\documentclass[11pt]{article}%
\usepackage{amssymb,amsmath,amsfonts,amsthm,array,bm,bbm,color}%
\usepackage[T1]{fontenc}
\usepackage[utf8]{inputenc}
\usepackage[backend=biber,style=numeric,sorting=nty,natbib=true,isbn=false,url=false,date=year,maxbibnames=99]{biblatex}
\renewbibmacro*{volume+number+eid}{%
  \printfield{volume}%
  \iffieldundef{number}{}{%
    \printtext{\mkbibparens{\printfield{number}}}%
  }%
  \setunit{\addcomma\space}%
  \printfield{eid}%
}

\usepackage{mathrsfs}
\usepackage{graphicx}
\usepackage{comment}
\usepackage{enumerate}

\providecommand{\U}[1]{\protect\rule{.1in}{.1in}}

\usepackage[pdfstartview=FitH]{hyperref}
\usepackage{xcolor}
\usepackage{orcidlink}

\numberwithin{equation}{section}

\newtheorem{theorem}{Theorem}[section]
\newtheorem{lemma}[theorem]{Lemma}

\newtheorem{proposition}[theorem]{Proposition}
\newtheorem{remark}[theorem]{Remark}

\newtheorem{definition}[theorem]{Definition}

\newcommand{\ind}{\mathbf{1}}

\def\scalV#1{ {_{V^{\ast}}}\langle #1 \rangle_{V}}

\def\scalH#1{\langle #1 \rangle_{H}}
\def\bigscalH#1{\big\langle #1 \big\rangle_{H}}
\def\scalU#1{\langle #1 \rangle_{U}}

\def\<{\langle}
\def\>{\rangle}

\def\d{{\rm d}}

\def\E{\mathbb{E}}
\def\N{\mathbb{N}}
\def\P{\mathbb{P}}
\def\R{\mathbb{R}}
\def\T{\mathbb{T}}
\def\W{\mathbb{W}}

\def\cF{\mathcal{F}}
\def\cI{\mathcal{I}}

\def\cY{\mathcal{Y}}

\def\sC{\mathscr{C}}

\begin{document}

\title{Large deviation principles for stationary solutions and invariant measures of a class of SPDE with locally monotone coefficients}
\author{Yong Liu$^1$ \footnote{Email: liuyong@math.pku.edu.cn}~, \quad Bin Tang$^2$ \footnote{Email: tangbin@njust.edu.cn} ~\orcidlink{0000-0003-0367-7271}, \quad Rangrang Zhang$^3$ \,\footnote{Email: rrzhang@bit.edu.cn}
\bigskip \\
{\small $^1$LMAM, School of Mathematical Sciences, Peking University, } \\
{\small Beijing 100871, China} \\
{\small $^2$School of Mathematics and Statistics, Nanjing University of Science and Technology, } \\
{\small Nanjing, Jiangsu, 210094, China} \\
{\small $^3$School of Mathematics and Statistics, Beijing Institute of Technology, } \\
{\small Beijing, 100081, China}
}

\maketitle

\begin{abstract}
We establish the well-posedness of stationary solutions for a class of SPDEs with locally monotone coefficients, and prove the Freidlin--Wentzell large deviation principle (LDP) for these stationary solutions. The LDP for the associated invariant measures then follows via the contraction principle, avoiding the need to construct the quasi-potential and verify the Dembo--Zeitouni uniform LDP over bounded sets. By working directly with stationary solutions, we bypass these technical difficulties, thereby providing a more general and flexible framework that is adapted to additive noise, multiplicative noise, and transport-type noise. As applications, our results cover a range of SPDEs, including the stochastic reaction-diffusion equations, stochastic 1D viscous Burgers equation, stochastic 2D Navier--Stokes equations, stochastic 2D magneto-hydrodynamic equations and stochastic 3D hyper-dissipative Navier--Stokes equations.
\end{abstract}

\textbf{Keywords:} Large deviation principles; locally monotone; stationary solutions; invariant measures

\textbf{MSC (2020):} {60F10; 60H15; 60J25; 35B40}

\section{Introduction}


Stochastic partial differential equations (SPDEs) are a mainstay of modern probability theory and arise naturally in a wide range of applications, such as engineering, finance, biology, and physics. In this paper, we study the Freidlin--Wentzell large deviation principal (LDP) of the stationary solutions and invariant measures for a large class of SPDEs with locally monotone coefficients within the variational framework,
\begin{equation*}
    \d X^{\epsilon}_t = A^{\epsilon}(X^{\epsilon}_t) \, \d t + \sqrt{\epsilon} B(X^{\epsilon}_t) \, \d W_t,
\end{equation*}
where $\epsilon>0$, operators $A^{\epsilon} : V \rightarrow V^{\ast}$ and $B: V \rightarrow L_{2}(U;H)$ are measurable and $W_{t}$ is $H$-valued double-side Wiener process, see details in Section \ref{subsec: setting}. It covers some important models such as the stochastic 2D Navier--Stokes equations, the stochastic 2D magneto-hydrodynamic equations,  the stochastic 3D hyper-dissipative Navier--Stokes equations, the stochastic 1D viscous Burgers equation, the stochastic Ginzburg--Landau equation, and the stochastic reaction-diffusion equations, see details in Section \ref{sec: application}.

The well-posedness of SPDEs in the variational framework has been extensively studied under monotonicity-type conditions. Starting from the pioneering works of Pardoux \cite{Pardoux_1974_Equations, Pardouxt_1980_Stochastic}, the monotonicity method has become a fundamental tool for establishing existence and uniqueness of variational solutions to nonlinear SPDEs with multiplicative noise. This framework was subsequently extended to locally monotone conditions, allowing the treatment of a broader class of models; see \cite{Liu_2013_Wellposedness, Liu_Stochastic_2015, Krylov_1981_Stochastic, Ren_2007_Stochastic, Brzezniak_2014_Strong, Neelima_2020_Coercivity, Nguyen_2021_Nonlinear} and the references therein. Further refinements incorporating pseudomonotone operators and relaxed continuity assumptions have led to well-posedness results for fully locally monotone SPDEs, even in the presence of multiplicative and gradient-dependent noise, see \cite{Kumar_2024_Wellposedness, Rockner_2024_Wellposedness, Pan_Large_2026}.


The large deviation principle (LDP) is a fundamental tool in probability theory that quantifies the exponential decay rates of probabilities of rare events. Its modern formulation was established by Varadhan \cite{Varadhan_1966_Asymptotic} in 1966. A powerful approach is the weak convergence method, developed by Bou\'{e}, Dupuis, Ellis, Budhiraja et al. \cite{Boue_variational_1998, Budhiraja_variational_2000, Budhiraja_Large_2008, Budhiraja_2019_Analysis,Dupuis_1997_Weak}. This approach reformulates the LDP via a variational representation formula for functionals of infinite-dimensional Brownian motion, avoids the difficult exponential probability estimates, and has proved effective for a broad class of SPDEs. To further describe transitions between stationary behaviors of dynamical systems under small noise, Freidlin and Wentzell \cite{Freidlin_1988_Random, Freidlin_2012_Random} developed a systematic framework for studying long-time behavior under small random perturbations, and introduced the quasi-potential as a key tool for analyzing the LDP of associated invariant measures.

The LDP for invariant measures of SPDEs has attracted considerable attention. For reaction--diffusion equation, Freidlin and Wentzell \cite{Freidlin_1988_Random} first established this with small additive Gaussian noise and used it to describe transitions between stationary behaviors; subsequent works progressively relaxed the assumptions on the noise and nonlinearity \cite{Sowers_1992_Large, Cerrai_Large_2005, Wang_Large_2024}. For fluid models, Brzezniak and Cerrai \cite{Brzezniak_Large_2017} established the LDP for invariant measures of the 2D stochastic Navier--Stokes equations with additive noise, Cerrai and Paskal \cite{Cerrai_2022_Large} extended this to vanishing noise correlation, and \cite{Bai_2026_Large} obtained related results for 1D stochastic Burgers equations. Zhang \cite{Zhang_2012_Large} studied the LDP for invariant measures of a 1D SPDE driven by multiplicative noise with two reflecting walls. In a different direction, Klose and Mayorcas \cite{Klose_2024_Large} derived the LDP for the $\Phi^4_3$ measures via stochastic quantisation. However, all these approaches rely on constructing the quasi-potential and verifying the Dembo--Zeitouni uniform LDP over bounded sets. The upper bound for the latter requires exponential estimates for invariant measures on compact set of the state space, which is notoriously difficult to establish in infinite-dimensional settings. To our knowledge, \cite{Zhang_2012_Large} is the only infinite-dimensional result treating multiplicative noise in this framework, made possible by the two reflecting walls that keep the solution bounded and reduce the multiplicative noise to additive-like behavior. Consequently, genuinely multiplicative noise in infinite dimensions remains out of reach within this framework, and no unified approach is currently available for broader applicability.

The concept of stationary solutions for stochastic dynamical systems dates back to the foundational work of It\^{o} and Nisio \cite{Ito_Stationary_1964}, where stationary solution is defined through the consistency of finite-dimensional distributions. An alternative and more dynamical formulation defines stationary solutions via the cocycle property of the associated random dynamical system \cite{Arnold_Random_2003, Arnold_Perfect_1995}. In this paper, we adopt a definition based on the crude cocycle, adapted from \cite{Mohammed_Stable_2008} to the two-sided time axis $\R$; see Section \ref{sec: stationary solution} for the precise formulation. A key observation is that the distribution of a stationary solution at any fixed time yields an invariant measure of the equation, while the stationary solution itself carries strictly more pathwise information. The existence of stationary solutions has been studied for a variety of models, including stochastic Burgers equations \cite{Bakhtin_2014_Spacetime, Dunlap_2021_Stationary, Liu_Representation_2009}, stochastic compressible Navier--Stokes systems \cite{Breit_2019_Stationary} and stochastic delay system \cite{Jiang_2023_Global,Liu_Large_2025}. A standard construction of stationary solution is the pull-back approach \cite{Flandoli_1999_Weak, Liu_Representation_2009}, which constructs the stationary solution as the limit of solutions started at time $-n$ as $n \rightarrow +\infty$. In this paper, we employ this approach to construct the pathwise unique stationary solution for SPDEs with locally monotone coefficients. A natural further question is how these stationary behaviors change under small random perturbations, which is precisely the subject of the LDP for stationary solutions and invariant measures.

Regarding the LDP for stationary solutions, Gao et al. \cite{Gao_Large_2022} established the LDP for stationary solutions of a class of SPDEs admitting explicit representations, with stochastic Burgers equations \cite{Liu_Representation_2009} as a primary example. Liu and Tang \cite{Liu_Large_2025} proved the LDP for stationary solutions of stochastic functional differential equations with infinite delay and derived the LDP for invariant measures via the contraction principle, circumventing the quasi-potential which is difficult to define in this setting. In this paper, we extend this approach to SPDEs with locally monotone coefficients, where stationary solutions are generally not available in explicit form. Without requiring the verification of the Dembo--Zeitouni uniform LDP over bounded sets, our approach provides a general framework applicable to additive noise, multiplicative noise, and transport-type noise, and in particular yields the LDP for invariant measures of equations whose exponential estimates of invariant measures on compact sets are difficult to establish, such as the stochastic 3D hyper-dissipative Navier--Stokes equations.

\subsection{Setting and Concept} \label{subsec: setting}

Let $V \subset H \equiv H^{\ast} \subset V^{\ast}$ be a Gelfand triple. $V$ is a reflexive and separable Banach space and $V^{\ast}$ is the dual space of $V, (H, \<\cdot, \cdot \>_H )$ is a separable Hilbert space identified with its dual space by the Riesz isomorphism, and $V$ is continuously and densely embedded in $H$. The dualization between $V^{\ast}$ and $V$ is denoted by $_{V^{\ast}} \< \cdot, \cdot \>_V$ ; we obviously have
\begin{equation*}
    _{V^{\ast}} \< u, v\>_V = \<u, v\>_H , \quad u \in H, \, v \in V.
\end{equation*}
Throughout the whole paper, we assume that there exists a constant $\lambda_1 >0 $ such that
\begin{equation}\label{eq: embedding}
    \lambda_1 \|v\|_{H}^2 \leq \| v \|_{V}^{2} , \quad \forall \, v \in V.
  \end{equation}
In particular, if $H= L^2 (\T^d)$ and $V = H_0^1 (\T^d)$, then the embedding inequality \eqref{eq: embedding} with the first eigenvalues $\lambda_1$ of $\Delta$ in $\T^d$ is  the well-known Poincar\'{e} inequality. Since all the examples we apply fulfill such an inequality, we will not list it as a condition.

Let $\{W_t\}_{t \in \R }$ be a double-side cylindrical Wiener process on a separable Hilbert space $U$ with respect to a complete filtered probability space $(\Omega, \cF , \cF_t, \P)$. Let $(L_2(U; H), \| \cdot \|_2)$ denote the space of all Hilbert-Schmidt operators from $U$ to $H$. The path of $W$ lies on the space:
\begin{equation} \label{def: Wiener space}
 \W=\Big\{ \theta\in C(\mathbb{R}; \, U); \; \theta(0)=0\Big\},\quad \|\theta\|_{\W}:= \sum_{n=1}^{+\infty} \frac{ (\sup_{|t| \leq n} \|\theta(t)\|_{U}) \wedge 1}{2^{n}}.
\end{equation}

For some $\epsilon>0$, we consider the equation
\begin{equation}\label{eq: SPDE}
    \d X^{\epsilon}_t = A^{\epsilon}(X^{\epsilon}_t) \, \d t + \sqrt{\epsilon} B(X^{\epsilon}_t) \, \d W_t,
\end{equation}
where $A^{\epsilon} : V \rightarrow V^{\ast}$ and $B: V \rightarrow L_{2}(U;H)$ are measurable. For the well-posedness and LDP of stationary solutions to \eqref{eq: SPDE}, we need the following assumptions.

Fix a constant $\beta \geq 0$, there exist a constant $\gamma_0>0$ and a positive constant $0<\epsilon \ll \gamma_0$ such that the following conditions hold for all $v, v_1, v_2 \in V$:
\begin{enumerate}
    \renewcommand{\labelenumi}{[A\theenumi].}
    \item  (Hemicontinuity) The map $s \mapsto _{V^{\ast}}\< A^{\epsilon}(v_1 + s v_2), v\>_V$ is continuous on $\R$.
    \item (Local monotonicity)  \begin{equation}
        \begin{aligned}
            2 _{V^{\ast}}\<A^{\epsilon}(v_1)-A^{\epsilon}(v_2) & , v_1 - v_2 \>_V + \epsilon \, \| B(v_1) - B(v_2) \|_2^2 \\
            & \leq  -2 \gamma_0 \|v_1 - v_2\|_{V}^{2} +  \rho(v_1) \| v_1 - v_2 \|_H^2,
        \end{aligned}
    \end{equation}
    where $\rho : V \rightarrow [0,+\infty)$ is a measurable and locally bounded function in $V$ satisfying
    \begin{equation}\label{rrr-1}
        \rho (v)  \leq  C_{\rho_1}  \big( 1+  \| v \|_{H}^{\beta} \big) \| v \|_{V}^{2} + C_{\rho_2},
    \end{equation}
    with constants $C_{\rho_1}, C_{\rho_2} \geq 0$.
    \item (Growth) There are positive constants $C_A,C_B$, $L_B$ and $\kappa\geq 0$ such that
    \begin{align*}
        & \|B(v)\|_2^2 \leq  C_{B}  \,  \big(1 +\|v\|_H^2 \big) +  L_{B}  \| v\|_{V}^{2} , \\
        &  \|B(v_1) - B(v_2) \|_2^2  \leq  C_{B}  \,  \|v_1- v_2\|_H^2 + L_B  \|v_1- v_2\|_V^{2},  \\
        & \|A^{\epsilon}(v)\|^{2}_{V^{\ast}} \leq C_{A} \big(1+ \| v\|_{V}^{2} \big) \big(1+ \| v\|_{H}^{\kappa} \big), \\
        & \|A^{\epsilon}(v)-A^{0}(v)\|^{2}_{V^{\ast}} \leq C_{A}\epsilon^2 \big(1+ \| v\|_{V}^{2} \big) \big(1+ \| v\|_{H}^{\kappa} \big) .
    \end{align*}
    \item (Noise boundness) $\scalH{v,B(v) \cdot}$ is a Hilbert--Schmidt operator that fulfills
    \begin{align*}
       \max \{  1, \|v\|_{H}^{\beta} \} \sum_{k} \big| \scalH{v,B(v) \, \mathcal{U}_{k}}  \big|^2 & \leq  C_{B}  \,  \|v\|_{H}^2, \quad \forall \, v \in V , \\
        \sum_{k} \Big| \bigscalH{u-v, \big(B(u)-B(v) \big) \, \mathcal{U}_{k}}  \Big|^2 & \leq  C_{B}  \,  \|u-v\|_{H}^4, \quad \forall \, u, \, v \in V,
    \end{align*}
    where $\{ \mathcal{U}_{k} \}_{k }$ is a complete orthonormal basis of $U$ and $C_B$ is given in {\rm{[A3]}}.
    \item (Coercivity) There are nonnegative constants $C_{A,\rho,\epsilon}$ and $C_{A,\rho}$ such that
    \begin{equation}  \label{eq: coercivity conditions}
        \max \{ \| v\|_{H}^{\beta} ,1 \} \, \Big( 2 _{V^{\ast}}\<A^{\epsilon}(v)  , v\>_V + \epsilon  \, \| B(v) \|_2^2 +  \gamma_0 \|v\|_{V}^{2} \Big) \leq C_{A,\rho,\epsilon} \leq C_{A, \rho}.
    \end{equation}
\end{enumerate}

We make some comments on the above conditions. The upper bound \eqref{rrr-1} on $\rho$ differs from that in \cite{Liu_Large_2020}, where the condition $\rho(v) \leq C(1 + \|v\|_V^{\alpha})(1 + \|v\|_H^{\beta})$ is proposed with general $\alpha > 1$, in two aspects. First, we explicitly separate the dependence on the
parameters $C_{\rho_1}$ and $C_{\rho_2}$, since they play different roles in establishing the LDP for stationary solutions and must be treated separately. Second, we fix $\alpha = 2$, which is necessary to ensure the exponential dissipation of the solution, see Section \ref{subsec: technical comment} for a detailed discussion. We consider a family of operators $\{A^{\epsilon}\}_{\epsilon>0}$ subject to condition {\rm [A3]}. This setup is intended to include transport noise with Stratonovich integral, whose It\^o-Stratonovich correction term is a higher-order infinitesimal in $\epsilon$ and thus vanishes as $\epsilon \rightarrow 0$. Regarding the other conditions, when $\epsilon$ is sufficiently small, the coercivity condition {\rm [A5]} can be derived from {\rm [A1--A3]}; see Lemma \ref{lem-1}. The noise boundedness condition {\rm [A4]} is introduced to ensure  exponential estimates of solutions; see Section \ref{subsec: technical comment} for further discussions.

It should be noted that the set of noises satisfying condition {\rm [A4]} is non-empty. A prominent example is transport-type noise, such as the Kraichnan noise (see \cite{Galeati_2023_Ldp, Flandoli_2024_Quantitative, Gess_2025_Stabilization} for more details), which takes the form $B(v)\,\mathcal{U}_k := \sigma_k \cdot \nabla v$ with $\{\sigma_k\}$ being divergence-free vector fields. We remark that pure transport-type noise may render the stationary solution trivial (i.e., the zero solution), so in practice it is typically combined with an additive or multiplicative component to ensure nontrivial dynamics. In particular, when $\beta = 0$, condition {\rm [A4]} holds for additive noise and bounded multiplicative noise. When $\beta > 0$, condition {\rm [A4]} is satisfied if the noise has special structure, for instance if $B(v)\,\mathcal{U}_k$ is orthogonal to $v$ in $H$, or if the Hilbert--Schmidt norm $\|B(v)\|_2^2 \leq C_{B} \| v \|_{H}^{-\beta}$, i.e., the noise intensity decays as $\|v\|_H$ grows.


Under assumptions {\rm [A1-A3]} and {\rm [A5]}, Liu et al. \cite{Liu_Stochastic_2015} and Pan et al. \cite{Pan_Large_2026} establish the existence and pathwise uniqueness of the solution $X^{\epsilon}_{t;t_0,\xi}$ to Eq. \eqref{eq: SPDE}  on any finite time interval $[t_0,T]$. These solutions naturally give rise to a family of solution maps $\{ U^{\epsilon}_{t_0}\}_{t_0 \in \R}$, which form a crude cocycle $(U^{\epsilon}, \theta)$ over the Wiener shift $\theta$; see Section \ref{sec: stationary solution} for the precise construction. Define $\sC_{-\infty}$ as the space $C(\R;H) \cap  \{ \cap_{N \geq 1} L^2([-N,N];V) \}$ equipped with
    \begin{equation*}
         d_{-\infty} (X,Y) := \Big(  \sum_{N=1}^{+\infty} \frac{ 1 \wedge \sup_{s\in [-N,N] } \| X_s -Y_s \|_{H}^2 \wedge \int_{-N }^N  \| X_s -Y_s \|_{V}^2 \, \d s}{2^{N}} \Big)^{1/2},
\end{equation*}
for all $X,Y \in \sC_{-\infty} $. A stationary solution for $(U^{\epsilon}, \theta)$ is an $\mathcal{F}_t$-progressively measurable process $\mathcal{Y}^{\epsilon}: \mathbb{R} \times \Omega \rightarrow H$ lying in the path space $\sC_{-\infty}$ and satisfying
\begin{equation*}
    U^{\epsilon}_0\big(t, \mathcal{Y}^{\epsilon}(s,\omega), \theta(s,\omega)\big) = \mathcal{Y}^{\epsilon}(t+s, \omega)=\mathcal{Y}^{\epsilon}(t,\theta(s, \omega)), \quad \forall\, t \geq 0, \, s \in \mathbb{R},
    \quad \mathbb{P}\text{-a.s.} .
\end{equation*}
This means that $\mathcal{Y}^{\epsilon}(t+s,\omega)$ is the solution of Eq.~\eqref{eq: SPDE} starting from $\mathcal{Y}^{\epsilon}(s,\omega)$ at any starting time $s \in \R$; see Definition~\ref{def: stationary solution} and the subsequent remark for further details. In particular, $\nu^{\epsilon} := \mathbb{P} \circ (\mathcal{Y}^{\epsilon}(0))^{-1}$ defines an invariant measure of Eq.~\eqref{eq: SPDE}.

\subsection{Main results}

Before stating the main results, we introduce two parameters used throughout the paper. Define $\tilde{\gamma}_0$ and $\tilde{\epsilon}$ as follows:
\begin{align}
  \tilde{\gamma}_0:=&\frac{C_{\rho_2}}{\lambda_1} \vee \big( \frac{(4+\beta) C_{\rho_1} C_{A,\rho}}{\lambda_1} \big)^{1/2},  \label{def: gamma}\\
   \tilde{\epsilon}:=&\frac{ (1+\beta)\lambda_1 \gamma_0^2}{8 (2+\beta)^2  C_{\rho_1} C_{B}} \wedge \frac{\gamma_0}{(18 \gamma_0 +  \beta  (2+\beta)  C_{\rho_1} )C_B} \Big( 2 \lambda_1 \gamma_0-\frac{   (4+\beta) C_{\rho_1} C_{A,\rho}   }{\gamma_0} -C_{\rho_2}  \Big), \label{def: epsilon}
\end{align}
where $C_{A,\rho}$ is given by the coercivity condition {\rm [A5]} or Lemma \ref{lem-1}. Our first main result concerns the well-posedness of stationary solutions to Eq. \eqref{eq: SPDE}.

\begin{theorem} [Theorem \ref{thm: stationary solution}] \label{thm: stationary solution intro}
 Assume that the conditions {\rm [A1-A5]} hold. If $  \tilde{\gamma}_0<\gamma_0$, then for any $0 \leq \epsilon< \tilde{\epsilon}$, there is a pathwise unique stationary solution $\cY^{\epsilon} \in  L^{\infty}(\R; L^{2}(\Omega; H))$ for cocycle $(U^{\epsilon},\theta)$.
For any $-\infty<t_0<t<+\infty$, the stationary solution $\cY^{\epsilon} \in L^{2}(\Omega;\sC_{-\infty})$ $\P$-a.s. satisfies
\begin{equation}\label{rrr-7}
    \scalV{\cY^{\epsilon}(t),v} = \scalV{\cY^{\epsilon}(t_0),v} + \int_{t_0}^{t} \scalV{A^{\epsilon}(\cY^{\epsilon}(s)),v} \d s + \sqrt{\epsilon} \int_{t_0}^{t} \scalH{B (\cY^{\epsilon}(s)) \, \d W_s,v},
\end{equation}
for any $v \in V$. Conversely, if a $\cF_{t}$-adapted process $\cY^{\epsilon}$ satisfies Eq. \eqref{eq: stationary solution} and $\sup_{t \in \R} \E \|\cY^{\epsilon}(t) \|_{H}^2< +\infty$, then it is indeed the stationary solution for cocycle $(U^{\epsilon},\theta)$. There exists a Borel measurable map $\mathcal{G}^{\epsilon}_{-\infty}: \W \rightarrow \sC_{-\infty}$ such that $\mathcal{G}^{\epsilon}_{-\infty}(W)(\cdot)$ is an $\mathcal{F}_{t}$-adapted solution to Eq. \eqref{eq: stationary solution} and $\sup_{t \in \R} \E \|\mathcal{G}^{\epsilon}_{-\infty}(W)(t) \|_{H}^2< +\infty$, called the strong stationary solution for cocycle $(U^{\epsilon},\theta)$.
\end{theorem}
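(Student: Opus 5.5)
We construct the stationary solution by the pull-back method. For $n\in\N$, let $X^{\epsilon,n}_t := X^\epsilon_{t;-n,0}$ denote the (pathwise unique, by \cite{Liu_Stochastic_2015, Pan_Large_2026}) variational solution of Eq. \eqref{eq: SPDE} started from $0$ at time $-n$. The plan is to show that $\{X^{\epsilon,n}\}_n$ is Cauchy in $L^2(\Omega;\sC_{-\infty})$. The limit will be $\cY^\epsilon$, and we then verify the cocycle identity and equation \eqref{rrr-7}.

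\textbf{Step 1: uniform moment and exponential bounds.} Apply It\^o's formula to $\|X^{\epsilon,n}_t\|_H^2$ and use the coercivity [A5] together with \eqref{eq: embedding}. This gives $\sup_n\sup_{t\ge -n}\E\|X^{\epsilon,n}_t\|_H^2\le C$ and $\E\int_{t}^{t+1}\|X^{\epsilon,n}_s\|_V^2\,\d s\le C$ uniformly. We also need an exponential moment of the form
\[
\E\exp\Big(\eta\int_s^t \rho(X^{\epsilon,n}_r)\,\d r - \eta\big(C_{\rho_2}+(4+\beta)C_{\rho_1}C_{A,\rho}/\gamma_0\big)(t-s)\Big)\le C e^{c\|\xi\|_H^2}.
\]
To get it, apply It\^o to $\|X\|_H^{2+\beta}$ (weighted by $\max\{\|X\|_H^\beta,1\}$ as in [A5]). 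This controls $\int (1+\|X\|_H^\beta)\|X\|_V^2$ up to a martingale. The quadratic variation of that martingale is bounded through [A4] by $C_B\int\|X\|_H^2$, so the exponential martingale inequality closes for $\epsilon$ small. This is where the first term in $\tilde\epsilon$ appears.

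\textbf{Step 2: contraction (the main obstacle).} For $m>n$, set $Z=X^{\epsilon,m}-X^{\epsilon,n}$ on $[-n,\infty)$. Apply It\^o to $e^{-\int_{-n}^t(\rho(X^{\epsilon,m}_r)-\delta)\d r}\|Z_t\|_H^2$ using [A2]. The $\rho$ term cancels and we are left with $-2\gamma_0\|Z\|_V^2+\delta\|Z\|_H^2$ plus a martingale. The quadratic variation of that martingale is controlled by $C_B\|Z\|_H^4$ via the second inequality of [A4]. Without the weight one cannot take expectations, since $\rho$ is random and unbounded. So the key is to combine Step 1 and H\"older: write $\E\|Z_t\|_H^2\le (\E[e^{p\int\rho}\cdots])^{1/p}(\E[\cdots])^{1/q}$, and use Poincar\'e $2\gamma_0\|Z\|_V^2\ge 2\lambda_1\gamma_0\|Z\|_H^2$. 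The condition $\tilde\gamma_0<\gamma_0$ and the second term in $\tilde\epsilon$ are exactly what make the net exponent negative, $2\lambda_1\gamma_0-(4+\beta)C_{\rho_1}C_{A,\rho}/\gamma_0-C_{\rho_2}-O(\epsilon C_B)>0$. The outcome is
\[
\E\sup_{s\in[-N,N]}\|Z_s\|_H^2+\E\int_{-N}^N\|Z_s\|_V^2\,\d s\le C_N e^{-c n}\,\E\|X^{\epsilon,m}_{-n}\|_H^2\le C_N e^{-cn}.
\]
Getting the $\sup$ inside uses BDG on the martingale over $[-N,N]$. Summing over $N$ gives the Cauchy property in $L^2(\Omega;\sC_{-\infty})$, with limit $\cY^\epsilon\in L^\infty(\R;L^2(\Omega;H))$. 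I expect the delicate bookkeeping of H\"older exponents and of the $\beta$-weights against $\tilde\epsilon,\tilde\gamma_0$ to be the hardest part.

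\textbf{Step 3: identification, stationarity, uniqueness and measurability.} Passing to the limit in the weak formulation for $X^{\epsilon,n}$ gives \eqref{rrr-7}. For the drift we use local monotonicity and [A3] growth (Minty-type, or directly via strong $L^2(V)$ convergence plus the hemicontinuity [A1]); for the stochastic integral we use the Lipschitz bound in [A3]. The cocycle property follows from two facts: $X^{\epsilon,n}(t+s,\omega)=X^{\epsilon,n+s}(t,\theta(s,\omega))$ for integer shifts (more generally via the flow identity of $U^\epsilon$), and the limit is independent of the starting index. It then passes to the limit almost surely along a subsequence.

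Uniqueness and the converse statement come from the same contraction estimate. If $\tilde\cY$ solves \eqref{rrr-7} with $\sup_t\E\|\tilde\cY(t)\|_H^2<\infty$, compare $\tilde\cY$ with $X^{\epsilon}_{\cdot;-n,\tilde\cY(-n)}$. Step 2's estimate (with initial data having bounded second moment, and with the Step 1 exponential bound needing only $\E e^{c\|\xi\|_H^2}$ or a truncation argument) shows $\E\|\cY^\epsilon(t)-\tilde\cY(t)\|_H^2\le Ce^{-c(t+n)}\to0$. Finally, each $X^{\epsilon,n}=\mathcal G^{\epsilon,n}(W)$ for a Borel map by Yamada--Watanabe for the finite-interval problem. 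Defining $\mathcal G^\epsilon_{-\infty}:=\lim_k\mathcal G^{\epsilon,n_k}$ on the Borel set where the limit exists in $\sC_{-\infty}$ (and $0$ elsewhere), along an a.s.-convergent subsequence, gives the Borel measurable strong stationary solution map.
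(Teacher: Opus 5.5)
Your architecture matches the paper's. You use pull-back from $X^{\epsilon}_{\cdot;-n,\xi}$, exponential supermartingale bounds built from {\rm [A4]}--{\rm [A5]} (Propositions~\ref{prop: basic energy estimate beta} and \ref{prop: basic energy estimate}), and a difference estimate weighted by the exponential of the $\rho$-integral, with the martingale term handled by BDG and the second inequality in {\rm [A4]}. Then come H\"older to remove the weight, passage to the limit, and the Wiener-shift identity. The conditions $\tilde\gamma_0<\gamma_0$ and the two terms of $\tilde\epsilon$ enter exactly where you place them.

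\textbf{The Step~2 contraction bound, and its reuse for uniqueness, do not hold as written.} The weighted It\^o/BDG argument controls only the \emph{first} moment of $e^{-W_t}\|Z_t\|_H^2$, where $W$ is the compensated $\rho$-integral, and bounds it by $\E\|Z(-n)\|_H^2$. Since $e^{W}$ is unbounded, splitting $\|Z\|_H^2=e^{W}\cdot e^{-W}\|Z\|_H^2$ by H\"older cannot bound $\E\sup_{[-N,N]}\|Z\|_H^2$. It only bounds a fractional moment:
\[
\E\Big[\big(\sup_s e^{\gamma(s+n)}\|Z_s\|_H^2\big)^{1/p_0}\Big]\le\Big(\E\sup_s e^{\gamma(s+n)-W_s}\|Z_s\|_H^2\Big)^{1/p_0}\Big(\E\, e^{\sup_s W_s/(p_0-1)}\Big)^{(p_0-1)/p_0}.
\]
Here $p_0\in(1,2]$ is tied to the admissible exponential order $\delta$, and this is exactly what the paper proves (Cauchy in $L^{2/p_0}$). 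A genuine second moment would need $\E[(e^{-W}\|Z\|_H^2)^p]$ for some $p>1$. That requires higher moments of $Z(-n)$ and a $p$-th power It\^o computation, which in general costs extra smallness of $\epsilon$.

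For the construction this is harmless. Since $d_{-\infty}\le 1$, the fractional bound already gives Cauchy in $L^2(\Omega;\sC_{-\infty})$ (equivalently, in probability). Also, $\cY^{\epsilon}\in L^\infty(\R;L^2(\Omega;H))$ follows from the uniform bounds via Fatou.

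It does matter for uniqueness and the converse:
\begin{itemize}
\item There $\tilde{\cY}$ is only assumed to satisfy $\sup_t\E\|\tilde{\cY}(t)\|_H^2<\infty$. So your bound $\E\|\cY^{\epsilon}(t)-\tilde{\cY}(t)\|_H^2\le Ce^{-c(t+n)}$ is out of reach. The fractional version needs only $\E\|\cY^{\epsilon}(-n)-\tilde{\cY}(-n)\|_H^2$ and still yields $\cY^{\epsilon}=\tilde{\cY}$.
\item The weight must be built from the process whose exponential moments are known. Since {\rm [A2]} involves $\rho(v_1)$ only, take $v_1=\cY^{\epsilon}$ (or $X^{\epsilon}_{\cdot;-n,0}$) and $v_2=\tilde{\cY}$. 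Mirroring your Step~2 choice $v_1=X^{\epsilon,m}$ (the earlier-started process) would put the weight on $\tilde{\cY}$, for which no exponential estimate is available.
\item With these two adjustments no truncation is needed.
\item Comparing $\tilde{\cY}$ with $X^{\epsilon}_{\cdot;-n,\tilde{\cY}(-n)}$ is vacuous, since the two coincide on $[-n,\infty)$. The comparison should be with $X^{\epsilon}_{\cdot;-n,0}$ or $\cY^{\epsilon}$.
\end{itemize}

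\textbf{Your construction of $\mathcal{G}^{\epsilon}_{-\infty}$ genuinely differs from the paper's, and it is valid.} The paper derives the Borel map abstractly from pathwise uniqueness, through a Yamada--Watanabe--Engelbert lemma adapted to the infinite time axis (Lemma~\ref{lem: Yamada-Watanabe}). That lemma needs convexity of $S_{\Gamma^{\epsilon}}$ and an adaptedness argument for the Kurtz kernels.

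You instead take the Borel maps $w\mapsto\mathcal{G}^{\epsilon}_{-n}(w,0)$ of Proposition~\ref{Prop: measurable map truncated}, extract an a.s.\ convergent subsequence, and define the limit on the Borel set where it is Cauchy in $\sC_{-\infty}$. This is more elementary. Convergence in probability suffices, and adaptedness follows from completeness of the filtration. Your choice $\xi=0$ makes the approximating paths vanish on $(-\infty,-n]$, so they lie in $\sC_{-\infty}$, where $d_{-n}$ and $d_{-\infty}$ coincide.

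The paper's route buys something different: it decouples existence of the map from the particular pull-back construction and also yields joint uniqueness in law.
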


A few remarks on the above theorem are in order. The test functions in \eqref{rrr-7} are taken from $V$ and the starting time $t_0$ is arbitrary; in this sense, the stationary solution satisfying \eqref{rrr-7} is referred to as the ``very weak solution'' in \cite[Definition3.4]{Brzezniak_Large_2017}. The existence and uniqueness of the stationary solution are established via the pull-back approach, relying on the exponential estimates in Propositions \ref{prop: basic energy estimate beta} and \ref{prop: basic energy estimate}. The existence of the Borel measurable map $\mathcal{G}^{\epsilon}_{-\infty}: \W \rightarrow \sC_{-\infty}$ follows from pathwise uniqueness combined with a modified Yamada--Watanabe--Engelbert theorem (Lemma \ref{lem: Yamada-Watanabe}), which is adapted to handle stationary solutions.

The following results  are concerned with LDP, so it is necessary to recall some definitions from \cite{Budhiraja_Large_2008}. Let $\mathcal{E}$ be a Polish space. The rate function and LDP on $\mathcal{E}$ are defined as follows.
\begin{definition}[Rate functions on compact set]
A family of rate functions $\cI_{\xi}$ on $\mathcal{E}$, parametrized by $\xi \in H$, is said to have compact level sets on compacts if for all compact subsets $K \subset \mathcal{E}$ and each $M<+\infty$, $\Lambda_{M,K}= \cup_{\xi \in K} \{\Phi \in \mathcal{E}; \, \cI_{\xi}(\Phi) \leq M\}$ is a compact subset of $\mathcal{E}$.
\end{definition}
\begin{definition}[LDP]
    We say a sequence $\{ X^{\epsilon}\}_{\epsilon>0} \subset \mathcal{E}$ satisfies the large deviation principle with rate function $\cI$, if the following two conditions hold:
    \begin{itemize}
        \item [1.] Large deviation upper bound. For each closed subset $F$ of $\mathcal{E}$,
            \begin{equation*}
                \limsup_{\epsilon \rightarrow 0} \epsilon\,\log \P(X^{\epsilon}\in F)\leq - \cI(F).
            \end{equation*}
        \item [2.] Large deviation lower bound. For each open subset $G$ of $\mathcal{E}$,
            \begin{equation*}
                \liminf_{\epsilon \rightarrow 0} \epsilon\,\log \P(X^{\epsilon}\in G) \geq -\cI(G).
            \end{equation*}
    \end{itemize}
\end{definition}

\begin{definition}[Freidlin-Wentzell uniform LDP]
    Let $\mathcal{K}$ be the family of all compactness subset of $H$. We say sequence $\{X^{\epsilon}_{\cdot;t_0,\xi}\}_{\epsilon>0}\subset \mathcal{E}$ indexed by $\xi \in H$ satisfies the uniform large deviation principle with rate function $\cI_{\xi}$, uniformly in $\mathcal{K}$ if the following two conditions hold:
    \begin{itemize}
        \item [1.] Upper bound. For any $K \in \mathcal{K}$, $\delta>0$ and $s_0>0$,
            \begin{equation*}
                \limsup_{\epsilon \rightarrow 0} \sup_{\xi \in K} \sup_{s \leq s_0} \Big\{ \epsilon \log \P \big( \inf_{\Phi \in \{\Phi \in \mathcal{E}; \, \cI_{\xi}(\Phi) \leq s\} } d_{\mathcal{E}}(X^{\epsilon}_{\cdot;t_0,\xi},\phi) \geq \delta \big) + s \Big\} \leq 0.
            \end{equation*}
        \item [2.] Lower bound. For any $K \in \mathcal{K}$, $\delta>0$ and $s_0>0$,
            \begin{equation*}
                \liminf_{\epsilon \rightarrow 0} \inf_{\xi \in K} \inf_{\{\Phi \in \mathcal{E}; \, \cI_{\xi}(\Phi) \leq s_0\}} \Big\{ \epsilon\,\log \P \big( d_{\mathcal{E}}(X^{\epsilon}_{\cdot;t_0,\xi},\Phi) <\delta \big) + \cI_{\xi}(\Phi) \Big\} \geq 0.
            \end{equation*}
    \end{itemize}
\end{definition}

Taking $\mathcal{E}:=C([t_0,T];H)\cap L^2([t_0,T];V)$ equipped with distance $d_{\mathcal{E}}(X,Y):= \sup_{s \in [t_0,T]} \| X(s)-Y(s)\|_{H}^2 \wedge \int_{t_0}^T \| X(s)-Y(s) \|_{V}^2 \, \d s$, the uniform LDP of $X^{\epsilon}_{\cdot;t_0,\xi}$ states as follows.
\begin{theorem}[Theorem \ref{thm: ULDP}] \label{thm: ULDP intro}
 Under conditions {\rm [A1]-[A5]}, if $\gamma_0 \geq  \tilde{\gamma}_0$, for each $t_0\in \mathbb{R}$ and any $T>t_0$, the solution $X^{\epsilon}_{\cdot;t_0,\xi}$ of Eq. \eqref{eq: SPDE} satisfies the uniform Freidlin-Wentzell LDP over compact sets on $C([t_0,T];H)\cap L^2([t_0,T];V)$ with the rate function $\mathcal{I}_{t_0,\xi}$ defined by \eqref{def: rate func 1}.
\end{theorem}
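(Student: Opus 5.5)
We prove Theorem \ref{thm: ULDP intro} with the weak convergence approach of Budhiraja--Dupuis--Maroulas, in its uniform-over-compacts form (\cite{Budhiraja_Large_2008}). Since pathwise uniqueness holds for Eq. \eqref{eq: SPDE} on $[t_0,T]$ (\cite{Liu_Stochastic_2015, Pan_Large_2026}), the Yamada--Watanabe theorem gives measurable maps $\mathcal{G}^{\epsilon}_{\xi}$ with $X^{\epsilon}_{\cdot;t_0,\xi}=\mathcal{G}^{\epsilon}_{\xi}(\sqrt{\epsilon}W)$. Take the skeleton equation
\begin{equation*}
\d Z^{h}_t = A^{0}(Z^h_t)\,\d t + B(Z^h_t)h(t)\,\d t,\qquad Z^h_{t_0}=\xi,
\end{equation*}
with $h\in L^2([t_0,T];U)$, put $\mathcal{G}^0_\xi(\int h):=Z^{h}$, and set $\mathcal{I}_{t_0,\xi}(\Phi)=\inf\{\frac12\int_{t_0}^T\|h\|_U^2\,\d s:\Phi=Z^{h}\}$, which matches \eqref{def: rate func 1}. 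Well-posedness of the skeleton equation follows from the usual monotonicity/Galerkin argument. Energy estimates use [A5]; the control term is bounded by Cauchy--Schwarz together with the growth bound on $\|B\|_2$ in [A3]. The $L_B\|v\|_V^2$ piece is absorbed by $\gamma_0\|v\|_V^2$ after weighting by $\|h\|_U^2$ and applying Gronwall. Uniqueness comes from [A2], since $\rho(Z^h)\in L^1$ by \eqref{rrr-1} and the energy bound.

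The uniform criterion needs two conditions, both over $\xi$ ranging in a compact $K\subset H$ and controls in $S_M=\{h:\int\|h\|_U^2\le M\}$.

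(i) \emph{Compactness/continuity.} If $\xi_n\to\xi$ in $H$ and $h_n\to h$ weakly in $S_M$, then $Z^{h_n}_{\xi_n}\to Z^{h}_{\xi}$ in $C([t_0,T];H)\cap L^2([t_0,T];V)$.

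(ii) \emph{Controlled convergence.} If $\xi_\epsilon\to\xi$ and $h_\epsilon\to h$ in distribution, with $h_\epsilon$ predictable $S_M$-valued, then the controlled solutions
\begin{equation*}
\d X^{\epsilon,h_\epsilon}=A^\epsilon(X^{\epsilon,h_\epsilon})\,\d t+B(X^{\epsilon,h_\epsilon})h_\epsilon\,\d t+\sqrt\epsilon B(X^{\epsilon,h_\epsilon})\,\d W,\qquad X^{\epsilon,h_\epsilon}_{t_0}=\xi_\epsilon,
\end{equation*}
converge in distribution to $Z^h_\xi$.

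For (i), uniform energy bounds on $Z^{h_n}_{\xi_n}$ in $C(H)\cap L^2(V)$ together with bounds on the time derivative in $L^2(V^*)$ via [A3] give compactness, for instance via time-difference estimates in $H$ using the Galerkin approximation or the embedding. We identify the limit through monotonicity, using hemicontinuity [A1]. To upgrade to strong convergence we estimate $w_n=Z^{h_n}_{\xi_n}-Z^h_\xi$ with [A2]:
\begin{equation*}
\|w_n(t)\|_H^2+\gamma_0\int\|w_n\|_V^2\le \|\xi_n-\xi\|_H^2+\int\big(\rho(Z^h_\xi)+C\|h_n\|_U^2\big)\|w_n\|_H^2+2\int\big\langle (B(Z^h_\xi)(h_n-h)),w_n\big\rangle_H .
\end{equation*}
Gronwall then applies with integrable weight. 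The last term tends to $0$ by weak convergence of $h_n$ combined with compactness of $w_n$ in $L^2(H)$, after splitting $w_n$ into a strongly convergent part.

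For (ii), let $\tilde Z=Z^{h_\epsilon}_{\xi_\epsilon}$ and $Y^\epsilon=X^{\epsilon,h_\epsilon}-\tilde Z$. Apply Itô's formula to $\|Y^\epsilon\|_H^2$ multiplied by the exponential weight $\exp(-\int_{t_0}^t(\rho(\tilde Z_s)+C\|h_\epsilon(s)\|_U^2)\,\d s)$. The deterministic bound $\sup_{h\in S_M,\xi\in K}\int\rho(Z^h_\xi)<\infty$ keeps this weight bounded below. The stopped martingale is handled by Burkholder--Davis--Gundy using the second part of [A4], and the term $\epsilon\|B(X^{\epsilon,h_\epsilon})\|_2^2$ is controlled by moment bounds for $X^{\epsilon,h_\epsilon}$ uniform in $\epsilon$. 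The difference $A^\epsilon-A^0$ contributes $O(\epsilon)$ by the last growth bound in [A3]. This yields $\sup_t\|Y^\epsilon\|_H^2+\int\|Y^\epsilon\|_V^2\to0$ in probability. Together with (i) applied pathwise via Skorokhod, this gives convergence to $Z^h_\xi$.

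\textbf{Main obstacle.} The crux is the uniform moment bound for $X^{\epsilon,h_\epsilon}$, needed when the $L_B\|v\|_V^2$ part of the noise is present and when $\beta>0$ in \eqref{rrr-1}. Here I would use [A5] together with the first inequality of [A4] to bound the martingale term, plus the condition $\gamma_0\ge\tilde\gamma_0$, and localize with stopping times $\tau_R$ to remove the contribution of $\rho$. The uniform-in-$\xi$ statement then follows from the uniform version of the Budhiraja--Dupuis--Maroulas theorem.
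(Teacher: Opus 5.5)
Your proposal is correct and uses the same Budhiraja--Dupuis--Maroulas framework as the paper: your (i) is the paper's (S2) and your (ii) its (S1). The verification of (S1), however, is organized differently.

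\textbf{The paper's decomposition.} The paper splits $X^{\epsilon,v^\epsilon}_{\cdot;t_0,\xi^\epsilon}-X^{0,v}_{\cdot;t_0,\xi}$ into three pieces:
\begin{itemize}
\item $Z_1^\epsilon$, the change of initial datum inside the \emph{stochastic} controlled equation, handled by Lemma~\ref{lem: controlled eq diff};
\item $Z_2^\epsilon$, the $\epsilon$-convergence for the fixed datum $\xi$, quoted from \cite{Liu_Large_2020,Pan_Large_2026} plus the $O(\epsilon)$ contribution of $A^\epsilon-A^0$;
\item $Z_3^\epsilon$, continuity in the control from Lemma~\ref{lem: good rate func}.
\end{itemize}
(S2) then comes from continuity in $v$ together with the Lipschitz dependence on $\xi$ uniform over $S_M$, namely \eqref{eq: energy estimate skeleton diff}.

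\textbf{Your decomposition.} You push both the initial datum and the control into the deterministic skeleton via joint continuity on $K\times S_M$. The only stochastic estimate then compares $X^{\epsilon,h_\epsilon}$ with $Z^{h_\epsilon}_{\xi_\epsilon}$, which has the same datum and the same random control. Because the left side of [A2] is symmetric in $(v_1,v_2)$, you may evaluate $\rho$ at the skeleton. The quantity $\int_{t_0}^T\rho(Z^h_\xi)\,\d s$ is bounded deterministically over $h\in S_M$ and bounded sets of $\xi$. So only second moments of $X^{\epsilon,h_\epsilon}$, uniform in $\epsilon$, are needed, and these follow from [A5], [A3] and Gronwall.

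\textbf{What each route buys.} In Lemma~\ref{lem: controlled eq diff} the weight $\beta_{\gamma,v}$ contains $\int\rho(X^{\epsilon,v}_{s;t_0,\xi_2})\,\d s$ of the random process. This forces the exponential bounds of Lemma~\ref{lem: well-posedness controlled eq}, the condition $\gamma_0>\tilde\gamma_0$, and $\epsilon<\tilde\epsilon/4$. Your route is more elementary and, for this finite-horizon statement, never uses $\gamma_0\ge\tilde\gamma_0$. In particular, the ``main obstacle'' you flag does not arise: there is no random $\rho$ to localize away. Also $\beta>0$ is harmless, since $\int\|Z\|_H^\beta\|Z\|_V^2\le\sup_t\|Z\|_H^\beta\int\|Z\|_V^2$. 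The paper's route buys reuse: Lemma~\ref{lem: controlled eq diff}, with decay in $t-t_0$ uniform in $\epsilon$ and $v$, is exactly what controls $I_1$ in Theorem~\ref{thm: LDP for stationary solution}.

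\textbf{Two details in (i) to tighten.}
\begin{itemize}
\item When $L_B>0$, $\|B(Z)h_n\|_H\le\|B(Z)\|_2\|h_n\|_U$ is only integrable in time. The derivative is therefore bounded in $L^2(V^*)+L^1(H)$, not $L^2(V^*)$. Aubin--Lions--Simon still applies, but compactness in $L^2(H)$ tacitly needs $V\subset H$ compact. This is the same ``relative compactness argument'' the paper borrows from \cite{Pan_Large_2026}.
\item In the remainder $\int\langle B(Z^h_\xi)(h_n-h),w_n\rangle_H$, $L^2(H)$-convergence of $w_n$ alone does not suffice, because $\|B(Z^h_\xi)\|_2$ is only $L^2$ in time. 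Combine it with $\sup_n\sup_t\|w_n(t)\|_H<\infty$ and truncate on $\{\|B(Z^h_\xi)\|_2\le R\}$. With this, identifying the limit through monotonicity and [A1] becomes unnecessary.
\end{itemize}
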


The LDP for solutions within the variational framework on finite time intervals is relatively standard and can be found in \cite{Liu_Large_2020, Pan_Large_2026}. The uniform LDP over compact sets of initial data does not introduce substantial additional difficulty. We establish it here for two reasons: it serves as a key intermediate step in proving the LDP for stationary solutions, and the estimates developed in its proof
are also needed in the proof of this LDP. For completeness, we therefore prove it directly under assumptions {\rm [A1--A5]}.

\begin{theorem}[Theorem \ref{thm: LDP for stationary solution}] \label{thm: LDP for stationary solution intro}
Assume that the conditions {\rm [A1-A5]} hold and $\gamma_0 \geq \tilde{\gamma}_0$. The stationary solution $\{\cY^{\epsilon}\}_{\epsilon>0}$ satisfies Freidlin-Wentzell LDP on the space $\sC_{-\infty}$ with the good rate function $\mathcal{I}_{-\infty}$ given by (\ref{def: rate func 2}).
\end{theorem}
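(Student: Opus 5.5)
We will prove the LDP for $\{\cY^{\epsilon}\}$ on $\sC_{-\infty}$ with the weak convergence method of Budhiraja--Dupuis. By Theorem \ref{thm: stationary solution intro}, $\cY^{\epsilon}=\mathcal{G}^{\epsilon}_{-\infty}(W)$ for a Borel map $\mathcal{G}^{\epsilon}_{-\infty}:\W\to\sC_{-\infty}$. We take $\mathcal{G}^0_{-\infty}(\int_0^\cdot h\,\d s)$ to be the unique bounded-in-$H$ solution on $\R$ of the skeleton equation
\[
\frac{\d}{\d t}Z^h_t=A^0(Z^h_t)+B(Z^h_t)h_t, \qquad t\in\R .
\]
Here $h\in L^2(\R;U)$, and the solution is built by the same pull-back construction: solve from time $-n$ and let $n\to\infty$. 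The rate function is then
\[
\mathcal{I}_{-\infty}(\Phi)=\inf\Big\{\tfrac12\int_\R\|h\|_U^2\,\d s:\ \Phi=Z^h\Big\}.
\]

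The plan is to verify the two sufficient conditions of \cite{Budhiraja_Large_2008}, stated on the two-sided Wiener space $\W$:
\begin{enumerate}
\item[(a)] For each $M$, the set $\{Z^h:\tfrac12\int_\R\|h\|^2\le M\}$ is compact in $\sC_{-\infty}$.
\item[(b)] If $h^\epsilon$ are predictable controls with $\tfrac12\int\|h^\epsilon\|^2\le M$ a.s. and $h^\epsilon\to h$ in distribution (weak topology), then $\mathcal{G}^{\epsilon}_{-\infty}(W+\epsilon^{-1/2}\int h^\epsilon)\to Z^h$ in distribution.
\end{enumerate}

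By Girsanov, the controlled process $Y^{\epsilon,h^\epsilon}$ is the stationary-type solution on $\R$ of the SPDE with the extra drift $B(Y)h^\epsilon$.

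\textbf{Existence and uniqueness for the controlled equations.} First I will show that the skeleton and controlled equations have unique global-in-time solutions bounded on $\R$. The energy estimate uses [A5] and Young's inequality, which give
\[
\|B(v)h\|\,\|v\|\le \tfrac{\gamma_0}{2}\|v\|_V^2+C\|h\|_U^2\bigl(1+\|v\|_H^2\bigr).
\]
Gronwall is then applied with the integrable weight $\exp(C\int\|h\|^2)$, and the Poincaré inequality \eqref{eq: embedding} turns the remaining $\gamma_0\|v\|_V^2$ into exponential dissipation in $H$. Uniqueness and the pull-back limit follow from [A2]. The resulting factor $\exp(\int(\rho(Z_s)-2\lambda_1\gamma_0)\,\d s)$ is controlled using \eqref{rrr-1} together with the time integral of $\|Z\|_V^2(1+\|Z\|_H^\beta)$, which is bounded via [A5] by $C_{A,\rho}$ times the length of the time interval. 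The condition $\gamma_0\ge\tilde\gamma_0$ is exactly what makes the exponent negative in the long run, so contraction holds.

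\textbf{Reduction to finite windows.} This step is the key to both (a) and (b). Fix $N$ and a large $T$. On $[-N,N]$, compare $Z^h$ with the solution started at time $-T$ from $0$, and likewise for the controlled process. The contraction estimate above shows the difference is at most $C e^{-c(T-N)}$, uniformly over controls in the ball $S_M$ (and in $L^2(\Omega)$ for the stochastic case, using the exponential moment bounds of Propositions \ref{prop: basic energy estimate beta}, \ref{prop: basic energy estimate} and [A4]). On the finite window $[-T,N]$, compactness and weak convergence then follow from the finite-interval arguments used for Theorem \ref{thm: ULDP intro}: a uniform LDP over compact initial data, where the initial data at $-T$ are fixed, and continuity of the skeleton map in $h$ under weak convergence. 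This is proved via a Galerkin/compactness or localization argument with stopping times and [A2]. Letting $T\to\infty$ and summing over $N$ in the metric $d_{-\infty}$ gives (a) and (b).

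\textbf{Main obstacle.} The hardest part is the uniform-in-$\epsilon$ and uniform-in-control exponential forgetting of the initial condition for the controlled stochastic equation. Under local monotonicity the contraction factor involves $\exp(\int\rho(Y_s)\,\d s)$ of a random process, so we need a bound like
\[
\E\exp\Big(\eta\int_{-T}^{t}\|Y_s\|_V^2\bigl(1+\|Y_s\|_H^\beta\bigr)\,\d s\Big)\le e^{C(t+T)}.
\]
I would obtain this by applying Itô's formula to $\|Y\|_H^{2+\beta}$, using [A4] so that the martingale's quadratic variation is bounded by $C\|Y\|_H^2$. An exponential martingale inequality then yields the bound, and the smallness $\epsilon<\tilde\epsilon$ absorbs the Itô correction. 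The control term is handled by the deterministic bound $\int\|h^\epsilon\|^2\le 2M$, which contributes only a fixed factor $e^{CM}$.
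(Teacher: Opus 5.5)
Your proposal is correct and follows essentially the same route as the paper. You verify (C1)--(C2) of Budhiraja--Dupuis--Maroulas by approximating both the controlled stationary solution and the stationary skeleton by their pull-back approximations started at a finite time. This is the paper's split $I_1+I_2+I_3$: $I_1$ uses the uniform contraction of Lemma~\ref{lem: controlled eq diff}, $I_2$ uses the finite-interval condition (S1), and $I_3$ and compactness use Lemma~\ref{lem: skeleton}. The uniform forgetting rests on the same exponential-supermartingale moment bounds and on $\gamma_0\ge\tilde\gamma_0$.

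One minor correction: the random weight $e^{\int\rho}$ has to be removed by H\"older, so the uniform-in-$\epsilon$ forgetting estimate holds only in $L^{2/p_0}(\Omega)$ with $p_0>1$ (the paper controls $\E[\cdot]^{1/2}$), not in $L^2(\Omega)$. This does not affect the argument, since convergence in probability is all that (C1) requires.
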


The proof follows the weak convergence approach. The key difficulty lies in establishing the convergence of controlled equations corresponding to stationary solutions on $\R$. This is achieved through the finite-interval convergence of Theorem \ref{thm: ULDP} together with uniform-in-$\epsilon$ approximation estimates between controlled equations on $[-n,T]$ and those corresponding to stationary solutions (Lemmas~\ref{lem: skeleton} and~\ref{lem: controlled eq diff}). A detailed discussion is given in Section~\ref{subsec: technical comment}.

Let $\Pi: \sC_{-\infty}\rightarrow H$ be the projection operator that maps $\Phi\in \sC_{-\infty}$ to $\Phi(0)\in H$. Due to $\nu^{\epsilon}:=\mathbb{P}\circ (\cY^{\epsilon}(0))^{-1}$ is an invariant measure of Eq. \eqref{eq: SPDE}, as a result of the contraction principle, we get the LDP for invariant measure $\nu^{\epsilon}$ with rate function
\begin{align}\label{rrr-12}
  \mathcal{I}^{\prime} (\phi):=\inf_{\Phi\in \sC_{-\infty}}\{\mathcal{I}_{-\infty}(\Phi): \Pi\Phi=\phi\}, \quad \forall \, \phi\in H.
\end{align}

\begin{theorem}[Theorem \ref{thm: LDP for invariant measures}] \label{thm: LDP for invariant measures intro}
Assume that the conditions {\rm [A1-A5]} hold and $\gamma_0 \geq \tilde{\gamma}_0$. The family of invariant measure $\{\nu^{\epsilon}\}_{\epsilon>0}$ satisfies LDP on $H$ with the good rate function $\mathcal{I}^{\prime}$.
\end{theorem}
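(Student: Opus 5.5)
The plan is a direct application of the contraction principle to Theorem \ref{thm: LDP for stationary solution intro}. By that theorem, $\{\cY^{\epsilon}\}_{\epsilon>0}$ satisfies the LDP on $\sC_{-\infty}$ with the good rate function $\mathcal{I}_{-\infty}$. The projection $\Pi:\sC_{-\infty}\to H$, $\Pi\Phi=\Phi(0)$, is continuous for $d_{-\infty}$. Indeed, the $N=1$ term of $d_{-\infty}(X,Y)^2$ is
\[
\tfrac12\Big(1\wedge\sup_{s\in[-1,1]}\|X_s-Y_s\|_H^2\wedge\int_{-1}^1\|X_s-Y_s\|_V^2\,\d s\Big).
\]
However, the minimum with the $V$-integral term means smallness of $d_{-\infty}$ does not by itself force $\sup\|X_s-Y_s\|_H$ small. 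So the first step is to read the metric correctly.

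If $\wedge$ were a genuine minimum, $\Pi$ would fail to be continuous and the argument would need modification. I would therefore interpret the topology on $\sC_{-\infty}$ as the one intended in the paper: convergence in $C([-N,N];H)$ and in $L^2([-N,N];V)$ for every $N$, which is how the LDP in Theorem \ref{thm: LDP for stationary solution} is actually proved via the controlled-equation convergence. Under this topology, $d_{-\infty}(\Phi_n,\Phi)\to0$ implies $\sup_{[-1,1]}\|\Phi_n-\Phi\|_H\to0$, so $\|\Pi\Phi_n-\Pi\Phi\|_H\to0$ and $\Pi$ is continuous. This verification is the only genuinely delicate point, and I would settle it first by checking the convention used in the proof of Theorem \ref{thm: LDP for stationary solution}.

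Next I identify the law: $\nu^{\epsilon}=\P\circ(\cY^{\epsilon}(0))^{-1}=\P\circ(\Pi\cY^{\epsilon})^{-1}$, and by Theorem \ref{thm: stationary solution intro} this is an invariant measure. The contraction principle (e.g.\ Dembo--Zeitouni, Theorem 4.2.1) then says that $\{\Pi\cY^{\epsilon}\}$ satisfies the LDP on $H$ with rate function $\mathcal{I}'$ defined in \eqref{rrr-12}, and that $\mathcal{I}'$ is good. For goodness, the level set is $\{\mathcal{I}'\le M\}=\Pi(\{\mathcal{I}_{-\infty}\le M\})$. This holds because the infimum in \eqref{rrr-12} is attained: whenever it is finite, it is taken over a closed subset of a compact level set. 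The image of a compact set under a continuous map is compact.

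Finally, I would note that $\mathcal{I}'(\phi)=+\infty$ when $\phi$ is not attainable, by the convention $\inf\emptyset=+\infty$. This is consistent with the theorem. No further estimates are needed: all the analytic work, including the uniform-in-$\epsilon$ pullback approximations, is contained in Theorem \ref{thm: LDP for stationary solution intro}. The main obstacle is therefore only the continuity of $\Pi$ under the chosen metric.
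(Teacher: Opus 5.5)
Your proposal is correct and is the paper's proof. The paper applies the contraction principle to Theorem \ref{thm: LDP for stationary solution} via the projection $\Pi\Phi=\Phi(0)$, and your extra care over the continuity of $\Pi$ is well placed: if $\wedge$ in $d_{-\infty}$ is read literally as a minimum, it does not control $\sup_{[-1,1]}\|\cdot\|_H$. You must therefore use the topology of convergence in both $C([-N,N];H)$ and $L^2([-N,N];V)$ for every $N$, which is what the paper actually establishes in Theorem \ref{thm: stationary solution} and Lemma \ref{lem: good rate func}, and under that topology your argument, including the goodness of $\mathcal{I}'$, goes through.
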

The rate function $\mathcal{I}^{\prime}$ is obtained via the contraction principle from the LDP for stationary solutions, rather than through the quasi-potential. By the uniqueness of the rate function, both approaches yield the same rate function $\mathcal{I}^{\prime}$.

\subsection{Key arguments and technical remarks} \label{subsec: technical comment}

We comment on the key technical steps and limitations underlying the main results, organized around the following three aspects: exponential estimates uniform in $\epsilon > 0$ and bounded initial data, the existence and pathwise uniqueness of stationary solutions, and the LDP for stationary solutions and invariant measures.

\begin{itemize}
\item \textbf{Exponential estimates (Proposition \ref{prop: basic energy estimate beta} and \ref{prop: basic energy estimate}).}
In the variational framework, the difference $\|X_1 - X_2\|_H$ between two solutions typically admits an energy estimate of the form $\E\big[\exp\{-\int_0^t \rho(X_2)\,\mathrm{d}s\} \|X_1 - X_2\|_H^2\big] \leq C$, where $\rho$ is defined in the local monotonicity condition {\rm [A2]}. While this suffices for uniqueness on finite time intervals, it is inadequate for establishing the existence and uniqueness of stationary solutions on the infinite time axis. Two ingredients are needed to resolve this. First, since $\rho(X_2)$ varies along the trajectory in the pull-back approach, one needs a uniform bound on $\E\exp\{\delta\int_0^t \rho(X_2)\,\mathrm{d}s\}$ for some $\delta > 0$. Following \cite[Lemma 3.5]{Odasso_2008_Exponential}, we construct an exponential martingale and apply the supermartingale inequality to derive exponential upper bounds, for $\|\cdot\|_H^2 + \int_0^t \|\cdot\|_V^2$ and $\|\cdot\|_H^{2+\beta} + \int_0^t \|\cdot\|_H^{\beta}\|\cdot\|_V^2$, that are uniform in $\epsilon > 0$ and initial data, where $\beta \geq 0$ is the parameter in \eqref{rrr-1} characterizes the strength of the nonlinearity. The construction of the exponential martingale requires the noise to satisfy the boundedness condition {\rm [A4]} and the strengthened coercivity condition {\rm [A5]}. As a limitation, condition {\rm [A4]} with $\beta > 0$ excludes additive noise, restricting the admissible noise in the strongly nonlinear regime. Second, to counterbalance the exponential growth of $\exp\{\int_0^t \rho(X_2)\,\mathrm{d}s\}$, the solution itself must have sufficiently strong exponential dissipation. This necessitates taking $\alpha = 2$ in {\rm [A2]}, which excludes equations with only polynomial decay such as stochastic $p$-Laplace equations, and further requires the dissipation coefficient $\gamma_0 \geq \tilde{\gamma}_0$ to dominate the nonlinearity with $\epsilon$ sufficiently small.
\item \textbf{Well-posedness of stationary solutions (Theorem \ref{thm: stationary solution intro}).}
We construct the stationary solution via the pull-back approach: the sequence $\{X^\epsilon_{\cdot;-n,\xi}\}_n$ is shown to be a Cauchy sequence in $L^{2/p_0}(\Omega; \sC_{-\infty})$ using the exponential estimates of Propositions \ref{prop: basic energy estimate beta} and \ref{prop: basic energy estimate}, and its limit element $\cY^{\epsilon}$ satisfies the ``very weak solution'' formulation \eqref{rrr-7} and can be verified to be a stationary solution via Definition \ref{def: stationary solution}. For pathwise uniqueness, it suffices to assume that another stationary solution $\tilde{\mathcal{Y}}^{\epsilon}$ satisfies $\sup_{t \in \R} \E\|\tilde{\mathcal{Y}}^{\epsilon}(t)\|_H^2 < +\infty$, which is natural since the marginal distributions of stationary solution coincide at all times. In the fully locally monotone case, however, $\rho(\cdot)$ depends on both $v_1$ and $v_2$ simultaneously, which would require exponential estimates on $\tilde{\mathcal{Y}}^{\epsilon}$ analogous to those in Propositions \ref{prop: basic energy estimate beta} and \ref{prop: basic energy estimate}; such an assumption is unnatural and lies beyond the scope of the present work. Finally, the existence of a Borel measurable map $\mathcal{G}^\epsilon: \W \to \sC_{-\infty}$ such that $\mathcal{G}^\epsilon(W)(\cdot)$ is an $\mathcal{F}_t$-adapted strong stationary solution relies on a modified Yamada--Watanabe--Engelbert theorem (Lemma~\ref{lem: Yamada-Watanabe}), which combines \cite{Rockner_YamadaWatanabe_2008} and \cite{Kurtz_YamadaWatanabeEngelbert_2007} to handle $\mathcal{F}_t$-adapted solutions on the infinite time axis, since the former does not apply over infinite time and the latter is restricted to compatible solutions.
\item \textbf{LDP for stationary solutions (Theorem \ref{thm: LDP for stationary solution intro} and \ref{thm: LDP for invariant measures intro}).}
We employ the weak convergence approach to prove the LDP for stationary solutions. The key step is to establish the convergence in distribution of
\begin{equation*}
  \mathcal{Y}^{\epsilon,v^{\epsilon}}:=\mathcal{G}^{\epsilon}_{-\infty} \Big( W + \frac{1}{\sqrt{\epsilon}} \int_{0}^{\cdot} v^{\epsilon} \, \mathrm{d}s \Big) \longrightarrow \mathcal{Y}^{0,v}: = \mathcal{G}^{0}_{-\infty} \Big( \int_{0}^{\cdot} v \, \mathrm{d}s \Big) \quad \text{on} \quad \sC_{-\infty},
\end{equation*}
as $\epsilon \to 0$ and the $S_M$-valued random elements $v^{\epsilon}$ converge to $v$ in distribution; see condition (S1) in Section~\ref{sec: LDP for stationary solution} for the precise formulation. Since verifying (S1) directly is difficult, we introduce the solution $X^{\epsilon,v^{\epsilon}}_{\cdot;-n,\xi}$ of controlled equation on $[-n,T]$ as an auxiliary process. The proof of the LDP for $X^{\epsilon}_{\cdot;-n,\xi}$ on  $[-n,T]$ yields the convergence of $X^{\epsilon,v^{\epsilon}}_{\cdot;-n,\xi}$ to $X^{0,v}_{\cdot;-n,\xi}$, which solves the skeleton equation. Using Lemmas \ref{lem: skeleton} and \ref{lem: controlled eq diff}, we further obtain the uniform convergence of $X^{\epsilon}_{\cdot;-n,\xi}$ to $\mathcal{Y}^{\epsilon}$ for all $\epsilon \geq 0$ as $n \to +\infty$; this argument is analogous to the construction of stationary solutions. Combining these two convergence results allows us to verify condition (S1), thereby establishing the LDP for the family of stationary solutions $\{ \mathcal{Y}^{\epsilon}\}_{\epsilon>0}$. The LDP for invariant measures then follows from the contraction principle, yielding the same rate function as the quasi-potential approach. Our method circumvents both the verification of the Dembo--Zeitouni uniform LDP over bounded sets and the construction of the quasi-potential, and thus extends to a broad framework. A limitation of our approach is that it requires the unperturbed equation ($\epsilon = 0$) to possess a unique stationary solution. Indeed, while the perturbed system ($\epsilon > 0$) admits a unique stationary solution, the unperturbed equation may possess several distinct stationary solutions; a classical example is given by the reaction--diffusion equations studied by Freidlin and Wentzell~\cite{Freidlin_1988_Random}. Our proof relies on the uniqueness of the stationary solution at $\epsilon = 0$. In all cases we consider, this requirement is satisfied.
\end{itemize}

\subsection{Organization}

The remainder of this paper is organized as follows. Section \ref{sec: preliminaries} establishes existence results and uniform exponential estimates (Propositions~\ref{prop: basic energy estimate beta} and \ref{prop: basic energy estimate}). Section \ref{sec: stationary solution} demonstrates the existence and pathwise uniqueness of stationary solutions for the SPDEs~\eqref{eq: SPDE}. Section \ref{sec: skeleton eq} investigates the properties of the skeleton equation. Section~\ref{sec: uniform LDP} provides a detailed proof of the uniform large deviation principle (LDP) for solutions of~\eqref{eq: SPDE} over finite time intervals. Section \ref{sec: LDP for stationary solution} derives the LDP for stationary solutions and invariant measures. Section~\ref{sec: application} presents several examples that fit within our framework and compares them with previous work. Appendix \ref{sec: appendix} contains the version of the Yamada--Watanabe theorem adopted for stationary solutions.

\section{Preliminaries} \label{sec: preliminaries}



\subsection{Technical lemmas}

Recall that at the beginning of this paper, we assumed that there exists a constant $\lambda_1 >0 $ such that $\lambda_1 \|v\|_{H}^2 \leq \| v \|_{V}^{2} , \, \forall \, v \in V$. With the aid of the above inequality, the condition {\rm [A5]} can be derived from {\rm [A1-A3]}, thus it can be released for the final LDP for stationary solution and invariant measure.
Concretely, we have the following result.
\begin{lemma}\label{lem-1}
Assume that {\rm [A1-A3]} hold. If $C_{\rho_2} < \lambda_1  \gamma_0$, then for any $0 \leq \epsilon \leq \epsilon_0 < \frac{\lambda_1 \gamma_0-C_{\rho_2}}{2 C_B + \lambda_1 L_B}$,
    \begin{equation} \label{rrr-17}
     \max \{ \| v\|_{H}^{\beta} ,1 \}  \big(   2 _{V^{\ast}}\<A^{\epsilon}(v)  , v\>_V + \epsilon  \, \| B(v) \|_2^2 +  \gamma_0 \|v\|_{V}^{2} \big)  \leq  C_{A,\rho,\epsilon} \leq C_{A, \rho},
    \end{equation}
    where
    \begin{align*}
      & C_{A, \rho,\epsilon}:= \max \Big\{ \frac{\|A^{\epsilon}(0)\|_{V^{\ast}}^2}{4 \delta(\epsilon)}, \frac{2}{2+\beta} \Big(\frac{\beta}{\lambda_1 (2+\beta)} \Big)^{\frac{\beta}{2}} \, \delta(\epsilon) \Big( \frac{ \|A^{\epsilon}(0)\|_{V^{\ast}}^2}{4 \delta(\epsilon)^2} \Big)^{1+\frac{\beta}{2}}   \Big\} + \epsilon C_{B}, \\
      & C_{A, \rho}:= \max \Big\{ \frac{\sup_{0 \leq \epsilon \leq \epsilon_0}\|A^{\epsilon}(0)\|_{V^{\ast}}^2}{4 \delta(\epsilon_0)}, \frac{2}{2+\beta} \Big(\frac{\beta}{\lambda_1 (2+\beta)} \Big)^{\frac{\beta}{2}} \, \delta(\epsilon_0) \Big( \frac{ \sup_{0 \leq \epsilon \leq \epsilon_0}\|A^{\epsilon}(0)\|_{V^{\ast}}^2}{4 \delta(\epsilon_0)^2} \Big)^{1+\frac{\beta}{2}}   \Big\} + \epsilon_0 C_{B} ; \\
& \delta(\epsilon) :=\frac{\lambda_1 \gamma_0- C_{\rho_2} -2 \epsilon \, C_B - \epsilon \, \lambda_1 L_B}{2 \lambda_1}  \geq \delta(\epsilon_0)>0.
\end{align*}
It implies that the condition {\rm [A5]} holds.
\end{lemma}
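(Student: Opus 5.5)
The plan is to derive a coercivity bound from the local monotonicity condition [A2] with $v_2=0$, then use the Poincar\'e-type inequality \eqref{eq: embedding} to absorb the bad terms into the dissipation and optimise a Young-type inequality for the weight $\max\{\|v\|_H^\beta,1\}$.

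\textbf{Step 1 (pointwise bound).} Apply [A2] with $v_1=v$, $v_2=0$:
\begin{equation*}
2\scalV{A^{\epsilon}(v),v} \le 2\scalV{A^{\epsilon}(0),v} - \epsilon\|B(v)-B(0)\|_2^2 + \epsilon\|B(0)\|_2^2 \ldots - 2\gamma_0\|v\|_V^2 + \rho(v)\|v\|_H^2 .
\end{equation*}
Here $\rho(v)$ must be evaluated at $v_1=v$, which is harmful because $\rho(v)\|v\|_H^2$ contains $C_{\rho_1}(1+\|v\|_H^\beta)\|v\|_V^2\|v\|_H^2$. I would therefore apply [A2] with the roles reversed, $v_1=0$, $v_2=v$. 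Then $\rho(0)\le C_{\rho_2}$, since the $\|0\|_V^2$ factor kills the $C_{\rho_1}$ part. This gives
\begin{equation*}
2\scalV{A^{\epsilon}(v)-A^{\epsilon}(0),v} + \epsilon\|B(v)-B(0)\|_2^2 \le -2\gamma_0\|v\|_V^2 + C_{\rho_2}\|v\|_H^2 .
\end{equation*}
To handle $\epsilon\|B(v)\|_2^2$ I would use the growth bound in [A3] directly, $\epsilon\|B(v)\|_2^2\le \epsilon C_B(1+\|v\|_H^2)+\epsilon L_B\|v\|_V^2$, and drop the nonnegative term $\epsilon\|B(v)-B(0)\|_2^2$. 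The cross term is bounded by $2\scalV{A^\epsilon(0),v}\le 2\|A^\epsilon(0)\|_{V^*}\|v\|_V$.

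\textbf{Step 2 (absorption).} Converting $\|v\|_H^2\le \lambda_1^{-1}\|v\|_V^2$ gives
\begin{equation*}
\text{LHS}_0:=2\scalV{A^{\epsilon}(v),v} + \epsilon\|B(v)\|_2^2 + \gamma_0\|v\|_V^2 \le 2\|A^\epsilon(0)\|_{V^*}\|v\|_V - 2\delta(\epsilon)\|v\|_V^2 + \epsilon C_B,
\end{equation*}
where the coefficient works out as $\gamma_0 - C_{\rho_2}/\lambda_1 - \epsilon(C_B/\lambda_1 + L_B)$ up to the constant bookkeeping. I expect the factor $2\epsilon C_B$ in $\delta(\epsilon)$ to come from also using the second [A3] inequality on the difference term instead of dropping it. The hypothesis $\epsilon\le\epsilon_0$ guarantees $\delta(\epsilon)\ge\delta(\epsilon_0)>0$. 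Maximising $2a r-2\delta r^2$ over $r$ gives $\text{LHS}_0\le \|A^\epsilon(0)\|^2_{V^*}/(4\delta)\cdot(\text{const})+\epsilon C_B$, which yields the first entry of the max when $\|v\|_H\le1$.

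\textbf{Step 3 (weighted case $\|v\|_H>1$).} Here I need $\|v\|_H^\beta\,\text{LHS}_0$ bounded. The only positive contribution is $\|v\|_H^\beta(2a\|v\|_V-\delta\|v\|_V^2)$ together with the $\epsilon C_B$ piece. One half of the dissipation, $\delta\|v\|_V^2$, is used to kill $2a\|v\|_V$ via $2a r-\delta r^2\le a^2/\delta$, giving $\|v\|_H^\beta\,a^2/\delta$. The other half, $-\delta\|v\|_V^2\le-\delta\lambda_1\|v\|_H^2$, is kept. It remains to maximise $s\mapsto s^\beta(c-\delta\lambda_1 s^2)$ with $c\sim a^2/(4\delta)$. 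The maximiser is $s^2=\beta c/(\delta\lambda_1(2+\beta))$, and the maximum equals $\frac{2}{2+\beta}\big(\frac{\beta}{\lambda_1(2+\beta)}\big)^{\beta/2}\delta\,(c/\delta)^{1+\beta/2}$, which is exactly the second entry. The $\epsilon C_B$ term needs care because it carries the factor $\|v\|_H^\beta$. To handle it, I would not bound $\|B(v)\|_2^2\le C_B(1+\|v\|_H^2)$ in the $1$-part but instead absorb $\epsilon C_B\|v\|_H^\beta$ into the same maximisation, or move it into $\delta(\epsilon)$. I expect this to be the main obstacle: matching the exact constants, especially making the additive $\epsilon C_B$ appear unweighted. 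Finally, the monotonicity in $\epsilon$ of $\delta(\epsilon)$ and $\sup_{\epsilon\le\epsilon_0}\|A^\epsilon(0)\|_{V^*}$ gives $C_{A,\rho,\epsilon}\le C_{A,\rho}$; this supremum is finite by [A3], since $\|A^\epsilon(0)\|^2\le C_A$.
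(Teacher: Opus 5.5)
Your route is the paper's: [A2] with $v_1=0$ so that only $\rho(0)\le C_{\rho_2}$ appears, the growth bound on $B$ from [A3], absorption via \eqref{eq: embedding}, and a weighted Young/maximisation in $s=\|v\|_H$. In that maximisation one $\delta\|v\|_V^2$ kills the cross term and the other pays for the weight $s^\beta$. The step you leave open, the weighted $\epsilon C_B$ term, closes with room you threw away in Step~2. Your dissipation coefficient there is $2\delta(\epsilon)+\epsilon C_B/\lambda_1$, not $2\delta(\epsilon)$. Keeping $\frac{\epsilon C_B}{\lambda_1}\|v\|_V^2\ge \epsilon C_B\|v\|_H^2$ leaves the contribution $\epsilon C_B(1-\|v\|_H^2)$. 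This is at most $\epsilon C_B$ when $\|v\|_H<1$, and becomes $\le 0$ after multiplication by $\|v\|_H^\beta$ when $\|v\|_H\ge 1$. The paper does the same thing in the form $\|v\|_H^\beta\le 1+\|v\|_H^{2+\beta}$ followed by $\epsilon C_B\|v\|_H^{2+\beta}\le \frac{\epsilon C_B}{\lambda_1}\|v\|_H^\beta\|v\|_V^2$. That absorption is the origin of the second $\epsilon C_B$ in $\delta(\epsilon)$. It does not come from the difference bound in [A3], which enters with a favourable sign and is simply dropped, as you do.

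The real discrepancy is in the constants, and it is not your error. With the correct cross term $2\|A^\epsilon(0)\|_{V^*}\|v\|_V$, your Step~2 gives $\|A^\epsilon(0)\|_{V^*}^2/(2\delta)$. Your Step~3 gives $c=\|A^\epsilon(0)\|_{V^*}^2/\delta$, not $c\sim\|A^\epsilon(0)\|_{V^*}^2/(4\delta)$. So what you actually prove is \eqref{rrr-17} with $\|A^\epsilon(0)\|_{V^*}^2$ replaced by $4\|A^\epsilon(0)\|_{V^*}^2$ in $C_{A,\rho,\epsilon}$ and $C_{A,\rho}$. Your claims that the two entries come out ``exactly'' are therefore unsupported, and they cannot be supported. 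The paper obtains $1/(4\delta)$ only by writing the cross term as $\|A^\epsilon(0)\|_{V^*}\|v\|_V$, i.e.\ dropping the factor $2$, and the inequality as stated is false. Take $H=V=U=\mathbb{R}$, $\lambda_1=1$, $B\equiv 0$, and $A^\epsilon(v)=-\gamma_0 v+f$ with $0<f\le\gamma_0$, so that [A2] holds with $\rho\equiv 0$; let $\epsilon=\epsilon_0=0$ and $\beta\ge 0$ be arbitrary. Then $\delta(0)=\gamma_0/2$ and $C_{A,\rho,0}=f^2/(2\gamma_0)$; the second entry of the max is no larger because $f\le\gamma_0$. But at $v=f/\gamma_0$ the left side of \eqref{rrr-17} is at least $2fv-\gamma_0v^2=f^2/\gamma_0$. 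Since [A5] only requires some finite constant, your argument, completed as above, proves the corrected lemma. Only the numerical value of $\tilde\gamma_0$ is affected, and not at all in the examples, where $A^\epsilon(0)=0$.
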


\begin{proof}
  Taking $v_1=0$, condition [A2] and Cauchy inequality give
  \begin{equation*}
    2 _{V^{\ast}}\<A^{\epsilon}(v)  , v\>_V + \epsilon \, \| B(v) \|_2^2 + 2 \gamma_0 \|v\|_{V}^{2} \leq  \|A^{\epsilon}(0)\|_{V^{\ast}} \|v\|_{V} +  \epsilon \| B(v) \|_2^2 + C_{\rho_2} \|v\|_H^2 .
  \end{equation*}
  Using Young's inequality and condition [A3], we have
  \begin{align*}
   &  \| v\|_{H}^{\beta} \big( 2 _{V^{\ast}}\<A^{\epsilon}(v)  , v\>_V + \epsilon  \, \| B(v) \|_2^2 +  2 \gamma_0 \|v\|_{V}^{2} \big) \\
   & \quad \leq \delta \| v\|_{H}^{\beta}  \|v\|_{V}^2 + \frac{ \|A^{\epsilon}(0)\|_{V^{\ast}}^2}{{4 \delta}} \|v\|_{H}^{\beta} + \epsilon L_{B} \| v\|_{H}^{\beta}  \|v\|_{V}^2 + \big( C_{\rho_2}+  \epsilon \, C_B \big) \| v\|_{H}^{2+\beta} + \epsilon C_{B} \| v\|_{H}^{\beta}  \\
    &  \quad \leq (\delta + \epsilon L_{B}) \| v\|_{H}^{\beta}  \|v\|_{V}^2  +  \big( C_{\rho_2}+ \lambda_1 \delta +2 \epsilon \, C_B \big) \|v\|_H^{2+\beta} + C_{\beta} \, \delta \Big( \frac{ \|A^{\epsilon}(0)\|_{V^{\ast}}^2}{4 \delta^2} \Big)^{1+\frac{\beta}{2}} + \epsilon C_{B},
  \end{align*}
where $\delta>0$ will be decided later, $C_{\beta}=\frac{2}{2+\beta} \Big(\frac{\beta}{\lambda_1 (2+\beta)} \Big)^{\frac{\beta}{2}}$ when $\beta>0$ and $C_{\beta}=1$ when $\beta=0$. The constant $C_{\beta}$ comes from the Young inequality. Due to $C_{\rho_2} < \lambda_1 \gamma_0$ and $0 \leq \epsilon \leq \epsilon_0<\frac{\lambda_1 \gamma_0-C_{\rho_2}}{ 2 C_B + \lambda_1 L_B}$, taking $\delta=\frac{\lambda_1 \gamma_0- C_{\rho_2} -2 \epsilon \, C_B - \epsilon \, \lambda_1 L_B}{2 \lambda_1}  \geq \delta_0= \frac{\lambda_1 \gamma_0- C_{\rho_2} -2 \epsilon_0 \, C_B - \epsilon_0 \, \lambda_1 L_B}{2 \lambda_1}>0$, and using the embedding inequality \eqref{eq: embedding}, we obtain that
\begin{equation*}
   \| v\|_{H}^{\beta} \big( 2 _{V^{\ast}}\<A^{\epsilon}(v)  , v\>_V + \epsilon  \, \| B(v) \|_2^2 +  \gamma_0 \|v\|_{V}^{2} \big) \leq  C_{\beta} \, \delta \Big( \frac{ \|A^{\epsilon}(0)\|_{V^{\ast}}^2}{4 \delta^2} \Big)^{1+\frac{\beta}{2}} + \epsilon C_{B}.
\end{equation*}
When $\beta=0$, the similar estimate also holds. Due to $\delta \geq \delta_0$, we get the desired result \eqref{rrr-17}.
\end{proof}

\begin{remark}
$C_{A,\rho,\epsilon}$ is one of the key parameters governing the behavior of the solution.
For some special cases, we have $\lim_{\epsilon \rightarrow 0} C_{A,\rho,\epsilon}=0$. For example, in the setting of 2D Navier-Stokes equation, we have $C_{\rho_2}=0$ and $\|A^{\epsilon}(0)\|^{2}_{V^{\ast}}=0$, which implies $\lim_{\epsilon \rightarrow 0} C_{A,\rho,\epsilon}=0$.
\end{remark}

\subsection{Definition of solutions}
For the finite time interval, the definition of solutions to Eq. \eqref{eq: SPDE} reads as follows.
\begin{definition} \label{def: finite time solution}
    A continuous $H$-valued $\mathcal{F}_{t}$-adapted process $\{X_{t}^{\epsilon} \}_{t \in I}$ is called a solution of Eq. \eqref{eq: SPDE} in the finite time interval $I=[t_0,t_1]$ ($t_1>t_0$), if for its $\d t \otimes \d \P$-equivalence class $\{\hat{X}_{t}^{\epsilon} \}_{t \in I}$, we have $\hat{X}_{t}^{\epsilon} \in L^2(I \times \Omega, \d t \otimes \d \P; V)$ and $\P$-a.s.
    \begin{equation} \label{eq: def-solution}
        X^{\epsilon}_t= X^{\epsilon}_{t_0} + \int_{t_0}^{t} A^{\epsilon}(\bar{X}^{\epsilon}_s) \, \d s + \sqrt{\epsilon} \int_{t_0}^{t} B(\bar{X}^{\epsilon}_s) \, \d W_s, \quad t \in [t_0,t_1],
    \end{equation}
    where $\bar{X}_{t}^{\epsilon}$ is any $V$-valued progressively measurable $\d t \otimes \d \P$-version of $\hat{X}_{t}^{\epsilon}$.
\end{definition}

Under condition {\rm [A1-A3]} and coercivity condition {\rm [A5]},
\cite{Liu_Large_2020} and \cite{Pan_Large_2026} have achieved the well-posedness and LDP of solutions to Eq. \eqref{eq: SPDE} in a finite time interval, separately.

\begin{remark}
Due to $X_{t}^{\epsilon} \in L^2(I \times \Omega, \d t \otimes \d \P; V)$, it follows from the growth condition {\rm [A3]} that the Bochner integral in Eq. \eqref{eq: def-solution} is well-defined and coincides with the Pettis integral. Thus, Eq. \eqref{eq: def-solution} is equivalent to the following weak formula $\P$-a.s. holds
\begin{equation} \label{eq: weak formula}
    \<X^{\epsilon}_t,v \>_{H} = \< X^{\epsilon}_{t_0},v\>_{H}+ \int_{t_0}^{t} \scalV{A^{\epsilon}(X^{\epsilon}_s),v} \, \d s + \sqrt{\epsilon} \int_{t_0}^{t} \scalH{v,B(X^{\epsilon}_s) \, \d W_s}, \quad t \in [t_0,t_1]
\end{equation}
for all $v \in V$. Moreover, the crucial It\^o formula is given by (see \cite[Theorem 4.2.5]{Liu_Stochastic_2015})
\begin{equation} \label{eq: Ito formula}
    \| X^{\epsilon}_t \|_{H}^2 = \| X^{\epsilon}_0 \|_{H}^2 + \int_{0}^{t} \big( 2 \scalV{A^{\epsilon}(X^{\epsilon}_s),X^{\epsilon}_s} + \epsilon \| B(X^{\epsilon}_s) \|_{2}^2 \big) \, \d s + 2 \sqrt{\epsilon} \int_{0}^{t} \scalH{X^{\epsilon}_s, B (X^{\epsilon}_s) \, \d W_s}.
\end{equation}
\end{remark}

\begin{remark}
The stationary solutions of Eq. \eqref{eq: SPDE} will always be the $V$-valued $\mathcal{F}_{t}$-adapted processes. Thus, in the remainder of this article, we will not strictly distinguish between the different versions of the solution and will uniformly refer to them as $X_{t}^{\epsilon}$.
\end{remark}

With regard to the infinite time interval, we employ the following notations.
For any $t \geq t_0$, let $X^{\epsilon}_{t;t_0,\xi}$ be the solution of Eq. \eqref{eq: SPDE} in the sense of Definition \ref{def: finite time solution} starting from time $t_0$ with initial data $\xi$. For any $t \leq t_0$, let $X^{\epsilon}_{t;t_0,\xi}=:\xi$.
The state space of $X^{\epsilon}_{t;t_0,\xi}$ are defined as follows.
\begin{definition}
    For any $t_0 >-\infty$, we define $\sC_{t_0}$ as the space $C(\R;H) \cap  \{ \cap_{N \geq 1} L^2([t_0,N];V) \}$ equipped with the following distance
    \begin{equation*}
         d_{t_0} (X,Y) := \Big(  \sum_{N=1}^{+\infty} \frac{ 1 \wedge \sup_{s\in [-N,N] } \| X_s -Y_s \|_{H}^2 \wedge \int_{-N \vee t_0}^N  \| X_s -Y_s \|_{V}^2 \, \d s}{2^{N}} \Big)^{1/2},
    \end{equation*}
    for all $X,Y \in \sC_{t_0} $.
    The space $\sC_{+\infty}$ is defined as $C(\R;H)$ equipped with the metric
    \begin{equation*}
         d_{+\infty} (X,Y) := \Big(  \sum_{N=1}^{+\infty} \frac{ 1 \wedge \sup_{s\in [-N,N] } \| X_s -Y_s \|_{H}^2 }{2^{N}} \Big)^{1/2},
    \end{equation*}
    for all $X,Y \in \sC_{+\infty} $. For any $t_0 \in [ -\infty, +\infty]$ and $p \geq 1$, we define $L^{p}(\Omega,\sC_{t_0})$ as the space of all random functions lie on $\sC_{t_0}$ with the metric $ |\E |d_{t_0} (\cdot,\cdot)|^{p}|^{1/p} $.
\end{definition}

\begin{remark}
Clearly, $L^{p}(\Omega,\sC_{t_0})$ is a Polish space.
  For any $t_0 \in [ -\infty, +\infty)$, the metric $d_{t_0}(X,Y)$ is equivalent to
  \begin{equation*}
         d_{t_0,\gamma} (X,Y) := \Big(  \sum_{N=1}^{+\infty} \frac{ 1 \wedge \sup_{s\in [-N,N] } e^{-\gamma(N-s)} \| X_s -Y_s \|_{H}^2 \wedge \int_{ -N \wedge t_0  }^N  \| X_s -Y_s \|_{V}^2 \, \d s}{2^{N}} \Big)^{1/2}
  \end{equation*}
  for any $\gamma \geq 0$. The metric $d_{+\infty}(X,Y)$ is equivalent to
  \begin{equation*}
         d_{+\infty,\gamma} (X,Y) := \Big(  \sum_{N=1}^{+\infty} \frac{ 1 \wedge \sup_{s\in [-N,N] } e^{-\gamma(N-s)} \| X_s -Y_s \|_{H}^2 \d s}{2^{N}} \Big)^{1/2}
  \end{equation*}
  for any $\gamma \geq 0$. In the sequel, we will use both types of equivalent metrics.
\end{remark}

To show that the solution is a Cauchy sequence in $L^{p}(\Omega,\sC_{-\infty})$, we will frequently employ the following technical proposition, which can be proved by the diagonal argument.
\begin{proposition} \label{Prop: Cauchy sequence}
  Let $X_n \in \sC_{-n}$ for any $n \in \N$.
  For fixed $p \geq 1$, if for every $N \in \N$, the sequence $\{X_n \}_{n \geq N}$ is a Cauchy sequence in $L^{p}(\Omega,\sC_{-N})$, then there exists a limit element $X_{\infty} \in L^{p}(\Omega,\sC_{-\infty})$ such that $\{X_n \}_{n \in \N}$ converges to $X_{\infty}$ with respect to the metric $ |\E |d_{-\infty} (\cdot,\cdot)|^{p}|^{1/p} $.

\end{proposition}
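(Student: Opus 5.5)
The plan is a diagonal extraction followed by a Fatou-type passage to the limit.

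\emph{Step 1: limits on each half-line.} Since $L^{p}(\Omega,\sC_{-N})$ is complete, the hypothesis gives, for each $N\in\N$, a limit $Y^{N}\in L^{p}(\Omega,\sC_{-N})$ with $X_n\to Y^N$ as $n\to\infty$, $n\ge N$.

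\emph{Step 2: consistency.} For $M\ge N$, the restriction from $\sC_{-M}$ to $\sC_{-N}$ is $1$-Lipschitz, because $d_{-N}\le d_{-M}$: the $V$-integrals in $d_{-N}$ run over $[-K\vee(-N),K]\subset[-K\vee(-M),K]$. Hence $Y^M$ and $Y^N$ are both $L^{p}(\Omega,\sC_{-N})$-limits of the same sequence, so $d_{-N}(Y^M,Y^N)=0$ a.s. Concretely, $Y^M=Y^N$ as $C(\R;H)$-paths, and $Y^M\in L^2([-N,K];V)$ for every $K$.

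\emph{Step 3: the glued limit.} Define $X_\infty:=Y^N$ on $[-N,\infty)$. By Step 2 this is consistent, it is continuous in $H$ on $\R$, and it lies in $\cap_N L^2([-N,N];V)$ a.s. (discard a countable union of null sets). So $X_\infty\in\sC_{-\infty}$ a.s., and it is measurable as an a.s.\ limit of measurable maps.

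\emph{Step 4: convergence in $d_{-\infty}$.} Fix $\eta>0$ and choose $K_0$ with $\sum_{K>K_0}2^{-K}<\eta$. In $d_{-\infty}^2(X_n,X_\infty)$, the terms with $K\le K_0$ involve only $[-K,K]\subset[-K_0,\infty)$. On that range, for $n\ge K_0$, each term is bounded by the corresponding term of $d_{-K_0}^2(X_n,Y^{K_0})$. Since each summand is at most $1$, this gives
\begin{equation*}
d_{-\infty}^2(X_n,X_\infty)\le 2^{K_0}\, d_{-K_0}^2(X_n,Y^{K_0})+\eta .
\end{equation*}
Here $X_n$ is defined on $[-n,\infty)\supset[-K_0,\infty)$, and any convention off $[-n,\infty)$, such as the paper's $X_{t;t_0,\xi}=\xi$, is irrelevant for these terms. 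Raising to the power $p/2$ and taking expectations, then letting $n\to\infty$ and afterwards $\eta\to0$, gives $\E|d_{-\infty}(X_n,X_\infty)|^{p}\to0$. Finally, $\E\, d_{-\infty}^p(X_\infty,0)<\infty$ follows from the triangle inequality together with $d_{-\infty}\le 1$, so $X_\infty\in L^{p}(\Omega,\sC_{-\infty})$.

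\emph{Main obstacle.} The delicate point is Step 4's comparison of the truncated sums with $d_{-K_0}$. The paper's $d_{-N}$ places $\sup_{s\in[-K,K]}$ over $H$-norms for all $K$, including where $-K<-N$, with $X_n$ defined there only by convention. I would handle this by restricting attention to $[-K_0,\infty)$ throughout. If the $H$-supremum on $[-K,-N)$ caused trouble, I would instead use the equivalent weighted metrics $d_{t_0,\gamma}$ from the preceding remark.
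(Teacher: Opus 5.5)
Your argument is correct and is essentially the diagonal argument the paper invokes without details. Completeness of each $L^p(\Omega,\sC_{-N})$ together with uniqueness of limits gives consistent limits $Y^N$ (your gluing only needs $Y^M=Y^N$ on $[-N,\infty)$). Truncating the series for $d_{-\infty}$ at $K_0$ then transfers the convergence to $\sC_{-\infty}$; since the first $K_0$ summands of $d_{-\infty}$ and $d_{-K_0}$ coincide, the factor $2^{K_0}$ is not even needed.
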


Under assumptions {\rm [A1-A3]} and {\rm [A5]}, Liu et al. \cite{Liu_Stochastic_2015} and Pan et al. \cite{Pan_Large_2026} give the existence and pathwise uniqueness of the solution $X^{\epsilon}_{t}$ to Eq. \eqref{eq: SPDE} in any finite time interval. By the Yamada--Watanabe theorem, we derive the following result.
\begin{proposition}\label{Prop: measurable map truncated}
 There is a Borel measurable map $\mathcal{G}^{\epsilon}_{t_0}: \W \times H \rightarrow \sC_{t_0}$ such that $\mathcal{G}^{\epsilon}_{t_0}(W,\xi)(t,\omega)$, adapted to $\mathcal{F}_{t}$, is the unique solution of Eq. \eqref{eq: SPDE} with initial data $\xi \in H$ at $t_0$.
\end{proposition}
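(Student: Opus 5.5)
The plan is to apply the Yamada--Watanabe theorem in the form of \cite{Rockner_YamadaWatanabe_2008} (or Kurtz's version \cite{Kurtz_YamadaWatanabeEngelbert_2007}) on each finite interval, and then paste the resulting maps together over $\R$. First fix $t_0\in\R$ and $T>t_0$. By \cite{Liu_Stochastic_2015, Pan_Large_2026}, under [A1--A3] and [A5], Eq. \eqref{eq: SPDE} on $[t_0,T]$ has a probabilistically weak solution (in fact a strong one) for every initial datum $\xi\in H$, and pathwise uniqueness holds. The Yamada--Watanabe theorem of \cite{Rockner_YamadaWatanabe_2008} for the variational setting then provides a Borel map $F^{T}_{t_0}: H\times C([t_0,T];U)\to C([t_0,T];H)\cap L^2([t_0,T];V)$. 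This map satisfies two properties: $F^T_{t_0}(\xi,W_{\cdot}-W_{t_0})$ is the unique solution, and $F^T_{t_0}(\xi,\cdot)$ is adapted, i.e. $\mathcal{B}_t/\mathcal{B}_t$-measurable, for every $t$. Joint measurability in $(\xi,w)$ comes from the version of the theorem with $\mathcal F_{t_0}$-measurable initial data, obtained by applying it with law of $\xi$ arbitrary, or via \cite{Kurtz_YamadaWatanabeEngelbert_2007}.

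Next, I would make the maps consistent in $T$. For integers $T=N$, pathwise uniqueness implies that the restriction of $F^{N+1}_{t_0}$ to $[t_0,N]$ coincides with $F^N_{t_0}$ for $\P$-a.e. Wiener path, for each fixed $\xi$. To avoid exceptional sets depending on $\xi$, I will define $\mathcal G^\epsilon_{t_0}(w,\xi)(t)$ for $t\in[N-1,N]$ using $F^N_{t_0}$, and set it equal to $\xi$ for $t\le t_0$. Continuity at the junction points must hold pointwise, so I will instead set $\mathcal G^\epsilon_{t_0}(w,\xi):=F^N_{t_0}(\xi,w)$ on $[t_0,N]$ whenever the restrictions are consistent for all $N$, and a fixed default value (e.g.\ the constant path $\xi$) otherwise. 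The consistency set is Borel in $(w,\xi)$, being a countable intersection of equalities of Borel maps into Polish spaces, and it has full $\P$-measure for each $\xi$. The resulting map $\W\times H\to\sC_{t_0}$ is therefore Borel, because convergence in $d_{t_0}$ is determined by the restrictions to compact intervals, and it is adapted. It solves \eqref{eq: SPDE} from $t_0$, and for each fixed $\xi$ it agrees with the unique solution a.s. Since $W$ is two-sided with $W_0=0$, the increments $W_\cdot-W_{t_0}$ on $[t_0,\infty)$ are a Borel function of $\theta\in\W$, so composing is legitimate.

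Finally, uniqueness: any $\mathcal F_t$-adapted solution with initial datum $\xi$ coincides a.s. with $\mathcal G^\epsilon_{t_0}(W,\xi)$ on every $[t_0,N]$ by pathwise uniqueness, hence in $\sC_{t_0}$. The main obstacle is the measurability in $\xi$ jointly with $w$, together with the handling of null sets that depend on $\xi$. I would first try to settle this by invoking the Yamada--Watanabe theorem with random $\mathcal F_{t_0}$-measurable initial data, which yields a kernel-free Borel map on $H\times C$. Should that version be unavailable, the fallback is to use continuous dependence on the initial data, namely the $L^2$ stability estimate from [A2] localized by $\exp\{-\int\rho\}$. I would define the map first on a countable dense set of $\xi$ and extend it by limits in probability, choosing a Borel version via a.s.-convergent subsequences.
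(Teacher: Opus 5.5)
Your proposal takes essentially the same route as the paper: the paper simply combines finite-interval existence and pathwise uniqueness (from Liu--R\"ockner and Pan et al.) with the Yamada--Watanabe theorem, and your pasting over $[t_0,N]$ and your handling of joint measurability in $(w,\xi)$ supply details the paper leaves implicit. Two small remarks, neither a real gap. First, the Yamada--Watanabe theorem alone gives either only completion-measurability (R\"ockner--Schmuland--Zhang) or a Borel map tied to a fixed law of the initial datum (Kurtz); so it is your fallback, the $\exp\{-\int\rho\}$-weighted stability estimate on a countable dense set of initial data, that actually yields a jointly Borel map valid for every $\xi$, a point the paper also glosses over. Second, the default value on the inconsistency set should be a path in $\sC_{t_0}$, such as the zero path, since the constant path $\xi$ is not in $L^2([t_0,N];V)$ when $\xi\notin V$.
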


\subsection{Exponential estimates} \label{subsec: exponential estimate}

In this subsection, we will show two exponential estimates of $X^{\epsilon}_{t;t_0,\xi}$ uniformly in time that play a fundamental role in the proof of the existence and uniqueness of the stationary solution to Eq. \eqref{eq: SPDE}. The following result concerns the exponential estimates related to $\|\cdot\|^{2+\beta}_H$. It should be pointed out that this result also holds for $\beta=0$.
\begin{proposition} \label{prop: basic energy estimate beta}
Assume that $A^{\epsilon}$ and $B$ satisfy the conditions {\rm [A1- A5]}. For any $t_0 \in \R$, $T>t_0$ and $\xi \in H$, there is the unique solution $X^{\epsilon}_{t;t_0,\xi}$ of Eq. \eqref{eq: SPDE}, and it holds:
    \begin{description}
      \item[(i)] For any $0<\gamma \leq \frac{(1+\beta)\lambda_1 \gamma_0}{2} $, we have
    \begin{align}
        & \E \sup_{t \in [t_0,T]} \Big\{  e^{\gamma (t-t_0)} \| X^{\epsilon}_{t;t_0,\xi} \|_{H}^{2+\beta} + \gamma_0  \int_{t_0}^{t}  e^{\gamma (s-t_0)} \|X^{\epsilon}_{s;t_0,\xi}\|_{H}^{\beta} \|X^{\epsilon}_{s;t_0,\xi}\|_{V}^{2} \, \d s \Big\}  \nonumber \\
        & \quad \leq 2 e^{-\gamma(T-t_0)} \| \xi \|_{H}^{2+\beta} + \frac{(2+\beta) (C_{A,\rho,\epsilon} + (18+10\beta )  C_{B} \epsilon )}{\gamma}, \quad \quad \forall \, T \geq t_0.  \label{est: uniform bounded energy beta}
    \end{align}
      \item[(ii)]For any $0<\delta \leq \frac{ (1+\beta)\lambda_1 \gamma_{0}}{2(2+\beta)^2 C_{B} \epsilon}$, $X^{\epsilon}_{t;t_0,\xi}$ satisfies
    \begin{align}
        \E \Big[ \exp \Big\{  \delta \sup_{t \geq t_0} & \Big( \| X^{\epsilon}_{t;t_0,\xi}\|_{H}^{2+\beta} + \frac{\gamma_0}{2} \int_{t_0}^t \| X^{\epsilon}_{s;t_0,\xi} \|_{H}^{\beta} \| X^{\epsilon}_{s;t_0,\xi} \|_{V}^{2} \, \d s  \nonumber \\
         & \quad -  (1+\frac{\beta}{2}) \big( C_{A,\rho,\epsilon}+ \beta C_B \epsilon \big) (t-t_0) \Big) \Big\} \Big] \leq 2 e^{\delta \| \xi \|_{H}^{2+\beta}  }. \label{est: exp energy beta}
    \end{align}
      \item[(iii)] For any $t_1 \geq t_0$, it satisfies the following uniform estimates
    \begin{align}
        \E  \exp \Big\{ \delta \big\{ \| X^{\epsilon}_{t_1;t_0,\xi}\|_{H}^{2+\beta} \big\} \Big\}  & \leq 2 \,  e^{\delta \big(\| \xi \|_{H}^{2+\beta} + (2+\beta) \frac{ C_{A,\rho,\epsilon}+ \beta C_B \epsilon   }{\lambda_1 \gamma_0} \big)} ; \label{est: exp H beta} \\
        \E \Big[ \exp \Big\{  \frac{\delta}{2} \sup_{t \geq t_1}  \big( \frac{\gamma_0}{2} \int_{t_1}^t \| X^{\epsilon}_{s;t_0,\xi} \|_{H}^{\beta} \| X^{\epsilon}_{s;t_0,\xi} \|_{V}^{2} \, \d s & - (1+\frac{\beta}{2}) \big( C_{A,\rho,\epsilon}+ \beta C_B \epsilon \big) (t-t_1) \big) \Big\} \Big]  \nonumber \\
        &  \leq 2 \, e^{\delta  \big( \frac{\| \xi \|_{H}^{2+\beta}}{2} + \frac{2+\beta}{2} \frac{ C_{A,\rho,\epsilon} + \beta C_{B} \epsilon  }{\lambda_1 \gamma_0} \big)}. \label{est: exp V beta}
    \end{align}
    \end{description}
    The above results imply that $X^{\epsilon}_{t;t_0,\xi} \in L^{2}(\Omega; \sC_{t_0})$.
\end{proposition}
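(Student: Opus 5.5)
Existence and uniqueness on $[t_0,T]$ follow from the cited well-posedness results under [A1--A3], [A5]. The estimates all come from one computation. Apply Itô's formula to $F(X)=\|X\|_H^{2+\beta}=(\|X\|_H^2)^{1+\beta/2}$, using \eqref{eq: Ito formula} for $\|X\|_H^2$ and the chain rule for $x\mapsto x^{1+\beta/2}$. This gives
\begin{align*}
\d \|X_t\|_H^{2+\beta} = {}& (1+\tfrac{\beta}{2})\|X_t\|_H^{\beta}\big(2\scalV{A^{\epsilon}(X_t),X_t}+\epsilon\|B(X_t)\|_2^2\big)\d t \\
&+ \epsilon\,\tfrac{\beta}{2}(1+\tfrac{\beta}{2})\cdot 2\,\|X_t\|_H^{\beta-2}\sum_k|\scalH{X_t,B(X_t)\mathcal{U}_k}|^2\,\d t + \d M_t,
\end{align*}
where $\d M_t=(2+\beta)\sqrt{\epsilon}\,\|X_t\|_H^{\beta}\scalH{X_t,B(X_t)\d W_t}$. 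For $\beta<2$ the Itô formula near $0$ needs a regularization such as $(\|X\|_H^2+\eta)^{1+\beta/2}$ followed by $\eta\to0$; this is routine.

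By [A5], the first drift term is at most $(1+\frac{\beta}{2})\big(C_{A,\rho,\epsilon}-\gamma_0\|X\|_H^\beta\|X\|_V^2\big)$. By the first inequality in [A4], the Itô-correction term is at most $\beta(1+\frac{\beta}{2})C_B\epsilon$. Hence
\begin{align*}
\d\|X_t\|_H^{2+\beta} + (1+\tfrac{\beta}{2})\gamma_0\|X_t\|_H^\beta\|X_t\|_V^2\,\d t \le (1+\tfrac{\beta}{2})\big(C_{A,\rho,\epsilon}+\beta C_B\epsilon\big)\d t+\d M_t .
\end{align*}
The quadratic variation satisfies $\d\langle M\rangle_t\le(2+\beta)^2\epsilon\,\|X_t\|_H^{2\beta}\sum_k|\scalH{X_t,B(X_t)\mathcal{U}_k}|^2\d t\le(2+\beta)^2 C_B\epsilon\,\|X_t\|_H^{2+\beta}\d t$, using [A4] again (the $\max\{1,\|v\|_H^\beta\}$ factor absorbs $\|X\|_H^{\beta}$). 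By \eqref{eq: embedding}, $\|X\|_H^{2+\beta}\le\lambda_1^{-1}\|X\|_H^\beta\|X\|_V^2$.

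For (i), multiply by $e^{\gamma(t-t_0)}$. The extra term $\gamma e^{\gamma(t-t_0)}\|X\|_H^{2+\beta}$ is absorbed by half of the dissipation, since $\gamma\le\frac{(1+\beta)\lambda_1\gamma_0}{2}$ and $\lambda_1\|X\|_H^{2+\beta}\le\|X\|_H^\beta\|X\|_V^2$. After integrating, the deterministic drift contributes $(2+\beta)C/\gamma\cdot e^{\gamma(t-t_0)}$; this bound is of the stated form after normalizing by $e^{\gamma(T-t_0)}$, which is how I read the weighting in \eqref{est: uniform bounded energy beta}. The supremum of the stochastic integral is handled by BDG. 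The bound $\langle M\rangle\lesssim\epsilon C_B\int e^{2\gamma s}\|X\|_H^{2+\beta}\cdot\|X\|_H^{2+\beta}$ gives, by Young's inequality, $\frac12\E\sup e^{\gamma t}\|X\|^{2+\beta}$ plus a term $\lesssim C_B\epsilon\,\E\int e^{\gamma s}\|X\|_H^{2+\beta}$. The latter is controlled by the dissipation integral and produces the constant $(18+10\beta)C_B\epsilon$. Absorbing the first term yields the factor $2$. A localization by stopping times makes these manipulations legitimate.

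For (ii), write the inequality above as $Z_t\le\|\xi\|_H^{2+\beta}+M_t-\frac{\gamma_0}{2}(1+\frac{\beta}{2})\int_{t_0}^t\|X\|_H^\beta\|X\|_V^2$. Here $Z_t$ is the quantity inside the exponent of \eqref{est: exp energy beta}, and I keep only $\frac{\gamma_0}{2}$ of the dissipation there and use the rest below. Following \cite[Lemma 3.5]{Odasso_2008_Exponential}, the remaining dissipation dominates $\frac{\delta}{2}\langle M\rangle_t$ precisely when $\frac{\delta}{2}(2+\beta)^2C_B\epsilon\le\frac{(1+\beta)\lambda_1\gamma_0}{4}$ or similar, which is the stated range of $\delta$. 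Then $\delta Z_t\le\delta\|\xi\|^{2+\beta}+\big(\delta M_t-\frac{\delta^2}{2}\langle M\rangle_t\big)$. The exponential supermartingale inequality $\P(\sup_t(\delta M_t-\frac{\delta^2}{2}\langle M\rangle_t)\ge r)\le e^{-r}$ integrates to $\E e^{\delta\sup Z}\le 2e^{\delta\|\xi\|^{2+\beta}}$; the factor $2$ comes from $\int_0^\infty e^{r}\cdot e^{-r}$-type bounds, after adjusting $\delta$ by a factor if needed.

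For (iii), apply the same Itô computation to $e^{\mu(t-t_0)}\|X_t\|_H^{2+\beta}$ with $\mu=\lambda_1\gamma_0/2$-ish. The drift constant then integrates to at most $(2+\beta)(C_{A,\rho,\epsilon}+\beta C_B\epsilon)/(\lambda_1\gamma_0)$ uniformly in $t_1$. The weighted martingale has quadratic variation controlled by dissipation as before. Taking the supermartingale at the fixed time $t_1$, together with the fact that $e^{-\mu(t_1-t_0)}\le1$ reduces $\|\xi\|^{2+\beta}$, gives \eqref{est: exp H beta}. Then \eqref{est: exp V beta} follows by applying (ii) on $[t_1,\infty)$ with initial datum $X_{t_1}$ via the Markov/flow property, dropping the nonnegative $\|X_t\|^{2+\beta}$ term, and using Cauchy–Schwarz: $\E e^{\frac{\delta}{2}Y}\le(\E e^{\delta Y\,|\,\cF_{t_1}})^{1/2}$ combined with $\E[2e^{\delta\|X_{t_1}\|^{2+\beta}}]^{1/2}$ and \eqref{est: exp H beta}. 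This explains the halved constants.

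The membership $X\in L^2(\Omega;\sC_{t_0})$ follows from (i) with $\beta$ replaced by $0$, or directly from the bounds on $\|X\|_H^2$ and $\int\|X\|_V^2$.

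The main obstacle is the careful constant bookkeeping in (ii) and (iii). It must ensure simultaneously that the $\beta$-Itô correction, the quadratic variation of the weighted martingale, and the exponential weight are all absorbed by the single dissipation term $\gamma_0\|X\|_H^\beta\|X\|_V^2$ through \eqref{eq: embedding}. This is exactly where [A4] with the factor $\max\{1,\|v\|_H^\beta\}$ is needed.
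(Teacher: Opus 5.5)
Your route is the paper's: It\^o's formula for $\|X\|_H^{2+\beta}$ with [A5] and the first bound in [A4] for the drift and the It\^o correction, BDG for (i), an Odasso-type exponential supermartingale for (ii), and a weighted version at a fixed time $t_1$ plus the flow property and Cauchy--Schwarz for (iii). The problems sit in three places: two computations that fail as written, and one step that skips the only new point.

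In (ii), the last step fails as written. The tail bound $\P\big(\sup_t(\delta M_t-\tfrac{\delta^2}{2}\langle M\rangle_t)\ge r\big)\le e^{-r}$ cannot be integrated against $e^{r}$, since $\int_0^\infty e^{r}e^{-r}\,\d r=\infty$. So the ``factor adjustment'' you defer is the actual content of the step, and it must be done without losing the range. The clean way, as in the paper, is to absorb the full $\delta\langle M\rangle_t$, not $\tfrac{\delta}{2}\langle M\rangle_t$, into the leftover dissipation $\tfrac{(1+\beta)\gamma_0}{2}\int_{t_0}^t\|X_s\|_H^\beta\|X_s\|_V^2\,\d s\ge\tfrac{(1+\beta)\lambda_1\gamma_0}{2}\int_{t_0}^t\|X_s\|_H^{2+\beta}\,\d s$. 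Since $\delta\langle M\rangle_t\le\delta(2+\beta)^2C_B\epsilon\int_{t_0}^t\|X_s\|_H^{2+\beta}\,\d s$, this chain closes precisely when $\delta(2+\beta)^2C_B\epsilon\le\tfrac{(1+\beta)\lambda_1\gamma_0}{2}$. That is the stated range, and it is also the inequality you actually displayed. It gives $Z_t\le\|\xi\|_H^{2+\beta}+M_t-\delta\langle M\rangle_t$. Because $\exp\{2\delta M-2\delta^2\langle M\rangle\}$ is a supermartingale, $\P\big(\sup_t(M_t-\delta\langle M\rangle_t)\ge\rho\big)\le e^{-2\delta\rho}$. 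Hence $\E e^{\delta\sup_t(M_t-\delta\langle M\rangle_t)}\le1+\delta\int_0^\infty e^{-\delta\rho}\,\d\rho=2$, with no change of $\delta$.

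In (i), the weighted form of your own bound is $\d\langle M_\gamma\rangle_s\le(2+\beta)^2C_B\epsilon\,e^{2\gamma(s-t_0)}\|X_s\|_H^{2+\beta}\,\d s$. It has one factor $\|X_s\|_H^{2+\beta}$, not two. The right split is $e^{2\gamma(s-t_0)}\|X_s\|_H^{2+\beta}\le\sup_r\big(e^{\gamma(r-t_0)}\|X_r\|_H^{2+\beta}\big)\,e^{\gamma(s-t_0)}$. BDG and Young then give $\tfrac12\E\sup_r e^{\gamma(r-t_0)}\|X_r\|_H^{2+\beta}+\tfrac{9(2+\beta)^2}{2}C_B\epsilon\int_{t_0}^Te^{\gamma(s-t_0)}\,\d s$. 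So the noise enters as a deterministic constant. Doubled and added to the It\^o correction, it accounts for the $(2+\beta)(18+10\beta)C_B\epsilon/\gamma$ in (i). Absorbing a term $C_B\epsilon\int_{t_0}^t e^{\gamma(s-t_0)}\|X_s\|_H^{2+\beta}\,\d s$ into the dissipation, as you describe, would need a smallness condition on $\epsilon$ that (i) does not assume, and would not produce that constant.

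In (iii), ``quadratic variation controlled by dissipation as before'' hides the only new point. The increment $\d\langle M_\mu\rangle$ carries the weight $e^{2\mu(s-t_0)}$, while the dissipation carries only $e^{\mu(s-t_0)}$. So no fixed multiple of $\langle M_\mu\rangle$ is absorbed uniformly in time. The paper restricts to $[t_0,t_1]$ and uses $e^{2\mu(s-t_0)}\le e^{\mu(t_1-t_0)}e^{\mu(s-t_0)}$. It then takes the exponent $\delta'=\delta e^{-\mu(t_1-t_0)}$, which is exactly the factor that appears when the inequality at time $t_1$ is multiplied by $\delta e^{-\mu(t_1-t_0)}$. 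With $\mu=\lambda_1\gamma_0/2$ this again gives the stated range of $\delta$. With these repairs your argument coincides with the paper's.
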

\begin{proof}
Firstly, we proceed with the proof of (i).
    By It\^o formula, coercivity condition {\rm [A5]} and condition {\rm [A4]}, we obtain
    \begin{align}
        & e^{\gamma (t-t_0)} \| X^{\epsilon}_{t;t_0,\xi} \|_{H}^{2+\beta} - \| \xi \|_{H}^{2+\beta} - (2+ \beta) \sqrt{\epsilon} \int_{t_0}^{t} e^{\gamma (s-t_0)}  \|X^{\epsilon}_{s;t_0,\xi}\|_{H}^{\beta} \scalH{X^{\epsilon}_{s;t_0,\xi}, B (X^{\epsilon}_{s;t_0,\xi}) \, \d W_s} \nonumber \\
        &  = \gamma  \int_{t_0}^{t} e^{\gamma (s-t_0)} \|X^{\epsilon}_{s;t_0,\xi}\|_{H}^{2+\beta} \, \d s +  \beta (1+ \frac{\beta}{2}) \epsilon  \int_{t_0}^{t} e^{ \gamma (s-t_0)}  \|X^{\epsilon}_{s;t_0,\xi}\|_{H}^{\beta-2} \sum_{k} \big| \scalH{X^{\epsilon}_{s;t_0,\xi},B(X^{\epsilon}_{s;t_0,\xi})  \mathcal{U}_{k}}  \big|^2 \, \d s  \nonumber \\
        &  \quad + (1+ \frac{\beta}{2})  \int_{t_0}^{t} e^{\gamma (s-t_0)}  \|X^{\epsilon}_{s;t_0,\xi}\|_{H}^{\beta} \big( 2 \, \scalV{A^{\epsilon}(X^{\epsilon}_{s;t_0,\xi}),X^{\epsilon}_{s;t_0,\xi}} + \epsilon \| B(X^{\epsilon}_{s;t_0,\xi}) \|_{2}^2 \big) \, \d s \nonumber \\
        & \leq  \int_{t_0}^{t} e^{\gamma (s-t_0)} \|X^{\epsilon}_{s;t_0,\xi}\|_{H}^{\beta} \big( \gamma \|X^{\epsilon}_{s;t_0,\xi}\|_{H}^{2} - (1+ \frac{\beta}{2}) \gamma_0 \|X^{\epsilon}_{s;t_0,\xi}\|_{V}^{2} \big) \, \d s \nonumber \\
        & \quad  + (1+\frac{\beta}{2}) \big( C_{A,\rho,\epsilon}+ \beta C_B \epsilon \big) \int_{t_0}^{t} e^{\gamma (s-t_0)} \, \d s \label{eq: energy estimate beta} \\
        & \leq -\frac{\gamma_0}{2} \int_{t_0}^{t} e^{\gamma (s-t_0)} \|X^{\epsilon}_{s;t_0,\xi}\|_{H}^{\beta}  \|X^{\epsilon}_{s;t_0,\xi}\|_{V}^{2} \, \d s + (1+\frac{\beta}{2}) (C_{A,\rho,\epsilon}+ \beta C_B \epsilon ) \min \Big\{ \frac{e^{\gamma(t-t_0)}-1}{\gamma}, (t-t_0) e^{\gamma(t-t_0)} \Big\} \nonumber ,
    \end{align}
    where $0\leq \gamma \leq \frac{(1+\beta)\lambda_1 \gamma_0}{2} $ and in the last step used the embedding inequality \eqref{eq: embedding}.

    Using Burkh\"older--Davis--Gundy inequality and condition {\rm [A4]}, we have
    \begin{align*}
        & (2+ \beta)  \sqrt{\epsilon} \, \E \sup_{t \in [t_0,T]} \Big| \int_{t_0}^{t} e^{\gamma (s-t_0)} \|X^{\epsilon}_{s;t_0,\xi}\|_{H}^{\beta} \scalH{X^{\epsilon}_{s;t_0,\xi}, B(X^{\epsilon}_{s;t_0,\xi}) \, \d W_{s}} \Big| \\
        & \leq 3 (2+ \beta) \sqrt{\epsilon} \,  \E \Big| \int_{t_0}^{T} e^{2 \gamma (s-t_0)} \|X^{\epsilon}_{s;t_0,\xi}\|_{H}^{2 \beta} \sum_{k}  \big| \scalH{X^{\epsilon}_{s;t_0,\xi},B(X^{\epsilon}_{s;t_0,\xi}) \, \mathcal{U}_{k}}  \big|^2 \, \d s \Big|^{1/2} \\
        &  \leq \frac{1}{2} \E \sup_{s\in [t_0,t]} e^{\gamma (s-t_0)}\| X^{\epsilon}_{s;t_0,\xi} \|_{H}^{2+\beta}  + \frac{9 (2+ \beta)^2  C_B \, \epsilon}{2} \,  \int_{t_0}^{t} e^{\gamma (s-t_0)}  \d s.
    \end{align*}
Combining the above estimates, we obtain
     \begin{align*}
        & \E \sup_{t \in [t_0,T]} \Big\{  e^{\gamma (t-t_0)} \| X^{\epsilon}_{t;t_0,\xi} \|_{H}^{2+\beta} + \gamma_0  \int_{t_0}^{t}  e^{\gamma (s-t_0)} \|X^{\epsilon}_{s;t_0,\xi}\|_{H}^{\beta} \|X^{\epsilon}_{s;t_0,\xi}\|_{V}^{2} \, \d s \Big\} \\
       & \leq 2 \| \xi \|_{H}^2 + \frac{(2+\beta) (C_{A,\rho,\epsilon} + (18+10\beta )  C_{B} \epsilon )}{\gamma} \big(e^{\gamma(T-t_0)} -1 \big).
    \end{align*}
    Dividing both side by $e^{\gamma (T-t_0)}$, we obtain energy estimate \eqref{est: uniform bounded energy beta}.

Now, we are on a position to prove (ii). Denote by
    \begin{equation*}
    E_{X^{\epsilon}_{t;t_0,\xi};\beta} := \| X^{\epsilon}_{t;t_0,\xi}\|_{H}^{2+\beta} + \frac{\gamma_0}{2} \int_{t_0}^t \| X^{\epsilon}_{s;t_0,\xi} \|_{H}^{\beta} \| X^{\epsilon}_{s;t_0,\xi} \|_{V}^{2} \, \d s, \quad \forall \, t \geq t_0.
    \end{equation*}
    Define
    \begin{equation*}
        M (t) := (2+\beta) \sqrt{\epsilon} \int_{t_0}^{t} \|X^{\epsilon}_{s;t_0,\xi}\|_{H}^{\beta} \scalH{X^{\epsilon}_{s;t_0,\xi}, B(X^{\epsilon}_{s;t_0,\xi}) \, \d W_s}.
    \end{equation*}
  Let $\<M\>_{t}$ be the quadratic variation of local martingale $M(t)$, for any $\delta>0$, define
    \begin{equation*}
    \mathcal{M}_{\delta}(t):= M(t) - \frac{\delta}{2} \<M\>_{t},
    \end{equation*}
    where $\delta$ will be determined latter. Using conditions {\rm [A4]}, we have
    \begin{align*}
        \d \< M \>_t & \leq (2+\beta)^2 \epsilon \, \|X^{\epsilon}_{s;t_0,\xi}\|_{H}^{2 \beta} \| \scalH{X^{\epsilon}_{t;t_0,\xi},B(X^{\epsilon}_{t;t_0,\xi})} \|_{L_2(U;\R)}^2 \, \d t \\
        & \leq (2+\beta)^2  C_{B} \,\epsilon \, \| X^{\epsilon}_{t;t_0,\xi} \|_{H}^{2+\beta} \, \d t \leq \frac{(2+\beta)^2  C_{B} \epsilon }{\lambda_1} \, \| X^{\epsilon}_{t;t_0,\xi} \|_{H}^{\beta} \| X^{\epsilon}_{t;t_0,\xi} \|_{V}^2  \, \d t.
    \end{align*}
    Notice that $\exp \{\delta \mathcal{M}_{\delta}(t)  \}$ is a positive supermartingale whose value is $1$ at time $t_0$. We deduce from maximal supermartingale inequality that
    \begin{equation} \label{est: martingale beta}
        \P \Big( \sup_{t \geq t_0 } \mathcal{M}_{2 \delta}(t) \geq \rho \Big) = \P \Big( \sup_{t \geq t_0 } e^{ 2 \delta \mathcal{M}_{2 \delta}(t)} \geq e^{2 \delta \rho} \Big) \leq e^{-2 \delta \rho},
    \end{equation}
    for any $\rho \geq 0$. Thus, we have
    \begin{equation*}
        \E \Big[   e^{ \delta \sup_{t \geq t_0 } \mathcal{M}_{2 \delta}(t)} \Big] = 1 + \delta \int_{0}^{\infty} e^{\delta \rho} \, \P \Big( \sup_{t \geq t_0 } \mathcal{M}_{2 \delta}(t) \geq \rho \Big) \, \d \rho \leq 2.
    \end{equation*}
    Taking $0<\delta \leq \frac{ (1+\beta)\lambda_1 \gamma_{0}}{2(2+\beta)^2 C_{B} \epsilon}$ and $\gamma=0$ in estimate \eqref{eq: energy estimate beta}, we obtain
    \begin{equation*}
        \| X^{\epsilon}_{t;t_0,\xi}\|_{H}^{2+\beta} + \frac{\gamma_0}{2} \int_{t_0}^t \| X^{\epsilon}_{s;t_0,\xi} \|_{H}^{\beta} \| X^{\epsilon}_{s;t_0,\xi} \|_{V}^{2} \, \d s  \leq \| \xi \|_{H}^2 + \mathcal{M}_{2\delta}(t) + (1+\frac{\beta}{2}) \big( C_{A,\rho,\epsilon}+ \beta C_B \epsilon \big) \,  (t-t_0),
    \end{equation*}
    Due to $0<\delta \leq \frac{ (1+\beta)\lambda_1 \gamma_{0}}{2(2+\beta)^2 C_{B} \epsilon}$, the following inequality gives estimate \eqref{est: exp energy beta},
    \begin{align*}
       &  \E \Big[ \exp  \Big\{  \delta \sup_{t \geq t_0} \big(  \| X^{\epsilon}_{t;t_0,\xi}\|_{H}^{2+\beta} + \frac{\gamma_0}{2} \int_{t_0}^t \| X^{\epsilon}_{s;t_0,\xi} \|_{H}^{\beta} \| X^{\epsilon}_{s;t_0,\xi} \|_{V}^{2} \, \d s   -  (1+\frac{\beta}{2}) \big( C_{A,\rho,\epsilon}+ \beta C_B \epsilon \big)(t-t_0) \big) \Big\} \Big] \\
       & \leq \E \big[ e^{ \delta \sup_{t \geq t_0 } \mathcal{M}_{2 \delta}(t)} \big] \,  e^{\delta \| \xi \|_{H}^{2+\beta}  } \leq 2 e^{\delta \| \xi \|_{H}^{2+\beta}  }.
    \end{align*}

    Finally, we focus on the proof of (iii). For any $t_1 \geq t_0$, we will show the uniform estimate of $\int_{t_1}^{t} \| X^{\epsilon}_{s;t_0,\xi} \|_{H}^{\beta}  \| X^{\epsilon}_{s;t_0,\xi} \|_{V}^{2}  \, \d s$. Due to $X^{\epsilon}_{t;t_0,\xi}= X^{\epsilon}_{t;t_1,X^{\epsilon}_{t_1;t_0,\xi}}$, analogous to estimate \eqref{est: exp energy beta}, we have
    \begin{align}
        \E \Big[ \exp \Big\{  \delta \sup_{t \geq t_1} &  \big( \frac{\gamma_0}{2} \int_{t_1}^t \| X^{\epsilon}_{s;t_0,\xi} \|_{H}^{\beta} \| X^{\epsilon}_{s;t_0,\xi} \|_{V}^{2} \, \d s  \nonumber \\
       &  \quad - \| X^{\epsilon}_{t_1;t_0,\xi} \|_{H}^{2+\beta} - (1+\frac{\beta}{2}) \big( C_{A,\rho,\epsilon}+ \beta C_B \epsilon \big) \, (t-t_1) \big) \Big\} \Big] \leq  2, \label{est: exp energy t_1 beta}
    \end{align}
    for any $0<\delta \leq \frac{ (1+\beta)\lambda_1 \gamma_{0}}{2(2+\beta)^2 C_{B} \epsilon}$ and $t_1 \geq t_0$. To obtain estimate \eqref{est: exp V beta}, we only need to show the exponential estimate of $\| X^{\epsilon}_{t_1;t_0,\xi} \|_{H}^{2+\beta}$ like \eqref{est: exp H beta}. For any $0<\gamma \leq \frac{(1+\beta) \lambda_1 \gamma_0}{2}$, let
    \begin{equation*}
        M_{\gamma}(t):= (2+\beta) \sqrt{\epsilon} \int_{t_0}^{t} e^{\gamma (s-t_0)} \|X^{\epsilon}_{s;t_0,\xi}\|_{H}^{\beta} \scalH{X^{\epsilon}_{s;t_0,\xi}, B(X^{\epsilon}_{s;t_0,\xi}) \, \d W_s}
    \end{equation*}
    and $\mathcal{M}_{\delta^{\prime},\gamma}(t):= M_{\gamma}(t) -\frac{\delta^{\prime}}{2} \<M_{\gamma} \>_{t}$. Using conditions {\rm [A4]}, we have
    \begin{align*}
        \d \< M_{\gamma}\>_t & \leq (2+\beta)^2 \epsilon \, e^{2 \gamma (t-t_0)} \|X^{\epsilon}_{t;t_0,\xi}\|_{H}^{2\beta}  \| \scalH{X^{\epsilon}_{t;t_0,\xi},B(X^{\epsilon}_{t;t_0,\xi})} \|_{L_2(U;\R)}^2 \, \d t  \\
        &  \leq (2+\beta)^2 C_{B} \, \epsilon  \, e^{\gamma (t_1-t_0)} \,   e^{\gamma (t-t_0)}    \| X^{\epsilon}_{t;t_0,\xi} \|_{H}^{2+\beta}  \d t,
    \end{align*}
    for any $t \in [t_0,t_1]$. For any $0<\delta^{\prime} <\frac{ (2+\beta)\lambda_1 \gamma_0- 2\gamma}{2 (2+\beta)^2 C_{B} \epsilon} e^{-\gamma (t_1-t_0)} $, estimate \eqref{eq: energy estimate beta} yields
    \begin{equation*}
        e^{\gamma (t-t_0)}\| X^{\epsilon}_{t;t_0,\xi}\|_{H}^{2+\beta} \leq \| \xi \|_{H}^{2+\beta} + \mathcal{M}_{2\delta^{\prime},\gamma}(t) +  (1+\frac{\beta}{2}) \frac{C_{A,\rho,\epsilon}+ \beta C_B \epsilon }{\gamma} \big(e^{\gamma(t-t_0)} -1 \big),
    \end{equation*}
    for any $t \in [t_0,t_1]$. By maximal supermartingale inequality, we obtain
    \begin{align*}
        & \P \Big( \sup_{t \in [t_0,t_1] } \big\{ e^{\gamma (t-t_0)}\| X^{\epsilon}_{t;t_0,\xi}\|_{H}^2 - \| \xi \|_{H}^{2+\beta} - (1+\frac{\beta}{2}) \frac{C_{A,\rho,\epsilon}+ \beta C_B \epsilon }{\gamma}  (e^{\gamma (t-t_0)}-1 )  \big\} \geq \rho \Big)  \\
        &  \quad  \leq \P \Big( \sup_{t \geq t_0 } \mathcal{M}_{2\delta^{\prime},\gamma}(t) \geq \rho \Big) \leq e^{-2 \delta^{\prime} \rho},
      \end{align*}
    for any $\rho>0$. Let $\delta= e^{\gamma (t_1-t_0)} \, \delta^{\prime} < \frac{ (2+\beta)\lambda_1 \gamma_0-2\gamma}{2 (2+\beta)^2 C_{B} \epsilon}$, we obtain
    \begin{equation*}
        \E \exp \Big\{ \delta \sup_{t \in [t_0,t_1] } \big\{ e^{-\gamma (t_1-t)}\| X^{\epsilon}_{t;t_0,\xi}\|_{H}^{2+\beta} - e^{-\gamma (t_1-t_0)} \| \xi \|_{H}^{2+\beta} - (1+\frac{\beta}{2}) \frac{C_{A,\rho,\epsilon}+ \beta C_B \epsilon }{\gamma} \frac{e^{\gamma (t-t_0)}-1}{e^{\gamma (t_1-t_0)}} \big\} \Big\} \leq 2.
    \end{equation*}
    In order to unify the range of values for $\delta$, we take $\gamma=\frac{\lambda_1 \gamma_0}{2} $. For any $0<\delta < \frac{ (1+\beta)\lambda_1 \gamma_{0}}{2(2+\beta)^2 C_{B} \epsilon}$, the above inequality yields estimate \eqref{est: exp H beta},
       \begin{equation*}
        \E \exp \Big\{ \delta \big\{ \| X^{\epsilon}_{t_1;t_0,\xi}\|_{H}^{2+\beta} \big\} \Big\} \leq 2 \, e^{\delta \big(\| \xi \|_{H}^{2+\beta} + (2+\beta) \frac{ C_{A,\rho,\epsilon}+ \beta C_B \epsilon   }{\lambda_1 \gamma_0} \big)}.
    \end{equation*}
    Using H\"older inequality, \eqref{est: exp H beta} and \eqref{est: exp energy t_1 beta}, we obtain
    \begin{align*}
        & \E \Big[ \exp \Big\{  \frac{\delta}{2} \sup_{t \geq t_1} \big( \frac{\gamma_0}{2} \int_{t_1}^t \| X^{\epsilon}_{s;t_0,\xi} \|_{H}^{\beta} \| X^{\epsilon}_{s;t_0,\xi} \|_{V}^{2} \, \d s- (1+\frac{\beta}{2}) \big( C_{A,\rho,\epsilon}+ \beta C_B \epsilon \big) (t-t_1) \big) \Big\} \Big]  \\
        &  \leq  \Big( \E \Big[ \exp \Big\{  \delta \sup_{t \geq t_1} \big( \frac{\gamma_0}{2} \int_{t_1}^t \| X^{\epsilon}_{s;t_0,\xi} \|_{H}^{\beta} \| X^{\epsilon}_{s;t_0,\xi} \|_{V}^{2} \, \d s - \| X^{\epsilon}_{t_1;t_0,\xi} \|_{H}^{2+\beta} -(1+\frac{\beta}{2}) \big( C_{A,\rho,\epsilon}+ \beta C_B \epsilon \big) (t-t_1) \big) \Big\} \Big] \Big)^{1/2} \\
        & \quad \quad \times \Big(  \E \exp \Big\{ \delta \big\{ \| X^{\epsilon}_{t_1;t_0,\xi}\|_{H}^{2+\beta} \big\} \Big\} \Big)^{1/2}  \leq 2 \, e^{\delta  \big( \frac{\| \xi \|_{H}^{2+\beta}}{2} + \frac{2+\beta}{2} \frac{ C_{A,\rho,\epsilon} +  \beta C_{B} \epsilon  }{\lambda_1 \gamma_0} \big)}.
    \end{align*}
    Thus, we obtain the result \eqref{est: exp V beta}.
\end{proof}

As mentioned at the beginning of this section, the ``$\beta=0$'' version of Proposition \ref{prop: basic energy estimate beta} also holds. For convenience, we list it below.
\begin{proposition} \label{prop: basic energy estimate}
    Assume that $A^{\epsilon}$ and $B$ satisfy the conditions {\rm [A1-A5]}, for any $t_0 \in \R$, $T>t_0$ and $\xi \in H$, let $X^{\epsilon}_{t;t_0,\xi}$ be the unique solution of Eq. \eqref{eq: SPDE}, the following results hold.
    \begin{description}
      \item[(i)] For any $0 < \gamma \leq \frac{\lambda_1 \gamma_0 }{2}$ and $0 \leq \epsilon \leq \epsilon_0 < \frac{\lambda_1 \gamma_0-C_{\rho_2}}{2 C_B + \lambda_1 L_B}$, there is a uniform energy estimate
    \begin{align}
        & \E \sup_{t\in [t_0,T]} e^{-\gamma (T-t)} \Big\{ \| X^{\epsilon}_{t;t_0,\xi} \|_{H}^2 + \gamma_0  \int_{t_0}^{t}  e^{-\gamma (t-s)} \|X^{\epsilon}_{s;t_0,\xi}\|_{V}^{2} \, \d s \Big\} \nonumber \\
        & \quad \leq 2 e^{-\gamma(T-t_0)} \| \xi \|_{H}^2 + \frac{2 C_{A,\rho,\epsilon} + 36 \epsilon \, C_{B} }{\gamma} , \quad \quad \forall \, T \geq t_0.  \label{est: uniform bounded energy}
    \end{align}
      \item[(ii)] For any $0<\delta <  \frac{\lambda_1 \gamma_0 }{8 \epsilon \, C_{B}  }$, $X^{\epsilon}_{t;t_0,\xi}$ satisfies
    \begin{equation} \label{est: exp energy}
        \E \Big[ \exp \Big\{  \delta \sup_{t \geq t_0} \Big( \| X^{\epsilon}_{t;t_0,\xi}\|_{H}^2 + \frac{\gamma_0}{2} \int_{t_0}^t \| X^{\epsilon}_{s;t_0,\xi} \|_{V}^{2} \, \d s- C_{A,\rho,\epsilon}  (t-t_0) \Big) \Big\} \Big] \leq 2 e^{\delta \| \xi \|_{H}^2  }.
    \end{equation}
      \item[(iii)] For any $t_1 \geq t_0$, it satisfies the follows uniform estimates
    \begin{gather}
        \E \exp \Big\{ \delta \big\{ \| X^{\epsilon}_{t_1;t_0,\xi}\|_{H}^2 \big\} \Big\}  \leq 2 \, e^{\delta \big(\| \xi \|_{H}^2 + 2 \frac{ C_{A,\rho,\epsilon}  }{\lambda_1 \gamma_0} \big)} ; \label{est: exp H} \\
        \E \Big[ \exp \Big\{  \frac{\delta}{2} \sup_{t \geq t_1} \big( \frac{\gamma_0}{2} \int_{t_1}^t \| X^{\epsilon}_{s;t_0,\xi} \|_{V}^{2} \d s- C_{A,\rho,\epsilon}  (t-t_1) \big) \Big\} \Big]  \leq 2 e^{\delta  \big( \frac{\| \xi \|_{H}^2}{2} +\frac{ C_{A,\rho,\epsilon} }{\lambda_1 \gamma_0} \big)}. \label{est: exp V}
    \end{gather}
    \end{description}
 The above results imply $X^{\epsilon}_{t;t_0,\xi} \in L^{2}(\Omega; \sC_{t_0})$.
\end{proposition}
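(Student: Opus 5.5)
The plan is to rerun the proof of Proposition \ref{prop: basic energy estimate beta} with $\beta=0$ throughout. The changes are confined to the constants, and the It\^o correction term involving $\sum_k|\scalH{X,B(X)\mathcal{U}_k}|^2\|X\|_H^{\beta-2}$ drops out. The starting point is the It\^o formula \eqref{eq: Ito formula} applied to $e^{\gamma(t-t_0)}\|X^{\epsilon}_{t;t_0,\xi}\|_H^2$. Coercivity [A5] with $\beta=0$ bounds $2\scalV{A^\epsilon(X),X}+\epsilon\|B(X)\|_2^2$ by $C_{A,\rho,\epsilon}-\gamma_0\|X\|_V^2$. The embedding \eqref{eq: embedding} then absorbs $\gamma\|X\|_H^2\le \frac{\gamma_0}{2}\|X\|_V^2$ whenever $\gamma\le \lambda_1\gamma_0/2$. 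This yields the pathwise analogue of \eqref{eq: energy estimate beta}:
\begin{equation*}
e^{\gamma(t-t_0)}\|X_t\|_H^2+\frac{\gamma_0}{2}\int_{t_0}^t e^{\gamma(s-t_0)}\|X_s\|_V^2\,\d s\le \|\xi\|_H^2+C_{A,\rho,\epsilon}\frac{e^{\gamma(t-t_0)}-1}{\gamma}+M_\gamma(t),
\end{equation*}
where $M_\gamma(t)=2\sqrt{\epsilon}\int_{t_0}^t e^{\gamma(s-t_0)}\scalH{X_s,B(X_s)\d W_s}$. The constraint on $\epsilon_0$ is only needed so that Lemma \ref{lem-1} supplies [A5] with a uniform constant.

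For (i), I would take the supremum over $[t_0,T]$ and control $M_\gamma$ by BDG together with the first inequality in [A4]. With $\beta=0$ this gives $\d\<M_\gamma\>_t\le 4C_B\epsilon\, e^{2\gamma(t-t_0)}\|X_t\|_H^2\,\d t$. Young's inequality then splits the BDG bound into $\frac12\E\sup e^{\gamma(s-t_0)}\|X_s\|_H^2$ plus $18C_B\epsilon\int_{t_0}^T e^{\gamma(s-t_0)}\d s$. After absorbing the first piece, I multiply by $e^{-\gamma(T-t_0)}$; this produces \eqref{est: uniform bounded energy} with constants $2C_{A,\rho,\epsilon}+36\epsilon C_B$ over $\gamma$. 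A localization by stopping times is needed so that the absorbed term is finite, and I would take that as routine.

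For (ii), I would set $\gamma=0$ and consider the exponential supermartingale $\exp\{\delta'\mathcal{M}_{\delta'}\}$, where $\mathcal{M}_{\delta'}=M-\frac{\delta'}{2}\<M\>$, with $\delta'=2\delta$. Since $\d\<M\>\le 4C_B\epsilon\|X\|_H^2\,\d t\le \frac{4C_B\epsilon}{\lambda_1}\|X\|_V^2\,\d t$, the quadratic-variation term $\delta\<M\>$ is dominated by $\frac{\gamma_0}{2}\int\|X\|_V^2$ as long as $\delta<\lambda_1\gamma_0/(8\epsilon C_B)$. That dominance lets us write $E(t)-C_{A,\rho,\epsilon}(t-t_0)\le\|\xi\|_H^2+\mathcal{M}_{2\delta}(t)$. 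The maximal supermartingale inequality gives $\P(\sup\mathcal{M}_{2\delta}\ge r)\le e^{-2\delta r}$, and integrating the tail yields $\E e^{\delta\sup\mathcal{M}_{2\delta}}\le 2$. Estimate \eqref{est: exp energy} follows.

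Part (iii) is the main obstacle, because $\gamma>0$ and the exponential weight is not bounded over the interval. I would follow the same strategy as before. On $[t_0,t_1]$, use $M_\gamma$ with $\delta'=\delta e^{-\gamma(t_1-t_0)}$, so that $\d\<M_\gamma\>\le 4C_B\epsilon e^{\gamma(t_1-t_0)}e^{\gamma(t-t_0)}\|X\|_H^2\,\d t$. Under the restriction on $\delta$, this is absorbed by the dissipation term once $\gamma=\lambda_1\gamma_0/2$. The supermartingale bound then gives \eqref{est: exp H}, in which $2C_{A,\rho,\epsilon}/(\lambda_1\gamma_0)=C_{A,\rho,\epsilon}/\gamma$. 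For \eqref{est: exp V}, I would use the flow property $X_{\cdot;t_0,\xi}=X_{\cdot;t_1,X_{t_1;t_0,\xi}}$ and apply (ii) from time $t_1$ conditionally on $\cF_{t_1}$. Cauchy--Schwarz then combines this with \eqref{est: exp H}, which halves $\delta$. Finally, (i) together with continuity in $H$ and the $L^2_{loc}(V)$ bound gives $X^\epsilon_{t;t_0,\xi}\in L^2(\Omega;\sC_{t_0})$.
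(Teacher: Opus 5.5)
Your proposal is correct and follows the paper's route. The paper gives no separate proof; it only notes that these estimates are the $\beta=0$ case of Proposition~\ref{prop: basic energy estimate beta}. Your steps reproduce that proof with $\beta=0$, with the right constants and the right range $\delta<\lambda_1\gamma_0/(8\epsilon C_B)$: It\^o's formula, {\rm [A5]} and BDG for (i); the supermartingale $\exp\{2\delta\mathcal{M}_{2\delta}\}$ for (ii); and for (iii) the weighted supermartingale on $[t_0,t_1]$ with $\delta'=\delta e^{-\gamma(t_1-t_0)}$ and $\gamma=\lambda_1\gamma_0/2$, followed by the flow property and Cauchy--Schwarz.

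One bookkeeping remark on (i). The absorption step as you describe it yields the inequality with $\frac{\gamma_0}{2}$, not $\gamma_0$, in front of the $V$-integral. Recovering $\gamma_0$ by doubling the left-hand side costs an extra factor $2$ on the right. This slip is harmless, and the paper's own derivation of \eqref{est: uniform bounded energy beta} contains the same one.
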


\section{Well-posedness of stationary solutions} \label{sec: stationary solution}

From Proposition \ref{Prop: measurable map truncated}, there exists a Borel measurable map $\mathcal{G}^{\epsilon}_{t_0}: \W \times H \rightarrow \sC_{t_0}$ such that $\mathcal{G}^{\epsilon}_{t_0}(W,\xi) (t,\omega)$ is the solution of Eq. \eqref{eq: SPDE} started from $t_0$ with initial data $\xi \in H$. Thus, for any fixed $t_0$, we can define the solution map $U_{t_0}^{\epsilon}$ from $\R_{+} \times H \times \Omega$ to $H$ as
\begin{equation}\label{U(t,theta)}
 U_{t_0}^{\epsilon}: (t,\xi,\omega) \mapsto \mathcal{G}^{\epsilon}_{t_0}(W,\xi)(t+t_0, \omega)=X^{\epsilon}_{t+t_0;t_0,\xi}(\omega) \in H.
\end{equation}

Next, we will introduce the concept of the crude cocycle, which is adapted slightly according to our needs, c.f. \cite{Arnold_Perfect_1995,Kager_Generation_1997,Mohammed_Stable_2008}.
The standard $\mathbb{P}$-preserving ergodic Wiener shift $\theta:\mathbb{R}\times \Omega\rightarrow \Omega$ is defined by $\theta(t,\omega(s)):=\omega(t+s)-\omega(s)$ for any $s,t\in\mathbb{R}$. Moreover, denote $\theta(t,\omega(0))$ as $\theta(t,\omega)$. Using the stationary increments of the Wiener process, it holds that for any $t_0 \in \R$, $\P$-a.s.
\begin{equation} \label{ident: trans}
 U_{t_0}^{\epsilon}\left(t,\xi, \omega\right)=X^{\epsilon}_{t+t_0;t_0,\xi}(\omega)=X^{\epsilon}_{t;0,\xi}(\theta(t_0,\omega))=U_0^{\epsilon}\left(t,\xi, \theta\left(t_0, \omega\right)\right), \quad \forall \, \xi \in H,  \, \forall \, {t \geq 0},
\end{equation}
see also in \cite[P111 (4.65)]{Liu_Stochastic_2015}. According to \cite[Definition 1.1.2]{Mohammed_Stable_2008}, a random dynamical system $(U^{\epsilon}_{t_0}, \theta)$ is called a crude cocycle if the following conditions are satisfied:
\begin{itemize}
 \item for any $s,t \geq 0$, $\mathbb{P}$-a.s., $U_{t_0}^{\epsilon} \left(t+s, \cdot, \omega\right)=U_{t_0}^{\epsilon} \left(t, U_{t_0}^{\epsilon} \left(s, \cdot, \omega\right), \theta\left(s, \omega\right)\right)$; 
 \item $U_{t_0}^{\epsilon} (0, \xi, \omega)=\xi$ for all $\xi \in H$, $\omega \in \Omega$.
\end{itemize}
Using the Markov property c.f. \cite[Section 4.3]{Liu_Stochastic_2015}, we have
\begin{align*}
  U_{t_0}^{\epsilon} \left(t+s, \cdot, \omega\right) & =X^{\epsilon}_{t+s+t_0;t_0,\xi}(\omega) \\
  & = X^{\epsilon}_{t+t_0;t_0,X^{\epsilon}_{s+t_0;t_0,\xi}(\omega)}(\theta\left(s, \omega\right))=U_{t_0}^{\epsilon} \left(t, U_{t_0}^{\epsilon} \left(s, \cdot, \omega\right), \theta\left(s, \omega\right)\right),
\end{align*}
for any $s,t \geq 0$ and $t_0 \in \R$. As a result, $(U_{t_0}^{\epsilon},\theta)$ is the crude cocycle defined as \cite[Defintion 1.1.2]{Mohammed_Stable_2008} for any $t_0 \in \R$. We call $(U^{\epsilon},\theta)$ is a crude cocycle, if the above equality \eqref{ident: trans} holds and for every $t_0 \in \R$ the random system $(U^{\epsilon}_{t_0},\theta)$ is a crude cocycle. This definition removes the dependence on the initial time, thereby enabling an extension to the entire real line.

Stationary solution is a fundamental concept in studying the long time behavior of stochastic dynamical systems (see \cite{Bakhtin_Existence_2003, Bakhtin_Stationary_2005, Ito_Stationary_1964, Mohammed_Stable_2008}). For crude cocycle $(U^{\epsilon},\theta)$, it is defined as follows.
\begin{definition} \label{def: stationary solution}
 An $\mathcal{F}_t$ progressively measurable process $\mathcal{Y}^{\epsilon}: \R \times \Omega \rightarrow H$ is said to be a stationary solution for the crude cocycle $(U^{\epsilon}, \theta)$ if for any fixed $s \in \R$, $\P$-a.s.
	\begin{equation} \label{identity: stationary-solutions}
		U^{\epsilon}_0(t, \mathcal{Y}(s,\omega), \theta(s, \omega))=\mathcal{Y}^{\epsilon}(t+s,\omega)=\mathcal{Y}^{\epsilon}(t,\theta(s, \omega)), \quad \forall t \, \geq 0.
\end{equation}
\end{definition}
\begin{remark} \label{remark: stationary solution}
  For any $s \in \R$, the equality \eqref{ident: trans} and the first equality in \eqref{identity: stationary-solutions} show
  \begin{equation*}
    U_{s}^{\epsilon}(t, \mathcal{Y}^{\epsilon}(s,\omega), \omega)=U_0^{\epsilon}(t, \mathcal{Y}^{\epsilon}(s,\omega), \theta(s, \omega))=\mathcal{Y}^{\epsilon}(t+s,\omega), \quad \forall \, t \geq 0,
  \end{equation*}
which means that $\mathcal{Y}^{\epsilon}(t,\omega)$ is the solution of Eq. \eqref{eq: SPDE} with initial data $\mathcal{Y}^{\epsilon}(s, \omega)$ and any starting time $s \in \R$. The second equality of \eqref{identity: stationary-solutions} shows that the value of stationary solution at different times differ only by a Wiener shift in the Wiener space.
\end{remark}

The following describes the well-posedness of the stationary solution of Eq. \eqref{eq: SPDE} for the case when $\gamma_0$ is sufficiently big. Set
\begin{align*}
  \tilde{\gamma}_0:=&\frac{C_{\rho_2}}{\lambda_1} \vee \big( \frac{(4+\beta) C_{\rho_1} C_{A,\rho}}{\lambda_1} \big)^{1/2},  \\
   \tilde{\epsilon}:=&\frac{ (1+\beta)\lambda_1 \gamma_0^2}{8 (2+\beta)^2  C_{\rho_1} C_{B}} \wedge \frac{\gamma_0}{(18 \gamma_0 +  \beta  (2+\beta)  C_{\rho_1} )C_B} \Big( 2 \lambda_1 \gamma_0-\frac{   (4+\beta) C_{\rho_1} C_{A,\rho}   }{\gamma_0} -C_{\rho_2}  \Big),
\end{align*}
where $C_{A, \rho}$ is given by condition {\rm [A5]}.

\begin{theorem} \label{thm: stationary solution}
Assume that the conditions {\rm [A1-A5]} hold. For any $\gamma_0 > \tilde{\gamma}_0$ and $0 \leq \epsilon< \tilde{\epsilon}$, there is a pathwise unique stationary solution $\cY^{\epsilon} \in  L^{\infty}(\R; L^{2}(\Omega; H))$ for cocycle $(U^{\epsilon},\theta)$. The stationary solution $\cY^{\epsilon}(t)$ satisfies
\begin{small}
  \begin{gather}
     \E \exp \Big\{ \delta \big\{ \| \cY^{\epsilon}(t) \|_{H}^2 \big\} \Big\}  \leq 2 \, e^{2 \delta \frac{C_{A,\rho,\epsilon}}{\lambda_1 \gamma_0}}; \label{est: exp H stationary} \\
     \E \Big[ \exp \Big\{  \frac{\delta}{2} \sup_{t \geq t_1} \big( \frac{\gamma_0}{2} \int_{t_1}^t \| \cY^{\epsilon}(s) \|_{V}^{2} \, \d s-C_{A,\rho,\epsilon}   \big)(t-t_1) \big) \Big\} \Big]  \leq    2 e^{\delta  \frac{ C_{A,\rho,\epsilon} }{\lambda_1 \gamma_0}}; \label{est: exp V stationary} \\
      \E \Big[ \exp \Big\{  \frac{\delta}{2} \sup_{t \geq t_1} \big( \frac{\gamma_0}{2} \int_{t_1}^t \| \cY^{\epsilon}(s) \|_{H}^{\beta} \| \cY^{\epsilon}(s) \|_{V}^{2} \d s-\big( C_{A, \rho}+ \beta C_B \epsilon \big)(t-t_1) \big) \Big\} \Big]  \leq   2  e^{\delta  \frac{2+\beta}{2} \big( \frac{ C_{A,\rho,\epsilon} + \beta C_{B} \epsilon  }{\lambda_1 \gamma_0} \big)}, \label{est: exp V stationary beta}
 \end{gather}
\end{small}
for any $t_0 \in \R$, where $\frac{ 4 C_{\rho_1}}{\gamma_0} \leq \delta<  \frac{ (1+\beta)\lambda_1 \gamma_{0}}{2(2+\beta)^2 C_{B} \epsilon} \leq \frac{ \lambda_1 \gamma_{0}}{8 C_{B} \epsilon}$. For any $-\infty<t_0<t<+\infty$, the stationary solution $\cY^{\epsilon} \in L^{2}(\Omega;\sC_{-\infty})$ $\P$-a.s. satisfies
\begin{equation} \label{eq: stationary solution}
    \scalV{\cY^{\epsilon}(t),v} = \scalV{\cY^{\epsilon}(t_0),v} + \int_{t_0}^{t} \scalV{A^{\epsilon}(\cY^{\epsilon}(s)),v} \d s + \sqrt{\epsilon} \int_{t_0}^{t} \scalH{B (\cY^{\epsilon}(s)) \, \d W_s,v},
\end{equation}
for any $v \in V$. It is called the ``very weak solution'' in \cite[Definition 3.4]{Brzezniak_Large_2017}.

Conversely, if a $\cF_{t}$-adapted process $\cY^{\epsilon}$ satisfies Eq. \eqref{eq: stationary solution} and $\sup_{t \in \R} \E \|\cY^{\epsilon}(t) \|_{H}^2< +\infty$, then it is indeed the stationary solution for cocycle $(U^{\epsilon},\theta)$. There exists a Borel measurable map $\mathcal{G}^{\epsilon}_{-\infty}: \W \rightarrow \sC_{-\infty}$ such that $\mathcal{G}^{\epsilon}_{-\infty}(W)(\cdot)$ is an $\mathcal{F}_{t}$-adapted solution to Eq. \eqref{eq: stationary solution} and $\sup_{t \in \R} \E \|\mathcal{G}^{\epsilon}_{-\infty}(W)(t) \|_{H}^2< +\infty$, called the strong stationary solution for cocycle $(U^{\epsilon},\theta)$.
\end{theorem}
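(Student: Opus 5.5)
The construction will use the pull-back approach: fix $\xi=0$ and show that $\{X^{\epsilon}_{\cdot;-n,0}\}_{n}$ converges in $L^{2/p_0}(\Omega;\sC_{-\infty})$ for some $p_0\ge 1$, via Proposition \ref{Prop: Cauchy sequence}.

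\textbf{Cauchy property.} For $m>n$ write $X^{\epsilon}_{t;-m,0}=X^{\epsilon}_{t;-n,\eta}$ with $\eta=X^{\epsilon}_{-n;-m,0}$. Set $Z_t=X^{\epsilon}_{t;-n,\eta}-X^{\epsilon}_{t;-n,0}$. Applying It\^o's formula to $\exp\{2\gamma_0\lambda_1 (t+n)-\int_{-n}^t\rho(X^{\epsilon}_{s;-n,0})\,\d s\}\|Z_t\|_H^2$ and using [A2] (with $v_1=X^{\epsilon}_{\cdot;-n,0}$) together with \eqref{eq: embedding}, the drift becomes nonpositive. This gives
\begin{equation*}
\E\Big[e^{-\int_{-n}^t \rho(X_s)\d s+2\lambda_1\gamma_0(t+n)}\|Z_t\|_H^2\Big]\le \E\|\eta\|_H^2 .
\end{equation*}
To invert the weight, we use H\"older with exponents $p_0$ and $p_0'$. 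The factor $\E\|Z_t\|_H^{2/p_0}$ is then controlled by the previous display together with $\E\exp\{\frac{p_0'}{p_0}\int_{-n}^t\rho\}\,e^{-2\lambda_1\gamma_0 (t+n)\cdot p_0'/p_0}$.

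The exponential moment of $\int\rho$ comes from \eqref{rrr-1} combined with Propositions \ref{prop: basic energy estimate beta}(ii) and \ref{prop: basic energy estimate}(ii). These give
\begin{equation*}
\E\exp\Big\{\delta\tfrac{\gamma_0}{2}\int_{-n}^t(1+\|X\|_H^\beta)\|X\|_V^2\Big\}\lesssim \exp\Big\{\delta\big((1+\tfrac\beta2)C_{A,\rho}+\dots\big)(t+n)\Big\}.
\end{equation*}
Choosing $\delta\approx 4C_{\rho_1}/\gamma_0$ turns $C_{\rho_1}\int(\cdots)$ into something this bound controls, producing a growth rate of about $(4+\beta)C_{\rho_1}C_{A,\rho}/\gamma_0+C_{\rho_2}$. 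This rate is strictly below $2\lambda_1\gamma_0$ precisely because $\gamma_0>\tilde\gamma_0$ and $\epsilon<\tilde\epsilon$; the second part of $\tilde\epsilon$ absorbs the $\epsilon C_B$ corrections. The restriction $\delta<\frac{(1+\beta)\lambda_1\gamma_0}{2(2+\beta)^2C_B\epsilon}$ is compatible with $\delta\ge 4C_{\rho_1}/\gamma_0$ exactly by the first part of $\tilde\epsilon$.

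Since $\E\|\eta\|_H^2$ is bounded uniformly by \eqref{est: uniform bounded energy}, we obtain $\E\sup_{t\in[-N,N]}\|Z_t\|_H^{2/p_0}\lesssim e^{-c n}$. The same is needed for $\int_{-N}^N\|Z\|_V^2$, which we get by keeping the $\gamma_0\|Z\|_V^2$ term in the It\^o expansion and using BDG for the martingale part, whose quadratic variation is controlled by the second inequality in [A4]. This gives the Cauchy property in each $L^{2/p_0}(\Omega;\sC_{-N})$, hence a limit $\cY^\epsilon$. I expect the main obstacle to be this balancing of the exponential weight against the dissipation, i.e.\ matching constants so that the net exponent is negative.

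\textbf{Properties of the limit.} The bounds \eqref{est: exp H stationary}--\eqref{est: exp V stationary beta} follow from Propositions \ref{prop: basic energy estimate beta}(iii) and \ref{prop: basic energy estimate}(iii) with $\xi=0$ by Fatou's lemma. To get the very weak formulation \eqref{eq: stationary solution}, we pass to the limit in the weak form \eqref{eq: weak formula} for $X^\epsilon_{\cdot;-n,0}$ on $[t_0,t]$. The drift is handled by local monotonicity/hemicontinuity (Minty trick) or directly by strong $L^2V$ convergence plus the growth bound [A3]; the stochastic integral is handled by the It\^o isometry and [A3]. Stationarity in the sense of Definition \ref{def: stationary solution} follows from \eqref{ident: trans}: $X^\epsilon_{t+s;-n,0}(\omega)=X^\epsilon_{t;-n-s,0}(\theta(s,\omega))$, and letting $n\to\infty$ gives the second identity. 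The first identity follows from the cocycle property together with continuous dependence on the initial datum.

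\textbf{Uniqueness, converse and measurable map.} If $\tilde\cY$ is adapted, solves \eqref{eq: stationary solution} and has $\sup_t\E\|\tilde\cY(t)\|_H^2<\infty$, then $\tilde\cY(t)=X^\epsilon_{t;-n,\tilde\cY(-n)}$ by finite-interval pathwise uniqueness. Comparing with $X^\epsilon_{t;-n,0}$ by the same weighted estimate, with $\rho$ evaluated along $X^\epsilon_{\cdot;-n,0}$ so that only its exponential moments are needed, gives $\E\|\tilde\cY(t)-X^\epsilon_{t;-n,0}\|_H^{2/p_0}\to0$. Hence $\tilde\cY=\cY^\epsilon$, which yields both pathwise uniqueness and the converse statement. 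Finally, pathwise uniqueness in this class plus the existence of a solution allows the modified Yamada--Watanabe--Engelbert theorem (Lemma \ref{lem: Yamada-Watanabe}) to be applied, which produces the Borel map $\mathcal{G}^\epsilon_{-\infty}:\W\to\sC_{-\infty}$. Alternatively, $\mathcal{G}^\epsilon_{-\infty}$ can be obtained directly as the a.s.\ limit, along a subsequence, of $\mathcal{G}^\epsilon_{-n}(W,0)$ from Proposition \ref{Prop: measurable map truncated}.
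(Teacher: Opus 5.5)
Your proposal follows essentially the same route as the paper: the pull-back limit of $X^{\epsilon}_{\cdot;-n,\xi}$, and an It\^o estimate for the difference with an exponential weight built from $\int\rho$ along one trajectory. That weight is then inverted by H\"older together with the uniform exponential moments of Propositions \ref{prop: basic energy estimate beta}--\ref{prop: basic energy estimate}; your choice $p_0=2$, $\delta=4C_{\rho_1}/\gamma_0$ is exactly what the first part of $\tilde\epsilon$ permits. The remaining steps also match: Fatou for the stationary bounds, the Wiener shift for stationarity, the same weighted comparison for uniqueness (comparing $\tilde\cY^{\epsilon}$ with $X^{\epsilon}_{\cdot;-n,0}$ instead of with $\cY^{\epsilon}$ is equally fine), and Lemma \ref{lem: Yamada-Watanabe} for the Borel map.

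The one step to adjust concerns the $\sup$- and $V$-parts. At the full weight rate $2\lambda_1\gamma_0$ there is no room to keep a multiple of $\|Z\|_V^2$, and none to absorb the $18C_B\epsilon\|Z\|_H^2$ coming from BDG in the $\sup$-estimate. Keeping the full $\gamma_0\|Z\|_V^2$, as you wrote, would even force $r<\lambda_1\gamma_0$, where $r:=C_{\rho_2}+C_{\rho_1}\big((4+\beta)C_{A,\rho,\epsilon}+\beta(2+\beta)C_B\epsilon\big)/\gamma_0$. As the paper does with $\gamma$ and $\Delta_\gamma$, you should instead lower the rate to $\gamma+r$ with $\gamma+r+\lambda_1\Delta_\gamma+18C_B\epsilon<2\lambda_1\gamma_0$; this is precisely what the second part of $\tilde\epsilon$ guarantees.
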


\begin{proof}
    We split the proof into three steps.

    {\bf Step 1. Construct stationary solution.} To construct stationary solution, we first need to prove $\{X^{\epsilon}_{\cdot;-n,\xi}\}_{n \geq N}$ is a Cauchy sequence in $L^{2/p_0}(\Omega;\sC_{-N})$ for every $N \in \N$ for some $1<p_0 \leq 2$. Once we establish this, Proposition \ref{Prop: Cauchy sequence} shows that $\{X^{\epsilon}_{\cdot;-n,\xi}\}_{n}$ converges to a limit element $\mathcal{Y}^{\epsilon}$ with respect to $ |\E |d_{-\infty} (\cdot,\cdot)|^{2/p_0}|^{p_0/2} $. We will show $\mathcal{Y}^{\epsilon}$ is the stationary solution.

    Let $m>n>0$, for any $t \geq -n$, denote $\eta(t)=X^{\epsilon}_{t;-n,\xi}-X^{\epsilon}_{t;-m,\xi}$ and
\begin{align*}
      \beta_{\gamma}(t)  := & \gamma (t+n) - \frac{2 C_{\rho_1}}{\gamma_0} \Big( \frac{\gamma_0}{2} \int_{-n}^{t} \big(  1 +  \|X^{\epsilon}_{s;-n,\xi}\|_{H}^{\beta} \big) \|X^{\epsilon}_{s;-n,\xi}\|_{V}^{2} \big)  \, \d s \\
      &  - \frac{1}{2} \big((4+\beta) C_{A,\rho,\epsilon} + \beta (2+\beta) C_B \epsilon) \big)  (t+n) \Big).
    \end{align*}
    Using It\^o formula, we have
    \begin{align*}
        & e^{\beta_{\gamma}(t) } \| \eta(t) \|_{H}^2 - \| \eta(-n) \|_{H}^2 -2 \sqrt{\epsilon} \int_{-n}^{t} e^{\beta_{\gamma}(s)} \scalH{\eta(s), B(X^{\epsilon}_{s;-n,\xi})-B(X^{\epsilon}_{s;-m,\xi}) \, \d W_s} \\
        &  = \Big(\gamma+\frac{ C_{\rho_1} ( (4+\beta)  C_{A,\rho,\epsilon} + \beta (2+\beta) C_B \epsilon)  }{\gamma_0} +C_{\rho_2} \Big) \int_{-n}^{t}  e^{\beta_{\gamma}(s)} \|\eta(s)\|_{H}^{2} \, \d s \\
        & \quad -   \int_{-n}^{t}  e^{\beta_{\gamma}(s)} \Big( C_{\rho_1} \big(  1+ \|X^{\epsilon}_{s;-n,\xi}\|_{H}^{\beta} \big) \|X^{\epsilon}_{s;-n,\xi}\|_{V}^{2}  +C_{\rho_2} \Big) \|\eta(s)\|_H^2  \, \d s \\
        & \quad  + \int_{-n}^{t} e^{\beta_{\gamma}(s)} \big( 2 \, \scalV{A^{\epsilon}(X^{\epsilon}_{s;-n,\xi})-A^{\epsilon}(X^{\epsilon}_{s;-m,\xi}),\eta(s)}  + \epsilon \| B(X^{\epsilon}_{s;-n,\xi})-B(X^{\epsilon}_{s;-m,\xi})  \|_{2}^2  \big) \, \d s \\
        &  \leq  \int_{-n}^{t}  e^{\beta_{\gamma}(s)} \Big( \big(\gamma+\frac{ C_{\rho_1} ((4+\beta)   C_{A,\rho,\epsilon} + \beta (2+\beta) C_B \epsilon)  }{\gamma_0} +C_{\rho_2} \big) \|\eta(s)\|_{H}^{2} - 2 \gamma_0 \|\eta(s)\|_{V}^{2} \Big) \, \d s,
    \end{align*}
    where in the last step we used the local monotonicity condition {\rm [A2]}. By Burkh\"older--Davis--Gundy inequality, Cauchy--Schwarz inequality and condition {\rm [A4]}, we obtain
    \begin{align*}
        & 2 \sqrt{\epsilon} \, \E \sup_{s\in [-n,t]} \Big| \int_{-n}^{s}  e^{\beta_{\gamma}(r)} \scalH{\eta(r), B(X^{\epsilon}_{r;-n,\xi})-B(X^{\epsilon}_{r;-m,\xi}) \, \d W_{r}} \Big| \\
        & \leq 6 \sqrt{ \epsilon} \, \E \Big| \int_{-n}^{t}  e^{2 \beta_{\gamma}(s)} \sum_{k} \Big| \bigscalH{\eta(s), \big( B(X^{\epsilon}_{s;-n,\xi})-B(X^{\epsilon}_{s;-m,\xi}) \big) \, \mathcal{U}_{k}} \Big|^2 \, \d s  \Big|^{1/2} \\
        &  \leq \frac{1}{2} \E \sup_{s\in [-n,t]}  e^{\beta_{\gamma}(s)} \| \eta(s) \|_{H}^2  + 18 C_B \epsilon \, \E \int_{-n}^{t} e^{\beta_{\gamma}(s)} \| \eta(s) \|_{H}^2 \, \d s.
    \end{align*}
    Due to $\gamma_0 > \frac{C_{\rho_2}}{\lambda_1} \vee \big( \frac{(4+\beta) C_{\rho_1} C_{A,\rho,\epsilon}}{\lambda_1} \big)^{1/2}$ and $\epsilon <\frac{\gamma_0}{(18 \gamma_0 +  \beta  (2+\beta)  C_{\rho_1} )C_B} ( 2 \lambda_1 \gamma_0-\frac{  (4+\beta) C_{\rho_1}  C_{A,\rho,\epsilon}   }{\gamma_0} -C_{\rho_2} \big)$, we can choose $\gamma,\Delta_{\gamma}>0$ such that $\gamma + \lambda_1 \Delta_{\gamma} < 2 \lambda_1 \gamma_0 - \frac{C_{\rho_1} ( (4+\beta) C_{A,\rho,\epsilon} + \beta (2+\beta) C_B \epsilon))}{\gamma_0} -C_{\rho_2} - 18 C_B \epsilon$. Using assumption {\rm [A3]} and embedded inequality \eqref{eq: embedding}, we have
    \begin{equation}\label{eq: stationary diff weighting}
    \max\Big\{   \E \big[ \sup_{s\in [-n,t]}  e^{\beta_{\gamma}(s)} \| \eta(s) \|_{H}^2  \Big] , 2 \Delta_{\gamma}  \E \big[   \int_{-n}^{t}  e^{\beta_{\gamma}(s)}  \|\eta(s)\|_{V}^{2} \, \d s  \Big] \Big\} \leq 2 \E \| \eta(-n) \|_{H}^2.
    \end{equation}
    Due to $0 \leq \epsilon < \frac{ (1+\beta)\lambda_1 \gamma_0^2}{8 (2+\beta)^2  C_{\rho_1} C_{B}} $, we can take $\frac{ 4 C_{\rho_1}}{\gamma_0} \leq \delta<  \frac{ (1+\beta)\lambda_1 \gamma_{0}}{2(2+\beta)^2 C_{B} \epsilon}$ and $2 \geq p_0=\frac{4 C_{\rho_1}}{\gamma_0 \delta}+1> \frac{8 (2+\beta)^2 C_{\rho_1} C_{B} \epsilon }{(1+\beta) \lambda_1 \gamma_0^2}+1$, then $ \frac{1}{p_0-1} \frac{2 C_{\rho_1}}{\gamma_0} =\frac{\delta}{2}$. Cauchy-Schwarz inequality yields
    \begin{align*}
        & \E \big[\sup_{s\in [-n,t]} e^{\gamma (s+n)} \| \eta(s) \|_{H}^2 \big]^{\frac{1}{p_0}} \leq \big( \E \sup_{s\in [-n,t]} e^{\beta_{\gamma}(s)} \| \eta(s) \|_{H}^2 \big)^{\frac{1}{p_0}}  \\
        & \times \Big( \E \Big[ \exp \Big\{  \frac{1}{p_0-1} \frac{2  C_{\rho_1}}{\gamma_0}  \sup_{s \geq -n}  \big( \frac{\gamma_0}{2} \int_{-n}^{s} \|X^{\epsilon}_{r;-n,\xi}\|_{H}^{\beta} \|X^{\epsilon}_{r;-n,\xi}\|_{V}^{2}  \, \d r -(1+\frac{\beta}{2}) \big( C_{A, \rho}+ \beta C_B \epsilon \big) (s+n) \big) \Big\} \Big] \Big) ^{\frac{p_0-1}{p_0}}  \\
        & \times \Big( \E \Big[ \exp \Big\{  \frac{1}{p_0-1} \frac{2  C_{\rho_1}}{\gamma_0}  \sup_{s \geq -n}  \big( \frac{\gamma_0}{2} \int_{-n}^{s}  \|X^{\epsilon}_{r;-n,\xi}\|_{V}^{2}  \, \d r -  C_{A, \rho}  (s+n) \big) \Big\} \Big] \Big) ^{\frac{p_0-1}{p_0}}  \\
        & \quad \leq 2 (\E \| \eta(-n) \|_{H}^2 )^{\frac{1}{p_0}}  e^{\frac{ 2  C_{\rho_1}}{\gamma_0 p_0} \big( \| \xi \|_{H}^{2} + \| \xi \|_{H}^{2+\beta} + \big(2+\frac{\beta}{2} \big) \frac{ 2 C_{A,\rho,\epsilon} + \beta C_{B} \epsilon  }{\lambda_1 \gamma_0} \big)},
    \end{align*}
    where we have used estimates \eqref{est: exp V}, \eqref{est: exp V beta} and \eqref{eq: stationary diff weighting} in the last step. Then, we obtain
    \begin{align}
       &  \E \big[ \sup_{s\in [-n,t]} e^{-\gamma (t-s)} \| \eta(s) \|_{H}^2 \big]^{\frac{1}{p_0}} \nonumber \\
        & \quad \leq 2 \Big[  e^{ \frac{2 C_{\rho_1}}{\gamma_0} \big(\| \xi \|_{H}^{2}+\| \xi \|_{H}^{2+\beta} + \frac{4+\beta}{2}  \frac{2 C_{A,\rho,\epsilon}+\beta C_{B} \epsilon}{\lambda_1 \gamma_0} \big) - \gamma (t+n) } \E \| \eta(-n) \|_{H}^2  \Big]^{\frac{1}{p_0}}. \label{est: uniform diff H}
    \end{align}
    By estimate \eqref{est: exp H}, we know $\E \| \eta(-n) \|_{H}^2$ has a uniform upper bound. Thus, for any $N>0$,
    \begin{align*}
        & \limsup_{n \rightarrow +\infty }  \E \big[\sup_{s\in [-N,N]} e^{-\gamma (N-s)} \| X^{\epsilon}_{s;-n,\xi}-X^{\epsilon}_{s;-m,\xi} \|_{H}^2 \big]^{\frac{1}{p_0}} \\
        & \quad \leq \lim_{n \rightarrow +\infty}  \E \big[\sup_{s\in [-n,N]} e^{-\gamma (N-s)} \| X^{\epsilon}_{s;-n,\xi}-X^{\epsilon}_{s;-m,\xi} \|_{H}^2 \big]^{\frac{1}{p_0}} =0.
    \end{align*}
    Inequality $\big( \sum_{n=1}^{\infty} |a_{n}|^{p_0} \big)^{1/p_0} \leq  \sum_{n=1}^{\infty} |a_{n}|$ yields
    \begin{equation*}
       \E \, \big| d_{+\infty,\gamma}(X^{\epsilon}_{t;-n,\xi},X^{\epsilon}_{t;-m,\xi}) \big|^{2/p_0} \leq  \E \sum_{N=1}^{+\infty} \Big(\frac{ 1 \wedge \sup_{s\in [-N,N] } e^{-\gamma (N-s)} \| X^{\epsilon}_{s;-n,\xi}-X^{\epsilon}_{s;-m,\xi} \|_{H}^2 }{2^{N}} \Big)^{1/p_0}.
    \end{equation*}
    The above estimates yield that $\{X^{\epsilon}_{\cdot;-n,\xi}\}_{n}$ is a Cauchy sequence in $L^{2/p_0}(\Omega;\sC_{+\infty})$. Denote the limit element of Cauchy sequence as $\cY^{\epsilon}$. Due to $\{X^{\epsilon}_{\cdot;-n,\xi}\}_{n}$ has a uniform bound in $L^{2}(\Omega;\sC_{+\infty})$, we obtain $\cY^{\epsilon} \in L^{2}(\Omega;\sC_{+\infty})$. By Fatou's Lemma and estimate \eqref{est: exp H}, we have
    \begin{align*}
        \E \exp \Big\{ \delta \big\{ \| \cY^{\epsilon}(t) \|_{H}^{2} \big\} \Big\} \leq \E \liminf_{n \rightarrow +\infty} \exp \Big\{ \delta \big\{ \| X^{\epsilon}_{t;-n,\xi}\|_{H}^{2} \big\} \Big\}  \leq 2 \,  e^{\delta \big(\| \xi \|_{H}^{2} + 2 \frac{ C_{A, \rho}+ \beta C_B \epsilon   }{\lambda_1 \gamma_0} \big)} .
    \end{align*}
    Taking $\xi =0$, we obtain estimate \eqref{est: exp H stationary} and $\cY^{\epsilon} \in L^{\infty}(\R;L^2(\Omega;H))$.

    Analogous to the estimate of $\|\eta\|_{H}$, by estimate \eqref{est: exp V}, \eqref{est: exp V beta} and \eqref{eq: stationary diff weighting}, we have
    \begin{align*}
        & \E \Big[\int_{-n}^{t}  e^{{\gamma}(s+n)}  \|\eta(s)\|_{V}^{2} \, \d s \Big]^{\frac{1}{p_0}}  \leq \big( \E  \int_{-n}^{t}  e^{\beta_{\gamma}(s)}  \|\eta(s)\|_{V}^{2} \, \d s \big)^{\frac{1}{p_0}}  \\
        & \times \Big( \E \Big[ \exp \Big\{  \frac{1}{p_0-1} \frac{2 C_{\rho_1}}{\gamma_0}  \sup_{s \geq -n}  \big( \frac{\gamma_0}{2} \int_{-n}^{s} \|X^{\epsilon}_{r;-n,\xi}\|_{H}^{\beta} \|X^{\epsilon}_{r;-n,\xi}\|_{V}^{2}  \, \d r -(1+\frac{\beta}{2}) \big( C_{A, \rho}+ \beta C_B \epsilon \big) (s+n) \big) \Big\} \Big] \Big) ^{\frac{p_0-1}{p_0}}  \\
        & \times \Big( \E \Big[ \exp \Big\{  \frac{1}{p_0-1} \frac{2 C_{\rho_1}}{\gamma_0}  \sup_{s \geq -n}  \big( \frac{\gamma_0}{2} \int_{-n}^{s} \|X^{\epsilon}_{r;-n,\xi}\|_{V}^{2}  \, \d r -C_{A, \rho} (s+n) \big) \Big\} \Big] \Big) ^{\frac{p_0-1}{p_0}}  \\
        & \quad \leq 2 \Big( \frac{1}{2 \Delta_{\gamma}} \big(\E \| \eta(-n) \|_{H}^2 \big) \Big)^{\frac{1}{p_0}} e^{\frac{  C_{\rho_1}}{\gamma_0 p_0} \big( \| \xi \|_{H}^{2+\beta} + (2+\frac{\beta}{2}) \frac{ C_{A,\rho,\epsilon} + \beta C_{B} \epsilon  }{\lambda_1 \gamma_0} \big)}.
    \end{align*}
    For any $t_2>t_1 \geq -n$, dividing both sides by $e^{\gamma(t_1+n)/p_0}$ gives
    \begin{equation} \label{est: uniform diff V}
        \E \Big[\int_{t_1}^{t_2} \! \|\eta(s)\|_{V}^{2} \d s \Big]^{\frac{1}{p_0}} \leq 2 \Big( \frac{e^{-{\gamma}(t_1+n)}}{2 \Delta_{\gamma}} \E \| \eta(-n) \|_{H}^2  \Big)^{\frac{1}{p_0}} \! e^{\frac{ 2  C_{\rho_1}}{\gamma_0 p_0} \big( \|\xi\|_H^2 + \| \xi \|_{H}^{2+\beta} + \frac{4+\beta}{2} \frac{ 2 C_{A,\rho,\epsilon} + \beta C_{B} \epsilon  }{\lambda_1 \gamma_0} \big)}.
    \end{equation}
    So $\{X^{\epsilon}_{\cdot;-n,\xi}\}_{n}$ is a Cauchy sequence in $L^{2/p_0}(\Omega; L^2([t_1,t_2];V))$ and the limit element also is $\cY^{\epsilon}$. Thus, $\{X^{\epsilon}_{\cdot;-n,\xi}\}_{n \geq N}$ is a Cauchy sequence in $L^{2/p_0}(\Omega;\sC_{-N})$ for every $N \in \N$. And $\{X^{\epsilon}_{\cdot;-n,\xi}\}_{n \geq N}$ converges to $\cY^{\epsilon} \in \sC_{-\infty}$ with respect to distance $ |\E |d_{-\infty} (\cdot,\cdot)|^{2/p_0}|^{p_0/2} $ as $n \rightarrow +\infty$. By Fatou's Lemma and estimate \eqref{est: exp V}, we have
    \begin{align*}
        & \E \Big[ \exp \Big\{  \frac{\delta}{2} \sup_{t \geq t_1} \big( \frac{\gamma_0}{2} \int_{t_1}^t \| \cY^{\epsilon}(s) \|_{V}^{2} \, \d s-C_{A,\rho,\epsilon} (t-t_1) \big) \Big\} \Big]  \nonumber \\
        & \leq  \E \Big[ \lim_{n \rightarrow +\infty} \exp \Big\{  \frac{\delta}{2} \sup_{t \geq t_1} \big( \frac{\gamma_0}{2} \int_{t_1}^t \| X^{\epsilon}_{s;-n,\xi} \|_{V}^{2} \, \d s-C_{A,\rho,\epsilon}  (t-t_1) \big) \Big\} \Big] \leq    2 e^{\delta  \big( \frac{\| \xi \|_{H}^2}{2} +\frac{ C_{A,\rho,\epsilon} }{\lambda_1 \gamma_0} \big)}.
    \end{align*}
    Taking $\xi =0$, we obtain estimate \eqref{est: exp V stationary} and $\cY^{\epsilon} \in L^{2}(\Omega;\sC_{-\infty})$. By the same way,  Fatou's Lemma and estimate \eqref{est: exp V beta} yield estimate \eqref{est: exp V stationary beta}.

    Next, we show $\cY^{\epsilon}(t)$ is the stationary solution of Eq. \eqref{eq: SPDE}. For $t_1 \geq -n$, we have $\P$-a.s.
    \begin{equation*}
    \scalV{ X^{\epsilon}_{t+t_1;-n,\xi}, v} = \scalV{X^{\epsilon}_{t_1;-n,\xi}, v} + \int_{t_1}^{t+t_1} \scalV{A^{\epsilon}(X^{\epsilon}_{s;-n,\xi}), v} \, \d s + \sqrt{\epsilon} \int_{t_1}^{t+t_1} \scalH{B(X^{\epsilon}_{s;-n,\xi}) \, \d W_s, v}
    \end{equation*}
    for any $ t \geq 0$ and for any $v \in V$. By the convergence of $\{X^{\epsilon}_{\cdot;-n,\xi}\}_{n \geq N}$ in $L^2([t_1,t_2];V)$ and assumption {\rm [A3]}, we obtain that $\{ A^{\epsilon}(X^{\epsilon}_{\cdot;-n,\xi}) \}_{n}$ and $\{ B(X^{\epsilon}_{\cdot;-n,\xi}) \}_{n}$ are Cauchy sequences in $L^{2/p_0}(\Omega; L^2([t_1,t_2];V^{\ast}))$ and $L^{2/p_0}(\Omega; L^2([t_1,t_2];L_{2}(U; H)))$, respectively, with limits $A^{\epsilon}(\cY^{\epsilon})$ and $B(\cY^{\epsilon})$. Thus, taking $n \rightarrow +\infty$, we obtain that $\cY^{\epsilon}(t)$ $\P$-a.s. satisfies
    \begin{equation*}
    \scalV{\cY^{\epsilon}(t+t_1),v} = \scalV{\cY^{\epsilon}(t_1),v}+ \int_{t_1}^{t+t_1} \scalV{A^{\epsilon}(\cY^{\epsilon}(s)),v} \, \d s + \sqrt{\epsilon} \int_{t_1}^{t+t_1} \scalH{B(\cY^{\epsilon}(s)) \, \d W_s,v}
    \end{equation*}
    for any $ t \geq 0$ and for any $v \in V$. Then $\cY^{\epsilon}(t+t_1)$ is a solution of Eq. \eqref{eq: SPDE} starting from $t_1$ with initial $\cY^{\epsilon}(t_1)$ called ``very weak solution''. So $\P$-a.s. it holds
    \begin{equation*}
        U_0^{\epsilon}(t, \mathcal{Y}^{\epsilon}(t_1,\omega), \theta(t_1, \omega))=\mathcal{Y}^{\epsilon}(t+t_1,\omega), \quad \forall \, t \geq 0.
    \end{equation*}
    On the other hand, by the definition of the Wiener shift $\theta$, we have $X^{\epsilon}_{t;-n,\xi}( \theta(t_1,\omega))=X^{\epsilon}_{t+t_1;-n+t_1,\xi}( \omega)$. Taking $n \rightarrow +\infty$, then $\mathcal{Y}^{\epsilon}(t+t_1,\omega) =\mathcal{Y}^{\epsilon}(t,\theta(t_1,\omega))$ $\P$-a.s. holds. Combining the above results, we proved that $\cY^{\epsilon}$ is the stationary solution for crude cocycle $(U^{\epsilon}, \theta)$.

    {\bf Step 2. Pathwise uniqueness.}  If $\Tilde{\cY^{\epsilon}} \in L^{\infty}(\R; L^{2}(\Omega;H))$ is another stationary solution for the crude cocycle $(U^{\epsilon},\theta)$, then for any $t_0 \in \R$, it $\P$-a.s. satisfies
    \begin{equation*}
        \tilde{\cY^{\epsilon}}(t) = \tilde{\cY^{\epsilon}}(t_0)+ \int_{t_0}^{t} A^{\epsilon}(\tilde{\cY^{\epsilon}}(s)) \, \d s + \sqrt{\epsilon} \int_{t_0}^{t} B(\tilde{\cY^{\epsilon}}(s)) \, \d \tilde{W}_s, \quad \forall \, t \geq 0.
    \end{equation*}
    Using \eqref{est: exp V stationary} and analogous to estimate \eqref{est: uniform diff H}, we have
    \begin{align}\notag
        & \E \big[  \sup_{s\in [-n,t]} e^{-\gamma (t-s)} \| \cY^{\epsilon}(s)- \tilde{\cY^{\epsilon}}(s) \|_{H}^2 \big]^{\frac{1}{p_0}} \\
        \label{rrr-5}
        & \leq  2 \Big(  e^{ \frac{2  C_{\rho_1}}{\gamma_0} \big(\| \xi \|_{H}^2 + \| \xi \|_{H}^{2+\beta} +\frac{4+\beta}{2}  \frac{2 C_{A,\rho,\epsilon}+\beta C_{B} \epsilon}{\lambda_1 \gamma_0} \big) - \gamma (t+n) } \E \|  \cY^{\epsilon}(-n)- \tilde{\cY^{\epsilon}}(-n) \|_{H}^2  \Big)^{\frac{1}{p_0}}.
    \end{align}
    Due to $\cY^{\epsilon}, \Tilde{\cY^{\epsilon}} \in L^{\infty}(\R; L^{2}(\Omega;H))$, taking $n \rightarrow +\infty$, we obtain that $\cY^{\epsilon}=\Tilde{\cY^{\epsilon}}$ in the Polish space $L^{2/p_0}(\Omega,\sC_{+\infty})$. Using \eqref{est: exp V stationary} and analogous to estimate \eqref{est: uniform diff V}, we obtain that
    \begin{align}\notag
       & \E \Big[\int_{t_1}^{t_2} \! \!  \|\cY^{\epsilon}(s)- \tilde{\cY^{\epsilon}}(s)\|_{V}^{2} \, \d s \Big]^{\frac{1}{p_0}} \\
       \label{rrr-6}
       & \leq  2 \Big( \frac{e^{-{\gamma}(t_1+n)}}{2 \Delta_{\gamma}} \big(\E \| \cY^{\epsilon}(-n)- \tilde{\cY^{\epsilon}}(-n)\|_{H}^2 \big) \Big)^{\frac{1}{p_0}} e^{\frac{ 2  C_{\rho_1}}{\gamma_0 p_0} \big( \| \xi \|_{H}^2 + \| \xi \|_{H}^{2+\beta} + \frac{4+\beta}{2} \frac{ 2 C_{A,\rho,\epsilon} + \beta C_{B} \epsilon  }{\lambda_1 \gamma_0} \big)},
    \end{align}
    and $\cY^{\epsilon}=\Tilde{\cY^{\epsilon}}$ in $L^{2/p_0}(\Omega,L^2([t_1,t_2];V))$ for any $-\infty<t_1 \leq t_2<+\infty$. Thus, we get the pathwise uniqueness of the stationary solution in $L^{2/p_0}(\Omega,\sC_{-\infty})$. Note that to prove the pathwise uniqueness, we only used $\sup_{t \in \R} \E\|\Tilde{\cY^{\epsilon}}(t)\|_{H}^2<+\infty$ and $\tilde{\cY^{\epsilon}}$ satisfying Eq. \eqref{eq: stationary solution}.

    {\bf Step 3: Strong stationary solution.} Eq. \eqref{eq: stationary solution} gives a constraint $\Gamma^{\epsilon}$ that determines $S_{\Gamma^{\epsilon}} \subset \mathcal{P}( \W \times \sC_{-\infty})$. We say that $\varrho \in S_{\Gamma^{\epsilon}}$ with constraint $\Gamma^{\epsilon}$, if there exists $(w, X) \in \W \times \sC_{-\infty}$ such that $\varrho= \P \circ (w,X)^{-1}$ with marginal distribution $\P \circ (W)^{-1} $ on $\W $ and $(X,w)$ satisfies
    \begin{align*}
        \lim_{n \rightarrow + \infty} \E \bigg[ & _{V^\ast}\bigg{\<} X(t) -X(t_0) - \int_{t_0}^{t} A \big(X(s) \big) \, \d s \bigg. \bigg. \\
    \bigg. \bigg. & -\sqrt{\epsilon} \sum_{k=0}^{\lfloor n (t-t_0) \rfloor} B \Big( X \Big(t_0+ \frac{k}{n} \Big) \Big) \cdot \Big( w \Big( t \wedge \big(t_0+\frac{k+1}{n} \big) \Big)-w \Big(t_0+ \frac{k}{n} \Big) \Big), v \bigg\>_{V} \bigg] =0,
    \end{align*}
    for any $v \in V$ and $\sup_{t \in \R} \E\|X(t)\|_{H}^2<+\infty $. It is clear that $S_{\Gamma^{\epsilon}}$ is convex. By the pathwise uniqueness and the general Yamada--Watanabe Theorem (Lemma \ref{lem: Yamada-Watanabe}), there is a Borel measurable $\mathcal{G}^{\epsilon}_{-\infty}: \W \rightarrow \sC_{-\infty}$ such that $\mathcal{G}^{\epsilon}_{-\infty}(W)(\cdot)$ is an $\mathcal{F}_{t}$-adapted process satisfying Eq. \eqref{eq: stationary solution} and $\sup_{t \in \R} \E \|\mathcal{G}^{\epsilon}_{-\infty}(W)(t) \|_{H}^2< +\infty$. Thus, $\mathcal{G}^{\epsilon}_{-\infty}(W)$ is the stationary solution for $(U^{\epsilon}, \theta)$.
\end{proof}

\section{The skeleton equation} \label{sec: skeleton eq}
In this section, we will make a finer analysis of the skeleton equation related to Eq. \eqref{eq: SPDE}.

Let $\mathcal{H}$ be the Cameron-Martin space, defined by
\begin{align*}
\mathcal{H}:=&\Big\{\phi: \phi\ {\rm{is\ a\ }}\  U\text{-}{\rm{valued}}\ \{\mathcal{F}_t\}\text{-}{\rm{predictable\ process\ such\ that}} \ \int^{+\infty}_{-\infty} |\phi(s)|^2_U \, \d s<+\infty\ \mathbb{P}\text{-}a.s.\Big\}.
\end{align*}
We also need the sets
\begin{align*}
S_M:=&\{ v\in L^2(\mathbb{R};U): \int^{+\infty}_{-\infty} |v(s)|^2_U \, \d s\leq M\};\\
\mathcal{A}_M:=&\{\phi\in \mathcal{H}: \phi(\omega)\in S_M,\ \mathbb{P}\text{-}a.s.\}.
\end{align*}
Here and in the sequel of this paper, we will always refer to the weak topology on the set $S_M$.

For any $v\in \mathcal{H}$, we set the map $\int^{\cdot}_{0}v(s) \,  \d s: \mathbb{R}\rightarrow U$ as follows:
\begin{equation*}
  t\in \mathbb{R}\mapsto \begin{cases}
    \int^t_0v(s) \, \d s, & {\rm{when}}\ t\geq 0; \\
    -\int^0_tv(s) \, \d s, & {\rm{when}}\ t<0.
  \end{cases}
\end{equation*}

Consider the skeleton equation related to \eqref{eq: SPDE},
\begin{align}\label{eq: skeleton}
  X^{0,v}_{t;t_0,\xi}=\begin{cases}
    \xi+\int^t_{t_0}A^{0}(X^{0,v}_{s;t_0,\xi}) \,  \d s+\int^t_{t_0}B(X^{0,v}_{s;t_0,\xi}) v(s) \,  \d s, & {\rm{if}}\ t\geq t_0; \\
    \xi, & {\rm{if}}\ t<t_0,
  \end{cases}
\end{align}
where $t_0\in \mathbb{R}, \xi\in H$ and $v\in L^2(\mathbb{R};U)$. Pan et al. \cite{Pan_Large_2026} shows that the skeleton equation \eqref{eq: skeleton} admits a unique solution $X^{0,v}_{t;t_0,\xi}$ in $C([t_0,T];H)\cap L^2([t_0,T];V)$. It is worth noting that the conditions {\rm [A1-A5]} in this paper are stronger than those given by Pan et al. in \cite[Section 4]{Pan_Large_2026}, therefore consequently ensuring the well-posedness of the skeleton equation \eqref{eq: skeleton}.

In the following, we will show that the solutions $\{X^{0,v}_{\cdot;-n,\xi}\}_{n\in \mathbb{N}}$ converge to $\mathcal{Y}^{0,v}\in \sC_{-\infty}$ as $n \rightarrow +\infty$ with respect to metric $d_{-\infty}(\cdot,\cdot)$. Moreover, the limit element $\mathcal{Y}^{0,v}$ satisfies
\begin{equation}\label{eq: skeleton stationary}
  \mathcal{Y}^{0,v}(t_2)=\mathcal{Y}^{0,v}(t_1) +\int^{t_2}_{t_1}A^{0}(\mathcal{Y}^{0,v}(s)) \,  \d s +\int^{t_2}_{t_1}B(\mathcal{Y}^{0,v}(s)) v(s) \,  \d s, \quad \forall \, t_2 > t_1.
\end{equation}
These results are formulated in Lemma \ref{lem: skeleton}.

To describe LDP, we need to define rate functions.
For each $v\in L^2(\mathbb{R};U)$, we denote $\mathcal{G}^{0}_{t_0,\xi}\Big(\int^{\cdot}_0v(s) \,  \d s\Big) :=X^{0,v}_{\cdot;t_0,\xi}$ and $\mathcal{G}^{0}_{-\infty}\Big(\int^{\cdot}_0v(s) \,  \d s\Big) :=\mathcal{Y}^{0,v}(\cdot)$. For each $\Phi\in \sC_{-\infty}$ and $\xi \in H$, define the rate functions as
\begin{align}
  \mathcal{I}_{t_0,\xi}(\Phi):=\inf_{\Big \{v\in L^2(\mathbb{R};U); \, \Phi=\mathcal{G}^{0}_{t_0,\xi}\Big(\int^{\cdot}_0 v(s) \,  \d s\Big)\Big\}} \big\{\frac{1}{2}\int^{+\infty}_{-\infty} |v(s)|^2_U \, \d s \big\}, \label{def: rate func 1} \\
 \mathcal{I}_{-\infty}(\Phi):=\inf_{\Big\{v\in L^2(\mathbb{R};U); \, \Phi=\mathcal{G}^{0}_{-\infty}\Big(\int^{\cdot}_0v(s) \,  \d s\Big) \Big\}} \big\{\frac{1}{2}\int^{+\infty}_{-\infty} |v(s)|^2_U \, \d  s \big\}.  \label{def: rate func 2}
\end{align}
where the infimum over the empty set is taken to be $+\infty$. In Lemma \ref{lem: good rate func}, we will show that $\mathcal{I}_{t_0,\xi}$ and $\mathcal{I}_{-\infty}$
are good rate functions.

\begin{lemma}\label{lem: skeleton}
 For any $t_0\in \mathbb{R}$, $\xi\in H$ and $v\in S_M$, let $X^{0,v}_{t;t_0,\xi}$ be the unique solution to Eq. \eqref{eq: skeleton}, if $\gamma_0 \geq  \tilde{\gamma}_0$, then the following results hold.
 \begin{description}
   \item[(i)] For $\gamma\in [0,\lambda_1 \gamma_0/4]$ and $t\geq t_0$, it holds that
   \begin{align}
  & \sup_{s\in [t_0,t]} \big\{ e^{-\gamma(t-s)} \|X^{0,v}_{s;t_0,\xi}\|^{2+\beta}_H+ \frac{\gamma_0}{2} \int^s_{t_0} e^{-\gamma(t-\tau)} \|X^{0,v}_{\tau;t_0,\xi}\|^{\beta}_H \|X^{0,v}_{\tau;t_0,\xi}\|^2_V \, \d \tau \big\} \nonumber   \\
  & \quad \leq e^{-\gamma(t-t_0)} \|\xi\|^{2+\beta}_H+ \big(1+\frac{\beta}{2} \big) \big(\frac{4 C_B}{\lambda_1 \gamma_0}M + \min \big\{ \frac{1}{\gamma} ,t-t_0 \big\} C_{A,\rho,0} \big); \label{eq: energy estimate skeleton beta}  \\
  & \sup_{s\in [t_0,t]} \big\{ e^{-\gamma(t-s)} \|X^{0,v}_{s;t_0,\xi}\|^{2}_H+ \frac{\gamma_0}{2} \int^s_{t_0} e^{-\gamma(t-\tau)} \|X^{0,v}_{\tau;t_0,\xi}\|^2_V \, \d \tau \big\} \nonumber \\
  & \quad  \leq e^{-\gamma(t-t_0)} \|\xi\|^2_H+ \big(\frac{4 C_B}{\lambda_1 \gamma_0}M + \min \big\{ \frac{1}{\gamma} ,t-t_0 \big\} C_{A,\rho,0} \big). \label{eq: energy estimate skeleton}
\end{align}
Taking $s=t$, it implies that $\|X^{0,v}_{t;t_0,\xi}\|_H^2$ is uniformly bounded over $[t_0,+\infty)$. The upper bound is less than $C(\xi,M):= \|\xi\|^2_H+ \frac{4 }{\lambda_1 \gamma_0} \big(C_B M +C_{A,\rho,0} \big)$.
   \item[(ii)] For any $\xi_1,\xi_2\in H$, $\gamma \in (0,(\lambda_1 \gamma_0- C_{\rho_2}) \vee \frac{\lambda_1 \gamma_0}{4}]$ and $t_2 \geq t_1 \geq t_0$, we have
\begin{align}
  & \sup_{s\in [t_1,t_2]} \big\{ \|X^{0,v}_{s;t_0,\xi_1}-X^{0,v}_{s;t_0,\xi_2}\|^2_H +\frac{\gamma_0-\tilde{\gamma}_0}{2} \int^s_{t_0} \|X^{0,v}_{\tau;t_0,\xi_1}-X^{0,v}_{\tau;t_0,\xi_2}\|^2_V \, \d \tau \big\} \nonumber \\
  & \leq  \|\xi_1-\xi_2\|^2_H \exp\Big\{ \frac{2 C_B}{\lambda_1 (\gamma_0-\tilde{\gamma}_0)}M+\frac{ 2 C_{\rho_1}}{\gamma_0}\big( \|\xi_2\|^{2}_H+ \|\xi_2\|^{2+\beta}_H + \frac{2(4+\beta) C_B}{\lambda_1 \gamma_0}M ) \Big\}  e^{-\gamma(t_1-t_0)}. \label{eq: energy estimate skeleton diff}
\end{align}
It implies the continuity of solution with respect to initial value.
   \item[(iii)] For every $N \in \N$, $\{X^{0,v}_{\cdot;-n,\xi}\}_{n \geq N}$ is a Cauchy sequence in $\sC_{-N}$ that converges to the same limit element $\mathcal{Y}^{0,v} \in \sC_{-\infty}$. The limit element $\mathcal{Y}^{0,v}$ is the unique solution of the controlled Eq. \eqref{eq: skeleton stationary}. And $\sup_{t \in \R} \| \mathcal{Y}^{0,v}(t) \|_{H}^2$ is uniformly bounded by $C(0,M)$ over all $v \in S_{M}$.
\end{description}

\end{lemma}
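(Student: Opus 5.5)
The plan is to repeat the deterministic counterpart of the arguments in Proposition \ref{prop: basic energy estimate beta} and Theorem \ref{thm: stationary solution}. With no martingale terms, every bound becomes a pathwise Gronwall estimate. The control term $B(X)v$ is treated as a perturbation whose total cost is controlled by $M$.

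\textbf{Step (i).} I would differentiate $e^{\gamma s}\|X^{0,v}_{s}\|_H^{2+\beta}$ by the chain rule. Coercivity {\rm [A5]} at $\epsilon=0$ bounds $(1+\frac{\beta}{2})\|X\|_H^{\beta}\,2\langle A^0(X),X\rangle$ by $(1+\frac{\beta}{2})\big(C_{A,\rho,0}-\gamma_0\|X\|_H^\beta\|X\|_V^2\big)$. The control term is $(2+\beta)\|X\|_H^{\beta}\langle X,B(X)v\rangle_H$. By {\rm [A4]} it is at most $(2+\beta)C_B^{1/2}\|X\|_H^{1+\beta/2}|v|_U$ (using $\max\{1,\|X\|^\beta\}$ and Cauchy--Schwarz in $k$). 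Young's inequality then gives
\[
\le \tfrac{\lambda_1\gamma_0}{4}\|X\|_H^{2+\beta}+\tfrac{(2+\beta)^2C_B}{\lambda_1\gamma_0}|v|_U^2 .
\]
The first term is absorbed into $\frac{\gamma_0}{4}\|X\|^\beta_H\|X\|_V^2$ by \eqref{eq: embedding}. The weight term $\gamma\|X\|_H^{2+\beta}$ with $\gamma\le\lambda_1\gamma_0/4$ is absorbed in the same way, which leaves $\frac{\gamma_0}{2}\|X\|_H^\beta\|X\|_V^2$ on the left. Integrating from $t_0$ to $s$ and multiplying by $e^{-\gamma t}$ gives \eqref{eq: energy estimate skeleton beta}. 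Here $\int e^{-\gamma(t-\tau)}|v|^2\le M$, and the constant term is bounded by $\min\{1/\gamma,t-t_0\}C_{A,\rho,0}$. The case $\beta=0$ gives \eqref{eq: energy estimate skeleton}. Letting $\gamma=\lambda_1\gamma_0/4$ yields the uniform bound $C(\xi,M)$.

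\textbf{Step (ii).} Set $\eta=X^{0,v}_{\cdot;t_0,\xi_1}-X^{0,v}_{\cdot;t_0,\xi_2}$ and use the weight $\exp\{\gamma(s-t_0)-\int_{t_0}^s(\rho(X_2)-C_{\rho_2})\,d\tau\}$, in the spirit of $\beta_\gamma$ in Theorem \ref{thm: stationary solution}. By {\rm [A2]} with $\epsilon=0$, $2\langle A^0(X_1)-A^0(X_2),\eta\rangle\le -2\gamma_0\|\eta\|_V^2+\rho(X_2)\|\eta\|_H^2$. The control difference satisfies $2\langle\eta,(B(X_1)-B(X_2))v\rangle\le 2C_B^{1/2}\|\eta\|_H^2|v|_U$ by the second part of {\rm [A4]}. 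This is bounded by $\lambda_1(\gamma_0-\tilde\gamma_0)\|\eta\|_H^2/2+\frac{2C_B}{\lambda_1(\gamma_0-\tilde\gamma_0)}|v|^2\|\eta\|_H^2$. The last piece enters the exponent and produces the factor $\exp\{\frac{2C_B}{\lambda_1(\gamma_0-\tilde\gamma_0)}M\}$. The accumulated $\int\rho(X_2)$ must be bounded pathwise using \eqref{rrr-1}, namely
\[
\int_{t_0}^{s}\rho(X_2)\le C_{\rho_1}\int(1+\|X_2\|_H^\beta)\|X_2\|_V^2+C_{\rho_2}(s-t_0).
\]
I bound the first integral with step (i) at $\gamma=0$. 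That gives $\frac{2}{\gamma_0}(\|\xi_2\|^2+\|\xi_2\|^{2+\beta}+\text{const}\cdot M)$ plus a linear-in-time term $\frac{(4+\beta)C_{\rho_1}C_{A,\rho,0}}{\gamma_0}(s-t_0)$. \textbf{Main obstacle:} this linear growth of the exponent must be beaten by the dissipation $2\lambda_1\gamma_0$. The condition $\gamma_0\ge\tilde\gamma_0$, i.e.\ $\lambda_1\gamma_0^2\ge(4+\beta)C_{\rho_1}C_{A,\rho}$ and $\lambda_1\gamma_0\ge C_{\rho_2}$, is what leaves a positive net rate $\gamma$ and a residual $\frac{\gamma_0-\tilde\gamma_0}{2}\|\eta\|_V^2$. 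I expect the precise bookkeeping of constants there, including the claimed admissible range of $\gamma$, to be delicate. Restricting $s\ge t_1$ then gives the factor $e^{-\gamma(t_1-t_0)}$.

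\textbf{Step (iii).} For $m>n$ I use the flow property $X^{0,v}_{\cdot;-m,\xi}=X^{0,v}_{\cdot;-n,X^{0,v}_{-n;-m,\xi}}$ and apply (ii) with $t_0=-n$, $\xi_1=\xi$ and $\xi_2=X^{0,v}_{-n;-m,\xi}$. By (i), $\|\xi_2\|_H^2\le C(\xi,M)$ uniformly, so the difference on $[-N,N]$ in $C(H)\cap L^2(V)$ is at most $Ce^{-\gamma(-N+n)}\to0$. The sequence is therefore Cauchy in every $\sC_{-N}$, and Proposition \ref{Prop: Cauchy sequence} (deterministic version) gives the limit $\mathcal{Y}^{0,v}\in\sC_{-\infty}$. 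I pass to the limit in \eqref{eq: skeleton} using {\rm [A3]}, so that $A^0(X_n)\to A^0(\mathcal{Y})$ in $L^2(V^*)$ and $B(X_n)v\to B(\mathcal{Y})v$ in $L^1(H)$; this yields \eqref{eq: skeleton stationary}. The bound $C(0,M)$ follows by taking $\xi=0$ in (i) and letting $n\to\infty$. For uniqueness, any bounded solution $\tilde{\mathcal{Y}}$ of \eqref{eq: skeleton stationary} coincides with $X^{0,v}_{\cdot;-n,\tilde{\mathcal{Y}}(-n)}$, and (ii) then forces its distance to $X^{0,v}_{\cdot;-n,0}$ to vanish as $n\to\infty$.
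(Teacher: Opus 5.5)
Your route is the paper's. In (i) you use the chain rule for $e^{\gamma s}\|X\|_H^{2+\beta}$ with {\rm [A5]} at $\epsilon=0$, {\rm [A4]} plus Young for the control term, and absorption through \eqref{eq: embedding}. In (ii) you use a weight cancelling $\rho(X_2)$ from {\rm [A2]}, the Young split of $2C_B^{1/2}|v|_U\|\eta\|_H^2$ that produces $\exp\{\frac{2C_BM}{\lambda_1(\gamma_0-\tilde\gamma_0)}\}$, and removal of the weight by (i) at $\gamma=0$. In (iii) you use the pull-back Cauchy argument via the flow property. The only difference is that the paper builds the compensator $b\,(s-t_0)$, $b:=\frac{(4+\beta)C_{\rho_1}C_{A,\rho,0}}{\gamma_0}$, into its weight $B_\gamma$, whereas you pay it after Gronwall; the resulting constraint is the same. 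The step you call the main obstacle closes in one line. Keeping a residual $\frac{\gamma_0-\tilde\gamma_0}{2}\|\eta\|_V^2$, Gronwall needs $\gamma+b+C_{\rho_2}\le\lambda_1\gamma_0+\lambda_1\tilde\gamma_0$. Since $b\le\frac{(4+\beta)C_{\rho_1}C_{A,\rho}}{\gamma_0}\le\lambda_1\tilde\gamma_0^2/\gamma_0\le\lambda_1\tilde\gamma_0$, this holds for every $\gamma\le\lambda_1\gamma_0-C_{\rho_2}$.

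Your hesitation about the statement is justified: what you (and the paper) can actually prove is a corrected (ii). First, your last step (restricting to $s\ge t_1$) gives the factor $e^{-\gamma(t_1-t_0)}$ only for $\|\eta(s)\|_H^2$ and for $\int_{t_1}^s\|\eta\|_V^2\,\d\tau$. With the integral starting at $t_0$, as written in \eqref{eq: energy estimate skeleton diff}, the left side is at least $\frac{\gamma_0-\tilde\gamma_0}{2}\int_{t_0}^{t_0+1}\|\eta\|_V^2\,\d\tau>0$ for $\xi_1\neq\xi_2$, while the right side tends to $0$ as $t_1\to\infty$. So the literal inequality is false, and the paper's last display makes this slip. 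Second, the argument gives $\gamma\in(0,\lambda_1\gamma_0-C_{\rho_2}]$, which is also all the paper's proof uses. The alternative $\frac{\lambda_1\gamma_0}{4}$ in the ``$\vee$'' is not reached. When both constraints defining $\tilde\gamma_0$ are active and $C_{A,\rho,0}=C_{A,\rho}$, the available rate $\lambda_1\gamma_0+\lambda_1\tilde\gamma_0-C_{\rho_2}-b$ equals $\lambda_1(\gamma_0^2-\tilde\gamma_0^2)/\gamma_0$, which tends to $0$ as $\gamma_0\downarrow\tilde\gamma_0$. Third, the constant $\frac{2C_B}{\lambda_1(\gamma_0-\tilde\gamma_0)}$ requires $\gamma_0>\tilde\gamma_0$ strictly.

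Your (iii) only uses $\sup_{[-N,N]}\|\eta\|_H^2+\int_{-N}^N\|\eta\|_V^2$, so it survives the correction. Restricting uniqueness to solutions bounded in $H$, as you do, is necessary: for $A^0=\Delta$ and $B=0$, the function $e^{-\lambda_1 t}e_1$, with $e_1$ the first eigenfunction, also solves \eqref{eq: skeleton stationary}.

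Two smaller repairs. In (i), your Young split yields $\frac{(2+\beta)^2C_B}{\lambda_1\gamma_0}M$, which is larger than the stated $(1+\frac{\beta}{2})\frac{4C_B}{\lambda_1\gamma_0}M$ when $\beta>0$. Factor out $1+\frac{\beta}{2}$ first and absorb $(1+\frac{\beta}{2})\frac{\lambda_1\gamma_0}{4}\|X\|_H^{2+\beta}$ into the dissipation; this is affordable for $\gamma\le\frac{2+3\beta}{8}\lambda_1\gamma_0$. In (iii), {\rm [A3]} is only a growth bound and does not give $A^0(X_n)\to A^0(\mathcal{Y}^{0,v})$ in $L^2(V^*)$. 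Use instead that {\rm [A1]}--{\rm [A2]} make $A^0$ demicontinuous (Minty's trick, with $\rho$ locally bounded). Then $\langle A^0(X_n(s)),u\rangle\to\langle A^0(\mathcal{Y}^{0,v}(s)),u\rangle$ along an a.e.-convergent subsequence, and the {\rm [A3]} bound supplies uniform integrability. That is enough for the weak form of \eqref{eq: skeleton stationary}.
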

\begin{proof}
Let $\beta \geq 0$ be the same as $\beta$ appearing in condition {\rm [A2]}. For any $t\geq t_0$, and $s\in [t_0,t]$,
\begin{align*}
  e^{\gamma(s-t_0)}\|X^{0,v}_{s;t_0,\xi}\|^{2+\beta}_H & =\|\xi\|^{2+\beta}_H +\gamma\int^s_{t_0}e^{\gamma(\tau-t_0)}\|X^{0,v}_{\tau;t_0,\xi}\|^{2+\beta}_H \, \d \tau \\
 & \quad  + (2+\beta) \int^s_{t_0} e^{\gamma(\tau-t_0)} \|X^{0,v}_{\tau;t_0,\xi}\|^{\beta}_H  \, \scalV{ A^{0}(X^{0,v}_{\tau;t_0,\xi}),X^{0,v}_{\tau;t_0,\xi}} \, \d \tau\\
 & \quad + (2+\beta) \int^s_{t_0}e^{\gamma(\tau-t_0)}  \|X^{0,v}_{\tau;t_0,\xi}\|^{\beta}_H \, \scalH{B(X^{0,v}_{\tau;t_0,\xi})v,X^{0,v}_{\tau;t_0,\xi}} \, \d \tau.
\end{align*}
Using condition {\rm [A4]} and mean value theorem for integrals, we have
\begin{align*}
 & (2+\beta) \Big|\int^s_{t_0} e^{\gamma(\tau-t_0)} \|X^{0,v}_{\tau;t_0,\xi}\|^{\beta}_H \scalH{ B(X^{0,v}_{\tau;t_0,\xi})v,X^{0,v}_{\tau;t_0,\xi}} \, \d \tau \Big| \\
  &= (2+\beta) \Big|\int^s_{t_0}e^{\gamma(\tau-t_0)} \|X^{0,v}_{\tau;t_0,\xi}\|^{\beta}_H \sum_{k} \scalH{ B(X^{0,v}_{\tau;t_0,\xi}) \, \mathcal{U}_{k},X^{0,v}_{\tau;t_0,\xi}} \scalU{v,\mathcal{U}_{k}} \, \d \tau \Big| \\
 & \leq (\beta +2) \Big(\int^s_{t_0}e^{\gamma(\tau-t_0)} \|X^{0,v}_{\tau;t_0,\xi}\|^{2\beta}_H \sum_{k} \scalH{ B(X^{0,v}_{\tau;t_0,\xi})\mathcal{U}_{k},X^{0,v}_{\tau;t_0,\xi}}^2  \, \d \tau\Big)^{1/2}\Big(\int^s_{t_0}e^{\gamma(\tau-t_0)}\|v\|^2_U \, \d \tau\Big)^{1/2} \\
 & \leq (1+\frac{\beta}{2}) \big( \frac{\lambda_1 \gamma_0}{4}\int^s_{t_0}e^{\gamma(\tau-t_0)}\|X^{0,v}_{\tau;t_0,\xi}\|^{2+\beta}_H \, \d \tau+ \frac{4 C_B}{\lambda_1 \gamma_0} e^{\gamma(s-t_0)} M \big).
\end{align*}
Using the coercivity condition {\rm [A5]} and combining the above estimates, we obtain
\begin{align*}
  & e^{\gamma(s-t_0)}\|X^{0,v}_{s;t_0,\xi}\|^{2+\beta}_H  \\
  & \leq \|\xi\|^{2+\beta}_H +\gamma\int^s_{t_0}e^{\gamma(\tau-t_0)} \|X^{0,v}_{\tau;t_0,\xi}\|^{2+\beta}_H \, \d \tau \\
  & \quad -\frac{(2+\beta)\gamma_0}{2}\int^s_{t_0}e^{\gamma(\tau-t_0)}  \|X^{0,v}_{\tau;t_0,\xi}\|^{\beta}_H \|X^{0,v}_{\tau;t_0,\xi}\|^2_V \, \d \tau + (1+\frac{\beta}{2}) e^{\gamma(s-t_0)} \min \big\{ \frac{1}{\gamma} ,s-t_0 \big\} C_{A,\rho,0}\\
  &\quad + (1+\frac{\beta}{2}) \frac{\lambda_1 \gamma_0}{4}\int^s_{t_0}e^{\gamma(\tau-t_0)}\|X^{0,v}_{\tau;t_0,\xi}\|^{2+\beta}_H \, \d \tau+ (1+\frac{\beta}{2}) \frac{4 C_B}{\lambda_1 \gamma_0} e^{\gamma(s-t_0)} M .
\end{align*}
By choosing $0 \leq \gamma \leq \frac{2 + 3 \beta}{8} \lambda_1 \gamma_0$, it gives
\begin{align*}
  & \sup_{s\in [t_0,t]} \big\{ e^{\gamma(s-t_0)} \|X^{0,v}_{s;t_0,\xi}\|^{2+\beta}_H+ \frac{\gamma_0}{2} \int^s_{t_0} e^{\gamma(\tau-t_0)} \|X^{0,v}_{\tau;t_0,\xi}\|^{\beta}_H \|X^{0,v}_{\tau;t_0,\xi}\|^2_V \, \d \tau \big\} \\
  & \quad \leq \|\xi\|^{2+\beta}_H+ \big(1+\frac{\beta}{2} \big) \big(\frac{4 C_B}{\lambda_1 \gamma_0}M +\min \big\{ \frac{1}{\gamma} ,t-t_0 \big\} C_{A,\rho,0} \big) e^{\gamma(t-t_0)}.
\end{align*}
Dividing $e^{\gamma(t-t_0)}$ in the both sides, we get \eqref{eq: energy estimate skeleton beta}. When $\beta=0$ and $0 \leq \gamma \leq \frac{\lambda_1 \gamma_0}{4} $, the similar estimate \eqref{eq: energy estimate skeleton} also holds by the same way. Taking $s=t$ and $\gamma= \frac{\lambda_1 \gamma_0}{4} $, we obtain that $\|X^{0,v}_{t;t_0,\xi}\|^{2}_H$ is uniformly bounded over $[t_0,+\infty)$ and is less than $C (\xi,M):= \|\xi\|^2_H+ \frac{4 }{\lambda_1 \gamma_0} \big(C_B M +C_{A,\rho,0} \big)$.

For fixed $t_0\in \mathbb{R}$, $t\geq t_0$ and $s\in [t_0,t]$, for $\xi_1,\xi_2\in H$, let $\eta(t)=X^{0,v}_{t;t_0,\xi_1}-X^{0,v}_{t;t_0,\xi_2}$. Set
\begin{equation*}
  B_{\gamma}(s)= \gamma (s-t_0)-\frac{2 C_{\rho_1}}{\gamma_0} \Big( \frac{\gamma_0}{2} \int^s_{t_0} (1+\|X^{0,v}_{\tau;t_0,\xi_2}\|^{\beta}_H) \|X^{0,v}_{\tau;t_0,\xi_2}\|^2_V \, \d \tau - \frac{(4+\beta)C_{A,\rho,\epsilon}}{2} (s-t_0) \Big).
\end{equation*}
By the chain rule, we get
\begin{align*}
 & e^{B_{\gamma}(s)}\|\eta(s)\|^2_H -\|\xi_1-\xi_2\|^2_H \\
 & = \big( \gamma+\frac{(4+\beta)C_{\rho_1}C_{A,\rho,\epsilon}}{\lambda_1 \gamma_0}  \big) \int^s_{t_0} e^{B_{\gamma}(\tau)}\|\eta(\tau)\|^2_H \, \d \tau -C_{\rho_1}\int^s_{t_0}e^{B_{\gamma}(\tau)} (1+\|X^{0,v}_{\tau;t_0,\xi_2}\|^{\beta}_H) \|X^{0,v}_{\tau;t_0,\xi_2}\|^2_V \|\eta(\tau)\|^2_H \, \d \tau \\
& \quad +\int^s_{t_0}e^{B_{\gamma}(\tau)} \big(2 \scalV{ A^{0}(X^{0,v}_{\tau;t_0,\xi_1})-A^{0}(X^{0,v}_{\tau;t_0,\xi_2}),\eta(\tau)}  +2 \scalH{ (B(X^{0,v}_{\tau;t_0,\xi_1})-B(X^{0,v}_{\tau;t_0,\xi_2}))v,\eta(\tau) } \big) \, \d \tau.
\end{align*}
The monotonicity condition {\rm [A2]} gives
\begin{align*}
 &\int^s_{t_0}e^{B_{\gamma}(\tau)}2 \scalV{ A^{0}(X^{0,v}_{\tau;t_0,\xi_1})-A^{0}(X^{0,v}_{\tau;t_0,\xi_2}),\eta(\tau)} \, \d \tau\\
& \leq \int^s_{t_0}e^{B_{\gamma}(\tau)}\Big[-2\gamma_0\|\eta(\tau)\|^2_V+  \Big(C_{\rho_1} \big( 1+ \|X^{0,v}_{\tau;t_0,\xi_2}\|^{\beta}_H \big) \|X^{0,v}_{\tau;t_0,\xi_2}\|^2_V +C_{\rho_2} \Big) \|\eta(\tau)\|^2_H\Big] \, \d \tau.
\end{align*}
The condition {\rm [A4]} and Cauchy--Schwarz inequality give
\begin{align*}
  &2\int^s_{t_0}e^{B_{\gamma}(\tau)}\scalH{ (B(X^{0,v}_{\tau;t_0,\xi_1})-B(X^{0,v}_{\tau;t_0,\xi_2}))v,\eta(\tau) } \, \d \tau\\
&  =2 \int^s_{t_0}e^{B_{\gamma}(\tau)} \sum_{k} \scalH{ (B(X^{0,v}_{\tau;t_0,\xi_1})-B(X^{0,v}_{\tau;t_0,\xi_2})) \, \mathcal{U}_{k},\eta(\tau)} \scalU{v,\mathcal{U}_{k}} \, \d \tau\\
& \leq 2 \int^s_{t_0}e^{B_{\gamma}(\tau)} \big(\sum_{k} \big| \scalH{ (B(X^{0,v}_{\tau;t_0,\xi_1})-B(X^{0,v}_{\tau;t_0,\xi_2})) \, \mathcal{U}_{k},\eta(\tau)} \big|^2 \big)^{\frac{1}{2}}\|v(\tau)\|_U  \, \d \tau \\
& \leq \frac{\lambda_1 (\gamma_0-\tilde{\gamma}_0)}{2}\int^s_{t_0}e^{B_{\gamma}(\tau)}\| \eta(\tau)\|^2_H \, \d \tau+ \frac{2 C_B}{\lambda_1 (\gamma_0-\tilde{\gamma}_0)} \int^s_{t_0}e^{B_{\gamma}(\tau)}\| \eta(\tau)\|^2_H\|v(\tau)\|^2_U \, \d \tau,
\end{align*}
where $\tilde{\gamma}_0$ defined as \eqref{def: gamma}. Combining the above estimates, we obtain
\begin{align*}
 & e^{B_{\gamma}(s)}\|\eta(s)\|^2_H + \frac{\gamma_0-\tilde{\gamma}_0}{2} \int^s_{t_0}e^{B_{\gamma}(\tau)}\|\eta(\tau)\|^2_V \, \d \tau  -\|\xi_1-\xi_2\|^2_H \\
 & \leq \big( \frac{\gamma + C_{\rho_2}}{\lambda_1}+\frac{(4+\beta)C_{\rho_1}C_{A,\rho,\epsilon}}{\lambda_1 \gamma_0} -\tilde{\gamma}_0- \gamma_0  \big)\int^s_{t_0} e^{B_{\gamma}(\tau)}\|\eta(\tau)\|^2_V \, \d \tau \\
& \quad   + \frac{2 C_B}{\lambda_1 (\gamma_0-\tilde{\gamma}_0)} \int^s_{t_0}e^{B_{\gamma}(\tau)}\| \eta(\tau)\|^2_H\|v(\tau)\|^2_U \, \d \tau.
\end{align*}
Recall the definition of $\tilde{\gamma}_0$ defined as \eqref{def: gamma}, we have $\gamma_0\geq \tilde{\gamma}_0 > \frac{C_{\rho_2}}{\lambda_1}$ and $\gamma_0> \frac{(4+\beta)C_{\rho_1}C_{A,\rho,\epsilon}}{\lambda_1 \gamma_0}$. When $\gamma\in [0,\lambda_1 \gamma_0- C_{\rho_2}]$, using the Gr\"onwall inequality and the fact $v \in S_{M}$, we get
\begin{equation*}
  \sup_{s\in [t_0,t]}e^{B_{\gamma}(s)}\|\eta(s)\|^2_H+\frac{\gamma_0-\tilde{\gamma}_0}{2}\int^t_{t_0}e^{B_{\gamma}(\tau)}\|\eta(\tau)\|^2_V \, \d \tau
\leq  e^{\frac{2 C_B}{\lambda_1 (\gamma_0-\tilde{\gamma}_0)}M} \|\xi_1-\xi_2\|^2_H .
\end{equation*}
Then for any $t_2 \geq t_1 \geq t_0$, using \eqref{eq: energy estimate skeleton} and \eqref{eq: energy estimate skeleton beta}, we arrive at
\begin{align*}
  & \sup_{s\in [t_1,t_2]} \big\{ e^{\gamma(s-t_0)}\|\eta(s)\|^2_H + \frac{\gamma_0-\tilde{\gamma}_0}{2} \int^s_{t_0}e^{\gamma(\tau-t_0)}\|\eta(\tau)\|^2_V \, \d \tau \big\} \\
  & \quad \leq \sup_{s\in [t_0,t]} \big\{ e^{B_{\gamma}(s)}\|\eta(s)\|^2_H +\frac{\gamma_0-\tilde{\gamma}_0}{2}\int^s_{t_0}e^{B_{\gamma}(\tau)}\|\eta(\tau)\|^2_V \, \d \tau \big\} \\
  & \quad \quad \times \exp \Big\{ \frac{2 C_{\rho_1}}{\gamma_0} \Big( \frac{\gamma_0}{2} \int^s_{t_0} (1+\|X^{0,v}_{\tau;t_0,\xi_2}\|^{\beta}_H) \|X^{0,v}_{\tau;t_0,\xi_2}\|^2_V \, \d \tau - \frac{(4+\beta)C_{A,\rho,\epsilon}}{2} (s-t_0) \Big) \Big\}\\
  & \quad \leq  \|\xi_1-\xi_2\|^2_H \exp\Big\{ \frac{2 C_B}{\lambda_1 (\gamma_0-\tilde{\gamma}_0)}M+\frac{ 2 C_{\rho_1}}{\gamma_0}\big( \|\xi_2\|^{2}_H+ \|\xi_2\|^{2+\beta}_H + \frac{2(4+\beta) C_B}{\lambda_1 \gamma_0}M ) \Big\}.
\end{align*}
Dividing $e^{\gamma(t_1-t_0)}$ in the above inequality, we have
\begin{align*}
  & \sup_{s\in [t_1,t_2]} \big\{ \|\eta(s)\|^2_H +\frac{\gamma_0-\tilde{\gamma}_0}{2} \int^s_{t_0} \|\eta(\tau)\|^2_V \, \d \tau \big\} \\
  & \leq e^{-\gamma(t_1-t_0)}  \sup_{s\in [t_1,t_2]} \big\{ e^{\gamma(s-t_0)}\|\eta(s)\|^2_H + \frac{\gamma_0-\tilde{\gamma}_0}{2} \|\eta(\tau)\|^2_V \, \d \tau \big\} \\
  & \leq  \|\xi_1-\xi_2\|^2_H \exp\Big\{ \frac{2 C_B}{\lambda_1 (\gamma_0-\tilde{\gamma}_0)}M+\frac{ 2 C_{\rho_1}}{\gamma_0}\big( \|\xi_2\|^{2}_H+ \|\xi_2\|^{2+\beta}_H + \frac{2(4+\beta) C_B}{\lambda_1 \gamma_0}M ) \Big\} \, e^{-\gamma(t_1-t_0)}.
\end{align*}
Thus, we obtain \eqref{eq: energy estimate skeleton diff}. Following the proof of Theorem \ref{thm: stationary solution} and using estimate \eqref{eq: energy estimate skeleton diff}, we conclude that $\{X^{0,v}_{\cdot;-n,\xi}\}_{n \geq N}$ is a Cauchy sequence in $\sC_{-N}$ for every $N \in \N$ and converges to the same limit element $\mathcal{Y}^{0,v} \in \sC_{-\infty}$. By the similar arguments in Theorem \ref{thm: stationary solution}, the convergence yields that $\mathcal{Y}^{0,v}$ satisfies the skeleton Eq. \eqref{eq: skeleton stationary} and $\sup_{t \in \R} \| \mathcal{Y}^{0,v}(t) \|_{H}^2$ is uniformly bounded by $C(0,M)$. The estimate \eqref{eq: energy estimate skeleton diff} also implies the uniqueness.
\end{proof}

Next, we will show $\mathcal{I}_{t_0,\xi}$ and $\mathcal{I}_{-\infty}$ are good rate functions.

\begin{lemma}\label{lem: good rate func}
  For each $M>0$, $\xi\in H$ and for any fixed $t_0\in \mathbb{R}$, the maps $G_{t_0,\xi}: v\mapsto X^{0,v}_{\cdot;t_0,\xi}$ and $G_{-\infty}: v \mapsto \cY^{0,v}$ are continuous from $S_M$ to $\sC_{t_0}$. Thus, the following sets
\begin{align*}
  \Gamma_{M;t_0,\xi}:=\Big\{\mathcal{G}^0_{t_0,\xi}\Big(\int^{\cdot}_0 v(s) \,  \d s\Big)\in \sC_{t_0}: v\in S_M\Big\};\
\Gamma_{M;-\infty}:=\Big\{\mathcal{G}^0_{-\infty}\Big(\int^{\cdot}_0 v(s) \,  \d s \Big)\in \sC_{-\infty}: v\in S_M\Big\}
\end{align*}
are compact subsets of $\sC_{t_0}$. It follows that $\mathcal{I}_{t_0,\xi}$ and $\mathcal{I}_{-\infty}$ defined by \eqref{def: rate func 1}-\eqref{def: rate func 2} are good rate functions. Moreover, for each compact set $K\subset H$, the set $\Gamma_{M;t_0,K}:=\cup_{\xi\in K}\Gamma_{M;t_0,\xi}$ is compact.
\end{lemma}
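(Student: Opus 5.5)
The plan is to prove continuity of $G_{t_0,\xi}$ (and then $G_{-\infty}$) from $S_M$ with the weak topology into $\sC_{t_0}$ (resp.\ $\sC_{-\infty}$). Since $S_M$ is weakly compact and metrizable, compactness of $\Gamma_{M;t_0,\xi}$ and $\Gamma_{M;-\infty}$ follows at once. The level sets $\{\mathcal{I}_{t_0,\xi}\le M/2\}=\Gamma_{M;t_0,\xi}$ (and likewise for $\mathcal{I}_{-\infty}$) then give goodness of the rate functions; the infimum is attained because $S_M$ is weakly compact and the maps are continuous.

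\textbf{Step 1 (finite horizon).} Let $v_k\rightharpoonup v$ weakly in $S_M$, and write $X_k:=X^{0,v_k}_{\cdot;t_0,\xi}$, $X:=X^{0,v}_{\cdot;t_0,\xi}$. On $[t_0,T]$, Lemma \ref{lem: skeleton}(i) gives bounds on $\sup\|X_k\|_H^{2+\beta}$ and $\int\|X_k\|_H^\beta\|X_k\|_V^2$ that are uniform in $k$. Set $\eta_k=X_k-X$ and use the chain rule with the weight $e^{-\int_{t_0}^t\rho(X)}$, which is finite by \eqref{rrr-1} and those bounds. By [A2], with $\epsilon=0$, we get
\begin{equation*}
\|\eta_k(t)\|_H^2e^{-\int\rho}+2\gamma_0\int e^{-\int\rho}\|\eta_k\|_V^2\le 2\int e^{-\int\rho}\big(\langle (B(X_k)-B(X))v_k,\eta_k\rangle_H+\langle B(X)(v_k-v),\eta_k\rangle_H\big).
\end{equation*}
The first term is handled by [A4] and Gronwall with $\int\|v_k\|^2\le M$. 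The key term is $\int\langle B(X)(v_k-v),\eta_k\rangle$. I would split it as $\int\langle B(X)(v_k-v),X_k\rangle-\int\langle B(X)(v_k-v),X\rangle$. The second piece tends to $0$ by weak convergence, since $B(X)^*X\in L^2([t_0,T];U)$. For the first piece, use the standard time-discretization argument as in Liu--Röckner type skeleton proofs (Pan et al.). The $X_k$ are equicontinuous in $V^*$ by [A3], and I would use this together with compactness to replace $X_k$ by piecewise-constant-in-time approximations $X_k(t_i)$. One needs $\sup_k\int\|X_k-X_k(\lfloor\cdot\rfloor_h)\|_H^2\to0$ as $h\to0$, which follows from the energy bounds. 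Then weak convergence kills the term $\int_{t_i}^{t_{i+1}}B(X)(v_k-v)\,ds$ tested against fixed vectors, provided a compactness of $\{X_k(t_i)\}$ is available. I expect this step to be the main obstacle without a compact embedding; I would first try instead to prove directly that $\{X_k\}$ is precompact in $C([t_0,T];H)\cap L^2V$ along subsequences, and identify the limit by uniqueness of the skeleton equation. I would otherwise follow the argument of Pan et al.\ \cite{Pan_Large_2026}, who prove exactly this continuity under weaker assumptions. This yields $\sup_{[t_0,T]}\|\eta_k\|_H^2+\int_{t_0}^T\|\eta_k\|_V^2\to0$ for each $T$, hence convergence in $d_{t_0}$.

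\textbf{Step 2 (infinite horizon).} For $G_{-\infty}$, I would use a three-$\varepsilon$ argument with Lemma \ref{lem: skeleton}(ii)--(iii). First, $\mathcal{Y}^{0,v_k}(t)=X^{0,v_k}_{t;-n,\mathcal{Y}^{0,v_k}(-n)}$ by uniqueness. Then \eqref{eq: energy estimate skeleton diff} with $\xi_1=\mathcal{Y}^{0,v_k}(-n)$, $\xi_2=0$ gives
\begin{equation*}
\sup_{[-N,N]}\|\mathcal{Y}^{0,v_k}-X^{0,v_k}_{\cdot;-n,0}\|_H^2+\int_{-N}^N\|\cdot\|_V^2\le C(M)e^{-\gamma(n-N)},
\end{equation*}
uniformly in $k$ since $\sup_t\|\mathcal{Y}^{0,v_k}(t)\|_H^2\le C(0,M)$. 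The same bound holds for $v$. Choose $n$ large, then apply Step 1 on $[-n,N]$ to get $X^{0,v_k}_{\cdot;-n,0}\to X^{0,v}_{\cdot;-n,0}$. This gives continuity in each seminorm, hence in $d_{-\infty}$.

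\textbf{Step 3 (uniform over compact initial data).} For $\Gamma_{M;t_0,K}$, consider the map $(\xi,v)\mapsto X^{0,v}_{\cdot;t_0,\xi}$ on $K\times S_M$. It is jointly continuous: combine Step 1 with \eqref{eq: energy estimate skeleton diff}, whose constant is locally bounded in $\xi_2$. The image of the compact set $K\times S_M$ is therefore compact.
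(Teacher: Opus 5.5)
Your proposal follows essentially the same route as the paper. The finite-horizon step is a weighted energy estimate via {\rm [A2]}, with the term $(B(X_k)-B(X))v_k$ absorbed by {\rm [A4]} and Gronwall and the remainder $\int\langle B(X)(v_k-v),\eta_k\rangle_H$ disposed of by the compactness argument of Pan et al.\ (which the paper likewise only cites); $G_{-\infty}$ is handled by a pull-back splitting based on \eqref{eq: energy estimate skeleton diff} and the uniform bound $C(0,M)$ (you anchor at the initial datum $0$, the paper at $\mathcal{Y}^{0,v}(-N)$, which is immaterial), and $\Gamma_{M;t_0,K}$ by joint continuity in $(\xi,v)$.

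The compactness you worry about in the remainder comes from the noise, not from $\{X_k(t_i)\}$ or from $V\subset H$. Since $B(X(\cdot))\in L^2([t_i,t_{i+1}];L_2(U;H))$, the map $w\mapsto\int_{t_i}^{t_{i+1}}B(X(s))w(s)\,\d s$ is Hilbert--Schmidt from $L^2([t_i,t_{i+1}];U)$ into $H$. It therefore sends $v_k-v\rightharpoonup0$ to a strongly null sequence, so its pairing with the bounded $X_k(t_i)$ vanishes.
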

\begin{proof}
  Let $\{v_n\}_{n \geq 1} \subset S_M$ be the sequence, and $\{v_n\}_{n \geq 1}$ converges to $v$ in $S_M$ as $n\rightarrow \infty$. For fixed $t_0\in \mathbb{R}$, for any $\gamma\in [0,\lambda_1 \gamma_0)$, $t\geq t_0$ and $s\in [t_0,t]$, let
\begin{equation*}
  B_{\gamma}(s)=\gamma(s-t_0)-C_{\rho_1}\int^s_{t_0}  (1+ \|X^{0,v}_{\tau;t_0,\xi}\|^{\beta}_H) \|X^{0,v}_{\tau;t_0,\xi}\|^2_V \, \d \tau \leq \gamma(s-t_0) .
\end{equation*}
Denote $\eta^n(s)=X^{0,v_n}_{s;t_0,\xi}-X^{0,v}_{s;t_0,\xi}$. Then, we get
\begin{align*}
  e^{B_{\gamma}(s)}\|\eta^n(s)\|^2_H & =\gamma\int^s_{t_0}e^{B_{\gamma}(\tau)}\|\eta^n(\tau)\|^2_H \, \d \tau-C_{\rho_1}\int^s_{t_0}e^{B_{\gamma}(\tau)} (1+ \|X^{0,v}_{\tau;t_0,\xi}\|^{\beta}_H) \|X^{0,v}_{\tau;t_0,\xi}\|^{2}_V \, \d \tau\\
& \quad +2 \int^s_{t_0} e^{B_{\gamma}(\tau)} \scalV{ A^{0}(X^{0,v_n}_{\tau;t_0,\xi})-A^{0}(X^{0,v}_{\tau;t_0,\xi}),\eta^n(\tau)} \, \d \tau \\
& \quad +2\int^s_{t_0} e^{B_{\gamma}(\tau)} \scalH{ B(X^{0,v_n}_{\tau;t_0,\xi})v_n-B(X^{0,v}_{\tau;t_0,\xi})v,\eta^n(\tau) } \, \d \tau.
\end{align*}
Owing to condition {\rm [A2]}, it follows that
\begin{align*}
 &\int^s_{t_0} e^{B_{\gamma}(\tau)} 2 \scalV{ A^{0}(X^{0,v_n}_{\tau;t_0,\xi})-A^{0}(X^{0,v}_{\tau;t_0,\xi}),\eta^n(\tau)} \, \d \tau\\
& \leq  \int^s_{t_0}e^{B_{\gamma}(\tau)}\Big[-2\gamma_0\|\eta^n(\tau)\|^2_V+ \big(C_{\rho_1} (1+ \|X^{0,v}_{\tau;t_0,\xi}\|^{\beta}_H) \|X^{0,v}_{\tau;t_0,\xi}\|^2_V + C_{\rho_2} \big) \|\eta^n(\tau)\|^2_H   \Big] \, \d \tau.
\end{align*}
Using condition {\rm [A4]}, we have
\begin{align*}
&2\int^s_{t_0}e^{B_{\gamma}(\tau)} \scalH{ B(X^{0,v_n}_{\tau;t_0,\xi})v_n-B(X^{0,v}_{\tau;t_0,\xi})v,\eta^n(\tau) } \, \d \tau\\
& \leq 2\int^s_{t_0}e^{B_{\gamma}(\tau)}\scalH{(B(X^{0,v_n}_{\tau;t_0,\xi})-B(X^{0,v}_{\tau;t_0,\xi}))v_n,\eta^n(\tau) } \, \d \tau
+2\int^s_{t_0}e^{B_{\gamma}(\tau)}\scalH{B(X^{0,v}_{\tau;t_0,\xi})(v_n-v),\eta^n(\tau) } \, \d \tau\\
& \leq 2 \sqrt{C_B}\int^s_{t_0}e^{B_{\gamma}(\tau)}\|\eta^n(\tau)\|^2_H\|v_n\|_U \, \d \tau+|R_n(t)|,
\end{align*}
where
\begin{equation*}
  R_n(t):=2\int^t_{t_0}e^{B_{\gamma}(s)}\scalH{B(X^{0,v}_{s;t_0,\xi})(v_n(s)-v(s)),\eta^n(s) }\, d s.
\end{equation*}
Using the fact $\lambda_1\gamma_0 > C_{\rho_2}$ and choosing $\gamma \in (0, \lambda_1 \gamma_0 - C_{\rho_2}]$, we get
\begin{equation*}
   e^{B_{\gamma}(s)}\|\eta^n(s)\|^2_H + \gamma_0 \int^s_{t_0}e^{B_{\gamma}(\tau)}\|\eta^n(\tau)\|^2_V \, \d \tau  \leq 2 \sqrt{C_B}\int^s_{t_0}e^{B_{\gamma}(\tau)}\|\eta^n(\tau)\|^2_H\|v_n\|_U \, \d \tau+|R_n(t)|.
\end{equation*}
By Gr\"onwall inequality and using estimates \eqref{eq: energy estimate skeleton} and \eqref{eq: energy estimate skeleton beta}, we obtain
\begin{align}
   & \sup_{s\in [t_0,t]} \big\{ e^{\gamma(s-t_0)}\|\eta^n(s)\|^2_H+\gamma_0\int^t_{t_0} e^{\gamma(s-t_0)}\|\eta^n(s)\|^2_V \, \d s \big\} \nonumber  \\
   & \leq  \sup_{s\in [t_0,t]} \big\{ e^{B_{\gamma}(s)}\|\eta^n(s)\|^2_H+\gamma_0\int^t_{t_0} e^{B_{\gamma}(s)}\|\eta^n(s)\|^2_V \, \d s \big\} \times e^{C_{\rho_1}\int^t_{t_0}  (1+ \|X^{0,v}_{s;t_0,\xi}\|^{\beta}_H) \|X^{0,v}_{s;t_0,\xi}\|^2_V \, \d s} \nonumber \\
  & \leq  \sup_{s\in [t_0,t]}|R_n(s)| \exp \{ 2 \sqrt{C_B M (t-t_0)}  \}  \nonumber \\
  & \quad \times \exp\Big\{\frac{2 C_{\rho_1}}{\gamma_0}\big( \|\xi\|^{2}_H+ \|\xi\|^{2+\beta}_H + (2+ \frac{\beta}{2}) \big(\frac{4 C_B}{\lambda_1 \gamma_0}M +  C_{A,\rho,0}  ( t-t_0) \big) \big) \Big\}. \label{est: skeleton diff v}
\end{align}
Analogous to the estimate for remainder term $Z_{n}(t)$ in \cite[Section 4.4]{Pan_Large_2026}, using the estimate \eqref{eq: energy estimate skeleton} and the relative compactness argument, we get
\begin{equation*}
  \lim_{n\rightarrow \infty}\sup_{s\in [t_0,t]}|R_n(s)|=0, \quad \forall \, t_0<t <+\infty.
\end{equation*}
Thus, $\{ \eta^n \}_{n}$ is a Cauchy sequence in $(\sC_{t_0},d_{t_0})$ and the map $G_{t_0,\xi}: v\mapsto X^{0,v}_{\cdot;t_0,\xi}$ is continuous from $S_M$ to $\sC_{t_0}$. By the compactness of $S_M$ under weak topology and the weak-strong continuity of $G_{t_0,\xi}$, we deduce that $\Gamma_{M;t_0,\xi}$ is compact. For each compact set $K\subset H$, using \eqref{eq: energy estimate skeleton diff} and the compactness of $\Gamma_{M;t_0,\xi}$, it follows that $\Gamma_{M;t_0,K}:=\cup_{\xi\in K}\Gamma_{M;t_0,\xi}$ is also compact. Thus, level set
\begin{align*}
  \{\Phi\in \sC_{t_0}: \mathcal{I}_{t_0,\xi}(\Phi)\leq M\}=\cap_{n\geq 1} \Gamma_{2M+\frac{1}{n};t_0,\xi}
\end{align*}
is compact, the rate function $\mathcal{I}_{t_0,\xi}$ is a good rate function.

Next, we  will show that $\Gamma_{M;-\infty}:=\{\mathcal{G}^0_{-\infty}(\int^{\cdot}_0v(s) \,  \d s): v\in S_M\}$ is compact and $\mathcal{I}_{-\infty}$ is a good rate function. Recall the definition of $(\sC_{-\infty},d_{-\infty})$, to get the continuity of $G_{-\infty}: S_M \rightarrow \sC_{-\infty}$, we only need to prove that if $v_n$ converges to $v$ in $S_M$, then for any $t_2>t_1$,
\begin{equation*}
  \lim_{n \rightarrow +\infty} \sup_{s\in [t_1,t_2]} \Big\{\|\mathcal{Y}^{0,v_n}(s)-\mathcal{Y}^{0,v}(s)\|^2_H + \frac{\gamma_0}{2} \int^{t_2}_{t_1}\|\mathcal{Y}^{0,v_n}(s)-\mathcal{Y}^{0,v}(s)\|^2_V\, d s \Big\}=0.
\end{equation*}
Lemma \ref{lem: skeleton} shows the fact $\mathcal{Y}^{0,v}(s)=X^{0,v}_{s;-N,\cY^{0,v}(-N)}$. Thus, it holds that
\begin{align*}
& \sup_{s\in [t_1,t_2]} \Big\{ \|\mathcal{Y}^{0,v_n}(s)-\mathcal{Y}^{0,v}(s)\|^2_H+ \frac{\gamma_0}{2}\int^{t_2}_{t_1}\|\mathcal{Y}^{0,v_n}(s)-\mathcal{Y}^{0,v}(s)\|^2_V\, d s \Big\} \\
& = \sup_{s\in [t_1,t_2]} \Big\{ \|X^{0,v_n}_{s;-N,\cY^{0,v_n}(-N)}-X^{0,v}_{s;-N,\cY^{0,v}(-N)}\|^2_H+ \frac{\gamma_0}{2} \int^{t_2}_{t_1}\|X^{0,v_n}_{s;-N,\cY^{0,v_n}(-N)}-X^{0,v}_{s;-N,\cY^{0,v}(-N)}\|^2_V \, \d s \Big\} \\
& \leq 2 \sup_{s\in [t_1,t_2]} \Big\{ \|X^{0,v_n}_{s;-N,\cY^{0,v_n}(-N)}-X^{0,v_n}_{s;-N,\cY^{0,v}(-N)}\|^2_H+ \frac{\gamma_0}{2} \int^{t_2}_{t_1}\|X^{0,v_n}_{s;-N,\cY^{0,v_n}(-N)}-X^{0,v_n}_{s;-N,\cY^{0,v}(-N)}\|^2_V \, \d s \Big\}  \\
& \quad + 2 \sup_{s\in [t_1,t_2]} \Big\{ \|X^{0,v_n}_{s;-N,\cY^{0,v}(-N)}-X^{0,v}_{s;-N,\cY^{0,v}(-N)}\|^2_H+ \frac{\gamma_0}{2} \int^{t_2}_{t_1}\|X^{0,v_n}_{s;-N,\cY^{0,v}(-N)}-X^{0,v}_{s;-N,\cY^{0,v}(-N)}\|^2_V \, \d s \Big\}.
\end{align*}
Taking $\gamma \in (0,(\lambda_1 \gamma_0- C_{\rho_2}) \vee \frac{\lambda_1 \gamma_0}{4}]$ and using estimate \eqref{eq: energy estimate skeleton diff}, we have
\begin{align*}
  & \sup_n \sup_{s\in [t_1,t_2]} \Big\{ \|X^{0,v_n}_{s;-N,\cY^{0,v_n}(-N)}-X^{0,v_n}_{s;-N,\cY^{0,v}(-N)}\|^2_H+ \frac{\gamma_0-\tilde{\gamma}_0}{2} \int^{t_2}_{t_1}\|X^{0,v_n}_{s;-N,\cY^{0,v_n}(-N)}-X^{0,v_n}_{s;-N,\cY^{0,v}(-N)}\|^2_V \, \d s \Big\} \\
  & \leq \exp\Big\{ \frac{2 C_B}{\lambda_1 (\gamma_0-\tilde{\gamma}_0)}M+\frac{ 2 C_{\rho_1}}{\gamma_0}\big( \|\cY^{0,v}(-N)\|^{2}_H+ \|\cY^{0,v}(-N)\|^{2+\beta}_H + \frac{2(4+\beta) C_B}{\lambda_1 \gamma_0}M ) \Big\}  \\
  & \quad \times \sup_{n} \big\{ \|\cY^{0,v}(-N)-\cY^{0,v_n}(-N)\|^2_H \, e^{-\gamma(t_1+N)} \big\}.
\end{align*}
Recall that $C(\xi,M):= \|\xi\|^2_H+ \frac{4 }{\lambda_1 \gamma_0} \big(C_B M +C_{A,\rho,0} \big)$ and $ \sup_{t \in \R} \sup_{n} \| \cY^{0,v}(t) \|_{H}^2 \vee \| \cY^{0,v}(t) \|_{H}^2$ is bounded by $C(0,M)$, we obtain that the right of the above inequality tends to zero as $N \rightarrow +\infty$. For any fixed $N \geq 1$, taking into account the weak-strong continuity of $G_{-N,\cY^{0,v}(-N)}: S_M \rightarrow \sC_{-N}$, we get that when $n$ tends to zero, it holds
\begin{equation*}
\sup_{s\in [t_1,t_2]} \Big\{ X^{0,v_n}_{s;-N,\cY^{0,v}(-N)}-X^{0,v}_{s;-N,\cY^{0,v}(-N)}\|^2_H+ \frac{\gamma_0 }{2} \int^{t_2}_{t_1} \|X^{0,v_n}_{s;-N,\cY^{0,v}(-N)}-X^{0,v}_{s;-N,\cY^{0,v}(-N)}\|^2_V \, \d s \Big\} \rightarrow 0,
\end{equation*}
for any $t_2 \geq t_1 \geq -N$. Combining the above estimates, we obtain $\lim\limits_{n\rightarrow+\infty}\|\mathcal{Y}^{0,v_n}-\mathcal{Y}^{0,v}\|^2_{\sC_{-\infty}}=0$ and $G_{-\infty}: S_M \rightarrow \sC_{-\infty}$ is a continuous map. Then the set $\Gamma_{M;-\infty}$ is compact, and $\mathcal{I}_{-\infty}$ is also a good rate function.
\end{proof}

By a similar method as \cite[Lemma 4.3]{Liu_Large_2025}, we derive the following property of $\mathcal{I}_{t_0,\xi}$, which is used to prove uniform LDP for the solution map.
\begin{lemma}\label{lem: lsc uniform LDP}
  For each $\Phi\in \sC_{t_0}$, $\xi\mapsto \mathcal{I}_{t_0,\xi}$ is a lower semicontinuous (l.s.c.) map from $H$ to $[0,+\infty]$. Furthermore, for each $M<+\infty$,
\begin{align*}
  \Lambda_{M,t_0,K}:=\{\Phi\in \sC_{t_0}: \mathcal{I}_{t_0,\xi}(\Phi)\leq M, \, \xi\in K\}
\end{align*}
is a compact subset of $\sC_{t_0}$.
\end{lemma}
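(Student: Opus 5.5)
Both claims will follow from a single compactness statement: whenever $\xi_n\to\xi$ in $H$ and $v_n\rightharpoonup v$ weakly in $S_M$, we have $X^{0,v_n}_{\cdot;t_0,\xi_n}\to X^{0,v}_{\cdot;t_0,\xi}$ in $\sC_{t_0}$. To prove it, split by the triangle inequality:
\begin{equation*}
d_{t_0}\big(X^{0,v_n}_{\cdot;t_0,\xi_n},X^{0,v}_{\cdot;t_0,\xi}\big)\le d_{t_0}\big(X^{0,v_n}_{\cdot;t_0,\xi_n},X^{0,v_n}_{\cdot;t_0,\xi}\big)+d_{t_0}\big(X^{0,v_n}_{\cdot;t_0,\xi},X^{0,v}_{\cdot;t_0,\xi}\big).
\end{equation*}
The second term tends to zero by the weak--strong continuity of $G_{t_0,\xi}$ in Lemma \ref{lem: good rate func}. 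For the first term we apply \eqref{eq: energy estimate skeleton diff} with $\xi_1=\xi_n$, $\xi_2=\xi$ and $t_1=t_0$. This estimate is uniform over $v\in S_M$, and its exponential prefactor depends only on $M$ and $\|\xi\|_H$. It therefore gives $\sup_{s\in[t_0,T]}\|\cdot\|_H^2+\int_{t_0}^T\|\cdot\|_V^2\,\d s\le C(M,\xi)\|\xi_n-\xi\|_H^2\to 0$ on every finite window, hence convergence in $d_{t_0}$.

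\emph{Lower semicontinuity.} Fix $\Phi$ and let $\xi_n\to\xi$. We may assume $\liminf_n\mathcal{I}_{t_0,\xi_n}(\Phi)=:L<\infty$ and pass to a subsequence attaining the $\liminf$. For $\delta>0$ choose $v_n$ with $\Phi=\mathcal{G}^0_{t_0,\xi_n}(\int_0^\cdot v_n\,\d s)$ and $\frac12\int|v_n|_U^2\,\d s\le \mathcal{I}_{t_0,\xi_n}(\Phi)+\delta$. Then $v_n\in S_{2(L+2\delta)}$ for large $n$. By weak compactness we extract $v_n\rightharpoonup v$, and the convergence statement above gives $\Phi=X^{0,v_n}_{\cdot;t_0,\xi_n}\to X^{0,v}_{\cdot;t_0,\xi}$, so $\Phi=X^{0,v}_{\cdot;t_0,\xi}$. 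Weak lower semicontinuity of the $L^2$ norm then yields $\mathcal{I}_{t_0,\xi}(\Phi)\le\frac12\int|v|^2_U\,\d s\le L+\delta$. Letting $\delta\to0$ proves the lower semicontinuity.

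\emph{Compactness of $\Lambda_{M,t_0,K}$.} We have $\Lambda_{M,t_0,K}\subset\cap_{k}\Gamma_{2M+1/k;t_0,K}$, which is compact by Lemma \ref{lem: good rate func}; so it suffices to show $\Lambda_{M,t_0,K}$ is closed. Take $\Phi_n\in\Lambda_{M,t_0,K}$ with $\Phi_n\to\Phi$, realized by $\xi_n\in K$ and $v_n\in S_{2M+2/n}$. Extract $\xi_n\to\xi\in K$ and $v_n\rightharpoonup v\in S_{2M}$. The same convergence argument gives $\Phi=X^{0,v}_{\cdot;t_0,\xi}$ with $\frac12\int|v|^2_U\,\d s\le M$, hence $\Phi\in\Lambda_{M,t_0,K}$.

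The main obstacle is the uniformity of \eqref{eq: energy estimate skeleton diff} when both the control and the initial datum move. The constant there depends only on $\xi_2$ and $M$, so we must fix $\xi_2=\xi$ as the limit point and put the varying $\xi_n$ in the $\xi_1$ slot. This is exactly what makes the triangle-inequality split above work.
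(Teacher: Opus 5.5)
Your proposal is correct and follows the paper's proof. It uses the same triangle-inequality split: the initial-data difference is controlled by \eqref{eq: energy estimate skeleton diff} (limit point $\xi$ in the $\xi_2$ slot, uniformly over the control ball), and the control difference is handled by the weak--strong continuity of $G_{t_0,\xi}$ from Lemma \ref{lem: good rate func}, followed by weak lower semicontinuity of the $L^2$ norm. The only difference is in the compactness step: the paper asserts $\Lambda_{M,t_0,K}=\bigcap_{n}\Gamma_{2M+1/n;t_0,K}$ by appeal to \cite{Budhiraja_Large_2008}, while you prove inclusion into this compact set plus closedness. That rests on the same joint continuity in $(\xi,v)$ and makes explicit the reverse inclusion the paper leaves implicit.
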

\begin{proof}
  Let $\{\xi_n\}_{n}$ be a convergence sequence in $H$ with the limit $\xi$. For each $\Phi\in\sC_{t_0}$, if
$\liminf_{n\rightarrow +\infty}\mathcal{I}_{t_0,\xi_n}(\Phi)=M<+\infty$, then by the definition of $\mathcal{I}_{t_0,\xi_n}(\Phi)$, for any $\delta>0$, there exists $\{v_{n_k}\}_{k\in \mathbb{N}}\subset S_{2M+\delta}$ such that
\begin{align*}
  \Phi=\mathcal{G}^0_{t_0,\xi_{n_k}}\Big(\int^{\cdot}_0v_{n_k}(s) \,  \d s\Big),\ \forall \, k \in \mathbb{N},
\end{align*}
where $\{n_k\}_k$ is a subsequence of $\mathbb{N}$. By the compactness of $S_{2M+\delta}$, there is a subsequence (still denoted by $\{n_k\}_k$) such that $v_{n_k}$ converges to $v\in S_{2M+\delta}$ as $k\rightarrow +\infty$. With the aid of Lemma \ref{lem: good rate func} and estimate \eqref{eq: energy estimate skeleton diff}, for any $t_0\in \mathbb{R}$ and $t_2>t_1\geq t_0$, we have
\begin{align*}
  &\sup_{t\in [t_1,t_2]} \Big\{ \|\Phi(t)-\mathcal{G}^0_{t_0,\xi}\Big(\int^{\cdot}_0v(s) \,  \d s\Big)(t)\|^2_{H}
  + \gamma_0 \int^{t}_{t_1}\|\Phi(\tau)-\mathcal{G}^0_{t_0,\xi}\Big(\int^{\cdot}_0v(s) \,  \d s\Big)(\tau)\|^2_{V} \, \d \tau \Big\}\\
& \leq \lim_{k\rightarrow \infty}\sup_{s\in [t_1,t_2]}\|\mathcal{G}^0_{t_0,\xi_{n_k}}\Big(\int^{\cdot}_0v_{n_k}(s) \,  \d s\Big)-\mathcal{G}^0_{t_0,\xi}\Big(\int^{\cdot}_0v(s) \,  \d s\Big)(t)\|^2_{H}\\
& \quad + \lim_{k\rightarrow \infty}  \gamma_0  \int^{t_2}_{t_1}\|\mathcal{G}^0_{t_0,\xi_{n_k}}\Big(\int^{\cdot}_0v_{n_k}(s) \,  \d s\Big)-\mathcal{G}^0_{t_0,\xi}\Big(\int^{\cdot}_0v(s) \,  \d s\Big)(t)\|^2_{V} \, \d t\\
& \leq \lim_{k\rightarrow \infty}\sup_{s\in [t_1,t_2]}\|\mathcal{G}^0_{t_0,\xi_{n_k}}\Big(\int^{\cdot}_0v_{n_k}(s) \,  \d s\Big)-\mathcal{G}^0_{t_0,\xi}\Big(\int^{\cdot}_0v_{n_k}(s) \,  \d s\Big)(t)\|^2_{H}\\
& \quad +\lim_{k\rightarrow \infty}\sup_{s\in [t_1,t_2]}\|\mathcal{G}^0_{t_0,\xi}\Big(\int^{\cdot}_0v_{n_k}(s) \,  \d s\Big)-\mathcal{G}^0_{t_0,\xi}\Big(\int^{\cdot}_0v(s) \,  \d s\Big)(t)\|^2_{H}\\
& \quad + \lim_{k\rightarrow \infty} \gamma_0 \int^{t_2}_{t_1}\|\mathcal{G}^0_{t_0,\xi_{n_k}}\Big(\int^{\cdot}_0v_{n_k}(s) \,  \d s\Big)-\mathcal{G}^0_{t_0,\xi}\Big(\int^{\cdot}_0v_{n_k}(s) \,  \d s\Big)(t)\|^2_{V} \, \d t\\
& \quad +\lim_{k\rightarrow \infty} \gamma_0 \int^{t_2}_{t_1}\|\mathcal{G}^0_{t_0,\xi}\Big(\int^{\cdot}_0v_{n_k}(s) \,  \d s\Big)-\mathcal{G}^0_{t_0,\xi}\Big(\int^{\cdot}_0v(s) \,  \d s\Big)(t)\|^2_{V} \, \d t\\
& =0.
\end{align*}
So $\mathcal{G}^0_{t_0,\xi}\Big(\int^{\cdot}_0 v(s) \,  \d s\Big)=\Phi$ in $\sC_{t_0}$ and $\mathcal{I}_{t_0,\xi}(\Phi)\leq M+\frac{\delta}{2}$. By the arbitrariness of $\delta$, we have
\begin{align*}
\mathcal{I}_{t_0,\xi}(\Phi)\leq M=\liminf_{n\rightarrow +\infty}\mathcal{I}_{t_0,\xi_n}(\Phi).
\end{align*}
We get $\xi\mapsto \mathcal{I}_{t_0,\xi}(\Phi)$ is an l.s.c. map. Following the proof of Lemma 5 in \cite{Budhiraja_Large_2008}, we obtain
\begin{align*}
\Lambda_{M,t_0,K}=\bigcap_{n\geq 1}\Gamma_{2M+\frac{1}{n};t_0,K}.
\end{align*}
Thus, $\Lambda_{M,t_0,K}$ is a compact follows from the compactness of $\Gamma_{2M+\frac{1}{n};t_0,K}$.
\end{proof}

\section{ULDP for the solution on compact sets} \label{sec: uniform LDP}
Recall that  for $t\in [t_0,T]$, $X^{\epsilon}_{t;t_0,\xi}=\mathcal{G}^{\epsilon}_{t_0}(W,\xi) (t,\omega)=:\mathcal{G}^{\epsilon}_{t_0,\xi}(W) (t,\omega)$ is the solution of Eq. \eqref{eq: SPDE} started from $t_0$ with initial data $\xi \in H$. In this section, we will prove the uniform LDP for $\{X^{\epsilon}_{\cdot;t_0,\xi}\}_{\epsilon>0} \subset \mathcal{E}:=C([t_0,T];H)\cap L^2([t_0,T];V)$ with initial datas $\xi$ in compact sets of $H$. The equipped distance is $d_{\mathcal{E}}(X,Y):= \sup_{s \in [t_0,T]} \| X(s)-Y(s)\|_{H}^2 \wedge \int_{t_0}^T \| X(s)-Y(s) \|_{V}^2 \, \d s$.
\begin{definition}[Freidlin-Wentzell uniform LDP] \label{def: ULDP}
    Let $\mathcal{K}$ be the family of all compactness subset of $H$. We say sequence $\{X^{\epsilon}_{\cdot;t_0,\xi}\}_{\epsilon>0}\subset \mathcal{E}$ indexed by $\xi \in H$ satisfies the uniform large deviation principle with rate function $\cI_{\xi}$, uniformly in $\mathcal{K}$ if the following two conditions hold:
    \begin{itemize}
        \item [1.] Upper bound. For any $K \in \mathcal{K}$, $\delta>0$ and $s_0>0$,
            \begin{equation*}
                \limsup_{\epsilon \rightarrow 0} \sup_{\xi \in K} \sup_{s \leq s_0} \Big\{ \epsilon \log \P \big( \inf_{\Phi \in \{\Phi \in \mathcal{E}; \, \cI_{\xi}(\Phi) \leq s\} } d_{\mathcal{E}}(X^{\epsilon}_{\cdot;t_0,\xi},\phi) \geq \delta \big) + s \Big\} \leq 0.
            \end{equation*}
        \item [2.] Lower bound. For any $K \in \mathcal{K}$, $\delta>0$ and $s_0>0$,
            \begin{equation*}
                \liminf_{\epsilon \rightarrow 0} \inf_{\xi \in K} \inf_{\{\Phi \in \mathcal{E}; \, \cI_{\xi}(\Phi) \leq s_0\}} \Big\{ \epsilon\,\log \P \big( d_{\mathcal{E}}(X^{\epsilon}_{\cdot;t_0,\xi},\Phi) <\delta \big) + \cI_{\xi}(\Phi) \Big\} \geq 0.
            \end{equation*}
    \end{itemize}
\end{definition}

Referring to \cite[Theorem 4.4]{Budhiraja_variational_2000} and \cite[Theorem 5]{Budhiraja_Large_2008}, the following two conditions (S1)-(S2) are sufficient to ensure the uniform LDP for $\{X^{\epsilon}_{\cdot;t_0,\xi}\}_{\epsilon>0}$, which is the well-known weak convergence method. It reads as follows.
\begin{proposition} \label{prp-1}
For the solution $X^{\epsilon}_{t;t_0,\xi}$ of Eq. \eqref{eq: SPDE}, if the following two items hold:
\begin{description}
  \item[(S1)] Let $M>0$, the sequence $\{v^{\epsilon}: \epsilon>0\}\subset \mathcal{A}_M$ and $\{\xi^{\epsilon}\}_{\epsilon>0} \subset H$. If $v^{\epsilon}$ converges to $v$ in distribution as $S_M-$valued random elements and $\xi^{\epsilon}$ converges to $\xi$ in $H$ as $\epsilon\rightarrow 0$, then
\begin{equation*}
  \mathcal{G}^{\epsilon}_{t_0,\xi^{\epsilon}}\Big(W(\cdot)+\frac{1}{\sqrt{\epsilon}}\int^{\cdot}_0v^{\epsilon}(s) \, \d s\Big) \rightarrow \mathcal{G}^{0}_{t_0,\xi}\Big(\int^{\cdot}_0v(s) \,  \d s\Big) \quad \mbox{on} \quad \sC_{t_0},
 \end{equation*}
in distribution as $\epsilon\rightarrow 0$.
  \item[(S2)] $\xi\mapsto \mathcal{I}_{t_0,\xi}$ is an l.s.c. map from $H$ to $[0,+\infty]$; and for each $M>0$ and compact set $K\subset H$, the set $\Gamma_{M; t_0,K}=\{\mathcal{G}^{0}_{t_0,\xi}(\int^{\cdot}_0v(s) \,  \d s)\in\sC_{t_0}: v\in S_M, \xi\in K\}$ is compact,
\end{description}
then $X^{\epsilon}_{t;t_0,\xi}$ satisfies ULDP over compact subsets  on $\sC_{t_0}$ with the good rate function $\mathcal{I}_{t_0,\xi}$:
\begin{equation*}
  \mathcal{I}_{t_0,\xi}(\Phi):=\inf_{\Big \{v\in L^2(\mathbb{R};U); \, \Phi=\mathcal{G}^{0}_{t_0,\xi}\Big(\int^{\cdot}_0 v(s) \,  \d s\Big)\Big\}} \big\{\frac{1}{2}\int^{+\infty}_{-\infty} |v(s)|^2_U \, \d s \big\}, \quad \forall \, \Phi \in \sC_{-\infty}.
\end{equation*}
\end{proposition}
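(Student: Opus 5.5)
This proposition is the standard weak-convergence criterion of Budhiraja--Dupuis--Maroulas, adapted to the present two-sided Wiener setting. I would therefore not reprove it from scratch. Instead, I would reduce it to \cite[Theorem 5]{Budhiraja_Large_2008} (equivalently \cite[Theorem 4.4]{Budhiraja_variational_2000}). That result is formulated for a family of Borel measurable maps $\mathcal{G}^{\epsilon}_{t_0,\xi}$ acting on the path of a cylindrical Wiener process together with a limiting map $\mathcal{G}^0_{t_0,\xi}$. The first step is to check that our objects fit that framework. By Proposition \ref{Prop: measurable map truncated}, $(w,\xi)\mapsto \mathcal{G}^{\epsilon}_{t_0}(w,\xi)$ is Borel measurable from $\W\times H$ to $\sC_{t_0}$. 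The solution on $[t_0,\infty)$ depends only on the increments $W(\cdot)-W(t_0)$ on $[t_0,\infty)$, and on $(-\infty,t_0)$ the path is frozen at $\xi$. Hence $\mathcal{G}^{\epsilon}_{t_0,\xi}$ factors through a measurable functional of a one-sided cylindrical Brownian motion restricted to finite windows $[t_0,N]$, which is exactly the Polish-space setting of the abstract theorem. The controls $v\in\mathcal{A}_M$ on $\R$ likewise act only through their restriction to $[t_0,\infty)$, and $\frac12\int_{-\infty}^{\infty}|v|_U^2\,\d s$ is minimized by setting $v=0$ off $[t_0,\infty)$. So the rate function $\mathcal{I}_{t_0,\xi}$ coincides with the one in the abstract theorem.

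The second step is to recall the two ingredients of the abstract argument and match them with (S1) and (S2). The core is the Bou\'e--Dupuis variational representation
\[
-\epsilon\log \E\, e^{-h(X^{\epsilon})/\epsilon}=\inf_{v\in\mathcal{A}}\E\Big[\tfrac12\int|v|_U^2\,\d s+h\Big(\mathcal{G}^{\epsilon}_{t_0,\xi}\big(W+\tfrac1{\sqrt\epsilon}\textstyle\int_0^\cdot v\,\d s\big)\Big)\Big],
\]
valid for bounded continuous $h$, together with the localization to $\mathcal{A}_M$ at the cost of an arbitrarily small error. One then argues by contradiction for the uniform Laplace principle over a compact $K$. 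Pick near-optimal $\xi^{\epsilon}\in K$ and controls $v^{\epsilon}\in\mathcal{A}_M$, and pass to subsequences with $\xi^{\epsilon}\to\xi$ (by compactness of $K$) and $v^{\epsilon}\Rightarrow v$ (by weak compactness of $S_M$, which is Polish). Condition (S1) then identifies the limit of the controlled processes as $\mathcal{G}^0_{t_0,\xi}(\int_0^\cdot v\,\d s)$, which yields the uniform lower bound of the Laplace principle. For the matching upper bound, one takes near-optimal deterministic controls for the limit problem and applies (S1) with constant sequences.

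The third step, which I expect to be the main obstacle, is passing from the uniform Laplace principle over compacts to the uniform Freidlin--Wentzell upper and lower bounds in Definition \ref{def: ULDP}. This requires that the level sets $\Lambda_{M,t_0,K}$ be compact and that $\xi\mapsto\mathcal{I}_{t_0,\xi}$ be lower semicontinuous. These are exactly the hypotheses in (S2), and Lemma \ref{lem: lsc uniform LDP} and Lemma \ref{lem: good rate func} already supply them. With these in hand, the equivalence argument of \cite[Theorem 5 and Proposition 14]{Budhiraja_Large_2008} applies verbatim on the Polish space $\sC_{t_0}$, or on $\mathcal{E}$ after restricting to $[t_0,T]$, since the restriction map is continuous. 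Finally, the uniform LDP on $\mathcal{E}$ follows from the one on $\sC_{t_0}$ by the contraction principle applied uniformly, because the restriction map is Lipschitz from $d_{t_0}$ to $d_{\mathcal{E}}$ up to a constant depending on $T$.
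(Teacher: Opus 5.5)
Your proposal is correct and takes the paper's route: the paper gives no separate argument and simply invokes \cite[Theorem 4.4]{Budhiraja_variational_2000} and \cite[Theorem 5]{Budhiraja_Large_2008}. Your extra checks (measurability of $\mathcal{G}^{\epsilon}_{t_0}$, the control on $(-\infty,t_0)$ playing no role, and the passage from the uniform Laplace principle to the uniform LDP via (S2)) only make explicit what that citation presupposes; one small correction is that, because of the truncation $1\wedge\cdot$ in $d_{t_0}$, the restriction map from $\sC_{t_0}$ to $\mathcal{E}$ is uniformly continuous rather than Lipschitz, which is still all the uniform contraction step needs.
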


Clearly, the condition (S2) has been proved by Lemma \ref{lem: lsc uniform LDP}. In the following, we devote to verifying the condition (S1). For each $\xi\in H$ and $v\in \mathcal{A}_M$, let $X^{\epsilon,v}_{t;t_0,\xi}$ be the solution of
\begin{equation}\label{eq: controlled equation truncated}
  X^{\epsilon,v}_{t;t_0,\xi}=\xi+\int^t_{t_0}A^{\epsilon}(X^{\epsilon,v}_{s;t_0,\xi}) \,  \d s+
\int^t_{t_0}B(X^{\epsilon,v}_{s;t_0,\xi})v(s) \,  \d s+\sqrt{\epsilon}\int^t_{t_0}B(X^{\epsilon,v}_{s;t_0,\xi}) \, \d W_s, \quad \forall \, t\geq t_0,
\end{equation}
and equal to $\xi$ otherwise. By Proposition \ref{Prop: measurable map truncated} and Girsanov transform, for each $\epsilon>0$, it holds
\begin{equation*}
  \mathbb{P}\mbox{-}a.s.,\ X^{\epsilon,v}_{t;t_0,\xi}= \mathcal{G}^{\epsilon}_{t_0;\xi}\Big(W(\cdot)+\frac{1}{\sqrt{\epsilon}}\int^{\cdot}_0v(s) \,  \d s\Big)(t), \quad \forall \,  t\in \mathbb{R},
\end{equation*}
which also implies the existence and uniqueness of solution $X^{\epsilon,v}_{t;t_0,\xi}$ to the controlled equation \eqref{eq: controlled equation truncated}. The following result gives an energy estimate of $X^{\epsilon,v}_{t;t_0,\xi}$ uniformly with $\epsilon $ and $v \in \mathcal{A}_{M}$. It is necessary for the ULDP in finite time interval and the LDP for stationary solution.
\begin{lemma} \label{lem: well-posedness controlled eq}
  Let $ M>0$, $\epsilon_0>0$ and $v \in \mathcal{A}_M$. Assume that the conditions {\rm [A1-A5]} hold for any $0 \leq \epsilon < \epsilon_0$, then there exists a unique solution $X^{\epsilon,v}_{t;t_0,\xi}$ to Eq. \eqref{eq: controlled equation truncated}. It satisfies
\begin{align}
 \sup_{v\in \mathcal{A}_M}  &   \E \Big[ \exp \Big\{  \delta \sup_{t \geq t_0} \Big( \| X^{\epsilon,v}_{t;t_0,\xi}\|_{H}^{2+\beta} + \frac{\gamma_0}{2} \int_{t_0}^t \| X^{\epsilon,v}_{s;t_0,\xi} \|_{H}^{\beta} \| X^{\epsilon,v}_{s;t_0,\xi} \|_{V}^{2} \, \d s \nonumber \\
      & \quad  \quad - (1+\frac{\beta}{2}) \big( \big( C_{A,\rho,\epsilon}+ \beta C_B \epsilon \big) \,  (t-t_0)  \big) \Big) \Big\} \Big]  \leq  2 e^{\delta ( \| \xi \|_{H}^{2+\beta} + (2+\beta) \frac{2 C_B}{\lambda_1 \gamma_0} M )  }; \label{est: energy controlled eq beta} \\
 \sup_{v\in \mathcal{A}_M}  &   \E \Big[ \exp \Big\{  \delta \sup_{t \geq t_0} \big( \| X^{\epsilon,v}_{t;t_0,\xi}\|_{H}^{2} +  \frac{\gamma_0}{2} \int_{t_0}^t  \| X^{\epsilon,v}_{s;t_0,\xi} \|_{V}^{2} \, \d s  -  C_{A,\rho,\epsilon} \,  (t-t_0)   \big) \Big\} \Big]  \leq  2 e^{\delta ( \| \xi \|_{H}^{2} + \frac{4 C_B}{\lambda_1 \gamma_0} M )  }. \label{est: energy controlled eq}
\end{align}
for any $0<\delta \leq \frac{ (1+\beta)\lambda_1 \gamma_{0}}{4 (2+\beta)^2 C_{B} \epsilon}$ and for any $0<\epsilon \leq \epsilon_0$. Furthermore, we have
\begin{align}
 \sup_{v\in \mathcal{A}_M}  &   \E \exp \Big\{ \delta \| X^{\epsilon,v}_{t_1;t_0,\xi}\|_{H}^{2+\beta}  \Big\} \leq 2 e^{\delta ( \| \xi \|_{H}^{2+\beta} + (2+\beta) \frac{C_{A,\rho,\epsilon}+ 2 C_B M+ \beta C_B \epsilon }{\lambda_1 \gamma_0} )}, \label{est: energy controlled eq beta H} \\
\sup_{v\in \mathcal{A}_M}  &     \E \Big[ \exp \Big\{  \frac{\delta}{2} \sup_{t \geq t_1} \big( \frac{\gamma_0}{2} \int_{t_1}^t \| X^{\epsilon,v}_{s;t_0,\xi} \|_{H}^{\beta} \| X^{\epsilon,v}_{s;t_0,\xi} \|_{V}^{2} \, \d s- (1+\frac{\beta}{2}) \big( C_{A,\rho,\epsilon}+ \beta C_B \epsilon \big) (t-t_1) \big) \Big\} \Big] \nonumber \\
 &  \leq 2 \, e^{\delta  \big( \frac{\| \xi \|_{H}^{2+\beta}}{2} + (1+\frac{\beta}{2}) \frac{ C_{A,\rho,\epsilon} + 2C_B M+  \beta C_{B} \epsilon  }{\lambda_1 \gamma_0} \big)}. \label{est: energy controlled eq beta V}
\end{align}
The ``$\beta=0$'' version of the above estimates also hold. Taking the supremum over $\epsilon \in (0,\epsilon_0]$ of the above inequalities yields the uniform estimate respect to $\epsilon$ on a finite time interval.
\end{lemma}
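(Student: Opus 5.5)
Existence and uniqueness of $X^{\epsilon,v}_{t;t_0,\xi}$ come from Proposition \ref{Prop: measurable map truncated} together with the Girsanov transform, as already observed before the statement: since $v\in\mathcal{A}_M$ has a.s. bounded $L^2$ norm, Novikov's condition holds. The law of $W+\epsilon^{-1/2}\int_0^\cdot v\,\d s$ is then equivalent to that of $W$. Hence $\mathcal{G}^{\epsilon}_{t_0}(W+\epsilon^{-1/2}\int_0^\cdot v\,\d s,\xi)$ solves \eqref{eq: controlled equation truncated}. Pathwise uniqueness transfers the same way.

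The estimates themselves I will prove by repeating the exponential-martingale argument of Proposition \ref{prop: basic energy estimate beta}(ii),(iii). The only new ingredient is the control term, which I treat as in Lemma \ref{lem: skeleton}(i).

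First apply It\^o's formula to $\|X^{\epsilon,v}_t\|_H^{2+\beta}$. The drift and It\^o-correction terms are bounded exactly as in \eqref{eq: energy estimate beta} using [A4], [A5] and \eqref{eq: embedding}. The extra term
\[
(2+\beta)\int_{t_0}^t\|X^{\epsilon,v}_s\|_H^\beta\scalH{B(X^{\epsilon,v}_s)v(s),X^{\epsilon,v}_s}\,\d s
\]
is handled by expanding in the basis $\{\mathcal{U}_k\}$ and applying Cauchy--Schwarz, then [A4] and Young. This bounds it by
\[
(1+\tfrac{\beta}{2})\Big(\tfrac{\lambda_1\gamma_0}{4}\int_{t_0}^t\|X^{\epsilon,v}_s\|_H^{2+\beta}\,\d s+\tfrac{4C_B}{\lambda_1\gamma_0}\int_{t_0}^t\|v(s)\|_U^2\,\d s\Big).
\]
The first piece is absorbed into $-(1+\frac\beta2)\gamma_0\int\|X\|_H^\beta\|X\|_V^2$ via \eqref{eq: embedding}, at the cost of part of the dissipation. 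The second piece is at most $(2+\beta)\frac{2C_B}{\lambda_1\gamma_0}M$ a.s., because $v\in\mathcal{A}_M$. This gives the pathwise inequality
\[
E_{X;\beta}(t)\le\|\xi\|_H^{2+\beta}+(2+\beta)\tfrac{2C_B}{\lambda_1\gamma_0}M+(1+\tfrac\beta2)(C_{A,\rho,\epsilon}+\beta C_B\epsilon)(t-t_0)+M(t),
\]
with the same martingale $M(t)$ as before.

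Next, write $M=\mathcal{M}_{2\delta}+\delta\langle M\rangle$ and bound $\d\langle M\rangle\le\frac{(2+\beta)^2C_B\epsilon}{\lambda_1}\|X\|_H^\beta\|X\|_V^2\,\d t$. The main obstacle is the bookkeeping here: since part of the dissipation was spent absorbing the control term, $\delta\langle M\rangle$ must be absorbed with a smaller coefficient. That is exactly why the admissible range is halved to $\delta\le\frac{(1+\beta)\lambda_1\gamma_0}{4(2+\beta)^2C_B\epsilon}$. I will check that with this $\delta$ the remaining dissipation still dominates, keeping at least $\frac{\gamma_0}{2}\int\|X\|_H^\beta\|X\|_V^2$. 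The supermartingale maximal inequality then gives $\E e^{\delta\sup_t\mathcal{M}_{2\delta}(t)}\le2$. Exponentiating the pathwise inequality yields \eqref{est: energy controlled eq beta}, and all constants are independent of $v$, so the supremum over $\mathcal{A}_M$ costs nothing. The case $\beta=0$ gives \eqref{est: energy controlled eq}.

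For \eqref{est: energy controlled eq beta H} I repeat the weighted argument of part (iii), with weight $e^{\gamma(t-t_0)}$ and $\gamma=\lambda_1\gamma_0/2$. The control contribution becomes $\int e^{\gamma(s-t_0)}\|v\|_U^2\,\d s\le e^{\gamma(t_1-t_0)}M$, and after dividing by $e^{\gamma(t_1-t_0)}$ it gives the $2C_BM$ term. For \eqref{est: energy controlled eq beta V} I use the Markov restart at $t_1$, i.e. the controlled analogue of \eqref{est: exp energy t_1 beta}. Combining it with \eqref{est: energy controlled eq beta H} via H\"older, exactly as in the proof of \eqref{est: exp V beta}, gives the stated bound.
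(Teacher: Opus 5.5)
Your proposal is correct and follows essentially the same route as the paper: Girsanov for well-posedness, It\^o's formula for $\|X^{\epsilon,v}\|_H^{2+\beta}$ with the control term handled by the basis expansion, {\rm [A4]} and Young, the exponential supermartingale with the reduced range of $\delta$, the weighted version with $\gamma=\lambda_1\gamma_0/2$ for \eqref{est: energy controlled eq beta H}, and the restart at $t_1$ plus H\"older for \eqref{est: energy controlled eq beta V}.

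One caveat concerns the last step. Suppose you bound $\int_{t_1}^{t}\|v\|_U^2\,\d s$ and $\int_{t_0}^{t_1}e^{-\gamma(t_1-s)}\|v\|_U^2\,\d s$ by $M$ separately, and then apply H\"older to the two finished bounds. You then get $4C_BM$ instead of the stated $2C_BM$. To obtain the stated constant, keep both control integrals inside the exponentials and use $\int_{t_0}^{t_1}\|v\|_U^2\,\d s+\int_{t_1}^{\infty}\|v\|_U^2\,\d s\le M$ pathwise before applying H\"older. The paper's own H\"older display glosses over this by simply omitting the control term from the first factor.
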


\begin{proof}
The existence and uniqueness of solution $X^{\epsilon,v}_{t;t_0,\xi}$ to the controlled equation \eqref{eq: controlled equation truncated} is given by Proposition \ref{Prop: measurable map truncated} and the Girsanov theorem. Next, we will show the uniform energy estimate.

By It\^o formula and coercivity conditions {\rm [A5]} and condition {\rm [A4]}, we obtain
\begin{align}
        & \|X^{\epsilon,v}_{t;t_0,\xi}\|_{H}^{2+\beta} - \| \xi \|_{H}^{2+\beta} - (2+\beta) \sqrt{\epsilon} \, \int_{t_0}^{t} \|X^{\epsilon,v}_{s;t_0,\xi}\|_{H}^{\beta} \scalH{X^{\epsilon,v}_{s;t_0,\xi}, B(X^{\epsilon,v}_{s;t_0,\xi}) \, \d W_s}  \nonumber \\
        & \quad  - (2+\beta) \int_{t_0}^{t} \|X^{\epsilon,v}_{s;t_0,\xi} \|_{H}^{\beta} \scalH{X^{\epsilon,v}_{s;t_0,\xi}, B(X^{\epsilon,v}_{s;t_0,\xi}) v(s)} \, \d s  \nonumber \\
        & \quad  = (1+\frac{\beta}{2}) \int_{t_0}^{t} \|X^{\epsilon,v}_{s;t_0,\xi}\|_{H}^{\beta} \big( 2 \, \scalV{A^{\epsilon}(X^{\epsilon,v}_{s;t_0,\xi}),X^{\epsilon,v}_{s;t_0,\xi}} +  \epsilon \, \| B (X^{\epsilon,v}_{s;t_0,\xi}) \|_{2}^2 \big) \, \d s \nonumber \\
        & \quad \quad + \beta (1+ \frac{\beta}{2}) \, \epsilon  \int_{t_0}^{t} \|X^{\epsilon,v}_{s;t_0,\xi}\|_{H}^{\beta-2} \sum_{k} \big| \scalH{X^{\epsilon,v}_{s;t_0,\xi},B(X^{\epsilon,v}_{s;t_0,\xi})  \mathcal{U}_{k}}  \big|^2 \, \d s \nonumber \\
        & \quad  \leq -(1+\frac{\beta}{2}) \gamma_0 \int_{t_0}^{t}  \|X^{\epsilon,v}_{s;t_0,\xi} \|_{H}^{\beta} \|X^{\epsilon}_{s;t_0,\xi}\|_{V}^{2} \, \d s + (1+\frac{\beta}{2}) \int_{t_0}^{t} \big( C_{A, \rho, \epsilon} + \beta C_{B} \epsilon \big) \, \d s. \label{est: controlled eq basic}
\end{align}
Using the condition {\rm [A4]}, we obtain
\begin{align}
 & (2+\beta) \Big| \int_{t_0}^{t} \|X^{\epsilon,v}_{s;t_0,\xi} \|_{H}^{\beta} \scalH{X^{\epsilon,v}_{s;t_0,\xi}, B(X^{\epsilon,v}_{s;t_0,\xi}) v(s)} \, \d s \Big| \nonumber \\
  &= (2+\beta) \Big|\int^t_{t_0} \|X^{\epsilon,v}_{s;t_0,\xi}\|^{\beta}_H \sum_{k} \scalH{ B(X^{\epsilon,v}_{s;t_0,\xi}) \, \mathcal{U}_{k},X^{\epsilon,v}_{s;t_0,\xi}} \scalU{v(s),\mathcal{U}_{k}} \, \d s \Big| \nonumber  \\
 & \leq (\beta +2) \Big(\int^t_{t_0} \|X^{\epsilon,v}_{s;t_0,\xi}\|^{2\beta}_H \sum_{k} \scalH{ B(X^{\epsilon,v}_{s;t_0,\xi})\mathcal{U}_{k},X^{\epsilon,v}_{s;t_0,\xi}}^2  \, \d s \Big)^{1/2}\Big(\int^t_{t_0} \|v(s)\|^2_U \, \d s\Big)^{1/2} \nonumber \\
 & \leq (1+\frac{\beta}{2}) \big( \frac{\lambda_1 \gamma_0}{4}\int^t_{t_0} \|X^{\epsilon,v}_{s;t_0,\xi}\|^{2+\beta}_H \, \d s+ \frac{4 C_B}{\lambda_1 \gamma_0}  \int^t_{t_0} \|v(s)\|^2_U \, \d s \big). \label{est: controlled eq basic 2}
\end{align}
Analogous to the proof of Proposition \ref{prop: basic energy estimate beta}, define
 \begin{equation*}
        M (t) := (2+\beta) \sqrt{\epsilon} \int_{t_0}^{t} |X^{\epsilon,v}_{s;t_0,\xi}\|_{H}^{\beta} \scalH{X^{\epsilon,v}_{s;t_0,\xi}, B(X^{\epsilon,v}_{s;t_0,\xi}) \, \d W_s}.
    \end{equation*}
  Let $\<M\>_{t}$ be the quadratic variation of local martingale $M(t)$, for any $\delta>0$, define
    \begin{equation*}
    \mathcal{M}_{\delta}(t):= M(t) - \frac{\delta}{2} \<M\>_{t},
    \end{equation*}
    where $0<\delta \leq \frac{ (2+3\beta)\lambda_1 \gamma_{0}}{8 (2+\beta)^2 C_{B} \epsilon}$. Using conditions {\rm [A4]}, we have
    \begin{align*}
        \d \< M \>_t & \leq (2+\beta)^2 \epsilon \, \|X^{\epsilon,v}_{s;t_0,\xi}\|_{H}^{2 \beta} \| \scalH{X^{\epsilon,v}_{t;t_0,\xi},B(X^{\epsilon,v}_{t;t_0,\xi})} \|_{L_2(U;\R)}^2 \, \d t \\
 & \leq (2+\beta)^2  C_{B} \,\epsilon \, \| X^{\epsilon,v}_{t;t_0,\xi} \|_{H}^{2+\beta} \, \d t \leq \frac{(2+\beta)^2  C_{B} \epsilon }{\lambda_1} \, \| X^{\epsilon,v}_{t;t_0,\xi} \|_{H}^{\beta} \| X^{\epsilon,v}_{t;t_0,\xi} \|_{V}^2  \, \d t.
\end{align*}
Combining the above estimates and taking $0<\delta \leq \frac{ (2+3\beta)\lambda_1 \gamma_{0}}{8 (2+\beta)^2 C_{B} \epsilon}$, we obtain
    \begin{align*}
        E_{X^{\epsilon,v}_{t;t_0,\xi};\beta} &:= \| X^{\epsilon,v}_{t;t_0,\xi}\|_{H}^{2+\beta} + \frac{\gamma_0}{2} \int_{t_0}^t \| X^{\epsilon,v}_{s;t_0,\xi} \|_{H}^{\beta} \| X^{\epsilon,v}_{s;t_0,\xi} \|_{V}^{2} \, \d s \\
        & \, \leq \| \xi \|_{H}^2 + \mathcal{M}_{2\delta}(t) + (1+\frac{\beta}{2}) \Big( \big( C_{A,\rho,\epsilon}+ \beta C_B \epsilon \big) \,  (t-t_0) + \frac{4 C_B}{\lambda_1 \gamma_0} \int^t_{t_0} \|v(s)\|^2_U \, \d s  \Big).
    \end{align*}

Notice that $\exp \{\delta \mathcal{M}_{\delta}(t)  \}$ is a positive supermartingale whose value is $1$ at time $t_0$. By the same argument in the proof of Proposition \ref{prop: basic energy estimate beta}, we have
    \begin{equation*}
        \E \Big[   e^{ \delta \sup_{t \geq t_0 } \mathcal{M}_{2 \delta}(t)} \Big] = 1 + \delta \int_{0}^{\infty} e^{\delta \rho} \, \P \Big( \sup_{t \geq t_0 } \mathcal{M}_{2 \delta}(t) \geq \rho \Big) \, \d \rho \leq 1 + \delta \int_{0}^{\infty} e^{- \delta \rho} \d \rho =  2.
    \end{equation*}
The following inequality gives,
    \begin{align*}
      &   \E \Big[ \exp \Big\{  \delta \sup_{t \geq t_0} \Big( E_{X^{\epsilon,v}_{t;t_0,\xi};\beta} - (1+\frac{\beta}{2}) \big( \big( C_{A,\rho,\epsilon}+ \beta C_B \epsilon \big) \,  (t-t_0) - \frac{4 C_B}{\lambda_1 \gamma_0} \int^t_{t_0} \|v(s)\|^2_U \, \d s \big) \Big) \Big\} \Big] \\
       &  \quad   \leq \E \big[ e^{ \delta \sup_{t \geq t_0 } \mathcal{M}_{2 \delta}(t)} \big] \,  e^{\delta \| \xi \|_{H}^{2+\beta}  } \leq 2 e^{\delta \| \xi \|_{H}^{2+\beta}  }.
    \end{align*}
Using the fact $v \in \mathcal{A}_{M}$, we get estimate \eqref{est: energy controlled eq beta} holds for $0<\delta \leq \frac{ (1+\beta)\lambda_1 \gamma_{0}}{4 (2+\beta)^2 C_{B} \epsilon} \leq \frac{ (2+3\beta)\lambda_1 \gamma_{0}}{8 (2+\beta)^2 C_{B} \epsilon}$. For $0<\delta \leq \frac{ (1+\beta)\lambda_1 \gamma_{0}}{4 (2+\beta)^2 C_{B} \epsilon} \leq \frac{ \lambda_1 \gamma_{0}}{16 C_{B} \epsilon}$, the ``$\beta=0$'' version uniform estimate \eqref{est: energy controlled eq} also holds.

Finally, we focus on the uniform estimate of $\|X^{\epsilon,v}_{t;t_0,\xi} \|_{H}$ and $\int_{t_1}^{t} \| X^{\epsilon,v}_{s;t_0,\xi} \|_{H}^{\beta}  \| X^{\epsilon,v}_{s;t_0,\xi} \|_{V}^{2}  \, \d s$ for all $t_1 \geq t_0$. Similar to the estimate \eqref{est: controlled eq basic} and \eqref{est: controlled eq basic 2}, we have
\begin{align}
        & e^{\gamma (t-t_0)} \|X^{\epsilon,v}_{t;t_0,\xi}\|_{H}^{2+\beta} - \| \xi \|_{H}^{2+\beta} - (2+\beta) \sqrt{\epsilon} \, \int_{t_0}^{t} e^{\gamma (s-t_0)} \|X^{\epsilon,v}_{s;t_0,\xi}\|_{H}^{\beta} \scalH{X^{\epsilon,v}_{s;t_0,\xi}, B(X^{\epsilon,v}_{s;t_0,\xi}) \, \d W_s}  \nonumber \\
        &  \leq \big(  \gamma + (1+\frac{\beta}{2}) \frac{\lambda_1 \gamma_0}{4} \big) \int_{t_0}^{t} e^{\gamma (s-t_0)} \|X^{\epsilon,v}_{s;t_0,\xi}\|_{H}^{2+\beta} \, \d s  -(1+\frac{\beta}{2}) \gamma_0 \int_{t_0}^{t}  e^{\gamma (s-t_0)} \|X^{\epsilon,v}_{s;t_0,\xi} \|_{H}^{\beta} \|X^{\epsilon}_{s;t_0,\xi}\|_{V}^{2} \, \d s \nonumber \\
        & \quad + (1+\frac{\beta}{2}) \int_{t_0}^{t} \big( C_{A, \rho, \epsilon} + \beta C_{B} \epsilon + \frac{4 C_B}{\lambda_1 \gamma_0}  \|v(s)\|^2_U  \big) \, e^{\gamma (s-t_0)} \, \d s, \label{est: controlled eq basic 3}
\end{align}
where $\gamma>0$ will be determined later. Denote
    \begin{equation*}
        M_{\gamma}(t):= (2+\beta) \sqrt{\epsilon} \int_{t_0}^{t} e^{\gamma (s-t_0)} \|X^{\epsilon,v}_{s;t_0,\xi}\|_{H}^{\beta} \scalH{X^{\epsilon,v}_{s;t_0,\xi}, B(X^{\epsilon,v}_{s;t_0,\xi}) \, \d W_s}
    \end{equation*}
    and $\mathcal{M}_{\delta^{\prime},\gamma}(t):= M_{\gamma}(t) -\frac{\delta^{\prime}}{2} \<M_{\gamma} \>_{t}$. Using conditions {\rm [A4]}, we have
    \begin{align*}
        \d \< M_{\gamma}\>_t & \leq (2+\beta)^2 \epsilon \, e^{2 \gamma (t-t_0)} \|X^{\epsilon,v}_{s;t_0,\xi}\|_{H}^{2\beta}  \| \scalH{X^{\epsilon,v}_{t;t_0,\xi},B(X^{\epsilon,v}_{t;t_0,\xi})} \|_{L_2(U;\R)}^2 \, \d t  \\
        &  \leq (2+\beta)^2 C_{B} \, \epsilon  \, e^{\gamma (t_1-t_0)} \,   e^{\gamma (t-t_0)}    \| X^{\epsilon,v}_{t;t_0,\xi} \|_{H}^{2+\beta}  \d t,
    \end{align*}
    for any $t \in [t_0,t_1]$. For any $0<\delta^{\prime} <\frac{ 3(2+\beta)\lambda_1 \gamma_0- 8\gamma}{8 (2+\beta)^2 C_{B} \epsilon} e^{-\gamma (t_1-t_0)} $, estimate \eqref{est: controlled eq basic 3} yields
    \begin{align*}
        e^{\gamma (t-t_0)} \|X^{\epsilon,v}_{t;t_0,\xi}\|_{H}^{2+\beta} & \leq  \| \xi \|_{H}^{2+\beta} + \mathcal{M}_{2\delta^{\prime},\gamma}(t) \\
        &  \quad +  (1+\frac{\beta}{2})  \Big(\frac{C_{A,\rho,\epsilon}+ \beta C_B \epsilon }{\gamma} \big(e^{\gamma(t-t_0)} -1 \big) + \frac{4 C_B}{\lambda_1 \gamma_0} \int_{t_0}^{t} e^{\gamma (s-t_0)} \, \|v(s)\|^2_U  \,  \d s \Big),
    \end{align*}
for any $t \in [t_0,t_1]$. Let $\delta= e^{\gamma (t_1-t_0)} \, \delta^{\prime} < \frac{ 3(2+\beta)\lambda_1 \gamma_0- 8\gamma}{4 (2+\beta)^2 C_{B} \epsilon}$. By the supermartingale argument analogous to the proof of (iii) in Proposition \ref{prop: basic energy estimate beta}, we derive
\begin{align*}
        \E \exp \Big\{ \delta \sup_{t \in [t_0,t_1] } \Big\{ &  e^{-\gamma (t_1-t)}\| X^{\epsilon,v}_{t;t_0,\xi}\|_{H}^{2+\beta} - e^{-\gamma (t_1-t_0)} \| \xi \|_{H}^{2+\beta} - (1+\frac{\beta}{2}) \frac{C_{A,\rho,\epsilon}+ \beta C_B \epsilon }{\gamma} \frac{e^{\gamma (t-t_0)}-1}{e^{\gamma (t_1-t_0)}} \\
        & - (1+\frac{\beta}{2}) \frac{4 C_B}{\lambda_1 \gamma_0} \int_{t_0}^{t} e^{-\gamma (t-s)} \, \|v(s)\|^2_U  \,  \d s \Big\} \Big\} \leq 2.
\end{align*}
Taking $\gamma=\frac{\lambda_1 \gamma_0}{2} $, the above inequality yields
\begin{equation*}
   \E \exp \Big\{ \delta  \Big\{ \| X^{\epsilon,v}_{t_1;t_0,\xi}\|_{H}^{2+\beta} - (1+\frac{\beta}{2}) \frac{4 C_B}{\lambda_1 \gamma_0} \int_{t_0}^{t_1} \, \|v(s)\|^2_U  \,  \d s \Big\} \Big\} \leq 2 e^{\delta ( \| \xi \|_{H}^{2+\beta} + (2+\beta) \frac{C_{A,\rho,\epsilon}+ \beta C_B \epsilon }{\lambda_1 \gamma_0} )},
\end{equation*}
for any $0<\delta < \frac{ (2+3\beta)\lambda_1 \gamma_{0}}{8(2+\beta)^2 C_{B} \epsilon}$ and any $t_1 \geq t_0$. Using the fact $v \in \mathcal{A}_{M}$, we get estimate \eqref{est: energy controlled eq beta H}. Due to $X^{\epsilon,v}_{t;t_0,\xi}= X^{\epsilon,v}_{t;t_1,X^{\epsilon,v}_{t_1;t_0,\xi}}$, analogous to estimate \eqref{est: energy controlled eq beta}, we have
    \begin{align*}
        \E \Big[ \exp \Big\{  \delta \sup_{t \geq t_1} &  \big( \frac{\gamma_0}{2} \int_{t_1}^t \| X^{\epsilon,v}_{s;t_0,\xi} \|_{H}^{\beta} \| X^{\epsilon,v}_{s;t_0,\xi} \|_{V}^{2} \, \d s  - \| X^{\epsilon,v}_{t_1;t_0,\xi} \|_{H}^{2+\beta}  \nonumber \\
       &  \quad - (1+\frac{\beta}{2}) \frac{4 C_B}{\lambda_1 \gamma_0} \int^t_{t_1} \|v(s)\|^2_U \, \d s  - (1+\frac{\beta}{2}) \big( C_{A,\rho,\epsilon}+ \beta C_B \epsilon \big) \, (t-t_1) \big) \Big\} \Big] \leq  2,
    \end{align*}
for any $0<\delta < \frac{ (2+3\beta)\lambda_1 \gamma_{0}}{8(2+\beta)^2 C_{B} \epsilon}$ and any $t_1 \geq t_0$. Combining the above exponential estimate of $\| X^{\epsilon,v}_{t_1;t_0,\xi} \|_{H}^{\beta+2}$ and using H\"older inequality, we obtain
\begin{align*}
        & \E \Big[ \exp \Big\{  \frac{\delta}{2} \sup_{t \geq t_1} \big( \frac{\gamma_0}{2} \int_{t_1}^t \| X^{\epsilon,v}_{s;t_0,\xi} \|_{H}^{\beta} \| X^{\epsilon,v}_{s;t_0,\xi} \|_{V}^{2} \, \d s- (1+\frac{\beta}{2}) \big( C_{A,\rho,\epsilon}+ \beta C_B \epsilon \big) (t-t_1) \big) \Big\} \Big]  \\
        &  \leq  \Big( \E \Big[ \exp \Big\{  \delta \sup_{t \geq t_1} \big( \frac{\gamma_0}{2} \int_{t_1}^t \| X^{\epsilon,v}_{s;t_0,\xi} \|_{H}^{\beta} \| X^{\epsilon,v}_{s;t_0,\xi} \|_{V}^{2} \, \d s - \| X^{\epsilon,v}_{t_1;t_0,\xi} \|_{H}^{2+\beta} -(1+\frac{\beta}{2}) \big( C_{A,\rho,\epsilon}+ \beta C_B \epsilon \big) (t-t_1) \big) \Big\} \Big] \Big)^{1/2} \\
        & \quad \quad \times \Big(  \E \exp \Big\{ \delta \big\{ \| X^{\epsilon,v}_{t_1;t_0,\xi}\|_{H}^{2+\beta} \big\} \Big\} \Big)^{1/2}  \leq 2 \, e^{\delta  \big( \frac{\| \xi \|_{H}^{2+\beta}}{2} + (1+\frac{\beta}{2}) \frac{ C_{A,\rho,\epsilon} + 2C_B M+  \beta C_{B} \epsilon  }{\lambda_1 \gamma_0} \big)}.
\end{align*}
    Thus, we obtain the result \eqref{est: energy controlled eq beta V} and the corresponding  ``$\beta=0$'' version uniform estimates.
\end{proof}

Next, we will prove the estimate of the difference between solutions $X^{\epsilon,v}_{t;t_0,\xi_1}$ and $X^{\epsilon,v}_{t;t_0,\xi_2}$ starting from different initial values, which is uniform with $\epsilon > 0$ and $v \in \mathcal{A}_{M}$.
\begin{lemma} \label{lem: controlled eq diff}
Let $ M>0$, $\epsilon_0>0$ and $v \in \mathcal{A}_M$. Assume that the conditions {\rm [A1-A5]} hold for any $0 \leq \epsilon < \epsilon_0$, then for any two $\mathcal{F}_{t_0}$ measurable initial datas $\xi_1,\xi_2\in H$, the difference between solutions $X^{\epsilon,v}_{t;t_0,\xi_1}$ and $X^{\epsilon,v}_{t;t_0,\xi_2}$ to Eq. \eqref{eq: controlled equation truncated} satisfies the uniform estimate
\begin{align}
   & \max \Big\{ \E \big[\sup_{s\in [t_0,t]} e^{- \gamma (t-s)} \| X^{\epsilon,v}_{s;t_0,\xi_1}-X^{\epsilon,v}_{s;t_0,\xi_2} \|_{H}^2 \big]^{\frac{1}{2}}, \E \big[ (\gamma_0 -\tilde{\gamma}_0) \int_{t_0}^{t} e^{- \gamma (t-s)}   \| X^{\epsilon,v}_{s;t_0,\xi_1}-X^{\epsilon,v}_{s;t_0,\xi_2} \|_{V}^2 \, \d s \big]^{\frac{1}{2}} \Big\}  \nonumber \\
    & \leq 2 (\E \| \xi_1-\xi_2 \|_{H}^2 )^{\frac{1}{2}}  e^{\frac{  C_{\rho_1}}{\gamma_0} \big( \| \xi_2 \|_{H}^{2} + \| \xi_2 \|_{H}^{2+\beta} + \big(2+\frac{\beta}{2} \big) \frac{ 2 C_{A,\rho} + 4C_B M+ \beta C_{B} \tilde{\epsilon}  }{\lambda_1 \gamma_0} \big)+ \frac{C_B M}{\lambda_1(\gamma_0-\tilde{\gamma}_0)}} e^{-\frac{\gamma }{2}(t-t_0)}, \label{est: diff controlled eq}
\end{align}
for any $\epsilon<\frac{\tilde{\epsilon}}{4} \vee \epsilon_0$ and $\gamma \in [0,\frac{\lambda_1 (\gamma_0-\tilde{\gamma}_0) }{2}]$.
\end{lemma}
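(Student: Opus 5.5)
The plan is to follow Step 1 of the proof of Theorem \ref{thm: stationary solution}, with two changes: the exponential estimates of Lemma \ref{lem: well-posedness controlled eq} replace those of Propositions \ref{prop: basic energy estimate beta} and \ref{prop: basic energy estimate}, and the control drift $B(\cdot)v$ must be absorbed. Set $\eta(t)=X^{\epsilon,v}_{t;t_0,\xi_1}-X^{\epsilon,v}_{t;t_0,\xi_2}$. We introduce a random weight that depends only on the second solution,
\begin{equation*}
\beta_{\gamma}(t):=\gamma(t-t_0)-C_{\rho_1}\int_{t_0}^{t}\big(1+\|X^{\epsilon,v}_{s;t_0,\xi_2}\|_H^{\beta}\big)\|X^{\epsilon,v}_{s;t_0,\xi_2}\|_V^2\,\d s+c_1(t-t_0)-c_2\int_{t_0}^t\|v(s)\|_U^2\,\d s ,
\end{equation*}
where $c_1$ is the compensating constant $\frac{C_{\rho_1}}{2\gamma_0}\big((4+\beta)C_{A,\rho}+\beta(2+\beta)C_B\epsilon\big)$ as before, and $c_2=\frac{2C_B}{\lambda_1(\gamma_0-\tilde{\gamma}_0)}$. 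Applying It\^o's formula to $e^{\beta_\gamma(t)}\|\eta(t)\|_H^2$ and using [A2] with $v_1=X^{\epsilon,v}_{\cdot;t_0,\xi_2}$ (note that $\rho$ depends only on $v_1$) cancels the $\rho$-term against the derivative of $\beta_\gamma$.

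The control term $2\langle (B(X_1)-B(X_2))v,\eta\rangle_H$ is handled as in the proof of Lemma \ref{lem: skeleton}(ii). By the second inequality of [A4] and Cauchy--Schwarz, it is bounded by $\frac{\lambda_1(\gamma_0-\tilde\gamma_0)}{2}\|\eta\|_H^2+c_2\|\eta\|_H^2\|v\|_U^2$. The last term is killed by the $-c_2\|v\|_U^2$ part of $\beta_\gamma$. The stochastic integral is treated by BDG together with the second inequality of [A4], exactly as in Theorem \ref{thm: stationary solution}; it produces $\frac12\E\sup e^{\beta_\gamma}\|\eta\|_H^2+18C_B\epsilon\,\E\int e^{\beta_\gamma}\|\eta\|_H^2$. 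We then use $\gamma\le\lambda_1(\gamma_0-\tilde\gamma_0)/2$, the Poincar\'e inequality \eqref{eq: embedding}, the definition \eqref{def: gamma} of $\tilde\gamma_0$, and smallness of $\epsilon$ relative to $\tilde\epsilon$ to absorb all $\|\eta\|_H^2$ terms into $-2\gamma_0\|\eta\|_V^2$. This leaves a residual coefficient $(\gamma_0-\tilde\gamma_0)$ on $\int e^{\beta_\gamma}\|\eta\|_V^2$, and gives
\begin{equation*}
\max\Big\{\E\sup_{s\in[t_0,t]}e^{\beta_\gamma(s)}\|\eta(s)\|_H^2,\ (\gamma_0-\tilde\gamma_0)\E\int_{t_0}^t e^{\beta_\gamma(s)}\|\eta(s)\|_V^2\,\d s\Big\}\le 2\E\|\xi_1-\xi_2\|_H^2 .
\end{equation*}

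Next we remove the random weight with H\"older's inequality, using exponent $2$ this time instead of $p_0$. We write $e^{\gamma(s-t_0)}\|\eta\|^2=e^{\beta_\gamma}\|\eta\|^2\cdot e^{\gamma(s-t_0)-\beta_\gamma(s)}$ and take square roots. By Cauchy--Schwarz, the factor $\E\exp\{2(\gamma(s-t_0)-\beta_\gamma(s))\}$ splits into two exponential moments: one of $\frac{2C_{\rho_1}}{\gamma_0}\big(\frac{\gamma_0}{2}\int\|X_2\|_V^2-\dots\big)$ and one of the analogous $\|X_2\|_H^\beta\|X_2\|_V^2$ functional. These are controlled by the $\beta=0$ and $\beta$ versions of \eqref{est: energy controlled eq} and \eqref{est: energy controlled eq beta}, evaluated at $\delta=4C_{\rho_1}/\gamma_0$. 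This choice of $\delta$ is admissible precisely when $\epsilon<\tilde\epsilon/4$, by the first term in \eqref{def: epsilon}. The deterministic piece $e^{c_2\int\|v\|^2}\le e^{c_2M}$ yields the factor $e^{C_BM/(\lambda_1(\gamma_0-\tilde\gamma_0))}$ after the square root. Since $\xi_2$ is $\mathcal F_{t_0}$-measurable, we condition on $\mathcal F_{t_0}$ and apply the estimates with deterministic initial data. We then bound $C_{A,\rho,\epsilon}\le C_{A,\rho}$ and $\epsilon\le\tilde\epsilon$, which produces the stated exponent. Finally, multiplying by $e^{-\gamma(t-t_0)}$ gives the $e^{-\frac{\gamma}{2}(t-t_0)}$ factor after the square root. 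The $V$-integral bound follows in the same way from the second entry of the max.

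The main obstacle is the bookkeeping of constants. One must check that a single $\delta$ lies inside the admissible ranges of both exponential estimates uniformly in $\epsilon$ and $v\in\mathcal A_M$. One must also check that, after all absorptions, the dissipation margin is still at least $(\gamma_0-\tilde\gamma_0)$ on the $V$-norm while allowing $\gamma$ up to $\lambda_1(\gamma_0-\tilde\gamma_0)/2$. If this margin turns out too tight, I would first reduce the Young-inequality weight on the control term, moving more of it into $c_2$, which only changes the $M$-dependent prefactor.
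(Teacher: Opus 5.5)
Your proposal is correct and is essentially the paper's own proof: the same random weight built from $X^{\epsilon,v}_{\cdot;t_0,\xi_2}$ with the extra term $-\zeta\int_{t_0}^t\|v(s)\|_U^2\,\d s$ (your $c_2$ is exactly the paper's $\zeta$), the same Young split of the control term via {\rm [A4]}, then BDG and absorption, and finally H\"older with exponent $2$ against the exponential estimates of Lemma~\ref{lem: well-posedness controlled eq} at $\delta=4C_{\rho_1}/\gamma_0$. Two bookkeeping points need fixing. First, the compensator must be $c_1=\frac{C_{\rho_1}}{\gamma_0}\big((4+\beta)C_{A,\rho}+\beta(2+\beta)C_B\epsilon\big)$, twice what you wrote; with half of it the exponential estimates leave an uncompensated factor growing in $t-t_0$. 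Second, under $\epsilon<\tilde\epsilon/4$ the absorption only guarantees the coefficient $\frac{\gamma_0-\tilde\gamma_0}{2}$ (the paper's $\Delta_\gamma$) in front of $\int e^{\beta_\gamma}\|\eta\|_V^2$, not $\gamma_0-\tilde\gamma_0$. That still gives your displayed bound, so you do not need to move weight into $c_2$, which would change the factor $e^{C_BM/(\lambda_1(\gamma_0-\tilde\gamma_0))}$.
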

\begin{proof}
  For fixed $t_0\in \mathbb{R}$, $t\geq t_0$, for any $\xi_1,\xi_2\in H$, let $\eta(t)=X^{\epsilon,v}_{t;t_0,\xi_1}-X^{\epsilon,v}_{t;t_0,\xi_2}$ and define
\begin{align*}
      \beta_{\gamma,v}(t)  := & \gamma (t-t_0)  - \frac{2 C_{\rho_1}}{\gamma_0} \Big( \frac{\gamma_0}{2} \int^t_{t_0} (1+\|X^{\epsilon,v}_{s;t_0,\xi_2}\|^{\beta}_H) \|X^{\epsilon,v}_{s;t_0,\xi_2}\|^2_V \, \d s \\
      &  \quad - \frac{1}{2} \big((4+\beta) C_{A,\rho,\epsilon} + \beta (2+\beta) C_B \epsilon) \big)  (t-t_0) \Big) - \zeta  \int^t_{t_0} \|v(s)\|^2_U \, \d s, \quad \forall \, t \geq t_0,
\end{align*}
where $\zeta>0$ will be determined later. Using It\^o formula, we have
    \begin{align}
        & e^{\beta_{\gamma,v}(t) } \| \eta(t) \|_{H}^2 - \| \eta(t_0) \|_{H}^2 -2 \sqrt{\epsilon} \int_{t_0}^{t} e^{\beta_{\gamma,v}(s)} \scalH{\eta(s), B(X^{\epsilon,v}_{s;t_0,\xi_1})-B(X^{\epsilon,v}_{s;t_0,\xi_2}) \, \d W_s} \nonumber \\
        &  = \Big(\gamma+\frac{ C_{\rho_1} ( (4+\beta)  C_{A,\rho,\epsilon} + \beta (2+\beta) C_B \epsilon)  }{\gamma_0} +C_{\rho_2} \Big) \int_{t_0}^{t}  e^{\beta_{\gamma,v}(s)} \|\eta(s)\|_{H}^{2} \, \d s \nonumber \\
        & \quad -   \int_{t_0}^{t}  e^{\beta_{\gamma,v}(s)} \big( C_{\rho_1} \big(  1+ \|X^{\epsilon,v}_{s;t_0,\xi}\|_{H}^{\beta} \big) \|X^{\epsilon,v}_{s;t_0,\xi}\|_{V}^{2} \big) +C_{\rho_2} + \zeta \| v(s) \|_{U}^2  \big) \|\eta(s)\|_H^2  \, \d s \nonumber \\
        & \quad  + \int_{-n}^{t} e^{\beta_{\gamma,v}(s)} \big( 2 \, \scalV{A^{\epsilon}(X^{\epsilon,v}_{s;t_0,\xi})-A^{\epsilon}(X^{\epsilon,v}_{s;t_0,\xi}),\eta(s)}  + \epsilon \| B(X^{\epsilon,v}_{s;t_0,\xi})-B(X^{\epsilon,v}_{s;t_0,\xi})  \|_{2}^2  \big) \, \d s \nonumber \\
        & \quad +\int^t_{t_0} e^{\beta_{\gamma,v}(s)} 2 \scalH{ (B(X^{\epsilon,v}_{s;t_0,\xi_1})-B(X^{\epsilon,v}_{s;t_0,\xi_2}))v,\eta(s) } \, \d s. \nonumber
\end{align}
The condition {\rm [A4]} and Cauchy--Schwarz inequality give
\begin{align*}
  &2\int^t_{t_0}e^{\beta_{\gamma,v}(s)}\scalH{ (B(X^{\epsilon,v}_{s;t_0,\xi_1})-B(X^{\epsilon,v}_{s;t_0,\xi_2}))v,\eta(s) } \, \d s\\
& \leq 2 \int^t_{t_0}e^{\beta_{\gamma,v}(s)} \big(\sum_{k} \big| \scalH{ (B(X^{\epsilon,v}_{s;t_0,\xi_1})-B(X^{\epsilon,v}_{s;t_0,\xi_2})) \, \mathcal{U}_{k},\eta(s)} \big|^2 \big)^{\frac{1}{2}}\|v(s)\|_U  \, \d s \\
& \leq \frac{C_B}{\zeta}\int^t_{t_0}e^{\beta_{\gamma,v}(s)}\| \eta(s)\|^2_H \, \d s+\zeta \int^t_{t_0}e^{\beta_{\gamma,v}(s)}\| \eta(s)\|^2_H\|v(s)\|^2_U \, \d s.
\end{align*}
By Burkh\"older--Davis--Gundy inequality and condition {\rm [A4]}, we obtain
    \begin{align*}
        & 2 \sqrt{\epsilon} \, \E \sup_{s\in [t_0,t]} \Big| \int_{t_0}^{s}  e^{\beta_{\gamma,v}(\tau)} \scalH{\eta(\tau), B(X^{\epsilon,v}_{r;t_0,\xi})-B(X^{\epsilon,v}_{\tau;t_0,\xi}) \, \d W_{\tau}} \Big| \\
        & \leq 6 \sqrt{ \epsilon} \, \E \Big| \int_{-n}^{t}  e^{2 \beta_{\gamma,v}(s)} \sum_{k} \Big| \bigscalH{\eta(s), \big( B(X^{\epsilon,v}_{s;t_0,\xi})-B(X^{\epsilon,v}_{s;t_0,\xi}) \big) \, \mathcal{U}_{k}} \Big|^2 \, \d s  \Big|^{1/2} \\
        &  \leq \frac{1}{2} \E \sup_{s\in [t_0,t]}  e^{\beta_{\gamma,v}(s)} \| \eta(s) \|_{H}^2  + 18 C_B  \epsilon \, \E \int_{-n}^{t} e^{\beta_{\gamma,v}(s)} \| \eta(s) \|_{H}^2 \, \d s.
\end{align*}
Combining the above estimates, condition {\rm [A2]} and embeddeding inequality \eqref{eq: embedding}, we have
\begin{align*}
 &  \E \Big[ \sup_{s\in [t_0,t]} \big\{ e^{\beta_{\gamma,v}(s)} \| \eta(s) \|_{H}^2 +  \Delta_{\gamma} \int_{t_0}^{s}  e^{\beta_{\gamma,v}(\tau)}  \|\eta(\tau)\|_{V}^{2} \, \d \tau \big\}  \Big]  \\
& \leq  \E \| \eta(-n) \|_{H}^2 + \frac{1}{2} \E \sup_{s\in [t_0,t]}  e^{\beta_{\gamma,v}(s)} \| \eta(s) \|_{H}^2 +  \E \Big[ \sup_{s\in [t_0,t]}  \Big\{ \int_{t_0}^{s}  e^{\beta_{\gamma,v}(\tau)}  \|\eta(\tau)\|_{H}^{2} \, \d \tau  \\
& \quad \quad \times \Big(\gamma+\frac{ C_{\rho_1} ( (4+\beta)  C_{A,\rho,\epsilon} + \beta (2+\beta) C_B \epsilon)  }{\gamma_0} +C_{\rho_2} + 18 C_B \epsilon + \frac{C_B}{\zeta} +  \lambda_1 \big(\Delta_{\gamma}  -2  \gamma_0 \big) \Big) \Big\}  \Big],
\end{align*}
where $0<\Delta_{\gamma}<2 \gamma_0$. Recall that $\tilde{\gamma}_0= \frac{C_{\rho_2}}{\lambda_1} \vee \big( \frac{(4+\beta) C_{\rho_1} C_{A,\rho,\epsilon}}{\lambda_1} \big)^{1/2}$ and $\tilde{\epsilon} \leq \frac{\gamma_0}{(18 \gamma_0 +  \beta  (2+\beta)  C_{\rho_1} )C_B} ( 2 \lambda_1 \gamma_0-\frac{  (4+\beta) C_{\rho_1}  C_{A,\rho,\epsilon}   }{\gamma_0} -C_{\rho_2} \big)$. When $0< \epsilon < \frac{\tilde{\epsilon}}{4} $ and $\gamma_0 > \tilde{\gamma}_0$, taking $\zeta=\frac{2C_B}{\lambda_1 (\gamma_0-\tilde{\gamma}_0)}$ and $\gamma \leq \lambda_1 \Delta_{\gamma}=\frac{\lambda_1 (\gamma_0-\tilde{\gamma}_0)}{2}$, we get the last term of the above inequality is non-positive. Thus, we have
\begin{equation}\label{eq: diff weighting controll}
\max\Big\{ \frac{1}{2} \E \big[ \sup_{s\in [t_0,t]} \big\{ e^{\beta_{\gamma,v}(s)} \| \eta(s) \|_{H}^2 \big], \Delta_{\gamma} \E \Big[  \int_{t_0}^{t}  e^{\beta_{\gamma,v}(s)}  \|\eta(s)\|_{V}^{2} \, \d s \big\}  \Big] \Big\} \leq  \E \| \eta(t_0) \|_{H}^2.
\end{equation}

Recall that $\tilde{\epsilon} \leq \frac{ (1+\beta)\lambda_1 \gamma_0^2}{8 (2+\beta)^2  C_{\rho_1} C_{B}}$. Due to $0< \epsilon < \frac{\tilde{\epsilon}}{4} $, we can take $\frac{ 4 C_{\rho_1}}{\gamma_0} \leq \delta \leq \frac{ (1+\beta)\lambda_1 \gamma_{0}}{(2+\beta)^2 C_{B} \tilde{\epsilon}}  < \frac{ (1+\beta)\lambda_1 \gamma_{0}}{4(2+\beta)^2 C_{B} \epsilon}$ and $2 \geq p_0=\frac{4 C_{\rho_1}}{\gamma_0 \delta}+1> \frac{4 (2+\beta)^2 C_{\rho_1} C_{B} \tilde{\epsilon} }{(1+\beta) \lambda_1 \gamma_0^2}+1$, then $ \frac{1}{p_0-1} \frac{2 C_{\rho_1}}{\gamma_0} =\frac{\delta}{2}$. Applying estimate \eqref{est: energy controlled eq beta V} and \eqref{eq: diff weighting controll} and using Cauchy-Schwarz inequality, we obtain
    \begin{align*}
        & \E \big[\sup_{s\in [t_0,t]} e^{\gamma (s-t_0)} \| \eta(s) \|_{H}^2 \big]^{\frac{1}{p_0}} \leq \big( \E \sup_{s\in [t_0,t]} e^{\beta_{\gamma,v}(s)} \| \eta(s) \|_{H}^2 \big)^{\frac{1}{p_0}} \times \E \exp \big\{\frac{\zeta}{p_0}  \int^t_{t_0} \|v(s)\|^2_U \, \d s \big\} \\
        & \times \Big( \E \Big[ \exp \Big\{  \frac{1}{p_0-1} \frac{2  C_{\rho_1}}{\gamma_0}  \sup_{s \geq t_0}  \big( \frac{\gamma_0}{2} \int_{t_0}^{s} \|X^{\epsilon}_{r;t_0,\xi_2}\|_{H}^{\beta} \|X^{\epsilon}_{r;t_0,\xi_2}\|_{V}^{2}  \, \d r -(1+\frac{\beta}{2}) \big( C_{A, \rho,\epsilon}+ \beta C_B \epsilon \big) (s-t_0) \big) \Big\} \Big] \Big) ^{\frac{p_0-1}{p_0}}  \\
        & \times \Big( \E \Big[ \exp \Big\{  \frac{1}{p_0-1} \frac{2  C_{\rho_1}}{\gamma_0}  \sup_{s \geq t_0}  \big( \frac{\gamma_0}{2} \int_{t_0}^{s}  \|X^{\epsilon}_{r;t_0,\xi_2}\|_{V}^{2}  \, \d r -  C_{A, \rho, \epsilon}  (s-t_0) \big) \Big\} \Big] \Big) ^{\frac{p_0-1}{p_0}}  \\
        & \quad \leq 2 (\E \| \eta(t_0) \|_{H}^2 )^{\frac{1}{p_0}}  e^{\frac{ 2  C_{\rho_1}}{\gamma_0 p_0} \big( \| \xi_2 \|_{H}^{2} + \| \xi_2 \|_{H}^{2+\beta} + \big(2+\frac{\beta}{2} \big) \frac{ 2 C_{A,\rho,\epsilon} + 4C_B M+ \beta C_{B} \epsilon  }{\lambda_1 \gamma_0} \big)+ \frac{2C_B M}{\lambda_1(\gamma_0-\tilde{\gamma}_0)p_0}} .
\end{align*}
Taking $p_0=2$ and dividing $e^{\gamma (t-t_0)/2}$ in both side of the above inequality, we obtain
\begin{align*}
   & \E \big[\sup_{s\in [t_0,t]} e^{- \gamma (t-s)} \| X^{\epsilon,v}_{s;t_0,\xi_1}-X^{\epsilon,v}_{s;t_0,\xi_2} \|_{H}^2 \big]^{\frac{1}{2}} \\
    & \leq 2 (\E \| \xi_1-\xi_2 \|_{H}^2 )^{\frac{1}{2}}  e^{\frac{  C_{\rho_1}}{\gamma_0} \big( \| \xi_2 \|_{H}^{2} + \| \xi_2 \|_{H}^{2+\beta} + \big(2+\frac{\beta}{2} \big) \frac{ 2 C_{A,\rho} + 4C_B M+ \beta C_{B} \tilde{\epsilon}  }{\lambda_1 \gamma_0} \big)+ \frac{C_B M}{\lambda_1(\gamma_0-\tilde{\gamma}_0)}} e^{-\frac{\gamma}{2} (t-t_0)} .
\end{align*}
Similar to the above estimate, we also have
\begin{align*}
   & \E \big[ (\gamma_0 -\tilde{\gamma}_0) \int_{t_0}^{t} e^{- \gamma (t-s)}   \| X^{\epsilon,v}_{s;t_0,\xi_1}-X^{\epsilon,v}_{s;t_0,\xi_2} \|_{V}^2 \, \d s \big]^{\frac{1}{2}} \\
    & \leq 2 (\E \| \xi_1-\xi_2 \|_{H}^2 )^{\frac{1}{2}}  e^{\frac{  C_{\rho_1}}{\gamma_0} \big( \| \xi_2 \|_{H}^{2} + \| \xi_2 \|_{H}^{2+\beta} + \big(2+\frac{\beta}{2} \big) \frac{ 2 C_{A,\rho} + 4C_B M+ \beta C_{B} \tilde{\epsilon}  }{\lambda_1 \gamma_0} \big)+ \frac{C_B M}{\lambda_1(\gamma_0-\tilde{\gamma}_0)}} e^{-\frac{\gamma }{2}(t-t_0)} .
\end{align*}
Thus, we obtain estimate \eqref{est: diff controlled eq}.
\end{proof}

With the help of Lemma \ref{lem: good rate func} and Lemma \ref{lem: controlled eq diff}, we can verify the above condition (S1), thereby obtaining the uniform LDP on compact sets.
\begin{lemma}\label{lem: S1 condition}
  Assume that conditions {\rm [A1-A5]} are in force, then the condition (S1) holds.
\end{lemma}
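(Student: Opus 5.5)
By Girsanov's theorem, $\mathcal{G}^{\epsilon}_{t_0,\xi^{\epsilon}}\big(W+\frac{1}{\sqrt{\epsilon}}\int_0^{\cdot}v^{\epsilon}\,\d s\big)=X^{\epsilon,v^{\epsilon}}_{\cdot;t_0,\xi^{\epsilon}}$, the solution of the controlled equation \eqref{eq: controlled equation truncated}. Convergence in $\sC_{t_0}$ only involves finite windows $[-N,N]$, and the processes are constant before $t_0$. It therefore suffices to show $X^{\epsilon,v^{\epsilon}}_{\cdot;t_0,\xi^{\epsilon}}\to X^{0,v}_{\cdot;t_0,\xi}$ in distribution in $C([t_0,T];H)\cap L^2([t_0,T];V)$ for every $T>t_0$.

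\textbf{Reduction to a common initial datum.} I split
\[
X^{\epsilon,v^{\epsilon}}_{\cdot;t_0,\xi^{\epsilon}}-X^{0,v}_{\cdot;t_0,\xi}
=\big(X^{\epsilon,v^{\epsilon}}_{\cdot;t_0,\xi^{\epsilon}}-X^{\epsilon,v^{\epsilon}}_{\cdot;t_0,\xi}\big)+\big(X^{\epsilon,v^{\epsilon}}_{\cdot;t_0,\xi}-X^{0,v}_{\cdot;t_0,\xi}\big).
\]
Lemma \ref{lem: controlled eq diff} (with $\gamma=0$) bounds the first term in $L^1(\Omega)$ of the $\sup_H$ and $\int\|\cdot\|_V^2$ norms by $C(M,\|\xi\|_H)\,\|\xi^{\epsilon}-\xi\|_H$, uniformly in $\epsilon$ small and in $v^{\epsilon}\in\mathcal{A}_M$. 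Since $\xi^{\epsilon}$ is bounded, this term tends to $0$ in probability.

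\textbf{Same initial datum: pathwise comparison.} By Skorokhod representation, I may assume $v^{\epsilon}\to v$ weakly in $S_M$ almost surely. Set $\eta^{\epsilon}=X^{\epsilon,v^{\epsilon}}-X^{0,v^{\epsilon}}$, with both processes started from $\xi$ at $t_0$. I apply the Itô formula to $e^{B(s)}\|\eta^{\epsilon}(s)\|_H^2$, where the weight is
\[
B(s)=-C_{\rho_1}\int_{t_0}^s\big(1+\|X^{0,v^{\epsilon}}\|_H^{\beta}\big)\|X^{0,v^{\epsilon}}\|_V^2\,\d\tau-\zeta\int_{t_0}^s\|v^{\epsilon}\|_U^2\,\d\tau .
\]
The resulting terms are handled as follows.
\begin{itemize}
\item Condition {\rm [A2]} absorbs the $\rho$-term into the weight.
\item The drift perturbation $\langle A^{\epsilon}-A^0,\eta\rangle$ is $O(\epsilon)$ by {\rm [A3]}, using Young's inequality and the bounds of Lemma \ref{lem: well-posedness controlled eq}.
\item The control difference $(B(X^{\epsilon,v^\epsilon})-B(X^{0,v^\epsilon}))v^{\epsilon}$ is handled by {\rm [A4]} and the $\zeta\|v\|^2$ part of the weight.
\item The Itô correction $\epsilon\|B\|_2^2$ is $O(\epsilon)$ by the growth bound.
\item The stochastic integral is controlled by BDG together with {\rm [A4]}.
\end{itemize}
This yields $\E\sup_{[t_0,T]}e^{B}\|\eta^\epsilon\|_H^2+\E\int e^{B}\|\eta^\epsilon\|_V^2\le C\epsilon$. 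Removing the weight is the main obstacle, because the weight depends on the skeleton solution. Since $v^{\epsilon}\in S_M$, estimate \eqref{eq: energy estimate skeleton beta}/\eqref{eq: energy estimate skeleton} gives a \emph{deterministic} bound $e^{-B(T)}\le C(\xi,M,T)$, so no exponential moment estimate is needed here. It follows that $\eta^{\epsilon}\to0$ in probability in $\mathcal{E}$. A localisation with stopping times by $\|X^{\epsilon,v^\epsilon}\|_H\le R$ may be needed to make the It\^o argument rigorous; the cost is controlled by Lemma \ref{lem: well-posedness controlled eq}.

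\textbf{Conclusion.} Finally, $X^{0,v^{\epsilon}}_{\cdot;t_0,\xi}\to X^{0,v}_{\cdot;t_0,\xi}$ almost surely in $\sC_{t_0}$ by the weak-to-strong continuity of $G_{t_0,\xi}$ proved in Lemma \ref{lem: good rate func}. Combining the three convergences gives convergence in probability on the Skorokhod space, hence convergence in distribution, which is (S1).
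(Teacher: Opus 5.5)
Your proposal is correct and follows the paper's proof. It uses the same splitting into $Z_1^\epsilon+Z_2^\epsilon+Z_3^\epsilon$, with Lemma \ref{lem: controlled eq diff} for the initial-datum term and the weak-to-strong continuity of Lemma \ref{lem: good rate func} for the control term; your weighted It\^o estimate for $X^{\epsilon,v^\epsilon}-X^{0,v^\epsilon}$ (weight built from the skeleton, hence deterministically bounded by Lemma \ref{lem: skeleton}(i)) just spells out what the paper delegates to \cite{Liu_Large_2020,Pan_Large_2026} plus the $O(\epsilon)$ correction from $A^\epsilon-A^0$. One small fix: the martingale integrand there is $B(X^{\epsilon,v^\epsilon})$, not a difference, so the BDG step should use $\sum_k|\langle\eta,B(X^{\epsilon,v^\epsilon})\mathcal{U}_k\rangle_H|^2\le\|\eta\|_H^2\|B(X^{\epsilon,v^\epsilon})\|_2^2$ with the growth bound in {\rm [A3]} and Lemma \ref{lem: well-posedness controlled eq} rather than {\rm [A4]}, which still gives the $O(\epsilon)$ rate.
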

\begin{proof}
  Let $\{v^{\epsilon}\}_{\epsilon} \subset \mathcal{A}_{M}$ weakly converge to $v$ in distribution and $\{ \xi^{\epsilon} \}_{\epsilon} \subset H$ converges to $\xi$ in $H$ strongly as $\epsilon\rightarrow 0$. Denote $Z_1^{\epsilon}:=X^{\epsilon,v^{\epsilon}}_{t;t_0,\xi^{\epsilon}}-X^{\epsilon,v^{\epsilon}}_{t;t_0,\xi}$, $Z_2^{\epsilon}:=X^{\epsilon,v^{\epsilon}}_{t;t_0,\xi}-X^{0,v^{\epsilon}}_{t;t_0,\xi}$ and $Z_3^{\epsilon}:=X^{0,v^{\epsilon}}_{t;t_0,\xi}-X^{0,v}_{t;t_0,\xi}$. Due to
\begin{equation*}
\mathcal{G}^{\epsilon}_{t_0,\xi^{\epsilon}}\Big(W(\cdot)+\frac{1}{\sqrt{\epsilon}}\int^{\cdot}_0v^{\epsilon}(s) \, \d s\Big) - \mathcal{G}^{0}_{t_0,\xi}\Big(\int^{\cdot}_0v(s) \,  \d s\Big) =  X^{\epsilon,v^{\epsilon}}_{t;t_0,\xi^{\epsilon}}- X^{0,v}_{t;t_0,\xi}= Z_1^{\epsilon} + Z_2^{\epsilon} +Z_3^{\epsilon} ,
\end{equation*}
to prove the condition (S1), it suffices to show both $Z_{1}^{\epsilon}$, $Z_2^{\epsilon}$ and $Z_3^{\epsilon}$ converge to $0$ in distribution as $\epsilon\rightarrow 0$ in $\sC_{t_0}$. Clearly, it follows from Lemma \ref{lem: good rate func} that $\sup_{t\in [t_0,T]}\|Z_3^{\epsilon}(t)\|^2_H+\gamma_0\int^T_{t_0}\|Z_3^{\epsilon}(s)\|^{2}_V \, \d s$ converges to $0$  as $\epsilon\rightarrow 0$. On the other hand, by Lemma \ref{lem: controlled eq diff}, for any fixed time $T>t_0$, we have
  \begin{equation*}
    \lim_{\epsilon \rightarrow 0} \E \big[ \sup_{t\in [t_0,T]}\|Z_1^{\epsilon}(t)\|^2_H+\gamma_0\int^T_{t_0}\|Z_1^{\epsilon}(s)\|^{2}_V \, \d s \big]^{1/2} =0.
  \end{equation*}
Thus, it remains to prove that $Z_2^{\epsilon}$ converges to $0$ in distribution as $\epsilon\rightarrow 0$ in $\sC_{t_0}$. If $A^{\epsilon} \equiv A^{0}$, then this convergence is given by \cite{Liu_Large_2020,Pan_Large_2026}. If $A^{\epsilon} \neq A^{0}$, an additional term
\begin{equation*}
  2 \scalV{A^{\epsilon}(X^{0,v^{\epsilon}}_{t;t_0,\xi})-A^{0}(X^{0,v^{\epsilon}}_{t;t_0,\xi}),X^{\epsilon,v^{\epsilon}}_{t;t_0,\xi}-X^{0,v^{\epsilon}}_{t;t_0,\xi}}
\end{equation*}
will appear in the estimate of $Z_2^{\epsilon}$. Using the condition {\rm [A3]} and the uniform estimate of $X^{\epsilon,v^{\epsilon}}$ given in Lemma \ref{lem: skeleton}, the above term is uniformly less than $O(\epsilon) \| Z_2^{\epsilon} \|_V$. Thus, the convergence of $Z_2^{\epsilon}$ also holds and condition (S1) is verified. We get the desired result.
\end{proof}

Now, we state the result of uniform LDP on compact sets for the solution of Eq. \eqref{eq: SPDE}.
\begin{theorem}\label{thm: ULDP}
Under conditions {\rm [A1-A5]}, if $\gamma_0  \geq  \tilde{\gamma}_0$, for each $t_0\in \mathbb{R}$ and any $T>t_0$, the solution $X^{\epsilon}_{\cdot;t_0,\xi}$ of Eq. \eqref{eq: SPDE} satisfies the uniform LDP on the space $C([t_0,T];H)\cap L^2([t_0,T];V)$ with the rate function $\mathcal{I}_{t_0,\xi}$ defined by \eqref{def: rate func 1}.
\end{theorem}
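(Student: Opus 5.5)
The plan is to derive Theorem \ref{thm: ULDP} directly from the weak convergence criterion in Proposition \ref{prp-1}, i.e.\ the Budhiraja--Dupuis--Maroulas sufficient conditions for a uniform Freidlin--Wentzell LDP over compact sets of initial data. Both conditions (S1) and (S2) have already been established separately: (S2) is exactly Lemma \ref{lem: lsc uniform LDP}, and (S1) is Lemma \ref{lem: S1 condition}. What remains is to check that these results fit the precise setting of the theorem. That means confirming three things: the measurable solution map $\mathcal{G}^{\epsilon}_{t_0}$ is well defined; the relevant hypotheses hold on the parameter range $\gamma_0\geq\tilde{\gamma}_0$; and the statement transfers from the path space $\sC_{t_0}$ to the finite-horizon space $\mathcal{E}=C([t_0,T];H)\cap L^2([t_0,T];V)$.

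\textbf{Step 1 (measurable representation).} By Proposition \ref{Prop: measurable map truncated}, there is a Borel map $\mathcal{G}^{\epsilon}_{t_0}:\W\times H\to\sC_{t_0}$ with $X^{\epsilon}_{\cdot;t_0,\xi}=\mathcal{G}^{\epsilon}_{t_0}(W,\xi)$. By the Girsanov theorem, the shifted map $\mathcal{G}^{\epsilon}_{t_0,\xi}\big(W+\epsilon^{-1/2}\int_0^\cdot v\,\d s\big)$ solves the controlled equation \eqref{eq: controlled equation truncated}. This provides the functional representation that the variational criterion of \cite{Budhiraja_Large_2008} requires. On the deterministic side, $\mathcal{G}^{0}_{t_0,\xi}$ is defined through the skeleton equation \eqref{eq: skeleton}, which is well-posed by \cite{Pan_Large_2026}.

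\textbf{Step 2 (conditions).} I will invoke Lemma \ref{lem: lsc uniform LDP} for the lower semicontinuity of $\xi\mapsto\mathcal{I}_{t_0,\xi}$ and for the compactness of $\Gamma_{M;t_0,K}$; together these give (S2). I will invoke Lemma \ref{lem: S1 condition} for (S1). That lemma splits the error into three pieces $Z_1^\epsilon,Z_2^\epsilon,Z_3^\epsilon$, handled respectively by Lemma \ref{lem: controlled eq diff}, by the standard finite-interval argument with the extra $O(\epsilon)$ term coming from [A3], and by the continuity of $v\mapsto X^{0,v}$ in Lemma \ref{lem: good rate func}. One point needs care. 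Lemmas \ref{lem: skeleton} and \ref{lem: controlled eq diff} are stated with factors such as $(\gamma_0-\tilde{\gamma}_0)^{-1}$, which blow up at $\gamma_0=\tilde{\gamma}_0$. On a finite interval $[t_0,T]$, however, no exponential decay in time is needed. So in the boundary case I would rerun the Gronwall argument in the weight $\beta_{\gamma,v}$ with $\gamma=0$, and with $\zeta$ chosen as any fixed positive constant rather than $2C_B/(\lambda_1(\gamma_0-\tilde{\gamma}_0))$. This gives constants of the form $e^{C(T-t_0)}$ instead, which suffices because $T$ is fixed. The same rerun applies to the skeleton estimate \eqref{eq: energy estimate skeleton diff}, which feeds into the compactness of $\Gamma_{M;t_0,K}$.

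\textbf{Step 3 (transfer to $\mathcal{E}$).} Proposition \ref{prp-1} yields the uniform LDP on $\sC_{t_0}$. Restricting paths to $[t_0,T]$ is a continuous map $\sC_{t_0}\to\mathcal{E}$, and convergence in $d_{t_0}$ controls $d_{\mathcal{E}}$ on $[t_0,T]$. Conversely, the (S1)/(S2) arguments already give convergence in $\sup_{[t_0,T]}\|\cdot\|_H^2+\int_{t_0}^T\|\cdot\|_V^2$. We may therefore verify the criterion directly on $\mathcal{E}$, with rate function the restriction of $\mathcal{I}_{t_0,\xi}$, and conclude the uniform LDP over compacts. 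The main obstacle is Step 2. The key point there is to check that the $\epsilon$-uniform exponential estimates of Lemma \ref{lem: well-posedness controlled eq}, which are needed to control $e^{-\beta_{\gamma,v}}$, hold uniformly for $v\in\mathcal{A}_M$ and for $\xi$ ranging over a compact set, at the borderline $\gamma_0=\tilde{\gamma}_0$. Compactness of $K$ gives a uniform bound on $\|\xi\|_H$, and that is what makes the constants uniform.
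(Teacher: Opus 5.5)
Your proposal is correct and follows the paper's own proof: Theorem~\ref{thm: ULDP} is obtained from Proposition~\ref{prp-1}, with (S2) supplied by Lemma~\ref{lem: lsc uniform LDP} and (S1) by Lemma~\ref{lem: S1 condition}. Your extra remarks address points the paper leaves implicit, and your fixes work. First, at $\gamma_0=\tilde{\gamma}_0$ the factors $(\gamma_0-\tilde{\gamma}_0)^{-1}$ in Lemmas~\ref{lem: skeleton} and~\ref{lem: controlled eq diff} degenerate, so the finite-horizon difference estimates should be redone by an ordinary Gronwall argument with constants $e^{C(T-t_0)}$; the exponential moments of $\int\rho(X^{\epsilon,v}_{s;t_0,\xi})\,\d s$ that this requires are available once $\epsilon$ is small. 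Second, the conclusion has to be transferred from $\sC_{t_0}$ to $C([t_0,T];H)\cap L^2([t_0,T];V)$.
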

\begin{proof}
Based on Proposition \ref{prp-1}, we only need to verify conditions (S1)-(S2). The condition (S2) is verified by Lemma \ref{lem: lsc uniform LDP}. Moreover, Lemma \ref{lem: S1 condition} implies the condition (S1) holds. We complete the proof.
\end{proof}

\section{LDP for stationary solutions and invariant measures} \label{sec: LDP for stationary solution}

Analogous to the proof of Theorem \ref{thm: stationary solution}, using  Lemma \ref{lem: well-posedness controlled eq} and \ref{lem: controlled eq diff}, we can obtain that for any $v^{\epsilon} \in \mathcal{A}_{M}$ and for any $N \in \N$, the sequence of solutions $\{X^{\epsilon,v^\epsilon}_{\cdot;-n,\xi}\}_{n \geq N}$ to Eq. \eqref{eq: controlled equation truncated} is a Cauchy sequence in $L^{1}(\Omega;\sC_{-N})$, and the limit element $\cY^{\epsilon,v^{\epsilon}} \in L^{\infty}(\R; L^{\infty}(\Omega;H))$ satisfies
\begin{align}
   \scalV{\cY^{\epsilon,v^{\epsilon}}(t),u} & = \scalV{ \cY^{\epsilon}(t_0),u} + \int_{t_0}^{t} \scalV{A^{\epsilon}(\cY^{\epsilon,v^{\epsilon}}(s)),u} \, \d s \nonumber \\
   & \quad + \int_{t_0}^{t} \scalH{B(\cY^{\epsilon,v^{\epsilon}}(s)) v^{\epsilon}(s),u} \, \d  s  + \sqrt{\epsilon} \int_{t_0}^{t} \scalH{B(\cY^{\epsilon,v^{\epsilon}}(s)) \, \d W_s,u}, \label{eq: stationary controlled}
\end{align}
for any $u \in V$ and $-\infty<t_0<t<+\infty$. By Girsanov theorem, we know that $\cY^{\epsilon,v^{\epsilon}} = \mathcal{G}_{-\infty}^{\epsilon} (W (\cdot) +\frac{1}{\sqrt{\epsilon}}\int^{\cdot}_0v^{\epsilon}(s) \, \d s)$. Analogous to the proof of Theorem \ref{thm: stationary solution}, $\cY^{\epsilon,v^{\epsilon}}$ is the unique solution satisfying Eq. \eqref{eq: stationary controlled} in the space $L^{\infty}(\R; L^{\infty}(\Omega;H))$. According to \cite[Theorem 5]{Budhiraja_Large_2008}, the following conditions are sufficient
to ensure the LDP for stationary solution $\cY^{\epsilon}$.
\begin{proposition}\label{prp-2}
The stationary solution $\cY^{\epsilon}$ satisfeis LDP on the space $\sC_{-\infty}$ with rate function $\mathcal{I}_{-\infty}$ given by (\ref{def: rate func 2}), if the following items hold:
 \begin{description}
   \item[(C1)] For any $M>0$, let $\{v^{\epsilon}: \epsilon>0\}\subset \mathcal{A}_M$, if $v^{\epsilon}$ converges to $v$ in distribution as $S_M-$valued random elements, then
\begin{equation*}
  \mathcal{G}^{\epsilon}_{-\infty}\Big(W(\cdot)+\frac{1}{\sqrt{\epsilon}}\int^{\cdot}_0v^{\epsilon}(s) \, \d s\Big) \rightarrow \mathcal{G}^{0}_{-\infty}\Big(\int^{\cdot}_0v(s) \,  \d s\Big) \quad \mbox{on} \quad \sC_{-\infty},
 \end{equation*}
in distribution as $\epsilon\rightarrow 0$.
  \item[(C2)] For each $M>0$, the set $\Gamma_{M; -\infty}:=\{\mathcal{G}^{0}_{-\infty}(\int^{\cdot}_0v(s) \,  \d s)\in\sC_{-\infty}: v\in S_M\}$ is compact.
 \end{description}
\end{proposition}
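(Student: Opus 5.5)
Proposition \ref{prp-2} is the Budhiraja--Dupuis--Maroulas sufficient condition for a Laplace principle, and I would prove it by reducing it to that abstract criterion \cite[Theorem 5]{Budhiraja_Large_2008}. The first step is to identify the setting. By Theorem \ref{thm: stationary solution}, $\cY^{\epsilon}=\mathcal{G}^{\epsilon}_{-\infty}(W)$ for a Borel map $\mathcal{G}^{\epsilon}_{-\infty}:\W\to\sC_{-\infty}$, and $\sC_{-\infty}$ is Polish. The process $W$ is a cylindrical Wiener process on $U$ indexed by $\R$. I would regard it as a sequence of independent two-sided real Brownian motions $\{\langle W,\mathcal{U}_k\rangle\}_k$, or equivalently as a pair of independent one-sided cylindrical Wiener processes $W_t$ and $W_{-t}$, $t\ge0$, realised in a larger Hilbert space via a Hilbert--Schmidt embedding. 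This realisation puts us exactly in the framework of \cite{Budhiraja_Large_2008}, with Cameron--Martin space $L^2(\R;U)$.

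The core of the argument is the Bou\'e--Dupuis variational representation. For bounded continuous $h:\sC_{-\infty}\to\R$,
\[
-\log \E\exp\{-h(W)\}=\inf_{v\in\mathcal{H}}\E\Big[\tfrac12\int_{-\infty}^{+\infty}|v(s)|_U^2\,\d s+h\big(W+\textstyle\int_0^{\cdot}v\,\d s\big)\Big].
\]
On the whole line this is obtained by applying the one-sided representation to the product of the two half-line processes. One then restricts the infimum to $\mathcal{A}_M$ up to an error that vanishes as $M\to\infty$. With this representation in hand, I would follow the standard two-step argument.

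For the Laplace upper bound, i.e.\ the lower bound on $-\epsilon\log\E e^{-h(\cY^\epsilon)/\epsilon}$, I would take nearly optimal controls $v^\epsilon\in\mathcal{A}_M$. Since $S_M$ is compact in the weak topology, one can extract a subsequence along which $v^\epsilon\to v$ in distribution. Condition (C1) then gives convergence in law of the controlled processes, and Fatou's lemma together with the lower semicontinuity of the $L^2$ norm under weak convergence yields $\liminf\ge\inf_\Phi\{\cI_{-\infty}(\Phi)+h(\Phi)\}$.

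For the Laplace lower bound, I would fix $\Phi$ with $\cI_{-\infty}(\Phi)<\infty$ and choose a deterministic $v$ that nearly attains the infimum. Using the constant control $v^\epsilon\equiv v$, condition (C1) applied to this sequence shows that the representation's value converges to $\frac12\|v\|^2+h(\Phi)$. Condition (C2) gives that the level sets $\{\cI_{-\infty}\le M\}=\cap_n\Gamma_{2M+1/n;-\infty}$ are compact, so $\cI_{-\infty}$ is a good rate function. The Laplace principle together with goodness then yields the LDP by Bryc/Varadhan equivalence on the Polish space.

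The main obstacle is the infinite two-sided time axis. Specifically, one must make sure that the Girsanov and measurability identity $\mathcal{G}^\epsilon_{-\infty}(W+\epsilon^{-1/2}\int_0^\cdot v^\epsilon)=\cY^{\epsilon,v^\epsilon}$ is legitimate, since the control acts on all of $\R$. I would handle this by noting that, for $v\in\mathcal{A}_M$, the shifted law is equivalent to that of $W$ on $\W$ because the total energy $\int_\R|v|^2\le M$ is finite. Applying Girsanov separately on each half-line then shows that the Borel map from Theorem \ref{thm: stationary solution}, evaluated at the shifted path, solves \eqref{eq: stationary controlled}.
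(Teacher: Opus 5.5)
Your top-level route is the paper's. The paper proves Proposition~\ref{prp-2} simply by citing \cite[Theorem 5]{Budhiraja_Large_2008}. Your Laplace-bound sketch, the use of (C2) for goodness of $\cI_{-\infty}$, and the Bryc/Varadhan step are the standard ingredients of that theorem.

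You are right that the two-sided infinite time axis needs an argument the paper does not supply, but the one you propose does not work. There are two problems.

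\textbf{Finite horizon.} \cite{Budhiraja_Large_2008} works on a finite interval $[0,T]$. Splitting $\R$ into two half-lines leaves you with two infinite horizons, so you are not ``exactly'' in that framework.

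\textbf{Filtration mismatch.} This is the serious one. The forward filtration of the pair $(W_t,W_{-t})_{t\ge 0}$ at parameter $t$ is $\sigma(W_r:|r|\le t)$. A control that is predictable for this product filtration therefore has, at real time $-t<0$, a value that may depend on $W|_{[0,t]}$, which is the real-time future. It also need not see $W|_{(-\infty,-t]}$. Such controls are not $\{\cF_t\}_{t\in\R}$-predictable, so they do not lie in $\mathcal{A}_M$, and (C1) is assumed only for families $\{v^\epsilon\}\subset\mathcal{A}_M$.

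Consequently the step ``take nearly optimal controls $v^\epsilon\in\mathcal{A}_M$'' in your Laplace upper bound is not justified by the representation you derived. Restricting that representation to bounded energy keeps you in the product-predictable class, not in $\mathcal{A}_M$, so (C1) cannot be applied to the nearly optimal controls it produces. Your last paragraph is inconsistent with this reduction. The Girsanov identification of $\mathcal{G}^\epsilon_{-\infty}(W+\epsilon^{-1/2}\int_0^\cdot v^\epsilon\,\d s)$ with $\cY^{\epsilon,v^\epsilon}$ requires real-time adaptedness. That identification in any case belongs to the verification of (C1) in Theorem~\ref{thm: LDP for stationary solution}, not to this proposition. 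No splitting into unit-speed forward half-lines can reproduce the real-time filtration on $(-\infty,0]$, because that filtration starts at $t=-\infty$.

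\textbf{Repair.} Use a deterministic time change, which compactifies the axis while preserving both the filtration and the Cameron--Martin norm. Take an increasing smooth bijection $\tau:(0,1)\to\R$, e.g.\ $\tau(u)=\log\frac{u}{1-u}$, and set
\[
\beta(u):=\int_{-\infty}^{\tau(u)}\big(\tau'(\tau^{-1}(s))\big)^{-1/2}\,\d W_s,\qquad u\in[0,1].
\]
The construction has the following properties.
\begin{itemize}
\item Since $\int_{-\infty}^{\tau(u)}\d s/\tau'(\tau^{-1}(s))=u$, $\beta$ is a cylindrical Wiener process on $[0,1]$ with respect to $\{\cF_{\tau(u)}\}_u$, where $\tau(0):=-\infty$ and $\tau(1):=+\infty$.
\item $W=\Psi(\beta)$ for a linear map $\Psi$ defined pathwise by integration by parts.
\item The map $\tilde v\mapsto v:=(\tilde v\circ\tau^{-1})\,(\tau'\circ\tau^{-1})^{-1/2}$ is a linear isometry $L^2(0,1;U)\to L^2(\R;U)$ that preserves predictability.
\item $\Psi\big(\beta+\int_0^\cdot\tilde v\,\d u\big)=W+\int_0^\cdot v\,\d s$.
\end{itemize}
Applying \cite[Theorem 5]{Budhiraja_Large_2008} to $w\mapsto\mathcal{G}^\epsilon_{-\infty}(\Psi(w/\sqrt{\epsilon}))$ then yields the proposition. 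Hypotheses (C1)--(C2) transfer verbatim, because this isometry maps $\mathcal{A}_M$ onto the corresponding control class on $[0,1]$ and is a homeomorphism of $S_M$ in the weak topologies.
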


Based on Lemma \ref{lem: good rate func}, Lemma \ref{lem: controlled eq diff} and  Proposition \ref{prp-2}, we achieve the following result.
\begin{theorem}\label{thm: LDP for stationary solution}
 Assume that the conditions {\rm [A1-A5]} hold and $ \gamma_0 \geq  \tilde{\gamma}_0$. The stationary solution $\{\cY^{\epsilon}\}_{\epsilon>0}$ of  \eqref{eq: SPDE} satisfies LDP on $\sC_{-\infty}$ with the good rate function $\mathcal{I}_{-\infty}$ given by (\ref{def: rate func 2}).

 \end{theorem}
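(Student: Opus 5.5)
The argument runs through Proposition~\ref{prp-2}. Condition (C2) is exactly the compactness of $\Gamma_{M;-\infty}$ proved in Lemma~\ref{lem: good rate func}, which also shows that $\mathcal{I}_{-\infty}$ is a good rate function. All the work is therefore in (C1). Fix $M>0$ and $\{v^{\epsilon}\}\subset\mathcal{A}_M$ with $v^{\epsilon}\to v$ in distribution in $S_M$. Since $S_M$ with the weak topology is a compact Polish space, Skorokhod's representation lets us assume $v^{\epsilon}\to v$ almost surely, on a possibly new probability space carrying a Wiener process. By Girsanov, $\mathcal{G}^{\epsilon}_{-\infty}(W+\epsilon^{-1/2}\int_0^{\cdot}v^{\epsilon}\,\d s)=\cY^{\epsilon,v^{\epsilon}}$, the pull-back limit of the controlled solutions $X^{\epsilon,v^{\epsilon}}_{\cdot;-n,0}$ solving \eqref{eq: stationary controlled}. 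On the other side, $\mathcal{G}^0_{-\infty}(\int_0^{\cdot}v\,\d s)=\cY^{0,v}$ is the limit of $X^{0,v}_{\cdot;-n,0}$ from Lemma~\ref{lem: skeleton}(iii).

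Because the topology of $\sC_{-\infty}$ is determined by $d_{-\infty}$, a series whose tail beyond index $N$ is bounded by $2^{-N}$, it suffices to show, for each fixed window $[-N,N]$ and each $\eta>0$,
\[
\lim_{\epsilon\to0}\P\Big(\sup_{s\in[-N,N]}\|\cY^{\epsilon,v^{\epsilon}}(s)-\cY^{0,v}(s)\|_H^2\wedge\int_{-N}^N\|\cY^{\epsilon,v^{\epsilon}}(s)-\cY^{0,v}(s)\|_V^2\,\d s>\eta\Big)=0 .
\]
For $n>N$ I split the difference into three pieces:
\[
\cY^{\epsilon,v^{\epsilon}}-\cY^{0,v}=\big(\cY^{\epsilon,v^{\epsilon}}-X^{\epsilon,v^{\epsilon}}_{\cdot;-n,0}\big)+\big(X^{\epsilon,v^{\epsilon}}_{\cdot;-n,0}-X^{0,v}_{\cdot;-n,0}\big)+\big(X^{0,v}_{\cdot;-n,0}-\cY^{0,v}\big).
\]

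The first piece is handled by the cocycle identity $\cY^{\epsilon,v^{\epsilon}}(t)=X^{\epsilon,v^{\epsilon}}_{t;-n,\cY^{\epsilon,v^{\epsilon}}(-n)}$, which follows from uniqueness of \eqref{eq: stationary controlled} on $[-n,\infty)$. Lemma~\ref{lem: controlled eq diff} is then applied with $\xi_1=\cY^{\epsilon,v^{\epsilon}}(-n)$, $\xi_2=0$, $t_0=-n$, $t=N$. The weight $e^{-\gamma(N-s)}\ge e^{-2\gamma N}$ on $[-N,N]$, so the piece's $\sC_{-N}$-size is bounded by $C(N,M)\,(\E\|\cY^{\epsilon,v^{\epsilon}}(-n)\|_H^2)^{1/2}e^{-\gamma(n+N)/2}$, uniformly in $\epsilon<\tilde\epsilon/4$. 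The $L^2$ bound on $\cY^{\epsilon,v^{\epsilon}}(-n)$ is uniform in $n$, $\epsilon$ and $v^{\epsilon}\in\mathcal{A}_M$, coming from Fatou together with \eqref{est: energy controlled eq beta H} and its $\beta=0$ version. The first piece therefore tends to $0$ in $L^1(\Omega)$ as $n\to\infty$, uniformly in $\epsilon$. The third piece is deterministic given $v$. Estimate \eqref{eq: energy estimate skeleton diff} with $\xi_2=0$ and $\xi_1=\cY^{0,v}(-n)$, bounded by $C(0,M)$, gives exponential decay in $n$ uniformly over $v\in S_M$. For fixed $n$, the middle piece tends to $0$ in probability in $\sC_{-n}$, and hence on $[-N,N]$, by (S1) (Lemma~\ref{lem: S1 condition}) with $t_0=-n$ and $\xi^{\epsilon}=\xi=0$. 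A standard $\eta/3$ argument finishes (C1): first choose $n$ large enough to make the first and third pieces small uniformly in $\epsilon$, then send $\epsilon\to0$.

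The main obstacle is the uniformity in $\epsilon$ of the first piece. It needs the exponential moment bounds of Lemma~\ref{lem: well-posedness controlled eq} to hold with $\delta\ge 4C_{\rho_1}/\gamma_0$, which is what forces $\epsilon<\tilde\epsilon/4$. It also needs those bounds to stay uniform when the starting point is the random, $\mathcal{F}_{-n}$-measurable initial datum $\cY^{\epsilon,v^{\epsilon}}(-n)$, and when the control $v^{\epsilon}$ is random. The place to check carefully is that the factor $\E\exp\{\zeta p_0^{-1}\int\|v^{\epsilon}\|_U^2\}$ in Lemma~\ref{lem: controlled eq diff} is bounded by $e^{\zeta M/p_0}$ since $v^{\epsilon}\in\mathcal{A}_M$. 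Another point to verify is that $\xi_2=0$ keeps all constants deterministic, so that only $\E\|\xi_1\|_H^2$ enters. Once (C1) and (C2) hold, Proposition~\ref{prp-2} yields the LDP with good rate function $\mathcal{I}_{-\infty}$.
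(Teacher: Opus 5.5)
Your proposal is correct and matches the paper's proof. It uses the same three-term split through $X^{\epsilon,v^{\epsilon}}_{\cdot;-n,\xi}$ and $X^{0,v}_{\cdot;-n,\xi}$, handled as follows:
\begin{itemize}
\item the first term is controlled uniformly in $\epsilon$ by Lemma~\ref{lem: controlled eq diff}, applied to $\cY^{\epsilon,v^{\epsilon}}(t)=X^{\epsilon,v^{\epsilon}}_{t;-n,\cY^{\epsilon,v^{\epsilon}}(-n)}$ together with the uniform bound from \eqref{est: energy controlled eq beta H};
\item the middle term is handled by (S1);
\item the last term is handled by Lemma~\ref{lem: skeleton};
\item the limit $n\to\infty$ is taken before $\epsilon\to 0$.
\end{itemize}
The only difference is presentational. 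The paper tests against bounded continuous $F$, while you pass to convergence in probability after a Skorokhod step. This means you rely on the in-probability convergence that the proof of Lemma~\ref{lem: S1 condition} actually delivers (via $Z^{\epsilon}_1,Z^{\epsilon}_2,Z^{\epsilon}_3\to 0$ with $v^{\epsilon}\to v$ a.s.), not just the in-distribution statement of (S1), so cite it that way.
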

\begin{proof}
 Clearly, (C2) is verified by
Lemma \ref{lem: good rate func}. We focus on the proof of (C1).

For any bounded continuous function $F$ from $\sC_{-\infty}$ to $\R$, we have
\begin{align}\notag
    & F \Big( \mathcal{G}_{-\infty}^{\epsilon} \Big(W(\cdot)+\frac{1}{\sqrt{\epsilon}} \int_{0}^{\cdot} v^{\epsilon}(s) \, \d s\Big) \Big) - F \Big( \mathcal{G}_{-\infty}^{0} \Big(\int_{0}^{\cdot} v(s) \, \d s\Big)  \Big) \\
    \notag
    & =  \bigg( F \Big( \mathcal{G}_{-\infty}^{\epsilon} \Big(W(\cdot)+\frac{1}{\sqrt{\epsilon}} \int_{0}^{\cdot} v^{\epsilon}(s) \, \d s\Big) \Big) - F \Big(  \mathcal{G}^{\epsilon}_{-n,\xi}\Big(W(\cdot)+\frac{1}{\sqrt{\epsilon}}\int^{\cdot}_0v^{\epsilon}(s) \, \d s\Big)  \Big) \bigg) \\
    \notag
    & \quad + \bigg( F \Big(  \mathcal{G}^{\epsilon}_{-n,\xi}\Big(W(\cdot)+\frac{1}{\sqrt{\epsilon}}\int^{\cdot}_0v^{\epsilon}(s) \, \d s\Big) \Big) - F \Big( \mathcal{G}_{_{-n,\xi}}^{0} \Big(\int_{0}^{\cdot} v(s) \, \d s\Big)  \Big) \bigg) \\
    & \quad + \bigg( F \Big( \mathcal{G}_{_{-n,\xi}}^{0} \Big(\int_{0}^{\cdot} v(s) \, \d s\Big)  \Big) - F \Big( \mathcal{G}_{-\infty}^{0} \Big(\int_{0}^{\cdot} v(s) \, \d s\Big)  \Big)  \bigg)     \label{eq: split}
\end{align}
We denote the three terms of the right hands of the above identity as $I_1$, $I_2$ and $I_3$, respectively. Using Lemma \ref{lem: skeleton}, we have $\lim_{n \rightarrow +\infty} I_3=0$.

For any $t_0 \in \R$, denote $F_{t_0} (\Phi):= F(\Phi(t) \ind_{t \geq t_0 } + \Phi(t_0) \ind_{t <t_0}) $ for any $\Phi \in \sC_{-\infty}$, then $F_{t_0}$ is a bounded continuous function from $\sC_{t_0}$ to $\R$. Using the condition (S1), which has already been verified in Theorem \ref{thm: ULDP}, it holds
\begin{align*}
 \lim_{\epsilon \rightarrow 0} I_2 & = \lim_{\epsilon \rightarrow 0} \bigg| F \Big(  \mathcal{G}^{\epsilon}_{-n,\xi}\Big(W(\cdot)+\frac{1}{\sqrt{\epsilon}}\int^{\cdot}_0v^{\epsilon}(s) \, \d s\Big) \Big) - F \Big( \mathcal{G}_{_{-n,\xi}}^{0} \Big(\int_{0}^{\cdot} v(s) \, \d s\Big)  \Big) \bigg| \\
  & = \lim_{\epsilon \rightarrow 0} \bigg| F_{-n} \Big(  \mathcal{G}^{\epsilon}_{-n,\xi}\Big(W(\cdot)+\frac{1}{\sqrt{\epsilon}}\int^{\cdot}_0v^{\epsilon}(s) \, \d s\Big) \Big) - F_{-n} \Big( \mathcal{G}_{_{-n,\xi}}^{0} \Big(\int_{0}^{\cdot} v(s) \, \d s\Big)  \Big) \bigg|=0.
\end{align*}

As we mentioned in the beginning of this section, $\cY^{\epsilon,v^{\epsilon}}$ satisfies Eq. \eqref{eq: stationary controlled}. Thus, $\mathcal{Y}^{\epsilon,v^{\epsilon}}(t)=X^{\epsilon,v^{\epsilon}}_{t;-n,\mathcal{Y}^{\epsilon,v^{\epsilon}}(-n)}$. Using Lemma \ref{lem: controlled eq diff}, we have
\begin{align*}
   & \max \Big\{ \E \big[\sup_{s\in [-n,t]} e^{- \gamma (t-s)} \| X^{\epsilon,v^{\epsilon}}_{s;-n,\xi}-\mathcal{Y}^{\epsilon,v^{\epsilon}}(s)\|_{H}^2 \big]^{\frac{1}{2}}, \E \big[ (\gamma_0 -\tilde{\gamma}_0) \int_{-n}^{t} e^{- \gamma (t-s)}   \| X^{\epsilon,v^{\epsilon}}_{s;-n,\xi}-\mathcal{Y}^{\epsilon,v^{\epsilon}}(s) \|_{V}^2 \, \d s \big]^{\frac{1}{2}} \Big\}  \nonumber \\
    & \leq 2 (\E \| \xi- \mathcal{Y}^{\epsilon,v^{\epsilon}}(-n) \|_{H}^2 )^{\frac{1}{2}}  e^{\frac{  C_{\rho_1}}{\gamma_0} \big( \| \xi \|_{H}^{2} + \| \xi \|_{H}^{2+\beta} + \big(2+\frac{\beta}{2} \big) \frac{ 2 C_{A,\rho} + 4C_B M+ \beta C_{B} \tilde{\epsilon}  }{\lambda_1 \gamma_0} \big)+ \frac{C_B M}{\lambda_1(\gamma_0-\tilde{\gamma}_0)}} e^{-\frac{\gamma }{2}(t+n)},
\end{align*}
for any $\epsilon<\frac{\tilde{\epsilon}}{4} \vee \epsilon_0$ and $\gamma \in [0,\frac{\lambda_1 (\gamma_0-\tilde{\gamma}_0) }{2}]$. Using estimate \eqref{est: energy controlled eq beta H}, we know that the limit element $\cY^{\epsilon,v^{\epsilon}}$ of the Cauchy sequence $\{X^{\epsilon,v^\epsilon}_{\cdot;-n,\xi}\}_{n \geq N}$ is uniformly bounded, i.e. $\sup_{\epsilon} \sup_{t \in \R} \E \| \cY^{\epsilon,v^{\epsilon}} (t) \|_{H}^2<+\infty$. Thus, we obtain that $\lim_{n \rightarrow +\infty} \sup_{\epsilon} I_1=0$.

Combining the above estimates, the condition (C1) is verified. Thus, we obtain LDP for stationary solutions $\{ \cY^{\epsilon}\}_{\epsilon>0}$.
\end{proof}

Denote by $\nu^{\epsilon}:=\mathbb{P}\circ (\cY^{\epsilon}(0))^{-1}$. From the definition of stationary solution, we know the fact $\mathcal{Y}^{\epsilon}(t)=X^{\epsilon}_{t;0,\cY^{\epsilon}(0)}$ for any $t \geq 0$. Thus, we have
\begin{align*}
 \big( P_{t} \, \nu^{\epsilon} \big) (A)=\int_{H}\mathbb{P}(X^{\epsilon}_{t;0,\zeta}\in A) \, \nu^{\epsilon}(d\zeta)= \mathbb{P}(\cY^{\epsilon}(0)\in A)=\nu^{\epsilon}(A) ,
\end{align*}
for any Borel set $A\in \mathcal{B}(H)$ and $t\geq 0$. Thus, $\nu^{\epsilon}$ is an invariant measure of the solution map of \eqref{eq: SPDE}. \cite{Arnold_Random_2003} shows every invariant measure corresponds to a stationary solution, but it is
defined in an extended probability space. Thus, the uniqueness of the stationary solutions ensures the uniqueness of the invariant measure. Then $\nu^{\epsilon}$ is ergodic measure.

\begin{theorem}\label{thm: LDP for invariant measures}
Assume that the conditions {\rm [A1-A5]} hold and $\gamma \geq  \tilde{\gamma}_0$. The family of invariant measure $\{\nu^{\epsilon}\}_{\epsilon>0}$ of  \eqref{eq: SPDE} satisfies the LDP on $H$ with the good rate function $\mathcal{I}^{\prime} $ given by \eqref{rrr-12}.
\end{theorem}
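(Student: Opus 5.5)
This follows from Theorem \ref{thm: LDP for stationary solution} by the contraction principle applied to the projection $\Pi:\sC_{-\infty}\to H$, $\Pi\Phi=\Phi(0)$.

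\textbf{Step 1: $\Pi$ is continuous.} Take $X_k\to X$ in $d_{-\infty}$. Each summand of $d_{-\infty}$ is nonnegative, so for every fixed $N$ the $N$-th summand tends to zero. Take $N=1$. Then
\[
1\wedge \sup_{s\in[-1,1]}\|X_k(s)-X(s)\|_H^2\wedge\int_{-1}^1\|X_k(s)-X(s)\|_V^2\,\d s\to 0 .
\]
As literally written, this summand uses a minimum of the two quantities. That would only force one of them to be small, so it does not by itself control $\|X_k(0)-X(0)\|_H$. I will read the metric in the intended way, as in Lemma~\ref{lem: good rate func}, where convergence in $\sC_{-\infty}$ means both the sup-norm part and the $L^2(V)$ part vanish on every $[-N,N]$. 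Under that reading, $\sup_{|s|\le 1}\|X_k(s)-X(s)\|_H\to0$, hence $\|\Pi X_k-\Pi X\|_H\to0$. This metric point is the only genuine obstacle. If needed, I will note explicitly that all convergences established in Theorem~\ref{thm: stationary solution} and Lemma~\ref{lem: good rate func} are in this stronger sense, so the LDP of Theorem~\ref{thm: LDP for stationary solution} holds for that topology.

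\textbf{Step 2: contraction principle.} By Step 1 and the fact that $\mathcal{I}_{-\infty}$ is a good rate function (Lemma~\ref{lem: good rate func}), the contraction principle gives that $\{\Pi\cY^{\epsilon}\}=\{\cY^{\epsilon}(0)\}$ satisfies the LDP on $H$ with good rate function
\[
\mathcal{I}'(\phi)=\inf\{\mathcal{I}_{-\infty}(\Phi):\Pi\Phi=\phi\},
\]
which is exactly \eqref{rrr-12}. The law of $\cY^{\epsilon}(0)$ is $\nu^{\epsilon}$, so this is the LDP for $\{\nu^\epsilon\}$.

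\textbf{Step 3: goodness and the remaining hypotheses.} The level set satisfies $\{\mathcal{I}'\le M\}=\Pi(\{\mathcal{I}_{-\infty}\le M\})$: the infimum is attained because the level sets of $\mathcal{I}_{-\infty}$ are compact, so any $\phi$ with $\mathcal{I}'(\phi)\le M$ is the image of some $\Phi$ with $\mathcal{I}_{-\infty}(\Phi)\le M$. This is a continuous image of a compact set, hence compact, so $\mathcal{I}'$ is good.

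Finally, I check that the hypotheses match. Theorem~\ref{thm: LDP for stationary solution} requires $\gamma_0\ge\tilde\gamma_0$ and $\epsilon$ small. The LDP only concerns $\epsilon\to0$, so restricting to $\epsilon<\tilde\epsilon$ costs nothing. The condition "$\gamma\ge\tilde\gamma_0$" in the statement should be read as $\gamma_0\ge\tilde\gamma_0$.
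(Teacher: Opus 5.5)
Your proposal is correct and follows the paper's own argument, which is the one-line observation that the result is the contraction principle applied to $\Pi\Phi=\Phi(0)$ together with Theorem~\ref{thm: LDP for stationary solution}. Your additional remarks fill in points the paper leaves implicit: $\Pi$ is continuous only if the $\wedge$ in $d_{-\infty}$ is read as requiring both the $H$-sup and the $L^2(V)$ parts to vanish, which is the convergence actually proved in Lemma~\ref{lem: good rate func} and Theorem~\ref{thm: stationary solution}; goodness of $\mathcal{I}'$ follows from compactness of the level sets of $\mathcal{I}_{-\infty}$; and $\gamma$ should read $\gamma_0$.
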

\begin{proof}
  The result is a consequence of the contraction principle and Theorem \ref{thm: LDP for stationary solution}.
\end{proof}

\section{Application to examples}\label{sec: application}
For the reader's convenience, we first recall parameter $\tilde{\gamma}_0$ used in Theorems \ref{thm: LDP for stationary solution intro} and \ref{thm: LDP for invariant measures intro}. It is defined in \eqref{def: gamma} and takes the form:
\begin{equation*}
  \tilde{\gamma}_0= \frac{C_{\rho_2}}{\lambda_1} \vee \big( \frac{(4+\beta) C_{\rho_1} C_{A,\rho}}{\lambda_1} \big)^{1/2},
\end{equation*}
where $C_{A, \rho}$ is given by condition {\rm [A5]}. Lemma \ref{lem-1} gives an upper bound of $C_{A, \rho}$ as follows
\[
C_{A,\rho,\epsilon_0}:= \max \Big\{ \frac{ \sup_{0 \leq \epsilon \leq \epsilon_0} \|A^{\epsilon}(0)\|_{V^{\ast}}^2}{4 \delta(\epsilon_0)}, \frac{2}{2+\beta} \Big(\frac{\beta}{\lambda_1 (2+\beta)} \Big)^{\frac{\beta}{2}} \, \delta(\epsilon_0) \Big( \frac{ \sup_{0 \leq \epsilon \leq \epsilon_0} \|A^{\epsilon}(0)\|_{V^{\ast}}^2}{4 \delta(\epsilon_0)^2} \Big)^{1+\frac{\beta}{2}}   \Big\} + \epsilon_0 C_{B},
\]
for some $0 < \epsilon_0 < \frac{\lambda_1 \gamma_0-C_{\rho_2}}{2 C_B + \lambda_1 L_B}$. According to Theorems \ref{thm: LDP for stationary solution intro} and \ref{thm: LDP for invariant measures intro}, to obtain the LDP for stationary solutions and invariant measures of (\ref{eq: SPDE}), we need to verify the conditions {\rm{[A1]-[A5]}} and $ \tilde{\gamma}_0<\gamma_0$. When $\lambda_1 \gamma_0 > C_{\rho_2}$, Lemma \ref{lem-1} shows condition {\rm [A5]} can be obtained by {\rm [A1-A3]}. Thus, it is sufficient to check the conditions {\rm{[A1]-[A4]}}  and the following two conditions:
\begin{itemize}
  \item \textbf{(D1):} $\lambda_1 \gamma_0 > C_{\rho_2}$, where $\lambda_1$ is given in the embedding inequality \eqref{eq: embedding}.
  \item \textbf{(D2) :} $\lambda_1 \gamma_0^2 > (4+\beta) C_{\rho_1} C_{A,\rho,\epsilon_0} $ holds for some small enough $\epsilon_0>0$.
\end{itemize}

\subsection{2D stochastic Navier-Stokes equations on a torus}
Consider the two-dimensional Navier-Stokes equation defined on a torus $\mathbb{T}^2:=[0,2\pi]^2$, perturbed by a small noise,
\begin{equation} \label{eq: 2D NS}
  \begin{cases}
    \frac{\partial u(t,x)}{\partial t}=\chi\Delta u(t,x)-(u(t,x)\cdot \nabla)u(t,x)-\nabla p(t,x)+\sqrt{\epsilon}\eta(t,x) , &  \\
      ({\rm{div}}\ u)(t,x)=0 , &   \\
     u(0,x)=u_0(x) , &
  \end{cases}
\end{equation}
where $u(t,x)\in \mathbb{R}^2$ denotes the velocity field, $\chi>0$ is the viscosity, $p(t,x)$ denotes the pressure, $0<\epsilon\ll1$ and $\eta(t,x)$ is a Gaussian random field, white in time and colored in space.

Define $H:=\{f\in L^2(\mathbb{T}^2,\mathbb{R}^2): \int_{\mathbb{T}^2}f(x)dx=0,\  {\rm{div}}\ f=0\}$. Let $\Pi$ be the Leray-Helmholtz projection from $L^2(\mathbb{T}^2;\mathbb{R}^2)$ onto $H$. Setting
\begin{equation*}
  A:=\Pi(-\Delta),\quad D(A)= H \cap H^2(\mathbb{T}^2;\mathbb{R}^2)\ {\rm{and}}\ F(u,u)=\Pi((u\cdot \nabla)u).
\end{equation*}
The interpolation spaces between $D(A)$ and $H$ is $D(A^{\alpha})=[H,D(A)]_{\alpha}$, $\alpha\in (0,1)$. Let $V=D(A^{\frac{1}{2}})$ and $V^{*}$ is the dual space of $V$. Thus, $V\subset H\subset V^{*}$ be the Gelfand triple.

Applying the Leray-Helmholtz projection to \eqref{eq: 2D NS}, it can be written as in the abstract form
\begin{equation} \label{eq: 2D NS project}
  \begin{cases}
     \d u+\chi A u \, \d t+ F(u,u) \, \d t=\sqrt{\epsilon} Q(u) \, \d W(t),   \\
      u(0)=u_0,
  \end{cases}
\end{equation}
where $Q: H\rightarrow L_2(U;H)$ and $W(t)$ is a cylindrical Wiener process on a separable Hilbert space $U$.
When $Q\equiv Q(u)$ is an isomorphism of $H$ onto $D(A^{\frac{\alpha}{2}})$ for some $\alpha>1$, it was proved by \cite{Flandoli_Dissipativity_1994} that for all $\epsilon\in (0,1]$ and $u_0\in H$,
the Markov process on $H$ generated by
\eqref{eq: 2D NS project} has an invariant measure $\nu_{\epsilon}$, which is also unique and ergodic (see \cite{Ferrario_1999_Stochastic}). Furthermore, through a series of refined a priori estimates for the skeleton equation corresponding to \eqref{eq: 2D NS project}, in the periodic boundary condition, \cite[Theorem 7.1]{Brzezniak_Quasipotential_2015} proved that its quasi-potential admits an explicit expression
\begin{align}\label{rate}
   U(\phi) :=\begin{cases}
    |\phi|^2_V, & \phi\in V, \\
    +\infty, & \phi\in H\backslash V.
  \end{cases}
\end{align}
Building upon it,
by establishing two types of uniform LDP, \cite{Brzezniak_Large_2017} proved that the LDP of invariant measures $\{\nu_{\epsilon}\}_{\epsilon>0}$ with rate function defined by formula \eqref{rate}.

In the following, we will verify our framework {\rm{[A1]-[A5]}} for the equation \eqref{eq: 2D NS project}. Firstly, the embedding inequality \eqref{eq: embedding} holds for some $\lambda_1>0$. Denote by
\begin{align*}
  A_{F}(u):=-\chi Au-F(u,u).
\end{align*}
The hemicontinuity {\rm{[A1]}} is due to $A$ is a linear operator and $F$ is a bilinear
map. By the incompressible condition, the interpolation inequality and the Young inequality, we have
\begin{align}\notag
  2 _{V^{\ast}}\<A_{F}(v_1)-A_{F}(v_2)  , v_1 - v_2 \>_V
=& -2\chi\|v_1-v_2\|^2_V+\langle F(v_1-v_2,v_1), v_1-v_2\rangle\\ \notag
 \leq& -2\chi\|v_1-v_2\|^2_V+C\|v_1\|_V\|v_1-v_2\|^2_{L^4(\mathbb{T}^2)}\\
\label{r-2}
\leq& -\chi\|v_1-v_2\|^2_V+C(\chi)\|v_1\|^2_V\|v_1-v_2\|^2_H.
\end{align}
Moreover, referring to \cite[Example 5.1.10]{Liu_Stochastic_2015} and \cite[Section 4]{Liu_Large_2020}, one has
\begin{align}\label{r-4}
   \|A_{F}(v)\|_{V^*}\leq \chi\|v\|_V+C\|v\|_H\|v\|_V.
\end{align}
We next present and discuss our results separately for the cases of noises.

\noindent \textbf{\emph{Case 1} (Additive noise)}: The operator $Q$ is an additive noise satisfying $\|Q\|^2_{L_2(U;H)}\leq C_B$ for some constant $C_B>0$. Combining (\ref{r-2}) with (\ref{r-4}), it follows that Eq. \eqref{eq: 2D NS project} satisfies the conditions {\rm{[A1]-[A4]}} with $\beta=0$, $C_{\rho_2}=0$, $\gamma_0=\frac{\chi}{2}$ and $\kappa=2$. Clearly, condition (D1) holds. Note that $\|A_{F}(0)\|_{V^{\ast}}=0$, we have $C_{A, \rho,\epsilon_0}=\epsilon_0 C_B\rightarrow 0$ as $\epsilon_0 \rightarrow 0$, thus condition (D2) is valid.

\noindent \textbf{\emph{Case 2} (Multiplicative noise)}: The operator $Q: V\rightarrow L_2(U;H)$ satisfies the conditions {\rm{[A3]-[A4]}}. Based on (\ref{r-2}) and condition {\rm{[A3]}}, it follows that for $0<\epsilon<\frac{\lambda_1 \chi}{ 2C_B+ 2 \lambda_1 L_B}$,
\begin{align*}
  &2 _{V^{\ast}}\<A_{F}(v_1)-A_{F}(v_2) , v_1 - v_2 \>_V+\epsilon \, \| Q(v_1) - Q(v_2) \|_2^2\\
\leq& -\chi \|v_1-v_2\|^2_V+C(\chi)\|v_1\|^2_V\|v_1-v_2\|^2_H+\epsilon C_B\|v_1-v_2\|^2_H+\epsilon L_B\|v_1-v_2\|^2_V\\
\leq& -\chi \|v_1-v_2\|^2_V+C(\chi)\|v_1\|^2_V\|v_1-v_2\|^2_H+\epsilon \Big(\frac{C_B}{\lambda_1}+ L_B\Big)\|v_1-v_2\|^2_V\\
\leq& -\frac{\chi }{2}\|v_1-v_2\|^2_V+C(\chi,C_B,L_B,\lambda_1)\|v_1\|^2_V\|v_1-v_2\|^2_H.
\end{align*}
It implies that
$\gamma_0=\frac{\chi}{4}$, $C_{\rho_2}=0$ and $\beta=0$ in the condition {\rm{[A2]}}. Due to the same reason as in case 1, the conditions (D1) and (D2) hold. Combining the above two cases, we reach
\begin{theorem}[Ex: 2D SNS] \label{thm-r-1}
Assume that either $Q$ is an additive noise satisfying $\|Q\|^2_{L_2(U;H)}\leq C_B$, or
 $Q: V\rightarrow L_2(U;H)$ is a multiplicative noise satisfting the conditions {\rm{[A3]-[A4]}} with $\beta=0$.
Then for any small enough $\epsilon_0>0$ and all $0<\epsilon<\epsilon_0$, Theorem \ref{thm: stationary solution intro} shows that Eq. \eqref{eq: 2D NS project} admits a pathwise unique stationary solution. Furthermore, the LDP results stated in Theorems \ref{thm: LDP for stationary solution intro} and \ref{thm: LDP for invariant measures intro}  hold for the stationary solutions and invariant measures of Eq. \eqref{eq: 2D NS project}.
\end{theorem}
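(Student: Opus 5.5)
The theorem follows once we check that both noise cases satisfy the hypotheses of Theorems \ref{thm: stationary solution intro}, \ref{thm: LDP for stationary solution intro} and \ref{thm: LDP for invariant measures intro}. By the reduction stated at the beginning of this section, this means checking {\rm [A1]--[A4]} together with (D1) and (D2). Lemma \ref{lem-1} then provides {\rm [A5]} with an explicit $C_{A,\rho,\epsilon_0}$. We also need the smallness condition $\epsilon<\tilde\epsilon$ in \eqref{def: epsilon}. This holds for $\epsilon_0$ small enough, since $\tilde\epsilon>0$ whenever $\gamma_0>\tilde\gamma_0$.

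\textbf{Conditions [A1]--[A3].} Hemicontinuity {\rm [A1]} holds because $A$ is linear and $F$ is bilinear and continuous from $V\times V$ to $V^\ast$; this makes $s\mapsto\langle A_F(v_1+sv_2),v\rangle$ a polynomial in $s$. For {\rm [A2]} we use \eqref{r-2}. In the additive case $Q(v_1)-Q(v_2)=0$, so we may take $\gamma_0=\chi/2$, $\rho(v)=C(\chi)\|v\|_V^2$, $C_{\rho_1}=C(\chi)$, $C_{\rho_2}=0$ and $\beta=0$. In the multiplicative case the term $\epsilon\|Q(v_1)-Q(v_2)\|_2^2$ is absorbed using \eqref{eq: embedding} and the Lipschitz bound in {\rm [A3]}. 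This requires $\epsilon(C_B/\lambda_1+L_B)\le\chi/2$, and gives $\gamma_0=\chi/4$. For the growth bounds in {\rm [A3]}, \eqref{r-4} gives $\|A_F(v)\|_{V^\ast}^2\le 2\chi^2\|v\|_V^2+2C\|v\|_H^2\|v\|_V^2$, so $\kappa=2$ works. Since $A^\epsilon=A^0=A_F$, the bound on $A^\epsilon-A^0$ is trivial.

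\textbf{Condition [A4].} With $\beta=0$, additive noise gives $\sum_k|\langle v,Q\mathcal U_k\rangle|^2\le\|Q\|_2^2\|v\|_H^2\le C_B\|v\|_H^2$, and the difference term vanishes. In the multiplicative case {\rm [A4]} is an assumption of the theorem.

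\textbf{Conditions (D1) and (D2).} Condition (D1) holds because $C_{\rho_2}=0<\lambda_1\gamma_0$. For (D2), note that $A_F(0)=0$, so the explicit formula in Lemma \ref{lem-1} collapses to $C_{A,\rho,\epsilon_0}=\epsilon_0C_B$. Hence (D2), $\lambda_1\gamma_0^2>4C_{\rho_1}\epsilon_0C_B$, holds for small $\epsilon_0$. We then set $\tilde\gamma_0=(4C_{\rho_1}\epsilon_0C_B/\lambda_1)^{1/2}<\gamma_0$. As $\epsilon_0\to0$, the quantity $\tilde\epsilon$ stays bounded below by a positive constant, so $\epsilon<\epsilon_0<\tilde\epsilon$ is attainable.

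\textbf{Main obstacle.} The delicate point is that the constant $C(\chi,C_B,L_B,\lambda_1)$ in $\rho$, and hence $C_{\rho_1}$, must not depend on $\epsilon$. This holds here because the $\epsilon$-terms are absorbed into the $V$-dissipation rather than into $\rho$. We must also keep the restrictions on $\epsilon_0$ mutually consistent: absorbing the noise in {\rm [A2]}, the range in Lemma \ref{lem-1}, (D2), and $\epsilon_0<\tilde\epsilon$. Each of these requires only an upper bound on $\epsilon_0$, so taking the minimum suffices. The conclusions then follow directly from the three main theorems.
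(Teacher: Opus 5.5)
Your proposal is correct and follows the same route as the paper. You verify {\rm [A1]}--{\rm [A4]} with $\beta=0$, $C_{\rho_2}=0$, $\kappa=2$, and $\gamma_0=\chi/2$ (additive noise) or $\gamma_0=\chi/4$ (multiplicative noise, after absorbing the $\epsilon$-terms for $\epsilon<\lambda_1\chi/(2C_B+2\lambda_1L_B)$). You then get (D1) trivially and (D2) from $A_F(0)=0$, which forces $C_{A,\rho,\epsilon_0}=\epsilon_0C_B\to0$. Your explicit check of {\rm [A4]} in the additive case, and your argument that $\tilde\epsilon$ stays bounded below as $\epsilon_0\to0$ so that $\epsilon<\epsilon_0<\tilde\epsilon$ is attainable, fill in details the paper leaves implicit.
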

\begin{remark}
In \cite{Brzezniak_Large_2017}, the requirement $Q\in L_2(U;D(A^{\frac{\alpha}{2}}))$ with $\alpha>1$ is imposed to guarantee the exponential estimates for the invariant measure. Consequently, our result in Theorem \ref{thm-r-1} relaxes the condition imposed on $Q$. Moreover, the conditions imposed on $Q$ in case 2 are quite natural. For instance, we may take $Q(v)=a(v)B_0$, where $B_0\in L_2(U;H)$ and $a: H\rightarrow \mathbb{R}$ is a global Lipschitz continuous and uniformly bounded scalar function.
\end{remark}


Finally, we remark that the result of Theorem \ref{thm-r-1} remains valid for the equation \eqref{eq: 2D NS project} perturbed by an additional Kraichnan noise of transport type, namely
\begin{equation}\label{r-12}
  \begin{cases}
     \d u+[\chi Au +F(u,u)]\d t=\sqrt{\epsilon} Q(u) \, \d W(t)+\sqrt{\epsilon} \sum_{k\in \mathbb{Z}^2_0}  \Pi(\sigma_{k}\cdot \nabla u) \circ \,  \d B^{k}_t,   \\
      u(0)=u_0,
  \end{cases}
\end{equation}
where $\circ \d$ means the Stratonovich integral, $\mathbb{Z}^2_0$ is the set of non-zero lattice points and $\{\sigma_{k}\}$ are divergence free vector fields. The sequence $\{B^{k}\}$ consists of independent Brownian motions, which are independent of $W(t)$. We assume that $L_{B}:= \sum_{k\in \mathbb{Z}^2_0} \| \sigma_{k} \|_{L^{\infty}}^2 <+\infty$.

Clearly, Eq. \eqref{r-12} can be written in the form \eqref{eq: SPDE} with $A^{\epsilon} u := A_{F} + \frac{\epsilon}{2} \sum_{k} \Pi (\sigma_{k} \cdot \nabla \Pi (\sigma_{k} \cdot \nabla u ) )$ and $B u:= \sum_{k} \< Q(u) , \mathcal{U}_{k}^1 \>_{U_1} \mathcal{U}_{k}^1 + \sum_{k}\Pi(\sigma_{k}\cdot \nabla u) \, \mathcal{U}_{k}^{2}$, where $\{\mathcal{U}_{k}^{i}\}_{k}$ is an orthonormal basis of Hilbert space $U_{i}$ for $i=1,2$, respectively. Let $U = U_1 \times U_2$, if $Q \in L_2(U_1; H) $, then $B \in L_2(U; H)$. The Kraichnan noise preserves the energy estimates for differences, so the condition {\rm{[A2]}} coincides with that for the equation driven solely by $\sqrt{\epsilon} Q(u)\d W(t)$. A straightforward calculation shows that the last two items in the condition {\rm{[A3]}} holds. Moreover, with $L_{B}<+\infty$, the first two items in the condition {\rm{[A3]}} will also be satisfied by incorporating the extra terms $L_{B} \| v \|_{V}^2$ or $L_{B} \| v_1 -v_2 \|_{V}^2$, respectively. Due to the divergence free property of the Kraichnan noise, the condition {\rm{[A4]}} remains unchanged. Thus, the results of Theorem \ref{thm-r-1} are valid for Eq. \eqref{r-12}. Note that the operators $B$ and $Q$ are different, thus the rate functions defined by \eqref{def: rate func 1} and \eqref{def: rate func 2} are different.

\subsection{Stochastic 2D magneto-hydrodynamic equations}
Let $D\subset \mathbb{R}^2$ be a bounded open domain with smooth boundary and $T>0$. Consider stochastic 2D magneto-hydrodynamic equations (MHD) in the form
\begin{align}\label{MHD}
  \left\{
    \begin{array}{ll}
      \partial_t v=\chi\Delta v-(v\cdot \nabla)v+\frac{{{M}}^2\chi}{{{Rm}}}(b\cdot \nabla)b
-\nabla (P+\frac{{{M}}^2\chi}{{{Rm}}}\frac{|b|^2}{2})+\sqrt{\epsilon}\eta_1(t,x), &  \\
     \partial_t b=\chi_1\Delta b-(v\cdot \nabla)b+(b\cdot \nabla)v +\sqrt{\epsilon}\eta_2(t,x),   \\
     {\rm{div}}\ v=0,\  {\rm{div}}\ b=0, \quad {\rm{in}}\ D, &  \\
     v=0,\ b\cdot n=0,\ \partial_1b_2-\partial_2b_1=0, \quad {\rm{on}}\ \partial D, &
    \end{array}
  \right.
\end{align}
where $v=(v_1(x,t),v_2(x,t))$ is velocity field of fluid, $b=(b_1(x,t),b_2(x,t))$ is magnetic field, $P$ is pressure, $n$ denotes the outward normal to $\partial D$, $\frac{1}{\chi}>0$ is the Reynold number, $\chi_1>0$ is the magnetic resistivity and ${{Rm}}, M$ correspond to the magnetic Reynolds number and the Hartman number, respectively. Moreover, $\eta_i(t,x)$, $i=1,2$ are
random perturbations. Let $p=P+\frac{M^2\chi}{Rm}\frac{|b|^2}{2}$, it stands for the total pressure.
Boundary conditions on $b$ express the physical hypothesis that the boundary is perfectly conductive. Without loss of generality, we can assume that $\chi=\chi_1$ and $\frac{{{M}}^2\chi}{{{Rm}}}=1$. System \eqref{MHD} models the interaction between a conductor fluid or plasma and the magnetic field $b$ in the presence of random perturbations. There is a vast literature on magneto-hydrodynamic equations, see \cite{Barbu_2007_Existence,Sermange_1983_Mathematical} and references therein.

To further describe the system \eqref{MHD}, we need some notations.
The space $C^{\infty}_0(D;\mathbb{R}^2)$ consists of all smooth functions from $D$ to $\mathbb{R}^2$ with compact support. $W^{1,2}_0(D;\mathbb{R}^2)$ is the closure of $C^{\infty}_0(D;\mathbb{R}^2)$ with respect to the norm $\|u\|_{W^{1,2}_0(D;\mathbb{R}^2)}=\int_{D}| \nabla u(x)|^2 dx$.
Define the space
\begin{align*}
V_1=\{v\in W^{1,2}_0(D;\mathbb{R}^2): \nabla\cdot v=0\ in\ D\},
\end{align*}
whose norm is $\|v\|_{V_1}:=\Big(\int_D|\nabla v|^2dx\Big)^{1/2}$. Let $H_1$ be the closure of $V_1$ in the norm $\|v\|_{H_1}:=\Big(\int_D|v|^2dx\Big)^{1/2}$. Define the following space with the norm $\|b\|_{V_2}:=\Big(\int_D|\nabla b|^2dx\Big)^{1/2}$:
\begin{align*}
V_2=\{b\in W^{1,2}(D;\mathbb{R}^2): \nabla\cdot b=0\ in\ D,\ b\cdot n=0\ on\ \partial D\},
\end{align*}
Denote by $H:=H_1\times H_1$, $V:=V_1\times V_2$ and $V^{\ast}$ is the dual space with $V$. Then $V \subset H \subset V^{\ast}$ is a Gelfand triple and the embedded inequality \eqref{eq: embedding} holds.

 For any $k\in \mathbb{N}$, set $W^{k,2}(D;\mathbb{R}^2)=\{v\in L^2(D;\mathbb{R}^2): D^{\alpha}v\in L^2(D;\mathbb{R}^2)\}$. Let $\Pi_{H_1}$ be the Helmhotz-Hodge projection from $L^2(D; \mathbb{R}^2)$ to $H_1$. Define
\begin{align*}
  A_1v &=\chi \Pi_{H_1}\Delta v,\quad v\in W^{2,2}(D;\mathbb{R}^2)\cap V_1, \\
  A_2b &=\chi \Delta b,\quad  \quad \quad  b\in \{u\in W^{2,2}(D;\mathbb{R}^2)\cap V_2: \partial_1b_2-\partial_2b_1=0\ {\rm{on}}\ \partial D\}.
\end{align*}

We set $b(y,z,w)= \int_{D}  (y \cdot \nabla z)  \cdot w \, \d x$, for $y,z,w\in V$. Define the operators $F_1: V_1\times V_1\rightarrow V^*_1$, $F_2: V_2\times V_2\rightarrow V^*_1$, $F_3: V_1\times V_2\rightarrow V^*_2$ and $F_4: V_2\times V_1\rightarrow V^*_2$:
\begin{align*}
  & (F_1(y,y),w)=\Pi_{H_1}b(y,y,w),\quad y,w\in V_1, \\
  & (F_2(z,z),w)=\Pi_{H_1}b(z,z,w),\quad z\in V_2,\ w\in V_1, \\
  &(F_3(y,z),w)=b(y,z,w),\quad  \quad  \quad y\in V_1, z\in V_2, w\in V_2,\\
 &(F_4(z,y),w)=b(z,y,w),\quad \quad \quad y\in V_1, z\in V_2,\ w\in V_2.
\end{align*}
Let $X=(v,b)$, set
\begin{align*}
  AX=(A_1v, A_2b)^{\top},\ F(X,X)=(-F_1(v,v)+F_2(b,b), -F_3(v,b)+F_4(b,v))^{\top}.
\end{align*}

With the above operators in hand, by applying $\Pi_{H_1}$ to the system \eqref{MHD}, we obtain
\begin{align}\label{MHD-1}
  dX=AXdt+F(X,X)dt+\sqrt{\epsilon}Q(X)dW(t),
\end{align}
where $Q: H\rightarrow L_2(U;H)$ and $W(t)$ is a cylindrical Wiener process on a separable Hilbert space $U$. Denote by $A_F(X)=AX+F(X,X)$. In the following, we will verify conditions {\rm{[A1]-[A3]}}.

The hemicontinuity {\rm{[A1]}} follows from the fact that $A$ is a linear operator and $F$ is a bilinear map. Similar to 2D NS, it follows from the H\"{o}lder, interpolation and Young inequalities that
\begin{align*}
2 _{V^{\ast}}\<A_F(X_1)-A_F(X_2)  , X_1 - X_2 \>_V
\leq& -\frac{3\chi}{2} \|X_1 - X_2\|^2_V+C(\chi)\|X_2\|^4_{L^4(D)}\|X_1 - X_2\|^2_H \nonumber \\
\leq& -\frac{3\chi}{2} \|X_1 - X_2\|^2_V+C(\chi)\|X_2\|^2_{H}\|X_2\|^2_{V}\|X_1 - X_2\|^2_H.
 \end{align*}
If $B$ satisfies the conditions {\rm{[A3]-[A4]}} with $\beta=2$, then for all $0<\epsilon<\frac{\lambda_1 \chi}{2C_B+2 \lambda_1 L_B}$,
\begin{align*}
& 2 _{V^{\ast}}\<A_F(X_1)-A_F(X_2)  , X_1 - X_2 \>_V  + \epsilon \| Q(v_1)-Q(v_2) \|_2^2  \\
\leq& -\chi \|X_1 - X_2\|^2_V+C(\chi)\|X_2\|^2_{H}\|X_2\|^2_{V}\|X_1 - X_2\|^2_H.
 \end{align*}
It implies that the condition {\rm{[A2]}} with $\gamma_0=\frac{\chi}{2}$, $C_{\rho_2}=0$ and $\beta=2$, thereby condition (D1) holds. Moreover, we readily to check that
\begin{align*}
   \|A_F(X)\|^2_{V^*}\leq C(1+\|X\|^2_H)(1+\|X\|^2_V),
\end{align*}
for some $C>0$ and condition {\rm{[A3]}} holds for $\kappa=2$. Since $\|A_F(0)\|_{V^{\ast}}=0$, we have $C_{A, \rho,\epsilon_0}=\epsilon_0 C_B\rightarrow 0$ as $\epsilon_0 \rightarrow 0$, thus condition (D2) is valid. We conclude that
\begin{theorem}(Ex: 2D MHD equations)
Assume that $Q$ satisfies the conditions {\rm{[A3]-[A4]}} with $\beta=2$. Then for any small enough $\epsilon_0>0$ and all $0<\epsilon<\epsilon_0$, Theorem \ref{thm: stationary solution intro} shows that Eq. \eqref{MHD-1} admits a pathwise unique stationary solution. Furthermore, the LDP results stated in Theorems \ref{thm: LDP for stationary solution intro} and \ref{thm: LDP for invariant measures intro}  hold for the stationary solutions and invariant measures of Eq. \eqref{MHD-1}.
\end{theorem}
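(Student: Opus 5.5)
The plan is to reduce the statement for the MHD system \eqref{MHD-1} to the abstract Theorems \ref{thm: stationary solution intro}, \ref{thm: LDP for stationary solution intro} and \ref{thm: LDP for invariant measures intro}. As explained at the start of Section \ref{sec: application}, this means checking {\rm [A1]--[A4]} together with (D1) and (D2). Condition {\rm [A5]} is not checked directly; it follows from Lemma \ref{lem-1} once (D1) holds.

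\textbf{Step 1: Gelfand triple and [A1].} Because $V=V_1\times V_2$ carries the gradient norm, Poincar\'e's inequality on the bounded domain $D$ gives the embedding inequality \eqref{eq: embedding}. For $V_2$ this uses the boundary condition $b\cdot n=0$ together with the divergence-free constraint. Hemicontinuity {\rm [A1]} follows because $A$ is linear and $F$ is bilinear, so the map $s\mapsto{}_{V^*}\langle A_F(v_1+sv_2),v\rangle_V$ is a polynomial in $s$.

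\textbf{Step 2: local monotonicity [A2].} Set $Z=X_1-X_2$ and expand $F(X_1,X_1)-F(X_2,X_2)$. The cancellation identities $b(y,z,z)=0$ and $b(v,b,b')+b(v,b',b)=0$ pair the $F_2$ term with the $F_4$ term, so the only surviving contributions are trilinear terms of the form $b(Z,X_2,Z)$. Each such term is bounded by Ladyzhenskaya's inequality $\|Z\|_{L^4}^2\le C\|Z\|_H\|Z\|_V$ and Young's inequality:
\[
|b(Z,X_2,Z)|\le C\|X_2\|_{L^4}\|Z\|_{L^4}\|Z\|_V\le \tfrac{\chi}{4}\|Z\|_V^2+C\|X_2\|_{L^4}^4\|Z\|_H^2 .
\]
Using $\|X_2\|_{L^4}^4\le C\|X_2\|_H^2\|X_2\|_V^2$ then gives
\[
\rho(X_2)=C\|X_2\|_H^2\|X_2\|_V^2 ,
\]
which has the form \eqref{rrr-1} with $\beta=2$ and $C_{\rho_2}=0$. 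The condition {\rm [A2]} is stated with $\rho(v_1)$, so the roles of $X_1$ and $X_2$ are swapped by symmetry. The noise term $\epsilon\|Q(X_1)-Q(X_2)\|_2^2$ is controlled by {\rm [A3]} and \eqref{eq: embedding}; for $\epsilon<\lambda_1\chi/(2C_B+2\lambda_1L_B)$ it absorbs at most $\chi/2$ of the dissipation. This yields $\gamma_0=\chi/2$.

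\textbf{Step 3: growth [A3] and the conditions (D1)--(D2).} For the growth bound, write $\|A_F(X)\|_{V^*}\le\chi\|X\|_V+C\|X\|_{L^4}^2$. Since $\|X\|_{L^4}^2\le C\|X\|_H\|X\|_V$, this gives $\|A_F(X)\|_{V^*}^2\le C(1+\|X\|_H^2)(1+\|X\|_V^2)$, i.e. {\rm [A3]} with $\kappa=2$. Here $A^\epsilon=A^0$, so the last item of {\rm [A3]} is trivial, and the conditions on $Q$ in {\rm [A3]}--{\rm [A4]} are hypotheses of the statement. Condition (D1) holds because $C_{\rho_2}=0$. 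For (D2), note $A_F(0)=0$, so Lemma \ref{lem-1} gives $C_{A,\rho,\epsilon_0}=\epsilon_0C_B\to0$. Hence $\lambda_1\gamma_0^2>(4+\beta)C_{\rho_1}C_{A,\rho,\epsilon_0}$ once $\epsilon_0$ is small enough, and then $\tilde\gamma_0<\gamma_0$. For every $\epsilon<\epsilon_0\wedge\tilde\epsilon$, the three abstract theorems apply and give the claim.

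The main obstacle is Step 2. The difficulty is the bookkeeping of the mixed trilinear terms with the boundary conditions on $b$ (perfectly conducting wall), in particular justifying the cancellations and the $L^4$ interpolation for $V_2$, where the functions are not zero on $\partial D$. I would invoke the known estimates of Sermange--Temam \cite{Sermange_1983_Mathematical} and the verification in \cite{Liu_Large_2020} rather than reprove them.
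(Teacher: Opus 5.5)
Your proposal is correct and follows the paper's own route. Both arguments verify {\rm [A1]}--{\rm [A4]} with $\gamma_0=\chi/2$, $C_{\rho_2}=0$, $\beta=2$, $\kappa=2$ via the 2D interpolation $\|X\|_{L^4}^4\le C\|X\|_H^2\|X\|_V^2$, absorb the noise for $\epsilon<\lambda_1\chi/(2C_B+2\lambda_1L_B)$, and obtain (D1) from $C_{\rho_2}=0$ and (D2) from $A_F(0)=0$, so that $C_{A,\rho,\epsilon_0}=\epsilon_0C_B\to0$.

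You are a bit more explicit than the paper about the cancellations $b(y,z,z)=0$ and $b(y,z,w)=-b(y,w,z)$, and about the relabeling $X_1\leftrightarrow X_2$, which is needed because the paper's bound evaluates $\rho$ at $X_2$ while {\rm [A2]} uses $\rho(v_1)$. To land exactly at $\gamma_0=\chi/2$, choose the Young constants so that all trilinear terms together absorb only $\chi/2$ of the $2\chi$ dissipation.
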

The noise defined by $B(X)=\frac{\sigma_0}{1+\|X\|^2_H}\cdot I_U$ satisfy the conditions {\rm{[A3]-[A4]}} with $\beta=2$, where $I_U: U\rightarrow H$ is any Hilbert-Schmidt operator and constant $\sigma_0>0$.

\subsection{Stochastic hyper-dissipative 3D Navier-Stokes equations}
Consider the 3D stochastic hyper-dissipative
Navier-Stokes equations defined on $\mathbb{T}^3=[0,2\pi]^3$,
\begin{align}\label{rrr-20}
  du=\Big(-\chi(-\Delta)^{\alpha}u+(u\cdot \nabla)u+\nabla p \Big)dt+\sqrt{\epsilon}\eta(t,x),\quad \nabla\cdot u=0, \, u(0)=u_0,
\end{align}
where $u$ is velocity field, $\chi>0$ is the viscosity, $p$ is pressure and $\eta(t,x)$ is a Gaussian random field white in time and colored in space. The fractional Laplacian $(-\Delta)^{\alpha}$ is defined through the Fourier transform $\widehat{(-\Delta)^{\alpha}u}(k)=|2\pi k|^{2\alpha}\widehat{u}(k)$. When $\alpha=1$, Eq. \eqref{rrr-20} is reduced to the classical 3D Navier-Stokes equation. It is well-known that the Leray weak solution of 3D Navier-Stokes equations is non-uniqueness for both deterministic and stochastic cases, see \cite{Buckmaster_2019_Nonuniqueness}. Dissipation corresponding to a fractional Laplacian arises from modeling real physical phenomena. In order to explore the long-time behavior, we consider the hyper-dissipative critical case, $\alpha=\frac{5}{4}$.

Denote by $H:=\{u\in L^2(\mathbb{T}^3): {\rm{div}}\ u=0, \int_{\mathbb{T}^3}u dx=0\}$, whose norm is $\|u\|^2_H=\|u\|^2_{L^2(\mathbb{T}^3)}$. Let $V= H \cap  H^{\frac{5}{4}}(\mathbb{T}^3)$ and $V^{\ast}$ be the dual space of $V$, then $V\subset H\subset V^* $ is a Gelfand triple. Clearly, the embedding inequality \eqref{eq: embedding} holds for some $\lambda_1>0$. Let $\Pi$ be the Leray-Helmholtz projection from $L^2(\mathbb{T}^3)$ to $H$. Define the operator $A^{\alpha}$ by $A^{\alpha}u:=\chi\Pi (-\Delta)^{\alpha} u$ and the bilinear map $F(u,v)=\Pi (u\cdot \nabla) v$.
With the projection $\Pi$, Eq. \eqref{rrr-20} can be rewritten as
\begin{align}\label{rrr-21}
  du= A_{F}(u)dt+\sqrt{\epsilon}Q(u) \, \d W :=-\chi A^{\frac{5}{4}}udt-F(u,u)dt+\sqrt{\epsilon}Q(u) \, \d W,
\end{align}
where $Q: H\rightarrow L_2(U;H)$ and $W(t)$ is a cylindrical Wiener process on a Hilbert space $U$.

The hemicontinuity {\rm{[A1]}} is obvious, since $A^{\frac{5}{4}}$ is a linear operator and $F$ is a bilinear
map. It follows from the H\"{o}lder and Young inequalities that
\begin{align}
&2 _{V^{\ast}}\<A_{F}(v_1)-A_{F}(v_2)  , v_1 - v_2 \>_V \nonumber \\
\leq& -2\chi \|A^{\frac{5}{4}}(v_1-v_2)\|^2_H+\|A (v_1-v_2)\|_{L^{\frac{12}{5}}(\mathbb{T}^3)} \|v_1\|_{L^{12}(\mathbb{T}^3)}\|v_1-v_2\|_H \nonumber\\
  \leq& -\frac{3}{2}\chi \|v_1-v_2\|^2_V+C\|v_1\|^2_V\|v_1-v_2\|^2_H, \label{r-3}
\end{align}
where we have used the embedding inequalities in $\T^3$:
\begin{equation*}
  \|v_1\|_{L^{12}(\T^3)} \leq C \|A^{\frac{5}{4}}v_1\|_H= C \|v_1\|_{V} \, \mbox{and} \, \|A (v_1-v_2)\|_{L^{\frac{12}{5}}(\mathbb{T}^3)} \leq C \|A^{\frac{5}{4}}(v_1-v_2)\|_H = C \|v_1-v_2\|_{V}.
\end{equation*}
Moreover, it follows from (2.7) in \cite{Qiu_2023_Large} that
\begin{align}\label{r-5}
   \|A_{F}(u)\|_{V^*}\leq \chi\|u\|_V+C\|u\|_H\|u\|_V,
 \end{align}
which implies that condition {\rm{[A3]}} holds for $\kappa=2$. Based on (\ref{r-3}) and (\ref{r-5}), we conclude from  the discussion in the above of Theorem \ref{thm-r-1} that
\begin{theorem}(Ex: Hyper-dissipative 3D NS equations)
Assume that either $Q$ is an additive noise satisfying $\|Q\|^2_{L_2(U;H)}\leq C_B$, or $Q: V\rightarrow L_2(U;H)$ satisfies the conditions {\rm{[A3]-[A4]}} with $\beta=0$. Then for any small enough $\epsilon_0>0$ and all $0<\epsilon<\epsilon_0$, Theorem \ref{thm: stationary solution intro} shows that Eq. \eqref{rrr-21} admits a pathwise unique stationary solution. Furthermore, the LDP results stated in Theorems \ref{thm: LDP for stationary solution intro} and \ref{thm: LDP for invariant measures intro}  hold for the stationary solutions and invariant measures of Eq. \eqref{rrr-21}.
\end{theorem}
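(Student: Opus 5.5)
The plan is to check that \eqref{rrr-21} satisfies the hypotheses of Theorems \ref{thm: stationary solution intro}, \ref{thm: LDP for stationary solution intro} and \ref{thm: LDP for invariant measures intro}. As explained at the start of Section \ref{sec: application}, this amounts to verifying {\rm [A1]--[A4]} together with (D1) and (D2); Lemma \ref{lem-1} then supplies {\rm [A5]}. The argument follows the 2D Navier--Stokes case (Theorem \ref{thm-r-1}), using the hyper-dissipative estimates \eqref{r-3} and \eqref{r-5} in place of \eqref{r-2} and \eqref{r-4}.

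First, {\rm [A1]} holds because $A^{5/4}$ is linear and $F$ is bilinear, so $s\mapsto {}_{V^\ast}\langle A_F(v_1+sv_2),v\rangle_V$ is a polynomial in $s$. Next, from \eqref{r-5} we get $\|A_F(v)\|_{V^\ast}^2\le C(1+\|v\|_V^2)(1+\|v\|_H^2)$, which is the growth part of {\rm [A3]} with $\kappa=2$. Since $A^\epsilon=A^0=A_F$ here, the last item of {\rm [A3]} is trivial. For {\rm [A2]}, start from \eqref{r-3}. In the additive case the noise difference vanishes, so we may take $\gamma_0=\frac{3\chi}{4}$, $\rho(v)=C\|v\|_V^2$, $\beta=0$ and $C_{\rho_2}=0$. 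In the multiplicative case we add $\epsilon\|Q(v_1)-Q(v_2)\|_2^2\le \epsilon(C_B/\lambda_1+L_B)\|v_1-v_2\|_V^2$ using {\rm [A3]} and \eqref{eq: embedding}. For $\epsilon$ small this is absorbed into the dissipation, leaving $\gamma_0=\chi/2$, $C_{\rho_2}=0$, $\beta=0$. In both cases \eqref{rrr-1} holds with $C_{\rho_2}=0$, so (D1) is immediate. Condition {\rm [A4]} with $\beta=0$ is assumed in the multiplicative case. In the additive case it follows from $\sum_k|\langle v,Q\mathcal U_k\rangle_H|^2\le \|Q\|_2^2\|v\|_H^2\le C_B\|v\|_H^2$, and the second inequality of {\rm [A4]} is trivial because $Q(u)-Q(v)=0$.

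The step that needs care is (D2), namely $\lambda_1\gamma_0^2>4C_{\rho_1}C_{A,\rho,\epsilon_0}$ for small $\epsilon_0$. Here $C_{\rho_1}=C$ is a fixed constant from \eqref{r-3}, which may be large. The key point is that $A_F(0)=0$, since $F(0,0)=0$. The explicit formula in Lemma \ref{lem-1} then collapses to $C_{A,\rho,\epsilon_0}=\epsilon_0C_B$. This tends to $0$ as $\epsilon_0\to0$, so (D2) holds once $\epsilon_0<\lambda_1\gamma_0^2/(4C_{\rho_1}C_B)$. The same smallness makes $\tilde\gamma_0<\gamma_0$, and shrinking $\epsilon_0$ further gives $\epsilon<\tilde\epsilon$ as well as the condition $\epsilon_0<(\lambda_1\gamma_0-C_{\rho_2})/(2C_B+\lambda_1L_B)$ required by Lemma \ref{lem-1}. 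I expect the only subtle points to be two: confirming that the constant in \eqref{r-3} stays uniform when the noise term is absorbed, and recording that $\tilde\epsilon$ is bounded below uniformly as $\epsilon_0\to0$. The definitions \eqref{def: gamma} and \eqref{def: epsilon} show this, because $C_{A,\rho}\to0$ only helps.

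With all hypotheses verified, Theorem \ref{thm: stationary solution intro} gives the pathwise unique stationary solution for $0<\epsilon<\epsilon_0$. Theorem \ref{thm: LDP for stationary solution intro} then gives the LDP on $\sC_{-\infty}$, and Theorem \ref{thm: LDP for invariant measures intro} gives the LDP for $\nu^\epsilon$ by the contraction principle.
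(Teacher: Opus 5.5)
Your proposal is correct and follows the paper's own argument. It verifies {\rm [A1]--[A4]}, (D1) and (D2) from \eqref{r-3}, \eqref{r-5} and $A_F(0)=0$ (so that $C_{A,\rho,\epsilon_0}=\epsilon_0C_B\to0$), exactly as in the two noise cases of the 2D Navier--Stokes example, with Lemma \ref{lem-1} supplying {\rm [A5]}; your additive-noise check of {\rm [A4]} and the uniform positive lower bound on $\tilde{\epsilon}$ as $C_{A,\rho}\to0$ just make explicit what the paper leaves implicit.
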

\begin{remark}
Qiu et al. \cite{Qiu_2023_Large} investigated Eq. \eqref{rrr-21} driven by additive noise and established its ergodicity. By proving Freidlin-Wentzell ULDP over bounded sets, they further claimed the LDP for invariant measures. However, in their proof of the lower bound for this LDP, the Dembo--Zeitouni ULDP over bounded sets was used without justification. We establish the LDP for invariant measures by proving the corresponding LDP for stationary solutions, thereby avoiding the need to establish Dembo--Zeitouni ULDP over bounded sets.
\end{remark}

\subsection{Stochastic semilinear equations}\label{subsec-1}
Let $\Lambda$ be an open bounded domain in $\mathbb{R}^d$ with smooth boundary. Let $C^{\infty}_0(\Lambda)$ denote the set of all infinitely differentiable real-valued functions on $\Lambda$ with compact support. Define $H^{1,p}_0(\Lambda)$ as the completion of $C^{\infty}_0(\Lambda)$ with norm $\|u\|_{H^{1,2}_0}:=\Big(\int_{\Lambda}|\nabla u(x)|^2 \d x\Big)^{\frac{1}{2}}$. Define the Gelfand triple
\begin{align*}
  V:=H^{1,2}_0(\Lambda)\subset H:=L^2(\Lambda)\subset (H^{1,2}_0(\Lambda))^*=V^*.
\end{align*}
The Poincar inequality gives the embedded inequality between $H$ and $V$ like \eqref{eq: embedding}.

Consider the stochastic semilinear equation
\begin{align}\label{rrr-16}
  dX(t)=\Big(\chi\Delta X(t)+\sum^d_{i=1}f_i(X(t)) \frac{\partial}{\partial x_{i}} X(t)+g(X(t))\Big)dt+\sqrt{\epsilon}B(X(t)) \, \d W(t),
\end{align}
where $X(t,x)=0$ for $x\in \partial \Lambda$, $X(0)=X_0\in H$, $W_t$ is a cylindrical wiener process, $\{f_i\}$ are Lipschitz functions on $\mathbb{R}$, and $g$ is a continuous function on $\mathbb{R}$ with $g(0)=0$ and satisfies
\begin{align}
  &|g(\xi)|\leq C (|\xi|^r+1),  \quad \xi \in \mathbb{R}, \label{rrr-22}\\
  &(g(\xi)-g(\zeta))(\xi-\zeta)\leq C_{g} (1+|\xi|^s)(\xi-\zeta)^2,\quad \xi, \zeta\in \mathbb{R},   \label{r-13}
\end{align}
where $C,C_{g},s$ are nonnegative constants and $r \geq 1$. Let
\begin{align*}
A_g(u):=\chi\Delta u+f(u)\partial_{x}u+g(u)=:A(u)+g(u).
\end{align*}
The hemicontinuity {\rm{[A1]}} follows from the above conditions on $f$ and $g$.

In what follows, we consider two distinct cases and provide the associated results as well as illustrative examples, including stochastic 1D Burgers equation,  stochastic 1D Ginzburg--Landau equation and stochastic reaction-diffusion equations.

\subsubsection{\texorpdfstring{Case 1: \ $d=1$, $\{f_i\}_i$ are Lipschitz, $r\leq 3$ and $s=0$.}{Case 1 : d=1.}}
Using $H^{1,2}_0(\Lambda)\subset L^{\infty}(\Lambda)$ continuously and Young inequality, it was proved by (5.15) in \cite{Liu_Stochastic_2015}
\begin{align}
  &2 _{V^{\ast}}\<A(v_1)-A(v_2), v_1 - v_2 \>_V \nonumber \\
  =&-2\chi\|v_1 - v_2\|^{2}_{V}+Lip(f)\Big(\|v_1\|_{L^{\infty}(\Lambda)}\|v_1 - v_2\|_{H} \|v_1 - v_2\|_{V}+\|v_1 \|_{V}\|v_1 - v_2\|^2_{L^4(\Lambda)}\Big) \nonumber \\
\leq&  -\frac{3\chi}{2}  \|v_1 - v_2\|_{V}^{2} + C\|v_1\|^2_V \| v_1 - v_2 \|_H^2, \label{rrr-24}
\end{align}
for any $v_1,v_2\in V$. The condition \eqref{r-13} yields for $v_1, v_2\in V$,
\begin{equation} \label{rrr-25}
  2 _{V^{\ast}}\<g(v_1)-g(v_2)  , v_1 - v_2 \>_V
  =2\int_{\Lambda}(g(v_1)-g(v_2))(v_1-v_2)dx\leq 2 C_{g} \| v_1 - v_2 \|_H^2.
\end{equation}
Assume that the operator $B$ satisfies the conditions {\rm{[A3]-[A4]}} with $\beta=0$, we deduce from {\rm{[A3]}}, estimates \eqref{rrr-25} and \eqref{rrr-24} that for $0<\epsilon<\frac{\lambda_1 \chi}{2C_B+2 \lambda_1 L_B}$,
\begin{align}
  &2 _{V^{\ast}}\<A_g(v_1)-A_g(v_2)  , v_1 - v_2 \>_V +\epsilon\|B(v_1)-B(v_2)\|^2_2 \nonumber \\
  \leq & -\frac{3\chi}{2}   \|v_1 - v_2\|_{V}^{2} +(C \|v_1\|^2_V +2C_{g}) \| v_1 - v_2 \|_H^2+\epsilon C_B\| v_1 - v_2 \|_H^2+\epsilon L_B\| v_1 - v_2 \|_V^2 \nonumber \\
  \leq & -\chi   \|v_1 - v_2\|_{V}^{2} + ( C \|v_1\|^2_V + 2C_{g} ) \| v_1 - v_2 \|_H^2. \label{r-6}
\end{align}
Thus, condition {\rm{[A2]}} holds for $\gamma_0=\chi/2$, $C_{\rho_2}=2 C_{g}$ and $\beta=0$. Thus, when $\chi> 4C_{\rho}/ \lambda_1 $, condition (D1) holds. Since $r\leq 3$, using $\|u\|_{L^r(\Lambda)}\leq\|u\|_{L^3(\Lambda)}\leq \|u\|^{\frac{2}{3}}_H\|u\|^{\frac{1}{3}}_V$, we obtain
\begin{align*}
|_{V^{\ast}}\<g(u)  , v \>_V|\leq C\int_{\Lambda}(1+|u|^r)|v|d\xi\leq C\|v\|_{L^{\infty}}(1+\|u\|^3_{L^3})\leq C\|v\|_V(1+\|u\|_V\|u\|^2_H).
\end{align*}
Then, we reach the condition {\rm{ [A3]}} with the parameter $\kappa=4$:
\begin{align}\label{r-7}
  \|A_g(u)\|_{V^*}\leq \chi\|u\|_V+C\|u\|_H\|u\|_V+C(1+\|u\|_V\|u\|^2_H).
\end{align}
Note that $\|A_g(0)\|_{V^{\ast}}=0$, we have $C_{A, \rho,\epsilon_0}=\epsilon_0 C_B\rightarrow 0$ as $\epsilon_0 \rightarrow 0$, thus condition (D2) is valid. Thus, as we discussed in the beginning of this section, we conclude that
\begin{theorem}\label{thm-6}
Assume that $d=1$, $\{f_i\}_i$ are Lipschitz continuous, $g$ satisfies \eqref{rrr-22}-\eqref{r-13} with $r\leq 3$ and $s=0$. Let $\chi> 4C_{g}/ \lambda_1 $ and assume that either $B$ is an additive noise satisfying $\|B\|^2_{L_2(U;H)}\leq C_B$, or $B: V\rightarrow L_2(U;H)$ satisfies the conditions {\rm{[A3]-[A4]}} with $\beta=0$. Then for any small enough $\epsilon_0>0$ and all $0<\epsilon<\epsilon_0$, Theorem \ref{thm: stationary solution intro} shows that Eq. \eqref{rrr-16} admits a pathwise unique stationary solution. Furthermore, the LDP results stated in Theorems \ref{thm: LDP for stationary solution intro} and \ref{thm: LDP for invariant measures intro}  hold for the stationary solutions and invariant measures of Eq. \eqref{rrr-16}.
\end{theorem}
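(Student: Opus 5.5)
The plan is to reduce Theorem \ref{thm-6} to the abstract results. Theorems \ref{thm: stationary solution intro}, \ref{thm: LDP for stationary solution intro} and \ref{thm: LDP for invariant measures intro} only require {\rm [A1]--[A4]} together with (D1) and (D2); by Lemma \ref{lem-1}, {\rm [A5]} then follows from {\rm [A1]--[A3]} once (D1) holds. So we verify these conditions for $A^{\epsilon}=A^0:=A_g$, which makes the last item of {\rm [A3]} trivial, and for $B$ either additive or multiplicative.

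\textbf{Step 1: [A1] and [A2].} Hemicontinuity comes from the continuity of $f_i$ and $g$, combined with the growth bound \eqref{rrr-22} ($r\le 3$) and dominated convergence in $\Lambda\subset\R$. For local monotonicity we add \eqref{rrr-24} and \eqref{rrr-25}. Then we absorb $\epsilon\|B(v_1)-B(v_2)\|_2^2\le \epsilon C_B\|v_1-v_2\|_H^2+\epsilon L_B\|v_1-v_2\|_V^2$ using the Poincar\'e inequality \eqref{eq: embedding}. For $\epsilon<\lambda_1\chi/(2C_B+2\lambda_1L_B)$ this gives \eqref{r-6}. Hence {\rm [A2]} holds with $\gamma_0=\chi/2$, $\rho(v)=C\|v\|_V^2+2C_g$, $\beta=0$, $C_{\rho_1}=C$ and $C_{\rho_2}=2C_g$. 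In the additive case the $B$-difference vanishes, so the same bound holds directly.

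\textbf{Step 2: [A3], [A4] and the constants.} For the growth of $A_g$ in $V^*$ we use \eqref{r-7}. The linear and transport parts are bounded by $\chi\|u\|_V+C\|u\|_H\|u\|_V$; here we use $f_i$ Lipschitz (so $|f(u)|\le C(1+|u|)$) and $\|u\|_{L^\infty}\le C\|u\|_V$ in $d=1$. The $g$ part is handled by the one-dimensional Gagliardo--Nirenberg bound $\|u\|_{L^3}^3\le C\|u\|_H^2\|u\|_V$. Squaring gives $\|A_g(u)\|_{V^*}^2\le C(1+\|u\|_V^2)(1+\|u\|_H^4)$, i.e. $\kappa=4$.

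The noise items of {\rm [A3]}--{\rm [A4]} with $\beta=0$ are assumed in the multiplicative case. In the additive case they are immediate:
\[
\sum_k|\langle v,B\mathcal U_k\rangle_H|^2\le\|B\|_2^2\|v\|_H^2\le C_B\|v\|_H^2 ,
\]
and the difference term vanishes.

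(D1) reads $\lambda_1\chi/2>2C_g$, which is guaranteed by the hypothesis on $\chi$. For (D2), note $g(0)=0$ and $f(0)\partial_x 0=0$, so $A_g(0)=0$. Lemma \ref{lem-1} then gives $C_{A,\rho,\epsilon_0}=\epsilon_0C_B\to0$, so $\lambda_1\gamma_0^2>4C_{\rho_1}C_{A,\rho,\epsilon_0}$ for small $\epsilon_0$. Consequently $\tilde\gamma_0<\gamma_0$, and $\tilde\epsilon>0$ is positive.

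\textbf{Step 3: conclusion, and the expected obstacle.} Apply the abstract theorems for $0<\epsilon<\epsilon_0\wedge\tilde\epsilon$.

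The main obstacle is the threshold in (D1). The hypothesis is stated as $\chi>4C_g/\lambda_1$, while (D1) with $\gamma_0=\chi/2$ needs $\lambda_1\chi/2>2C_g$. These match exactly, so the only care needed is that the absorption in Step 1 leaves exactly $\gamma_0=\chi/2$ rather than something smaller. If the constant $C$ in \eqref{rrr-24} costs more than $\chi/2$ of the dissipation, we would rebalance the Young's inequality splitting in \eqref{rrr-24} to keep $\gamma_0=\chi/2$.
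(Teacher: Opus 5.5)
Your proposal is correct and follows the paper's argument essentially step for step. You verify {\rm [A1]}--{\rm [A4]} through \eqref{rrr-24}, \eqref{rrr-25}, \eqref{r-6} and \eqref{r-7} with $\gamma_0=\chi/2$, $C_{\rho_2}=2C_g$, $\beta=0$, $\kappa=4$. You then get (D1) from $\chi>4C_g/\lambda_1$ and (D2) from $A_g(0)=0$, so that $C_{A,\rho,\epsilon_0}=\epsilon_0C_B\to0$, and invoke Lemma \ref{lem-1} and the abstract theorems. Your explicit additive-noise check of {\rm [A4]} and your observation that $\tilde\gamma_0<\gamma_0$ and $\tilde\epsilon>0$ are correct details that the paper leaves implicit.
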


The 1D stochastic viscous Burgers equation:
  \begin{align}\label{rrr-15}
  dX(t)=\Big(\chi \frac{\partial^2}{\partial x^2 } X(t)+X(t) \frac{\partial}{\partial x} X(t)\Big)dt+\sqrt{\epsilon}B(X(t)) \, \d W(t),
\end{align}
is a typical example satisfying Theorem \ref{thm-6} with $C_{g}=0$.
\begin{remark}
Bai et al.\ \cite{Bai_2026_Large} studied equation \eqref{rrr-15} on $[0,1]$ with $B(X(t)) = \sqrt{Q_{\delta(\epsilon)}}$, where $Q_{\delta(\epsilon)}$ is a trace class operator converging to $(-\Delta)^{\alpha/2}$ with $\alpha < 1$ as $\epsilon \to 0$. Under a joint scaling regime in which $(\epsilon, \delta(\epsilon)) \to 0$, they established the LDP for invariant measures. As $\epsilon \to 0$, the operator $Q_{\delta(\epsilon)}$ becomes increasingly singular, the uniform estimate with $\epsilon>0$ of solutions and the well-posedness of associated skeleton equation relies on the convolution structure, which is natural in the mild solution framework but difficult to exploit within the variational framework. Consequently, this result lies outside the scope of the present work.
\end{remark}

The other typical example with $C_{g}=0$ is the 1D stochastic Ginzburg--Landau equation:
 \begin{align*}
 dX(t)=\Big(\chi\Delta X(t)+f(X(t))\partial_{x}X(t)-\alpha X(t)-c|X(t)|^2X(t)\Big)dt+\sqrt{\epsilon}B(X(t)) \, \d W(t),
\end{align*}
where $\alpha$ and $c$ are positive constants.

Finally, we give an example satisfying Theorem \ref{thm-6} with $C_{g} \geq 0$,
\begin{align}\label{r-14}
  du=(\chi\Delta u-\alpha u+h(u))dt+\sqrt{\epsilon}B(u) \, \d W(t),\quad u(0)=u_0:
\end{align}
where $\alpha > 0$ and $h$ is Lipschitz continuous fulfilling $h(0)=0$. Thus, $g(u)=-\alpha u + h(u)$ naturally satisfies \eqref{r-13} with $C_{g}=(Lip(h)-\alpha) \vee 0$ and $s=0$. When $Lip(h)<\alpha$, Zhang \cite[Theorem 6.2]{Zhang_2012_Large} has established the LDP of invariant measures of Eq. \eqref{r-14} with two reflecting walls. Thus, our result Theorem \ref{thm-6} recovers and generalizes their results to a broader setting, as it removes the requirement of reflecting walls and only needs $\alpha + \frac{\lambda_1 \chi}{4} >Lip(h)$.

\subsubsection{\texorpdfstring{Case 2:\ $d=3$, $f_i=0$, $r\leq\frac{7}{3}$ and s=0.}{Case 2: d=3.}}
Estimate \eqref{rrr-25} also holds and it gives
\begin{align} \label{rrr-27}
  2 _{V^{\ast}}\<A_g(v_1)-A_{g}(v_2), v_1 - v_2 \>_V
  \leq -2\chi\|v_1 - v_2\|^{2}_{V}+2C_g\|v_1 - v_2\|^{2}_{H}.
\end{align}
For any $v\in V$, we have
\begin{align*}
   _{V^{\ast}}\<g(u) , v \>_V\leq C\int_{\Lambda}(1+|u|^r)|v|d\xi
   \leq C(\|v\|_{L^1(\Lambda)}+\|u\|^r_{L^{\frac{6r}{5}}(\Lambda)}\|v\|_{L^{6}(\Lambda)}).
\end{align*}
By using $H^{1,2}_0(\Lambda)\subset L^{6}(\Lambda)$ continuously, $r\leq \frac{7}{3}$ and the interpolation inequality, we reach
\begin{align*}
  \|g(u)\|_{V^*}\leq 1+\|u\|^r_{L^{\frac{6r}{5}}(\Lambda)}
  \leq 1+\|u\|_{L^{6}(\Lambda)}\|u\|^{r-1}_{L^{\frac{(r-1)3}{2}(\Lambda)}}.
\end{align*}
 Since $r\leq \frac{7}{3}$, we obtain that $\|g(u)\|_{V^*}\leq 1+\|u\|_V\|u\|^{\frac{4}{3}}_H$ and
\begin{align}\label{r-8}
  \|A_g(u)\|_{V^*}\leq 1+\chi \|u\|_V+\|u\|_V\|u\|^{\frac{4}{3}}_H.
\end{align}
It implies that the parameter $\kappa=\frac{8}{3}$ in the condition {\rm{[A3]}}. Based on \eqref{rrr-27} and \eqref{r-8}, it follows from the discussion in the above of Theorem \ref{thm-r-1} that
\begin{theorem}
Assume that $d=3$, $f_i=0$ for all $i =1 ,2 ,3$, $g$ satisfies \eqref{rrr-22}-\eqref{r-13} with $r \leq \frac{7}{3}$ and $s=0$. If $B$ is an additive noise satisfying $\|B\|^2_{L_2(U;H)}\leq C_B$, or $B: V\rightarrow L_2(U;H)$ satisfies the conditions {\rm{[A3]-[A4]}} with $\beta=0$, then the LDP results stated in Theorems \ref{thm: LDP for stationary solution intro} and \ref{thm: LDP for invariant measures intro} hold for the stationary solutions and invariant measures of Eq. \eqref{rrr-16}.
\end{theorem}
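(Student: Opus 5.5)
The statement is an application theorem. The plan is to reduce it to Theorems \ref{thm: stationary solution intro}, \ref{thm: LDP for stationary solution intro} and \ref{thm: LDP for invariant measures intro}. By the discussion at the start of Section \ref{sec: application}, it suffices to verify [A1]--[A4] together with (D1) and (D2); condition [A5] then follows from Lemma \ref{lem-1}. The estimates \eqref{rrr-27} and \eqref{r-8} already supply the core inequalities, so the work is bookkeeping of constants.

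\textbf{Step 1: [A1].} Hemicontinuity holds because $\Delta$ is linear and $g$ is continuous with growth \eqref{rrr-22}, $r\le 7/3$. By dominated convergence, $s\mapsto\int_\Lambda g(v_1+sv_2)v\,dx$ is continuous: the integrand is dominated via $|v_1+sv_2|^r|v|\in L^1$, using $V\subset L^6$ and $r\le 7/3<5$.

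\textbf{Step 2: [A2] and (D1).} Add $\epsilon\|B(v_1)-B(v_2)\|_2^2\le \epsilon C_B\|v_1-v_2\|_H^2+\epsilon L_B\|v_1-v_2\|_V^2$ to \eqref{rrr-27}. Using \eqref{eq: embedding}, for $\epsilon<\lambda_1\chi/(2C_B+2\lambda_1L_B)$ this gives
\[
-\chi\|v_1-v_2\|_V^2+2C_g\|v_1-v_2\|_H^2 .
\]
Hence [A2] holds with $\gamma_0=\chi/2$, $\rho\equiv 2C_g$, $C_{\rho_1}=0$, $C_{\rho_2}=2C_g$ and $\beta=0$. Since $C_{\rho_1}=0$, the quantity $\tilde\gamma_0$ reduces to $C_{\rho_2}/\lambda_1$, and (D2) is automatic because its right side vanishes.

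(D1) requires $\lambda_1\chi/2>2C_g$. This must either be implicit, or be assumed as in Theorem \ref{thm-6}, where $\chi>4C_g/\lambda_1$. I would state it explicitly in the proof as the needed dissipativity. If $C_g=0$, for instance when $g$ is nonincreasing, it is free.

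\textbf{Step 3: [A3] and [A4].} Squaring \eqref{r-8} gives
\[
\|A_g(u)\|_{V^*}^2\le C(1+\|u\|_V^2)(1+\|u\|_H^{8/3}),
\]
which is [A3] with $\kappa=8/3$. Here $A^\epsilon=A^0$, so the last item of [A3] is trivial. The bounds on $B$ are hypotheses. In the additive case, $\|B\|_2^2\le C_B$ yields [A3], and with $\beta=0$ [A4] follows from
\[
\sum_k|\langle v,B\mathcal U_k\rangle|^2\le \|B\|_2^2\|v\|_H^2 .
\]
The difference term in [A4] is zero in the additive case.

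\textbf{Step 4: conclusion.} Apply Lemma \ref{lem-1} to get [A5]. Then, with $\tilde\epsilon>0$ positive by (D1), invoke the three theorems for all $\epsilon$ below a small $\epsilon_0$.

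The main obstacle is Step 2's sign requirement on $C_g$ relative to $\chi$. I expect it must be added as a hypothesis, mirroring Theorem \ref{thm-6}, since the statement as written omits it. A secondary check is the interpolation exponent in \eqref{r-8}, which is where $r\le 7/3$ enters.
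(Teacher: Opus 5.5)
Your proposal follows the paper's route exactly: the paper's proof is just \eqref{rrr-27} and \eqref{r-8} together with the [A1]--[A4], (D1), (D2) checklist from the start of Section~\ref{sec: application}, and your constants $\gamma_0=\chi/2$, $C_{\rho_1}=0$, $C_{\rho_2}=2C_g$, $\kappa=8/3$ (which make (D2) trivial) are what that appeal amounts to. Your flag on (D1) is correct and is an omission in the statement, not a weakness of your argument: the paper's one-line reference to the 2D Navier--Stokes discussion, where $C_{\rho_2}=0$, silently skips (D1), and a hypothesis such as $\chi>4C_g/\lambda_1$ is genuinely needed. For example, $g(u)=Ku$ with $K>\chi\lambda_1$ makes the first Dirichlet eigenmode exponentially unstable, so additive noise acting on that mode admits no invariant measure.
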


A typical example is 3D stochastic reaction-diffusion equation
\begin{align}\label{r-9}
  \d u(t)+(\lambda u(t)-\Delta u(t)+c|u|^{\frac{4}{3}}u) \d t=\sqrt{\epsilon}B(u) \d W_t,
\end{align}
where $\lambda, c$ are positive constants.
\begin{remark}
As established in \cite[Theorem 4.1]{Liu_Large_2020}, for reaction-diffusion equations, the general polynomial growth nonlinearity cannot be addressed due to constraints inherent in the variational framework. Within this framework, the growth order must be tied to the spatial dimension $d$. By contrast, the results of \cite{Cerrai_Stochastic_2003,Cerrai_Large_2005} are obtained in the space $C(\bar{\mathcal{O}};\mathbb{R}^r)$ using mild solution, which impose no such restriction on the growth order.
\end{remark}

\appendix
\section{Analysis tools} \label{sec: appendix}

Kurtz in \cite[Theorem 3.14]{Kurtz_YamadaWatanabeEngelbert_2007} shows the general Yamada--Watanabe--Engelbert Theorem for the compatible solution. Following this result and \cite{Rockner_YamadaWatanabe_2008}, we can get a similar result for $\cF_{t}$-adapted solution to \eqref{eq: stationary solution}, which are driven by double-side Wiener process $W$. Recall that $\W$ is the Wiener space, which the double-side wiener process $W$ lies on, see \eqref{def: Wiener space}.
\begin{lemma}\label{lem: Yamada-Watanabe}
Let $\mathcal{P}_{W}(\sC_{-\infty} \times \W)$ denote the Borel probability space on $\sC_{-\infty} \times \W$ satisfying the marginal distribution on $\W$ is $\P \circ W^{-1}$. Let $S_{\Gamma} \subset \mathcal{P}_{W}(\sC_{-\infty} \times \W)$ denote the collection of $\cF_{t}$-adapted solution measures with the constraint $\Gamma$. If $S_{\Gamma} $ is a convex non-empty set, then the following are equivalent:
 \begin{itemize}
  \item [i)] Pathwise uniqueness holds for $\cF_{t}$-adapted martingale solutions. This means that if there are two $\cF_{t}$-adapted process $X^1_t$ and $X^2_t$ defined on the same probability space such that $\nu_{1}:=\P \circ (W, X^1)^{-1}$ and $\nu_{2}:=\P \circ (W, X^2)^{-1}$ both belong to $S_{\Gamma}$, then $X^1=X^2$, $\P$-a.s..
  \item [ii)] Joint uniqueness in law holds, and there exists a $\cF_{t}$-adapted strong solution. This means that $S_{\Gamma}$ contains at most one measure, denoted by $\nu$, and $\nu$ corresponds to an $\mathcal{F}_t$-adapted process $X_{t}=\mathcal{G}(W)(t)$ such that the pair $(\omega, \mathcal{G}(W(\omega))$ satisfies the constraint $\Gamma$, where $\mathcal{G}: \W \rightarrow \sC_{-\infty}$ is a Borel measurable map.
 \end{itemize}
\end{lemma}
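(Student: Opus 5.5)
The plan is to follow the classical route of Kurtz's Yamada--Watanabe--Engelbert theorem, adapted to the two-sided path space $\sC_{-\infty}\times\W$, and to replace the compatibility requirement by $\cF_t$-adaptedness in the manner of \cite{Rockner_YamadaWatanabe_2008}.

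\emph{Direction (ii)$\Rightarrow$(i).} Let $X^1,X^2$ be two solutions on the same space with laws $\nu_1,\nu_2\in S_\Gamma$. Joint uniqueness in law gives $\nu_1=\nu_2=\nu$. Since $\nu$ is the law of $(W,\mathcal{G}(W))$, the conditional law of $X^i$ given $W$ is the Dirac mass at $\mathcal{G}(W)$. Hence $X^1=\mathcal{G}(W)=X^2$, $\P$-a.s. This only uses that the graph of a Borel map is Borel in the Polish space $\sC_{-\infty}\times\W$.

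\emph{Direction (i)$\Rightarrow$(ii).} This is the substantive direction.

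\begin{enumerate}
\item Take $\nu_1,\nu_2\in S_\Gamma$. Disintegrate each as $\nu_i(\d x,\d w)=Q_i(w,\d x)\,\P_W(\d w)$, using the Polish structure of $\sC_{-\infty}$.
\item On the product space $\sC_{-\infty}\times\sC_{-\infty}\times\W$, define the coupling $\pi(\d x_1,\d x_2,\d w)=Q_1(w,\d x_1)Q_2(w,\d x_2)\P_W(\d w)$.
\item Equip this space with the filtration generated by $x_1,x_2$ on $(-\infty,t]$ and by $w$ on $(-\infty,t]$, completed.
\item Show that $w$ is still a two-sided Wiener process on this filtration, with increments $w(t+\cdot)-w(t)$ independent of $\mathcal{G}_t$. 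This rests on the following claim: for $\cF_t$-adapted solutions, $Q_i(w,\cdot)$ restricted to $\sigma(x(s),s\le t)$ depends only on $w|_{(-\infty,t]}$. This is exactly the property adaptedness gives, and it is the substitute for Kurtz's compatibility.
\item Conclude that both coordinates $x_1,x_2$ are $\cF_t$-adapted solutions driven by the same $w$. Under $\pi$, the constraint $\Gamma$ (the discretized weak equation together with $\sup_t\E\|X(t)\|_H^2<\infty$) is preserved because it depends only on the pair laws $(x_i,w)$, which are $\nu_i$.
\item Apply pathwise uniqueness: $x_1=x_2$ $\pi$-a.s.
\item A product kernel $Q_1(w,\cdot)\otimes Q_2(w,\cdot)$ concentrated on the diagonal forces both kernels to be the same Dirac mass $\delta_{\mathcal{G}(w)}$ for $\P_W$-a.e.\ $w$. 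Measurability of $w\mapsto\mathcal{G}(w)$ follows from measurability of $w\mapsto Q_1(w,\cdot)$.
\item This gives $\nu_1=\nu_2$, so $S_\Gamma$ has at most one element. Non-emptiness then gives $\nu$. Finally, $X=\mathcal{G}(W)$ is $\cF_t$-adapted because, by the adaptedness property in step 4, $\mathcal{G}(w)|_{(-\infty,t]}$ is measurable with respect to $\sigma(w|_{(-\infty,t]})$ up to null sets.
\end{enumerate}

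Convexity of $S_\Gamma$ is used in the form of Kurtz's argument, namely through mixtures of the $\nu_i$. It is only needed if one first reduces to a single measure. In the direct coupling above it can be kept as a safety device: apply the argument to $\frac12(\nu_1+\nu_2)$ to deduce that the kernels are deterministic.

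\emph{Main obstacle.} The hardest step is step 4: verifying that $w$ remains a Brownian motion with respect to the enlarged filtration on the infinite two-sided time axis. Conditioning on the whole past of $w$, which is infinite, replaces the usual initial $\sigma$-field. I would handle this by proving the conditional-independence statement on each window $[-N,t]$, where \cite{Rockner_YamadaWatanabe_2008} applies essentially verbatim, and then passing $N\to\infty$ with a monotone class argument over the increasing family of $\sigma$-fields.
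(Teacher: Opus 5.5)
Your proposal is correct and follows essentially the same route as the paper. The coupling $Q_1(w,\d x_1)\,Q_2(w,\d x_2)\,\P_W(\d w)$ is exactly the law of $(F_1(W,\zeta_1),F_2(W,\zeta_2),W)$ from Kurtz's Lemma A.1 with independent uniform $\zeta_1,\zeta_2$. Your step-4 compatibility claim is what the paper checks via independence of future increments in \eqref{eq: adapted}. Your ``product kernel on the diagonal is a Dirac mass'' step is the content of Kurtz's Lemma A.2, which the paper invokes.

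One small fix, which applies equally to the paper's $\mathcal{B}_t(\W)$: for $t<0$ the past $\sigma$-field must be generated by the increments $w(r)-w(s)$, $s\le r\le t$, not by the path values $w|_{(-\infty,t]}$, since these contain $w(t)=-(w(0)-w(t))$, a future increment. With that choice the window-by-window argument you plan for step 4 is unnecessary: independence of past and future increments, together with the compatibility of each kernel, gives the required independence directly on the whole axis.
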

\begin{proof}
 ii) implies i) is immediate, so we focus on obtaining ii) from i).

Let $\zeta_1 $ and $\zeta_2$ uniformly distributed on $[0,1]$ and $\zeta_1$ and $\zeta_2$ are independent. Then by \cite[Lemma A.1]{Kurtz_YamadaWatanabeEngelbert_2007}, for any $\nu_1,\nu_2 \in S_{\Gamma}$, there exists Borel measurable map $F_1: \W \times [0,1]$ and $F_2: \W \times [0,1]$ such that $(F_{1}(W, \zeta_1),W)$ and $(F_{2}(W, \zeta_2),W)$ has the distribution $\nu_1$ and $\nu_2$, respectively. Let $\mathcal{B}_{t}(\sC_{-\infty})$ be the $\sigma$-algebra generated by all maps $\{ \pi_{s} \}_{s \leq t}$, where $(\pi_{s} z) (\tau):= z(\tau) \ind_{\tau \leq s}$, $z \in \sC_{-\infty}$. Next, we will show for any $\mathcal{B}_{t}(\sC_{-\infty})$ measurable map $f : \sC_{-\infty} \rightarrow \R$, map
 \begin{equation*}
  (\W,[0,1]) \ni (y,s) \mapsto f(F_{1}(y, s))= \int_{\sC_{-\infty}} f(z) K_{F_1}(y,s, \d z)
 \end{equation*}
 is $\mathcal{B}_{t}(\W) \otimes \mathcal{B}([0,1])$ (defined similar as $\mathcal{B}_{t}(\sC_{-\infty})$) measurable, where $K_{F_1}(y,s,z)$ is a transition kernel function. It is sufficient to prove that for any $B \in \mathcal{B}_{t}(\sC_{-\infty})$, $A_1 \in \mathcal{B}_{t}(\W)$ and
 \begin{align*}
  A_2 \in \sigma \big( & \{ w(t_1)-w(t) \in D_1, w(t_2)-w(t_1) \in D_2, \cdots, w(t_k)-w(t_{k-1}) \in D_k \big. \\
  & \quad \big. : \, t \leq t_1 < t_2 < \cdots <t_k, \, D_1,D_2, \cdots D_k \in \mathcal{B}(U), \, w \in \W \} \big)
 \end{align*}
 the following identity holds
 \begin{align} \label{eq: adapted}
  & \int_{\W \times [0,1]} \mathrm{1}_{ A_1 \cap A_2}( y) \, K_{F_1}(y,s,B) \, \P \circ W^{-1} (\d y) \, \d s \nonumber \\
  &= \int_{\W \times [0,1]} \mathrm{1}_{ A_1 \cap A_2}( y) \, \E [K_{F_1}(\cdot,B) | \mathcal{B}_t(\W) \otimes \mathcal{B}([0,1])] \, \P \circ W^{-1} (\d y) \, \d s,
 \end{align}
since all $A_1 \cap A_2$ generate $\cF$. The left-hand side of \eqref{eq: adapted} is equal to
 \begin{equation*}
  \int_{\Omega} \mathrm{1}_{ A_1 \cap A_2}(W) \mathrm{1}_{B}(Z) \d \P= \P \circ W^{-1} (A_2) \int_{\Omega} \mathrm{1}_{ A_1 }(W) \mathrm{1}_{B}(Z) \d \P,
 \end{equation*}
 where $(Z,W)$ is a $\cF_t$-adapted martingale solution and the last step used $\mathrm{1}_{A_2}(W)$ is independent of $\mathcal{F}_{t}$. Due to $A_2$ is independent $\mathcal{B}_{t}(\W)$, the right-hand side of \eqref{eq: adapted} is equal to
 \begin{align*}
  & \int_{\W } \mathrm{1}_{ A_2}( y) \, \P \circ W^{-1} (\d y) \times \int_{\W \times [0,1]} \mathrm{1}_{ A_1 }( y) \, \E [K_{F_1}(\cdot,B) | \mathcal{B}_t(\W) \otimes \mathcal{B}([0,1])] \, \P \circ W^{-1} (\d y) \, \d s \\
  & \quad = \P \circ W^{-1} (A_2) \times \int_{\W \times [0,1]} \mathrm{1}_{ A_1 }( y) \, K_{F_1}(\cdot,B) \, \P \circ W^{-1} (\d y) \, \d s \\
  & \quad = \P \circ W^{-1} (A_2) \int_{\Omega} \mathrm{1}_{ A_1 }(W) \mathrm{1}_{B}(Z) \d \P.
 \end{align*}
 Thus, \eqref{eq: adapted} holds, and furthermore $f(F_1(W,\zeta_1))$ is $\mathcal{F}_{t} \otimes \sigma(\zeta_1)$ measurable. The similar result holds for $F_2(W,\zeta_2)$.
 By pathwise uniqueness and \cite[Lemma A.2]{Kurtz_YamadaWatanabeEngelbert_2007}, there exists Borel measurable $F : \W \rightarrow \sC_r$ such that $\P$-a.s. $F(W)=F_{1}(W, \zeta_1)=F_{2}(W, \zeta_2)$, then $F(W)$ is the strong solution. By the measurability properties of $F_1(W,\zeta_1)$, we obtain that $F(Y)$ is $\mathcal{F}_{t}$-adapted.
\end{proof}

\noindent{\bf  Acknowledgements:}
Yong Liu is supported by NSFC (Nos. 12231002) and Center for Statistical Sciences, PKU. Rangrang Zhang is supported by Open Foundation of the State Key Laboratory of Mathematical Sciences (Grant No. 09), Beijing Institute of Technology Research Fund Program for Young Scholars and MIIT Key Laboratory of Mathematical Theory and Computation in Information Security.  Rangrang Zhang is the corresponding author.

\noindent{\bf Author Contributions:} Three authors contributed equally. 

\printbibliography

@article{Bakhtin_Existence_2003,
  title = {Existence and Uniqueness of a Stationary Solution of a Nonlinear Stochastic Differential Equation with Memory},
  author = {Bakhtin, Yuri},
  date = {2003-01},
  journaltitle = {Theory of Probability \& Its Applications},
  shortjournal = {Theory Probab. Appl.},
  volume = {47},
  number = {4},
  pages = {684--688},
  publisher = {{Society for Industrial and Applied Mathematics}},
  doi = {10.1137/S0040585X97980051}
}

@article{Bakhtin_Stationary_2005,
  title = {Stationary Solutions of Stochastic Differential Equations with Memory and Stochastic Partial Differential Equations},
  author = {Bakhtin, Yuri and Mattingly, Jonathan C.},
  date = {2005-10},
  journaltitle = {Communications in Contemporary Mathematics},
  shortjournal = {Commun. Contemp. Math.},
  volume = {07},
  number = {05},
  pages = {553--582},
  publisher = {World Scientific Publishing Co.},
  doi = {10/d45cwb}
}

@article{Boue_variational_1998,
  title = {A Variational Representation for Certain Functionals of Brownian Motion},
  author = {Bou\'e, Michelle and Dupuis, Paul},
  date = {1998-10-01},
  journaltitle = {The Annals of Probability},
  shortjournal = {Ann. Probab.},
  volume = {26},
  number = {4},
  pages = {1641--1659},
  doi = {10.1214/aop/1022855876}
}

@article{Budhiraja_Large_2008,
  title = {Large Deviations for Infinite Dimensional Stochastic Dynamical Systems},
  author = {Budhiraja, Amarjit and Dupuis, Paul and Maroulas, Vasileios},
  date = {2008-07-01},
  journaltitle = {The Annals of Probability},
  shortjournal = {Ann. Probab.},
  volume = {36},
  number = {4},
  pages = {1390--1420},
  doi = {10/fssfx7}
}

@article{Budhiraja_variational_2000,
  title = {A Variational Representation for Positive Functionals of Infinite Dimensional Brownian Motion},
  author = {Budhiraja, Amarjit and Dupuis, Paul},
  date = {2000},
  journaltitle = {Probability and Mathematical Statistics},
  shortjournal = {Probab. Math. Statist.},
  volume = {20},
  number = {1},
  pages = {39--61},
  mrnumber = {1785237}
}

@online{Gao_Large_2022,
  title = {Large Deviations Principle for Stationary Solutions of Stochastic Differential Equations with Multiplicative Noise},
  author = {Gao, Peipei and Liu, Yong and Sun, Yue and Zheng, Zuohuan},
  date = {2022-06-06},
  eprint = {2206.02356},
  eprinttype = {arXiv},
  eprintclass = {math},
  doi = {10.48550/arXiv.2206.02356}
}

@article{Kurtz_YamadaWatanabeEngelbert_2007,
  title = {The Yamada-Watanabe-Engelbert Theorem for General Stochastic Equations and Inequalities},
  author = {Kurtz, Thomas},
  date = {2007-01},
  journaltitle = {Electronic Journal of Probability},
  volume = {12},
  pages = {951--965},
  publisher = {{Institute of Mathematical Statistics and Bernoulli Society}},
  doi = {10/fzsm9s}
}

@article{Liu_Large_2020,
  title = {Large Deviation Principle for a Class of SPDE with Locally Monotone Coefficients},
  author = {Liu, Wei and Tao, Chunyan and Zhu, Jiahui},
  date = {2020-06-01},
  journaltitle = {Science China Mathematics},
  shortjournal = {Sci. China Math.},
  volume = {63},
  number = {6},
  pages = {1181--1202},
  doi = {10/gs95fh}
}

@article{Liu_Representation_2009,
  title = {Representation of Pathwise Stationary Solutions of Stochastic Burgers' Equations},
  author = {Liu, Yong and Zhao, Huaizhong},
  date = {2009-12},
  journaltitle = {Stochastics and Dynamics},
  shortjournal = {Stoch. Dyn.},
  volume = {09},
  number = {04},
  pages = {613--634},
  doi = {10.1142/S0219493709002798}
}

@article{Rockner_YamadaWatanabe_2008,
  title = {{Yamada--Watanabe} Theorem for Stochastic Evolution Equations in Infinite Dimensions},
  author = {R{\"o}ckner, Michael and Schmuland, Byron and {Xicheng Zhang}},
  year = 2008,
  journal = {Condensed Matter Physics},
  volume = {11},
  number = {2},
  eid = {247},
  issn = {1607324X},
  doi = {10/gtbpcb},
  langid = {english}
}

@article{Scheutzow_stochastic_2013,
  title = {A Stochastic Gronwall Lemma},
  author = {Scheutzow, Michael},
  date = {2013},
  journaltitle = {Infinite Dimensional Analysis, Quantum Probability and Related Topics},
  shortjournal = {Infin. Dimens. Anal. Quantum Probab. Relat. Top.},
  volume = {16},
  number = {2},
  pages = {1350019, 4},
  doi = {10.1142/S0219025713500197},
  mrnumber = {3078830}
}

@book{Liu_Stochastic_2015,
  title = {Stochastic Partial Differential Equations: An Introduction},
  author = {Liu, Wei and R\"ockner, Michael},
  date = {2015},
  series = {Universitext},
  publisher = {Springer International Publishing},
  location = {Cham},
  doi = {10.1007/978-3-319-22354-4},
  isbn = {978-3-319-22353-7}
}

@article{Mohammed_Stable_2008,
  title = {The Stable Manifold Theorem for Semilinear Stochastic Evolution Equations and Stochastic Partial Differential Equations},
  author = {Mohammed, Salah-Eldin A. and Zhang, Tusheng and Zhao, Huaizhong},
  year = {2008},
  journal = {Memoirs of the American Mathematical Society},
  volume = {196},
  number = {917},
  doi = {10.1090/memo/0917}
}

@online{Liu_Large_2025,
  title = {Large Deviation Principle for the Stationary Solutions of Stochastic Functional Differential Equations with Infinite Delay},
  author = {Liu, Yong and Tang, Bin},
  date = {2025-01-13},
  eprint = {2501.07325},
  eprinttype = {arXiv},
  eprintclass = {math},
  pubstate = {prepublished}
}

@article{Flandoli_Dissipativity_1994,
  title = {Dissipativity and Invariant Measures for Stochastic Navier-Stokes Equations},
  author = {Flandoli, Franco},
  date = {1994-12},
  journaltitle = {Nonlinear Differential Equations and Applications NoDEA},
  shortjournal = {NoDEA},
  volume = {1},
  number = {4},
  pages = {403--423},
  doi = {10.1007/BF01194988}
}

@article{Cerrai_Stochastic_2003,
  title = {Stochastic Reaction-Diffusion Systems with Multiplicative Noise and Non-Lipschitz Reaction Term},
  author = {Cerrai, Sandra},
  date = {2003-02-01},
  journaltitle = {Probability Theory and Related Fields},
  shortjournal = {Probab Theory Relat Fields},
  volume = {125},
  number = {2},
  pages = {271--304},
  doi = {10.1007/s00440-002-0230-6}
}

@article{Brzezniak_Quasipotential_2015,
  title = {Quasipotential and Exit Time for 2D Stochastic Navier-Stokes Equations Driven by Space Time White Noise},
  author = {Brze{\'z}niak, Zdzis{\l}aw and Cerrai, Sandra and Mark, Freidlin},
  year = {2015},
  journal = {Probability Theory and Related Fields},
  volume = {162},
  number = {3},
  pages = {739--793},
  doi = {10.1007/s00440-014-0584-6}
}

@article{Cerrai_Large_2005,
  title = {Large Deviations for Invariant Measures of Stochastic Reaction--Diffusion Systems with Multiplicative Noise and Non-Lipschitz Reaction Term},
  author = {Cerrai, Sandra and R{\"o}ckner, Michael},
  year = {2005},
  journal = {Annales de l'Institut Henri Poincare (B) Probability and Statistics},
  volume = {41},
  number = {1},
  pages = {69--105},
  doi = {10.1016/j.anihpb.2004.03.001},
  copyright = {https://www.elsevier.com/tdm/userlicense/1.0/}
}

@article{Wang_Large_2024,
  title = {Large Deviations of Invariant Measures of Stochastic Reaction--Diffusion Equations on Unbounded Domains},
  author = {Wang, Bixiang},
  year = {2024},
  journal = {Journal of Statistical Physics},
  volume = {191},
  number = {96},
  doi = {10.1007/s10955-024-03316-6}
}

@article{Bai_2026_Large,
  title = {Large Deviation Principle for Invariant Measures of Stochastic {{Burgers}} Equations},
  author = {Bai, Rui and Feng, Chunrong and Zhao, Huaizhong},
  date = {2026-03-01},
  journaltitle = {Journal of Functional Analysis},
  shortjournal = {Journal of Functional Analysis},
  volume = {290},
  number = {5},
  eid = {111284},
  issn = {0022-1236},
  doi = {10.1016/j.jfa.2025.111284}
}

@article{Pan_Large_2026,
  title = {Large Deviations for Fully Local Monotone Stochastic Partial Differential Equations Driven by Gradient-Dependent Noise},
  author = {Pan, Tianyi and Shang, Shijie and Zhai, Jianliang and Zhang, Tusheng},
  date = {2026-02},
  journaltitle = {Bernoulli},
  volume = {32},
  number = {1},
  pages = {249--273},
  publisher = {{Bernoulli Society for Mathematical Statistics and Probability}},
  doi = {10.3150/25-BEJ1857}
}

@article{Kager_Generation_1997,
author = {Gerald Kager and Michael Scheutzow},
title = {{Generation of One-Sided Random Dynamical Systems by Stochastic Differential Equations}},
volume = {2},
journal = {Electronic Journal of Probability},
number = {none},
publisher = {Institute of Mathematical Statistics and Bernoulli Society},
pages = {1 -- 17},
year = {1997},
doi = {10.1214/EJP.v2-22},
URL = {https://doi.org/10.1214/EJP.v2-22}
}

@article{Brzezniak_2014_Strong,
  title = {Strong Solutions for {{SPDE}} with Locally Monotone Coefficients Driven by {{L{\'e}vy}} Noise},
  author = {Brze{\'z}niak, Zdzis{\l}aw and Liu, Wei and Zhu, Jiahui},
  date = {2014-06},
  journaltitle = {Nonlinear Analysis: Real World Applications},
  shortjournal = {Nonlinear Analysis: Real World Applications},
  volume = {17},
  pages = {283--310},
  issn = {14681218},
  doi = {10.1016/j.nonrwa.2013.12.005},
  langid = {english},
}

@article{Krylov_1981_Stochastic,
  title = {Stochastic Evolution Equations},
  author = {Krylov, Nicolai V. and Rozovskii, Boris L.},
  date = {1981},
  journaltitle = {Journal of Soviet Mathematics},
  shortjournal = {J Math Sci},
  volume = {16},
  number = {4},
  pages = {1233--1277},
  issn = {0090-4104, 1573-8795},
  doi = {10.1007/BF01084893},
  langid = {english},
}

@article{Kumar_2024_Wellposedness,
  title = {Well-Posedness of a Class of Stochastic Partial Differential Equations with Fully Monotone Coefficients Perturbed by {{L{\'e}vy}} Noise},
  author = {Kumar, Ankit and Mohan, Manil T.},
  date = {2024-06},
  journaltitle = {Analysis and Mathematical Physics},
  shortjournal = {Anal.Math.Phys.},
  volume = {14},
  number = {44},
  issn = {1664-2368, 1664-235X},
  doi = {10.1007/s13324-024-00898-y},
  langid = {english},
}

@article{Liu_2013_Wellposedness,
  title = {Well-Posedness of Stochastic Partial Differential Equations with {{Lyapunov}} Condition},
  author = {Liu, Wei},
  date = {2013-08},
  journaltitle = {Journal of Differential Equations},
  shortjournal = {Journal of Differential Equations},
  volume = {255},
  number = {3},
  pages = {572--592},
  issn = {00220396},
  doi = {10.1016/j.jde.2013.04.021},
  langid = {english},
}

@article{Neelima_2020_Coercivity,
  title = {Coercivity Condition for Higher Moment a Priori Estimates for Nonlinear {{SPDEs}} and Existence of a Solution under Local Monotonicity},
  author = {{Neelima} and {\v{S}}i{\v{s}}ka, David},
  date = {2020-07-03},
  journaltitle = {Stochastics},
  shortjournal = {Stochastics},
  volume = {92},
  number = {5},
  pages = {684--715},
  issn = {1744-2508, 1744-2516},
  doi = {10.1080/17442508.2019.1650043},
  langid = {english},
}

@article{Nguyen_2021_Nonlinear,
  title = {Nonlinear Stochastic Parabolic Partial Differential Equations with a Monotone Operator of the {{Ladyzenskaya-Smagorinsky}} Type, Driven by a {{L{\'e}vy}} Noise},
  author = {Nguyen, Phuong and Tawri, Krutika and Temam, Roger},
  date = {2021-10},
  journaltitle = {Journal of Functional Analysis},
  shortjournal = {Journal of Functional Analysis},
  volume = {281},
  number = {8},
  eid = {109157},
  issn = {00221236},
  doi = {10.1016/j.jfa.2021.109157},
  langid = {english},
}

@article{Pardouxt_1980_Stochastic,
  title = {Stochastic Partial Differential Equations and Filtering of Diffusion Processes},
  author = {Pardoux, {\'E}tienne},
  date = {1980-01},
  journaltitle = {Stochastics},
  shortjournal = {Stochastics},
  volume = {3},
  number = {1--4},
  pages = {127--167},
  issn = {0090-9491},
  doi = {10.1080/17442507908833142},
  langid = {english}
}

@article{Ren_2007_Stochastic,
  title = {Stochastic Generalized Porous Media and Fast Diffusion Equations},
  author = {Ren, Jiagang and R{\"o}ckner, Michael and Wang, Feng-Yu},
  date = {2007-07},
  journaltitle = {Journal of Differential Equations},
  shortjournal = {Journal of Differential Equations},
  volume = {238},
  number = {1},
  pages = {118--152},
  issn = {00220396},
  doi = {10.1016/j.jde.2007.03.027},
  langid = {english}
}

@article{Rockner_2024_Wellposedness,
  title = {Well-Posedness of Stochastic Partial Differential Equations with Fully Local Monotone Coefficients},
  author = {R\"ockner, Michael and Shang, Shijie and Zhang, Tusheng},
  date = {2024-11},
  journaltitle = {Mathematische Annalen},
  shortjournal = {Math. Ann.},
  volume = {390},
  number = {3},
  pages = {3419--3469},
  issn = {0025-5831, 1432-1807},
  doi = {10.1007/s00208-024-02836-6},
  langid = {english},
}

@article{Pardoux_1974_Equations,
  title = {{\'E}quations aux d{\'e}riv{\'e}es partielles stochastiques de type monotone},
  author = {Pardoux, {\'E}tienne},
  date = {1974/1975},
  journaltitle = {S{\'e}minaire sur les {\'e}quations aux d{\'e}riv{\'e}es partielles},
  number = {3},
  pages = {1--10},
  issn = {2743-0588},
  url = {https://www.numdam.org/item/?id=SJL_1974-1975___3_A2_0},
  urldate = {2026-01-23},
  langid = {french}
}

@book{Arnold_Random_2003,
  title = {Random Dynamical Systems},
  author = {Arnold, Ludwig},
  year = 2003,
  series = {Springer Monographs in Mathematics},
  edition = {Corr. 2. printing},
  publisher = {Springer},
  address = {Berlin Heidelberg},
  isbn = {978-3-540-63758-5},
  langid = {english}
}

@article{Varadhan_1966_Asymptotic,
  title = {Asymptotic Probabilities and Differential Equations},
  author = {Varadhan, SR Srinivasa},
  year = 1966,
  journal = {Communications on Pure and Applied Mathematics},
  volume = {19},
  number = {3},
  pages = {261--286},
  issn = {1097-0312},
  doi = {10.1002/cpa.3160190303},
  copyright = {Copyright \copyright{} 1966 Wiley Periodicals, Inc., A Wiley Company},
  langid = {english}
}

@book{Freidlin_2012_Random,
  title = {Random {{Perturbations}} of {{Dynamical Systems}}},
  author = {Freidlin, Mark I. and Wentzell, Alexander D.},
  year = 2012,
  series = {Grundlehren Der Mathematischen {{Wissenschaften}}},
  volume = {260},
  publisher = {Springer},
  address = {Berlin, Heidelberg},
  doi = {10.1007/978-3-642-25847-3},
  isbn = {978-3-642-25846-6},
  langid = {english}
}

@book{Budhiraja_2019_Analysis,
  title = {Analysis and {{Approximation}} of {{Rare Events}}: {{Representations}} and {{Weak Convergence Methods}}},
  shorttitle = {Analysis and {{Approximation}} of {{Rare Events}}},
  author = {Budhiraja, Amarjit and Dupuis, Paul},
  year = 2019,
  series = {Probability {{Theory}} and {{Stochastic Modelling}}},
  volume = {94},
  publisher = {Springer US},
  address = {New York, NY},
  doi = {10.1007/978-1-4939-9579-0},
  copyright = {http://www.springer.com/tdm},
  isbn = {978-1-4939-9577-6},
  langid = {english}
}

@book{Dupuis_1997_Weak,
  title = {A {{Weak Convergence Approach}} to the {{Theory}} of {{Large Deviations}}},
  author = {Dupuis, Paul and Ellis, Richard S.},
  year = 1997,
  series = {Wiley {{Series}} in {{Probability}} and {{Statistics}}},
  edition = {1},
  publisher = {Wiley},
  doi = {10.1002/9781118165904},
  isbn = {978-0-471-07672-8},
  langid = {english}
}

@article{Freidlin_1988_Random,
  title = {Random {{Perturbations}} of {{Reaction-Diffusion Equations}}: {{The Quasi-Deterministic Approximation}}},
  shorttitle = {Random {{Perturbations}} of {{Reaction-Diffusion Equations}}},
  author = {Freidlin, Mark I.},
  year = 1988,
  journal = {Transactions of the American Mathematical Society},
  volume = {305},
  number = {2},
  eprinttype = {jstor},
  pages = {665--697},
  publisher = {American Mathematical Society},
  issn = {0002-9947},
  doi = {10.2307/2000884},
  langid = {american}
}

@article{Sowers_1992_Large,
  title = {Large Deviations for the Invariant Measure of a Reaction-Diffusion Equation with Non-{{Gaussian}} Perturbations},
  author = {Sowers, Richard},
  year = 1992,
  journal = {Probability Theory and Related Fields},
  volume = {92},
  number = {3},
  pages = {393--421},
  issn = {1432-2064},
  doi = {10.1007/BF01300562},
  langid = {english}
}

@article{Brzezniak_Large_2017,
  title = {Large Deviations Principle for the Invariant Measures of the {{2D}} Stochastic {{Navier}}--{{Stokes}} Equations on a Torus},
  author = {Brze{\'z}niak, Zdzis{\l}aw and Cerrai, Sandra},
  year = 2017,
  journal = {Journal of Functional Analysis},
  volume = {273},
  number = {6},
  pages = {1891--1930},
  issn = {0022-1236},
  doi = {10.1016/j.jfa.2017.05.008}
}

@article{Cerrai_2022_Large,
  title = {Large Deviations Principle for the Invariant Measures of the {{2D}} Stochastic {{Navier}}--{{Stokes}} Equations with Vanishing Noise Correlation},
  author = {Cerrai, Sandra and Paskal, Nicholas},
  year = 2022,
  journal = {Stochastics and Partial Differential Equations: Analysis and Computations},
  volume = {10},
  number = {4},
  pages = {1651--1681},
  issn = {2194-041X},
  doi = {10.1007/s40072-021-00219-5},
  langid = {english}
}

@online{Klose_2024_Large,
  title = {Large Deviations of the $\Phi^4_3$ Measure via Stochastic Quantisation},
  author = {Klose, Tom and Mayorcas, Avi},
  year = {2024},
  eprint = {2402.00975},
  eprinttype = {arXiv},
  eprintclass = {math-ph}
}

@article{Zhang_2012_Large,
  title = {Large Deviations for Invariant Measures of {{SPDEs}} with Two Reflecting Walls},
  author = {Zhang, Tusheng},
  year = 2012,
  journal = {Stochastic Processes and their Applications},
  volume = {122},
  number = {10},
  pages = {3425--3444},
  issn = {0304-4149},
  doi = {10.1016/j.spa.2012.06.003}
}

@article{Ito_Stationary_1964,
  title = {On Stationary Solutions of a Stochastic Differential Equation},
  author = {It{\^o}, Kiyosi and Nisio, Makiko},
  year = 1964,
  journal = {Kyoto Journal of Mathematics},
  volume = {4},
  number = {1},
  issn = {2156-2261},
  doi = {10.1215/kjm/1250524705}
}

@article{Arnold_Perfect_1995,
  title = {Perfect Cocycles through Stochastic Differential Equations},
  author = {Arnold, Ludwig and Scheutzow, Michael},
  year = 1995,
  journal = {Probability Theory and Related Fields},
  volume = {101},
  number = {1},
  pages = {65--88},
  issn = {1432-2064},
  doi = {10/cws3n7},
  langid = {english}
}

@article{Bakhtin_2014_Spacetime,
  title = {Space-Time Stationary Solutions for the {{Burgers}} Equation},
  author = {Bakhtin, Yuri and Cator, Eric and Khanin, Konstantin},
  year = 2014,
  journal = {Journal of the American Mathematical Society},
  volume = {27},
  number = {1},
  pages = {193--238},
  issn = {0894-0347, 1088-6834},
  doi = {10.1090/S0894-0347-2013-00773-0},
  langid = {english}
}

@article{Dunlap_2021_Stationary,
  title = {Stationary {{Solutions}} to the {{Stochastic Burgers Equation}} on the {{Line}}},
  author = {Dunlap, Alexander and Graham, Cole and Ryzhik, Lenya},
  year = 2021,
  journal = {Communications in Mathematical Physics},
  volume = {382},
  number = {2},
  pages = {875--949},
  issn = {1432-0916},
  doi = {10/gs4rz5},
  langid = {english}
}

@article{Breit_2019_Stationary,
  title = {Stationary Solutions to the Compressible {{Navier}}--{{Stokes}} System Driven by Stochastic Forces},
  author = {Breit, Dominic and Feireisl, Eduard and Hofmanov{\'a}, Martina and Maslowski, Bohdan},
  year = 2019,
  journal = {Probability Theory and Related Fields},
  volume = {174},
  number = {3-4},
  pages = {981--1032},
  issn = {0178-8051, 1432-2064},
  doi = {10.1007/s00440-018-0875-4},
  langid = {english}
}

@article{Flandoli_1999_Weak,
  title = {Weak {{Solutions}} and {{Attractors}} for {{Three-Dimensional Navier-Stokes Equations}} with {{Nonregular Force}}},
  author = {Flandoli, Franco and Schmalfu{\ss}, Bj{\"o}rn},
  year = 1999,
  journal = {Journal of Dynamics and Differential Equations},
  volume = {11},
  number = {2},
  pages = {355--398},
  issn = {10407294},
  doi = {10.1023/A:1021937715194}
}

@article{Jiang_2023_Global,
  title = {Global Stability of Stationary Solutions for a Class of Semilinear Stochastic Functional Differential Equations with Additive White Noise},
  author = {Jiang, Jifa and Lv, Xiang},
  year = 2023,
  journal = {Journal of Differential Equations},
  volume = {367},
  pages = {890--921},
  issn = {0022-0396},
  doi = {10.1016/j.jde.2023.05.035}
}

@article{Odasso_2008_Exponential,
  title = {Exponential Mixing for Stochastic {{PDEs}}: The Non-Additive Case},
  shorttitle = {Exponential Mixing for Stochastic {{PDEs}}},
  author = {Odasso, Cyril},
  year = 2008,
  journal = {Probability Theory and Related Fields},
  volume = {140},
  number = {1},
  pages = {41--82},
  issn = {1432-2064},
  doi = {10.1007/s00440-007-0057-2},
  langid = {english}
}

@article{Gess_2025_Stabilization,
  title = {Stabilization by Transport Noise and Enhanced Dissipation in the {{Kraichnan}} Model},
  author = {Gess, Benjamin and Yaroslavtsev, Ivan},
  year = 2025,
  journal = {Journal of Evolution Equations},
  volume = {25},
  number = {42},
  issn = {1424-3202},
  doi = {10.1007/s00028-025-01066-w},
  langid = {english}
}

@article{Flandoli_2024_Quantitative,
  title = {Quantitative Convergence Rates for Scaling Limit of {{SPDEs}} with Transport Noise},
  author = {Flandoli, Franco and Galeati, Lucio and Luo, Dejun},
  year = 2024,
  journal = {Journal of Differential Equations},
  volume = {394},
  pages = {237--277},
  issn = {0022-0396},
  doi = {10.1016/j.jde.2024.02.053}
}

@article{Galeati_2023_Ldp,
  title = {{{LDP}} and {{CLT}} for {{SPDEs}} with Transport Noise},
  author = {Galeati, Lucio and Luo, Dejun},
  year = 2023,
  journal = {Stochastics and Partial Differential Equations: Analysis and Computations},
  issn = {2194-041X},
pages = {736--793},
  doi = {10/gr7vzq}
}

@article{Ferrario_1999_Stochastic,
  title = {Stochastic {{Navier-Stokes}} Equations: {{Analysis}} of the Noise to Have a Unique Invariant Measure},
  shorttitle = {Stochastic {{Navier-Stokes}} Equations},
  author = {Ferrario, Benedetta},
  year = 1999,
  journal = {Annali di Matematica Pura ed Applicata},
  volume = {177},
  number = {1},
  pages = {331--347},
  issn = {1618-1891},
  doi = {10.1007/BF02505916},
  langid = {english}
}

@article{Buckmaster_2019_Nonuniqueness,
  title = {Nonuniqueness of Weak Solutions to the {{Navier-Stokes}} Equation},
  author = {Buckmaster, Tristan and Vicol, Vlad},
  year = 2019,
  journal = {Annals of Mathematics},
  volume = {189},
  number = {1},
pages = {101--144},
  issn = {0003-486X},
  doi = {10.4007/annals.2019.189.1.3}
}

@online{Qiu_2023_Large,
  title = {Large Deviations of Invariant Measure for the 3D Stochastic Hyperdissipative {Navier-Stokes} Equations},
  author = {Qiu, Zhaoyang and Liu, Hui and Sun, Chengfeng},
  year = {2023},
  eprint = {2307.04271},
  eprinttype = {arXiv},
  eprintclass = {math}
}

@article{Sermange_1983_Mathematical,
  title = {Some Mathematical Questions Related to the Mhd Equations},
  author = {Sermange, Michel and Temam, Roger},
  year = 1983,
  journal = {Communications on Pure and Applied Mathematics},
  volume = {36},
  number = {5},
  pages = {635--664},
  issn = {1097-0312},
  doi = {10.1002/cpa.3160360506},
  copyright = {Copyright \copyright{} 1983 Wiley Periodicals, Inc., A Wiley Company},
  langid = {english}
}

@article{Barbu_2007_Existence,
  title = {Existence and {{Ergodicity}} for the {{Two-Dimensional Stochastic Magneto-Hydrodynamics Equations}}},
  author = {Barbu, Viorel and Da Prato, Giuseppe},
  year = 2007,
  journal = {Applied Mathematics and Optimization},
  volume = {56},
  number = {2},
  pages = {145--168},
  issn = {1432-0606},
  doi = {10.1007/s00245-007-0882-2},
  langid = {english}
}

\end{document}